\providecommand{\U}[1]{\protect\rule{.1in}{.1in}}
\numberwithin{equation}{section}
\newtheorem{theorem} {Theorem} [section]
\newtheorem{proposition}[theorem]{Proposition}
\newtheorem{corollary}  [theorem]     {Corollary}
\newtheorem{lemma}  [theorem]     {Lemma}
\newtheorem{example}  [theorem]     {Example}
\newtheorem{remark}  [theorem]     {Remark}
\newtheorem{definition}  [theorem]     {Definition}
\newtheorem{conjecture}  [theorem]     {Conjecture}
\newtheorem{observation}  [theorem]     {Observation}
\newcommand{\dz}{d \bar{z}}
\newcommand{\pz}{\partial\bar{z}}
\newcommand{\w}{\wedge}
\renewcommand{\1}{\mathds{1}}
\newcommand{\G}{\mathbb{G}}
\newcommand{\db}{\overline{\partial}}
\newcommand{\lc}{\lrcorner}
\newcommand{\lk}{\left(}
\newcommand{\rk}{\right)}
\newcommand{\im}{\mathrm{im}}
\newcommand{\Om}{\Omega}
\newcommand{\btheorem}{\begin{theorem}}
\newcommand{\etheorem}{\end{theorem}}
\newcommand{\bproposition}{\begin{proposition}}
\newcommand{\eproposition}{\end{proposition}}
\newcommand{\bdefinition}{\begin{definition}}
\newcommand{\edefinition}{\end{definition}}
\newcommand{\bcorollary}{\begin{corollary}}
\newcommand{\ecorollary}{\end{corollary}}
\newcommand{\bproof}{\begin{proof}}
\newcommand{\eproof}{\end{proof}}
\newcommand{\beq}{\begin{equation}}
\newcommand{\eeq}{\end{equation}}
\newcommand{\ee}{\end{eqnarray*}}
\newcommand{\be}{\begin{eqnarray*}}
\newcommand{\elemma}{\end{lemma}}
\newcommand{\blemma}{\begin{lemma}}
\newcommand{\om}{\omega}
\renewcommand{\>}{\rightarrow}
\newcommand{\p}{\partial}
\newcommand{\bd}{\begin{enumerate} }
\newcommand{\ed}{\end{enumerate}}
\def\p{\partial}
\def\o{\overline}
\def\b{\bar}
\def\mb{\mathbb}
\def\mc{\mathcal}
\def\w{\wedge}
\def\om{\omega}
\def\l{\lrcorner}
\begin{document}

\title{Power series proofs for local stabilities of K\"{a}hler and balanced structures with mild $\p\db$-lemma}

\begin{abstract}
By use of a natural map introduced recently by the first and third
authors from the space of pure-type complex differential forms on a
complex manifold to the corresponding one on the small differentiable
deformation of this manifold, we will give a power series proof for
Kodaira--Spencer's local stability theorem of K\"{a}hler structures.
We also obtain two new local stability theorems, one of balanced
structures on an $n$-dimensional balanced manifold with the
$(n-1,n)$-th mild $\p\db$-lemma by power series method and the other one on $p$-K\"{a}hler structures with the deformation invariance of $(p,p)$-Bott--Chern numbers.
\end{abstract}

\author{Sheng Rao}
\address{School of Mathematics and statistics, Wuhan  University,
Wuhan 430072, China; Department of Mathematics, University of
California at Los Angeles, CA 90095-1555, USA}
\email{likeanyone@whu.edu.cn}
\thanks{Rao is partially supported by the National Natural Science Foundations of China No. 11671305, 11771339, 11922115
and China Scholarship Council/University of California, Los Angeles Joint Scholarship
Program. Wan is partially supported by Scientific Research Foundation of Chongqing University of Technology.
Zhao is partially supported by the National Natural Science Foundations of China No. 11801205.}

\author{Xueyuan Wan}
  \address{Mathematical Science Research Center, Chongqing University of Technology, Chongqing 400054, China}
  \email{xwan@cqut.edu.cn}

\author{Quanting Zhao}
\address{School of Mathematics and Statistics \&
Hubei Key Laboratory of Mathematical Sciences, Central China Normal
University, Wuhan, 430079, P.R.China.} \email{zhaoquanting@126.com;
zhaoquanting@mail.ccnu.edu.cn}

\date{\today}

\subjclass[2010]{Primary 32G05; Secondary 13D10, 14D15, 53C55}
\keywords{Deformations of complex structures; Deformations and
infinitesimal methods, Formal methods; deformations, Hermitian and
K\"ahlerian manifolds}

\maketitle

\tableofcontents

\section{Introduction}
The local stability of some special complex structure is an
interesting topic in deformation theory of complex structures and
the power series method, initiated by Kodaira--Nirenberg--Spencer and Kuranishi,
plays a prominent role there. One main goal
of this paper is to present a power series proof for the classical
Kodaira--Spencer's local stability of K\"ahler structures, which is a
problem at latest dated back to \cite[Remark 1 on Page 180]{MK}: \lq\lq A
good problem would be to find an elementary proof (for example,
using power series methods). Our proof uses nontrivial results from
partial differential equations."
\begin{theorem}[{\cite[Theorem 15]{KS}}]\label{stab-Kahler}
Let $\pi: \mathcal{X} \rightarrow B$ be a differentiable family of compact complex manifolds. If a fiber $X_0:= \pi^{-1}(t_0)$ admits a K\"ahler metric, then, for a sufficiently small neighborhood $U$ of $t_0$ on $B$, the fiber $X_t:=\pi^{-1}(t)$ over any point $t\in U$ still admits a K\"ahler metric, which depends smoothly on $t$ and coincides for $t=t_0$ with the given K\"ahler metric on $X_0$.
\end{theorem}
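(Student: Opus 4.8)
The plan is to produce, on the fixed underlying manifold, a smooth family of $d$-closed real $(1,1)$-forms for the varying complex structure, by solving the closedness condition as a power series over $X_0$ and then verifying convergence; positivity will be automatic. Normalize $t_0=0$; by the completeness of the Kuranishi family of $X_0$ it suffices to treat a holomorphic family, so that $J_t$ is given by an integrable Beltrami differential $\varphi(t)\in A^{0,1}(X_0,T^{1,0}X_0)$ with $\varphi(0)=0$, holomorphic in $t$. Let $\rho_t:=e^{\iota_{\varphi(t)}|\iota_{\overline{\varphi(t)}}}$ denote the natural extension operator of the first and third authors; for $t$ small it is a real linear isomorphism $A^{p,q}(X_0)\xrightarrow{\sim}A^{p,q}(X_t)$ (in particular bidegree preserving between the two complex structures), real-analytic in $t$, with $\rho_0=\mathrm{id}$. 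It suffices to construct a smooth family $\gamma_t\in A^{1,1}(X_0)$ of \emph{real} $(1,1)$-forms with $\gamma_0=\omega_0$ and $d(\rho_t\gamma_t)=0$: then $\omega_t:=\rho_t\gamma_t$ is a $d$-closed real $(1,1)$-form on $X_t$ that reduces to the given K\"ahler form at $t=0$, hence is positive---a K\"ahler metric---for $t$ small by continuity and compactness, and it depends smoothly on $t$.

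Transport $d$ to $X_0$: set $\widetilde d_t:=\rho_t^{-1}\circ d\circ\rho_t=\widetilde\partial_t+\widetilde{\overline\partial}_t$, where $\widetilde\partial_t:=\rho_t^{-1}\circ\partial_t\circ\rho_t$ raises the holomorphic degree by one, $\widetilde{\overline\partial}_t$ raises the antiholomorphic degree by one, and $\widetilde d_0=d$; the equation becomes $\widetilde d_t\gamma_t=0$. Fixing real coordinates on $B$ (so that all Taylor coefficients below are real forms) and expanding $\widetilde d_t=d+\sum_{|L|\ge1}D_L\,t^L$, $\gamma_t=\omega_0+\sum_{|K|\ge1}\gamma_K\,t^K$, the order-zero term is $d\omega_0=0$, true since $\omega_0$ is K\"ahler, while the order-$K$ term reads
\[
d\gamma_K=-\Phi_K,\qquad \Phi_K:=\sum_{0\neq L\le K}D_L\,\gamma_{K-L},
\]
with $\Phi_K$ depending only on the $\gamma_J$, $|J|<|K|$, which are known by induction. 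Two facts let one solve this with $\gamma_K$ of the correct type. First, $\Phi_K$ is $d$-\emph{exact}: the truncation $\gamma_t^{<K}:=\sum_{|J|<|K|}\gamma_J\,t^J$ satisfies $\widetilde d_t\gamma_t^{<K}=\Phi_K\,t^K+(\text{higher order})$ by the earlier equations, so $d(\rho_t\gamma_t^{<K})=\rho_t(\widetilde d_t\gamma_t^{<K})$ vanishes through order $|K|-1$ and has order-$|K|$ coefficient $\Phi_K$; since $d$ commutes with taking Taylor coefficients, $\Phi_K=d\alpha_K$, where $\alpha_K$ is the (real) order-$|K|$ Taylor coefficient of $\rho_t\gamma_t^{<K}$. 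Second, because $\widetilde\partial_t$ and $\widetilde{\overline\partial}_t$ shift the bidegree as above and each $\gamma_{K-L}$ is of type $(1,1)$, the $3$-form $\Phi_K$ has no $(3,0)$- or $(0,3)$-component; together with $d\Phi_K=0$ this forces $\partial\alpha_K^{2,0}=0$ and $\overline\partial\alpha_K^{0,2}=0$.

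It remains to solve $d\gamma_K=-d\alpha_K$ with $\gamma_K$ a real $(1,1)$-form; equivalently, to produce a $d$-closed real $2$-form $\beta_K$ with $\beta_K^{2,0}=\alpha_K^{2,0}$, for then $\gamma_K:=\beta_K-\alpha_K=\beta_K^{1,1}-\alpha_K^{1,1}$ (using the reality of $\alpha_K$) is as wanted. Writing $\beta_K=\alpha_K^{2,0}+\beta_K^{1,1}+\overline{\alpha_K^{2,0}}$ with $\beta_K^{1,1}$ real, closedness of $\beta_K$ reduces, via $\partial\alpha_K^{2,0}=0$ and conjugation, to the single equation $\partial\beta_K^{1,1}=-\overline\partial\alpha_K^{2,0}$. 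Now $-\overline\partial\alpha_K^{2,0}$ is a $(2,1)$-form that is $\overline\partial$-exact, hence $d$-closed, and also $\partial$-closed because $\partial\overline\partial\alpha_K^{2,0}=-\overline\partial\partial\alpha_K^{2,0}=0$; since $X_0$ is K\"ahler, the $\partial\overline\partial$-lemma gives $\psi_K\in A^{1,0}(X_0)$ with $-\overline\partial\alpha_K^{2,0}=\partial\overline\partial\psi_K$, and then $\beta_K^{1,1}:=\overline\partial\psi_K+\partial\overline{\psi_K}$ is real and satisfies $\partial\beta_K^{1,1}=\partial\overline\partial\psi_K=-\overline\partial\alpha_K^{2,0}$. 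This yields the explicit $\gamma_K=\overline\partial\psi_K+\partial\overline{\psi_K}-\alpha_K^{1,1}$ and closes the induction, giving a formal power-series solution. Choosing $\psi_K$ canonically via the Green operator of $X_0$ provides estimates $\|\gamma_K\|_{C^{\ell,\alpha}}\lesssim\|\alpha_K\|_{C^{\ell,\alpha}}$, where $\alpha_K$ is a universal polynomial in the (convergent) Taylor coefficients of $\varphi(t)$ and $\overline{\varphi(t)}$, their first derivatives, and the earlier $\gamma_J$; the classical method of majorants then shows that $\gamma_t$ converges for $t$ small, finishing the proof.

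\medskip
The step I expect to be the main obstacle is the one in the third paragraph: the equation $d\gamma_K=-\Phi_K$ can be solved directly only up to a $d$-closed $2$-form of mixed Hodge type, and it is exactly the interplay of (i) the $d$-exactness of the obstruction, (ii) the vanishing of its $(3,0)$- and $(0,3)$-components (a consequence of the bidegree behaviour of $\rho_t$), and (iii) the $\partial\overline\partial$-lemma on the K\"ahler manifold $X_0$ that allows one to absorb the spurious $(2,0)$- and $(0,2)$-components while keeping $\gamma_K$ genuinely of type $(1,1)$. The convergence estimate that follows it is routine but technical.
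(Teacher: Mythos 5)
Your formal construction is correct, and while the global strategy coincides with the paper's (reduction to the Kuranishi family, the extension isomorphism $e^{\iota_{\varphi}|\iota_{\overline{\varphi}}}$, an order-by-order resolution via the $\partial\overline\partial$-lemma on $X_0$, and a majorant-series convergence), the heart of the induction is handled by a genuinely different mechanism. The paper writes out the closedness condition explicitly via Proposition \ref{main1}, reduces it to the concrete system \eqref{reduced-equsys}, and must then verify by hand --- through the contraction identities of Proposition \ref{7for} and the computation in Observation \ref{closed} showing $\overline\partial(\varphi\lrcorner\omega)_k=0$ --- that the right-hand sides meet the compatibility conditions \eqref{db-condition} needed to invoke the $\partial\overline\partial$-lemma. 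You instead keep the conjugated operator $\widetilde d_t=\rho_t^{-1}\circ d\circ\rho_t$ abstract and extract the two needed facts structurally: the order-$K$ obstruction $\Phi_K$ is automatically $d$-exact because it is the leading Taylor coefficient of the exact form $d(\rho_t\gamma_t^{<K})$, and it has no $(3,0)$- or $(0,3)$-component because $\rho_t$ preserves bidegree; the $\partial\overline\partial$-lemma is then applied only to $\overline\partial\alpha_K^{2,0}$. This bypasses all of the explicit commutator computations and makes transparent that the K\"ahler hypothesis enters only through the $\partial\overline\partial$-lemma; the price is that your $\gamma_K$ is produced through the auxiliary primitive $\alpha_K$ rather than by the paper's closed-form Green's-operator formula, which is what drives both its convergence estimate and, more importantly, its regularity argument. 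On that last point you are thinner than you should be: $C^{k,\alpha}$-convergence for each fixed $k$ does not by itself yield a smooth limit on a $t$-disk of uniform size, since the radius produced by the majorant method may shrink as $k\to\infty$; the paper devotes Subsection \ref{regularity} to an elliptic bootstrap (the solution satisfies the second-order elliptic equation \eqref{ellip-eq}, to which the interior estimates of Douglis--Nirenberg apply), and your proof needs the analogous step for its canonical choice of $\gamma_K$ before ``finishing the proof.''
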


The other goal is to prove a new local stability theorem of balanced
structures when the reference fiber satisfies the $(n-1,n)$-th mild
$\p\db$-lemma, a new-type $\p\db$-lemma, using the power series
method developed above, and also one of $p$-K\"{a}hler structures with the deformation invariance of $(p,p)$-Bott--Chern numbers
by two different proofs. Recall that
a \emph{balanced metric} $\omega$ on an $n$-dimensional complex
manifold is a real positive $(1,1)$-form satisfying
$d(\omega^{n-1})=0,$
and a complex manifold is called \emph{balanced} if there
exists such a metric on it.

This paper is a sequel to \cite{lry,RZ15}, whose notions are
adopted here. All manifolds in this paper are assumed to be compact
complex $n$-dimensional manifolds. The symbol $A^{p,q}(X,E)$ stands for the space of
the holomorphic vector bundle $E$-valued $(p,q)$-forms on a
complex manifold $X$. A \emph{Beltrami differential} on $X$, generally
denoted by $\phi$, is an element in
$A^{0,1}(X, T^{1,0}_X)$, where $T^{1,0}_X$ denotes the holomorphic
tangent bundle of $X$. Then $\iota_\phi$ or $\phi\lrcorner$ denotes
the contraction operator with respect to $\phi\in A^{0,1}(X,T^{1,0}_X)$ or
other analogous vector-valued complex differential forms alternatively if there
is no confusion. We also follow the convention
\begin{equation}\label{0e-convention}
e^{\spadesuit}=\sum_{k=0}^\infty \frac{1}{k!} \spadesuit^{k},
\end{equation}
where $\spadesuit^{k}$ denotes $k$-time action of the operator
$\spadesuit$. Since the dimension of $X$ is finite, the summation in
the above formulation is always finite.

We will always consider the differentiable family $\pi: \mathcal{X} \rightarrow
B$ of compact complex $n$-dimensional manifolds
over a sufficiently small domain in $\mathbb{R}^k$ with the
reference fiber $X_0:= \pi^{-1}(t_0)$ and the general fibers $X_t:=
\pi^{-1}(t).$ For simplicity we set $k=1$. Denote by
$\zeta:=(\zeta^\alpha_j(z,t))$ the holomorphic coordinates of $X_t$
induced by the family with the holomorphic coordinates $z:=(z^i)$ of
$X_0$, under a coordinate covering $\{\mathcal{U}_j\}$ of
$\mathcal{X}$, when $t$ is assumed to be fixed, as the standard
notions in deformation theory described at the beginning of
\cite[Chapter 4]{MK}. This family induces a canonical differentiable family of
integrable Beltrami differentials on $X_0$, denoted by $\varphi(z,t)$,
$\varphi(t)$ and $\varphi$ interchangeably, to be explained at the beginning of
Section \ref{kahler-section}.

In \cite{RZ15}, the first and third authors introduce an extension
map
$$e^{\iota_{\varphi(t)}|\iota_{\overline{\varphi(t)}}}:
 A^{p,q}(X_0)\> A^{p,q}(X_t),$$ which plays an important role in
 this paper.
\begin{definition}\label{map}\rm
For $s\in
 A^{p,q}(X_0)$, we define
$$\label{lbro} e^{\iota_{\varphi(t)}|\iota_{\overline{\varphi(t)}}}(s)=
s_{i_1\cdots i_pj_1\cdots
j_q}(z(\zeta))\left(e^{\iota_{\varphi(t)}}\left(dz^{i_1}\wedge\cdots\wedge
dz^{i_p}\right)\right)\wedge
\left(e^{\iota_{\overline{\varphi(t)}}}\left(d\overline{z}^{j_1}\wedge\cdots\wedge
d\overline{z}^{j_q}\right)\right),
$$
where $s$ is locally written as
$$s=s_{i_1\cdots i_pj_1\cdots
j_q}(z)dz^{i_1}\wedge\cdots\wedge dz^{i_p}\wedge
d\overline{z}^{j_1}\cdots\wedge d\overline{z}^{j_q}$$ and the
operators $e^{\iota_{\varphi(t)}}$,
$e^{\iota_{\overline{\varphi(t)}}}$ follow the convention
\eqref{0e-convention}. It is easy to check that this map is a real
linear isomorphism as in \cite[Lemma $2.8$]{RZ15}.
\end{definition}

Now let us describe our approach to reprove Kodaira--Spencer's
local stability of K\"{a}hler structures. We will use Kuranishi's completeness theorem \cite{ku} to reduce the proof to Kuranishi family $\varpi:\mc{K}\to T$ and a power series
method to construct a natural K\"{a}hler extension
$\tilde{\omega}_t$ of the K\"{a}hler form $\omega_0$ on $X_0$, such that $\tilde{\omega}_t$ is a
K\"{a}hler form on the general fiber $\varpi^{-1}(t)=X_t$. More
precisely, the extension is given by
$$e^{\iota_{\varphi}|\iota_{\o{\varphi}}}:A^{1,1}(X_0)\to A^{1,1}(X_t),\quad \omega_0\to \tilde{\omega}_t:=e^{\iota_{\varphi}|\iota_{\o{\varphi}}}(\omega(t)),$$
where $\omega(t)$ is a family of smooth $(1,1)$-forms on $X_0$,
depending smoothly on $t$, and $\omega(0)=\omega_0$. This method
is developed in
\cite{LSY,Sun,SY,lry,RZ,RZ2,RZ15}.
The following proposition will be used many times in this paper:
\begin{proposition}[{\cite[Theorem 3.4]{lry}, \cite[Proposition 2.2]{RZ15}}]\label{main1}
Let $\phi\in A^{0,1}(X,T^{1,0}_X)$. Then on the space $A^{*,*}(X)$,
\beq\label{ext-old} d\circ e^{\iota_\phi}=e^{\iota_\phi}(d+\p\circ
\iota_\phi-\iota_\phi\circ
\p-\iota_{\db\phi-\frac{1}{2}[\phi,\phi]}).\eeq
\end{proposition}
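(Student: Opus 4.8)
The plan is to establish \eqref{ext-old} by induction on the number of factors $dz^{i}$ and $d\bar z^{j}$ in a local monomial, reducing the identity on all of $A^{*,*}(X)$ to its action on functions and on the one-forms $dz^i$, $d\bar z^i$. First I would record the basic algebraic fact that $\iota_\phi$ is an odd derivation: for $\alpha\in A^{p,q}(X)$ and $\beta\in A^{r,s}(X)$,
\[
\iota_\phi(\alpha\wedge\beta)=(\iota_\phi\alpha)\wedge\beta+(-1)^{p+q}\alpha\wedge(\iota_\phi\beta),
\]
since $\phi\in A^{0,1}(X,T^{1,0}_X)$ lowers the holomorphic degree by one and raises the antiholomorphic degree by one, so it acts with the Koszul sign of an odd operator. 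Consequently $e^{\iota_\phi}$ is multiplicative with respect to a suitably sign-twisted product, and $\p\circ\iota_\phi-\iota_\phi\circ\p$ together with $\iota_{\db\phi-\frac12[\phi,\phi]}$ are likewise (anti)derivations; hence both sides of \eqref{ext-old} are first-order operators that are compatible with wedge products in the same way. This means it suffices to check the identity on a set of local generators of the algebra $A^{*,*}(X)$, namely on smooth functions, on $dz^i$, and on $d\bar z^i$.

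Next I would verify the three generating cases directly. On a function $f$ we have $\iota_\phi f=0$, so the left side is $df=\p f+\db f$, while on the right $e^{\iota_\phi}=\mathrm{id}$ on $A^{0,0}$ up to lower-degree terms and the operator in parentheses applied to $f$ gives $df+\p(\iota_\phi f)-\iota_\phi(\p f)=df-\iota_\phi\p f$; the extra term $-\iota_\phi\p f$ is exactly what $e^{\iota_\phi}$ produces when acting on the $(1,0)$-form $\p f$, so the two sides match. On $dz^i$ one computes $d(dz^i)=0$ on the left, $e^{\iota_\phi}(dz^i)=dz^i+\iota_\phi dz^i$ (a $(0,1)$-form), and then $d$ of that, and checks this equals $e^{\iota_\phi}$ applied to $\p(\iota_\phi dz^i)-\iota_\phi(\p dz^i)-\iota_{\db\phi-\frac12[\phi,\phi]}dz^i$; here the term $\iota_{\frac12[\phi,\phi]}dz^i$ is precisely the quadratic correction needed to absorb $\db(\iota_\phi dz^i)$ coming from the non-commutativity, and the Nijenhuis-type bracket $[\phi,\phi]$ appears exactly because $\iota_\phi\iota_\phi$ contributes at second order. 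The case $d\bar z^i$ is the simplest, since $\iota_\phi d\bar z^i=0$.

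The main obstacle I expect is bookkeeping the signs and the degree shifts in the quadratic term: the identity $\iota_\phi\circ\iota_\phi=\tfrac12\iota_{[\phi,\phi]}$ (with the bracket the Frölicher–Nijenhuis / Schouten-type bracket on $T^{1,0}_X$-valued forms) must be pinned down with the correct convention, and then the derivation property has to be used to propagate it from generators to arbitrary monomials without sign errors. Once the three generating cases are in hand and the derivation compatibility of both sides is established, a clean induction on total form-degree finishes the proof: writing a general form locally as $f\,dz^I\wedge d\bar z^J$, split off one factor, apply the Leibniz rules for $d$, for $e^{\iota_\phi}$, and for the bracketed operator on the right, and invoke the inductive hypothesis on the remaining factors. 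I would also remark that, since $\dim_{\mathbb C}X=n$, all exponential series are finite, so no convergence issue arises and every manipulation is purely algebraic.
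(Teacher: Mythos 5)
Your overall strategy---conjugate to $e^{-\iota_\phi}\circ d\circ e^{\iota_\phi}$, observe that both this and the bracketed operator are derivations of odd degree, and hence reduce the identity to a check on the local generators $f$, $dz^i$, $d\bar z^j$---is legitimate and is in fact a different route from the one in the cited sources (the paper itself gives no proof but refers to \cite[Theorem 3.4]{lry} and \cite[Proposition 2.2]{RZ15}, where the argument is an induction on the powers $\iota_\phi^k/k!$ using the Tian--Todorov commutator formula). However, the execution as written contains concrete errors that would derail the proof. First, for $\phi\in A^{0,1}(X,T^{1,0}_X)$ the contraction $\iota_\phi$ is an \emph{even} (degree-zero) derivation: writing $\iota_\phi=\phi^i_{\bar j}\,d\bar z^j\wedge\iota_{\partial_i}$, the sign $(-1)^{p+q}$ from $\iota_{\partial_i}$ is cancelled by moving $d\bar z^j$ past $\alpha$, so $\iota_\phi(\alpha\wedge\beta)=(\iota_\phi\alpha)\wedge\beta+\alpha\wedge(\iota_\phi\beta)$ with no sign. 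Your claimed Koszul sign is wrong, and with it your justification that $e^{\iota_\phi}$ is ``multiplicative with respect to a sign-twisted product'' collapses; the correct (even) parity is exactly what you need, since it makes $e^{\iota_\phi}$ an honest algebra automorphism and the reduction to generators valid.

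Second, the identity $\iota_\phi\circ\iota_\phi=\tfrac12\iota_{[\phi,\phi]}$ is false: $\iota_\phi^2$ is purely algebraic (zeroth order in the derivatives of $\phi$) and annihilates every $(1,q)$-form, whereas $\iota_{[\phi,\phi]}$ involves first derivatives of $\phi$ and does not. The correct relation is the commutator formula of Lemma \ref{aaaa}, i.e.\ $\iota_{[\phi,\phi]}=2\,\iota_\phi\circ\p\circ\iota_\phi-\p\circ\iota_\phi^2-\iota_\phi^2\circ\p$, and it is this double commutator with $\p$ (not an algebraic square) that produces the $\tfrac12[\phi,\phi]$ term. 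Relatedly, in your key generator check on $dz^i$ you assert that $\tfrac12\iota_{[\phi,\phi]}dz^i$ ``absorbs'' $\db(\iota_\phi dz^i)$; but $\tfrac12\iota_{[\phi,\phi]}dz^i=\phi\lc\p(\phi\lc dz^i)$ while $\db(\iota_\phi dz^i)=\iota_{\db\phi}dz^i$, and these agree only when $\phi$ is integrable, whereas the proposition is asserted for arbitrary $\phi\in A^{0,1}(X,T^{1,0}_X)$. The whole content of the formula sits in this one computation: one must verify $e^{-\iota_\phi}\bigl(d(dz^i+\phi\lc dz^i)\bigr)=\p(\phi\lc dz^i)-\iota_{\db\phi-\frac12[\phi,\phi]}\,dz^i$ with the precise sign conventions for $\db\phi$ and $[\phi,\phi]$ fixed (these conventions determine the sign in front of $\iota_{\db\phi-\frac12[\phi,\phi]}$ in \eqref{ext-old}), and your sketch never pins them down. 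Until the parity of $\iota_\phi$, the role of Lemma \ref{aaaa}, and the $dz^i$ computation are corrected, the proposal does not constitute a proof.
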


By a careful use of Proposition \ref{main1}, we can show that
$d(e^{\iota_{\varphi}|\iota_{\b{\varphi}}}(\omega))=0$ with $\omega\in A^{1,1}(X_0)$ is equivalent
to
 \begin{equation}\label{0w.18}
    \begin{cases}
      ([\p,\iota_{\varphi}]+\b{\p})(\1-\b{\varphi}\varphi)\lc\omega=0,\\
      ([\b{\p},\iota_{\b{\varphi}}]+\p)(\1-\varphi\b{\varphi})\lc\omega=0.
    \end{cases}
 \end{equation}
In this paper we always use the notations: $\varphi
\overline{\varphi} = \overline{\varphi} \lc \varphi,
\overline{\varphi} \varphi = \varphi \lc \overline{\varphi}$ and
$\1$ is the identity operator defined as:
$$\1=\frac{1}{p+q}\left(\sum_i^n dz^i\otimes \frac{\p\ }{\p z^i}+\sum_i^n d\bar z^i\otimes \frac{\p\ }{\p \bar z^i}\right)$$
when it acts on $(p,q)$-forms of a complex manifold. Obviously, the
identity operator is a real operator. It is worth noticing that this
definition is a little different from that in \cite{RZ15}, where for
a $(p,q)$-form $\alpha$ on a complex manifold,
$$\1(\alpha)= (p+q)\cdot\alpha$$
since the constant factor $\frac{1}{p+q}$ didn't appear in that
definition.

Then one has the crucial reduction:
\begin{proposition}[=Proposition \ref{2.9}]\label{02.9}
 If the power series
  $$\omega(t)=\sum_{k=0}^{\infty}\sum_{i+j=k}\omega_{i,j}t^i\b{t}^j$$ of $(1,1)$-forms on $X_0$ is
a real formal solution of the system of equations
\begin{equation}\label{0reduced-equsys}
    \begin{cases}
      \b{\p}\omega=\b{\p}(\b{\varphi}\varphi\l\omega-\varphi\l\b{\varphi}\l\omega)-\p(\varphi\l\omega),\\
      \p\omega=\p(\varphi\b{\varphi}\l\omega-\b{\varphi}\l\varphi\l\omega)-\b{\p}(\b{\varphi}\l\omega),\\
    \end{cases}
\end{equation}
then it is also one of the system (\ref{0w.18}).
\end{proposition}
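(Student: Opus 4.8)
The plan is to show that a real formal solution $\omega(t)$ of the simplified system \eqref{0reduced-equsys} automatically satisfies the target system \eqref{0w.18}, by algebraically massaging the equations using the identity operator $\1$ and the contraction conventions $\varphi\overline{\varphi}=\overline{\varphi}\lc\varphi$, $\overline{\varphi}\varphi=\varphi\lc\overline{\varphi}$. First I would expand the bracketed operators in \eqref{0w.18}: since $[\p,\iota_\varphi]=\p\circ\iota_\varphi-\iota_\varphi\circ\p$ (note $\varphi$ has odd total degree as a $(0,1)$-form with values in $T^{1,0}$, so the graded commutator is really an anticommutator of the appropriate parity — I'd be careful here with signs), the first equation of \eqref{0w.18} becomes $(\p\iota_\varphi-\iota_\varphi\p+\db)\big((\1-\overline{\varphi}\varphi)\lc\omega\big)=0$. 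The strategy is to write $(\1-\overline{\varphi}\varphi)\lc\omega=\omega-\varphi\lc\overline{\varphi}\lc\omega$ (using the convention to reorder the double contraction), and then distribute $\db$, $\p\iota_\varphi$ and $\iota_\varphi\p$ across the two terms.

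Next I would use the first equation of \eqref{0reduced-equsys}, namely $\db\omega=\db(\overline{\varphi}\varphi\lc\omega-\varphi\lc\overline{\varphi}\lc\omega)-\p(\varphi\lc\omega)$, to substitute for $\db\omega$ wherever it appears, and likewise use the second equation for $\p\omega$. After this substitution I expect most terms to cancel in pairs, leaving only terms of the form $\iota_\varphi\p(\varphi\lc\overline{\varphi}\lc\omega)$, $\p\iota_\varphi(\varphi\lc\overline{\varphi}\lc\omega)$, $\db(\overline{\varphi}\varphi\lc\omega)$ and similar ``higher-order in $\varphi$'' expressions. The key point will be that these remaining terms vanish identically for \emph{type} reasons: $\varphi\lc$ lowers the holomorphic degree by one and raises the anti-holomorphic degree by one, so iterated contractions $\varphi\lc\overline{\varphi}\lc\varphi\lc(\cdot)$ applied to a form of low bidegree (here $(1,1)$) land in a space that is forced to be zero, e.g. a $(-1,3)$ or $(2,0)$-type space after too many holomorphic contractions against a $(1,1)$-form. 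I would carefully bookkeep the bidegrees: $\varphi\lc\omega\in A^{0,2}$, $\overline{\varphi}\lc\omega\in A^{2,0}$, $\varphi\lc\overline{\varphi}\lc\omega\in A^{1,1}$, $\overline{\varphi}\varphi\lc\omega=\varphi\lc\overline{\varphi}\lc\omega$, and note that any further contraction by $\varphi$ of something already in $A^{0,2}$ gives an element of $A^{-1,3}=0$.

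The main obstacle I anticipate is the sign bookkeeping in the graded (anti)commutators $[\p,\iota_\varphi]$ and $[\db,\iota_{\overline{\varphi}}]$ together with the reordering of the double contractions via the conventions $\varphi\overline{\varphi}=\overline{\varphi}\lc\varphi$ and $\overline{\varphi}\varphi=\varphi\lc\overline{\varphi}$; getting a single sign wrong will spoil the cancellation. A secondary subtlety is that \eqref{0reduced-equsys} and \eqref{0w.18} are each a \emph{system of two} equations, and one should check that the first equation of \eqref{0reduced-equsys} together with the \emph{reality} of $\omega(t)$ (which exchanges the roles of $\varphi$ and $\overline{\varphi}$ and of $\p$ and $\db$ under complex conjugation) suffices to deduce both equations of \eqref{0w.18} — the second equation of \eqref{0w.18} should follow from the first by applying complex conjugation, using that $\overline{\omega}=\omega$ and $\overline{\1-\overline{\varphi}\varphi}=\1-\varphi\overline{\varphi}$. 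So the proof structure is: (i) expand \eqref{0w.18}; (ii) substitute \eqref{0reduced-equsys}; (iii) cancel matching terms; (iv) kill the residual higher-contraction terms by bidegree; (v) obtain the second equation of \eqref{0w.18} from the first by conjugation and reality.
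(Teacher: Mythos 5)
Your steps (i)--(iii) and (v) are sound and essentially match the paper's reduction: expanding $[\p,\iota_\varphi]=\p\iota_\varphi-\iota_\varphi\p$, substituting the first equation of \eqref{0reduced-equsys} for $\db\omega$, and obtaining the second equation of \eqref{0w.18} from the first by conjugation and reality is exactly how the argument is organized, and after the substitution one is left with showing that
\begin{equation*}
\p(\varphi\lc\b{\varphi}\varphi\lc\omega)+\varphi\lc\p(\omega-\b{\varphi}\varphi\lc\omega)+\db(\varphi\lc\b{\varphi}\lc\omega)
\end{equation*}
vanishes. The genuine gap is step (iv): these residual terms do \emph{not} vanish for type reasons. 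Each summand above is an honest $(1,2)$-form (for instance $\b{\varphi}\lc\omega\in A^{2,0}$, so $\db(\varphi\lc\b{\varphi}\lc\omega)\in A^{1,2}$; and $\p\omega\in A^{2,1}$, so $\varphi\lc\p\omega\in A^{1,2}$), and none of the surviving iterated contractions over-depletes the holomorphic degree. Your sample vanishing ($\varphi\lc$ applied to something in $A^{0,2}$ lands in $A^{-1,3}=0$) is correct but applies to none of the terms that actually remain. Note also that $\b{\varphi}\varphi\lc\omega=(\varphi\lc\b{\varphi})\lc\omega$ and $\varphi\lc(\b{\varphi}\lc\omega)$ are \emph{not} equal as forms; their difference is precisely what the first identity of Proposition \ref{7for} quantifies, and it enters the computation nontrivially.

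What actually kills the residue---and what constitutes the bulk of the paper's proof---is analytic rather than combinatorial: one substitutes the second equation of \eqref{0reduced-equsys} for $\p\omega$, applies the Leibniz rule to rewrite $\varphi\lc\db(\b{\varphi}\lc\omega)$ as $\db(\varphi\lc\b{\varphi}\lc\omega)-(\db\varphi)\lc\b{\varphi}\lc\omega$, invokes the integrability condition $\db\varphi=\tfrac12[\varphi,\varphi]$ and the Tian--Todorov commutator formula (Lemma \ref{aaaa}) to expand $[\varphi,\varphi]\lc\b{\varphi}\lc\omega$, and cancels using the contraction identities of Proposition \ref{7for}; after all of this the residue collapses to $-\tfrac12\,\varphi\lc\varphi\lc\p(\b{\varphi}\lc\omega)$, which vanishes only because of the auxiliary identity $\db(\varphi\lc\omega)_k=0$ (equivalently, by reality, $\p(\b{\varphi}\lc\omega)_k=0$) for $k\leq N+1$. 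That identity is the paper's Observation \ref{closed}; it is itself a substantial inductive computation resting on integrability and the commutator formula, and the whole proposition must be proved by induction on the degree of the power series, since the reduced system only determines $\omega(t)$ order by order. Your plan contains neither the induction nor any mechanism playing the role of Observation \ref{closed}, so the cancellation anticipated in step (iv) would not go through.
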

Base on Observation \ref{expslt}, one can solve the system
\eqref{0reduced-equsys} easily by the K\"ahlerian condition on the
reference fiber inductively. Then the  H\"older convergence and regularity
argument in Subsections \ref{convergence} and \ref{regularity} gives
rise to the desired K\"ahler form on the deformation $X_t$ of
$X_0$.

In Section \ref{balanced}, we will discuss the local stability
problem of balanced structures on the complex manifolds also satisfying
various $\p\db$-lemmata. The \emph{$(n-1,n)$-th mild
$\p\b{\p}$-lemma} is introduced in
Subsection \ref{subs-mild}: an $n$-dimensional complex manifold $X$
satisfies the $(n-1,n)$-th mild $\p\b{\p}$-lemma, if  for any
$(n-2,n)$-complex differential form $\xi$ on $X$, there exists an
$(n-2,n-1)$-form $\theta$ on $X$ such that
\[ \p \db \theta = \p \xi. \]
Then another main result of this paper can be described as follows:
\begin{theorem}[=Theorem \ref{blc-inv}]\label{0blc-inv}
Let $X_0$ be a compact balanced manifold of complex dimension $n$,
satisfying the $(n-1,n)$-th mild $\p\db$-lemma. Then $X_t$ also
admits a balanced metric for $t$ small.
\end{theorem}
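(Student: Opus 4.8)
The plan is to mimic the power-series strategy used for the K\"ahler case, replacing the K\"ahler form $\omega_0$ by the balanced form, which is now an $(n-1,n-1)$-form, and the role of closedness by the balanced condition $d\omega_0^{n-1}=0$ (equivalently $\omega_0^{n-1}$ is a closed positive $(n-1,n-1)$-form). First I would fix a balanced metric $\omega_0$ on $X_0$ and set $\Omega_0:=\omega_0^{n-1}$, a $d$-closed real $(n-1,n-1)$-form which is moreover \emph{strictly positive} in the appropriate sense (it is $\sigma^{n-1}$ for a positive $(1,1)$-form pointwise, by Michelsohn's characterization). I would then use Kuranishi completeness to reduce to the Kuranishi family and apply the extension map $e^{\iota_\varphi|\iota_{\o\varphi}}$ to a smoothly $t$-dependent family $\Omega(t)$ of real $(n-1,n-1)$-forms on $X_0$ with $\Omega(0)=\Omega_0$, aiming to produce $\widetilde\Omega_t:=e^{\iota_\varphi|\iota_{\o\varphi}}(\Omega(t))$ that is $d$-closed on $X_t$. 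By the same computation via Proposition \ref{main1} that yielded \eqref{0w.18} in bidegree $(1,1)$, the condition $d\widetilde\Omega_t=0$ translates into a coupled system on $X_0$ whose two halves are conjugate; the $(n-1,n)$-part reads, after the same algebraic reduction as in Proposition \ref{02.9}, as an equation of the schematic form $\db\Omega = \db(\text{quadratic in }\varphi\lc\Omega) - \p(\varphi\lc\Omega)$ (and its conjugate), where now $\Omega$ is an $(n-1,n-1)$-form so $\varphi\lc\Omega$ is of bidegree $(n-2,n)$.

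The heart of the matter is solving this reduced system order by order in $t,\o t$. At order zero we take $\Omega_{0,0}=\Omega_0$, which satisfies the homogeneous equations because $d\Omega_0=0$. At order $k$ the equation has the form $\db\,\Omega_{i,j} = \db\beta_{i,j} - \p\gamma_{i,j}$ where $\beta_{i,j},\gamma_{i,j}$ are $(n-2,n)$-forms built from $\varphi$ and lower-order $\Omega_{i',j'}$ (and $\gamma_{i,j}=\varphi\lc\Omega_{i',j'}$ type terms), and here is exactly where the $(n-1,n)$-th mild $\p\db$-lemma enters: we need to write $\p\gamma_{i,j}$ as $\p\db\theta$ for some $(n-2,n-1)$-form $\theta$, which is precisely the hypothesis applied to $\xi=\gamma_{i,j}$; then $\Omega_{i,j}:=\beta_{i,j}-\db\theta$ solves the equation at that order. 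One must check the compatibility/closedness condition needed to even run the induction — that the right-hand side $\db\beta_{i,j}-\p\gamma_{i,j}$ is itself $\db$-closed (automatic, since $\db^2=0$ and $\db\p=-\p\db$ kills $\p\gamma$ only after we know $\p\gamma$ is $\db$-exact, so the logical order is: first solve the $(n-1,n)$ equation using mild $\p\db$, then verify the conjugate $(n,n-1)$ equation is solved by the conjugate form, and finally check the two are consistent because $\Omega$ is real). I expect the main obstacle to be organizing this induction so that $\Omega(t)$ stays \emph{real} and so that the auxiliary choices $\theta$ (which are not unique) can be made to depend nicely on $t$; this is the analogue of the bookkeeping behind Observation \ref{expslt} in the K\"ahler case.

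Once a formal real solution $\Omega(t)=\sum\Omega_{i,j}t^i\o t^j$ is constructed, the remaining steps parallel Subsections \ref{convergence} and \ref{regularity}: establish H\"older-norm estimates for the solution operators (the $\p\db$-lemma-type solution operator, which on a fixed compact manifold is bounded on suitable H\"older spaces by elliptic theory) to get convergence of the series for $|t|$ small, and then a regularity/elliptic-bootstrap argument to conclude $\widetilde\Omega_t$ is smooth in $(z,t)$. It then remains to observe that positivity is an open condition: since $\widetilde\Omega_0=\Omega_0=\omega_0^{n-1}$ is strictly positive and $\widetilde\Omega_t$ depends continuously on $t$, for $t$ small $\widetilde\Omega_t$ is a $d$-closed strictly positive $(n-1,n-1)$-form on $X_t$, hence by Michelsohn's theorem equals $\omega_t^{n-1}$ for some balanced metric $\omega_t$ on $X_t$. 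I would flag that the delicate point in this last paragraph is checking that the extension map $e^{\iota_\varphi|\iota_{\o\varphi}}$ sends the strict positivity of $\Omega_0$ to (near-)positivity of $\widetilde\Omega_t$ in the sense required on $X_t$ — this should follow because the map is close to the identity for $t$ small and positivity of $(n-1,n-1)$-forms is an open cone condition, but it deserves an explicit lemma.
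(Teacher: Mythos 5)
Your overall architecture---reduce to the Kuranishi family, extend $\omega_0^{n-1}$ by $e^{\iota_\varphi|\iota_{\o\varphi}}$ applied to a power series $\Omega(t)$, solve the resulting obstruction system order by order with the $(n-1,n)$-th mild $\p\db$-lemma supplying exactly the step ``$\p$ of an $(n-2,n)$-form is $\p\db$-exact'', then H\"older convergence, regularity, and openness of positivity plus Michelsohn's characterization---is the same as the paper's in Subsection \ref{proof-bal}, and your closing concern about positivity of the extension is indeed settled there by the convergence argument.

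The genuine gap is in the middle: you assert that ``the same algebraic reduction as in Proposition \ref{02.9}'' yields a system of the schematic form $\db\Omega=\db(\text{quadratic in }\varphi\lc\Omega)-\p(\varphi\lc\Omega)$. That reduction does not carry over from bidegree $(1,1)$. The general obstruction system \eqref{w.18} involves the simultaneous contraction $(\1-\b{\varphi}\varphi)\Finv$, which coincides with $\1-\b{\varphi}\varphi\lc$ only on $(1,1)$-forms, and the proof of Proposition \ref{2.9} (Observation \ref{closed} together with Proposition \ref{7for}) repeatedly uses vanishings such as $\phi\lc\o{\phi}\lc\o{\phi}\lc\alpha=0$ that hold only for $\alpha\in A^{1,1}(X)$; for an $(n-1,n-1)$-form these identities fail, so the reduced system you propose to solve is not justified and its right-hand side need not have the structure required to apply the mild lemma. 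The paper instead factors the extension differently: it sets $\tilde{\Omega}=e^{-\iota_{(\1-\b{\varphi}\varphi)^{-1}\b{\varphi}}}\circ e^{-\iota_{\varphi}}\circ e^{\iota_{\varphi}|\iota_{\b{\varphi}}}(\Omega)$ as in \eqref{2.1.2}, applies Proposition \ref{main1} once, and exploits the extreme bidegree $(n-1,n-1)$ (for instance $\p$ annihilates the $(n,n-2)$-form $\iota_{(\1-\b{\varphi}\varphi)^{-1}\b{\varphi}}(\tilde{\Omega})$ for type reasons) to arrive at the explicit system \eqref{wan.12}; only then is the induction run, with the mild $\p\db$-lemma and its conjugate providing solvability and with the canonical solution $(\p\db)^*\G_{BC}$ of Lemma \ref{ddbar-eq} fixing the non-uniqueness of your auxiliary $\theta$. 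Some such replacement for the $(1,1)$-specific algebra is indispensable. Two smaller corrections: if $\theta$ is an $(n-2,n-1)$-form with $\p\db\theta=\p\gamma_{i,j}$, the correction to $\Omega_{i,j}$ must be $\p\theta$ (an $(n-1,n-1)$-form), not $\db\theta$ (an $(n-2,n)$-form); and reality is obtained at the end simply by averaging $\Omega(t)$ with its conjugate as in passing from \eqref{wan.2.1} to \eqref{wan.2}, while the H\"older and regularity steps are considerably more delicate than you indicate because the canonical solution involves the nonlocal Green's operator $\G_{BC}$.
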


Nilmanifolds with invariant abelian complex structures satisfy the $(n-1,n)$-th mild $\p\db$-lemma as shown in
Corollary \ref{abelian}.
Many examples and results, such as \cite[Proposition 4.4, Remark 4.6, Remark 4.7 and Example 4.10]{au}
and \cite[Corollary 8 and Corollary 9]{FY}, become consequences of this theorem.

It is an obvious generalization of C. Wu's result \cite[Theorem 5.13]{w} that
the balanced structure is preserved under small deformation if the
reference fiber satisfies the $\p\db$-lemma. Fu--Yau \cite[Theorem
6]{FY} show that the balanced structure is deformation open,
assuming that the $(n-1,n)$-th weak $\p\db$-lemma, introduced by them, holds on the
general fibers $X_t$ for $t \neq 0$. Recall that the \emph{$(n-1,n)$-th weak $\p\db$-lemma} on a compact complex manifold $X$  says that if for any real
$(n-1,n-1)$-form $\psi$ on $X$ such that $\db \psi$ is $\p$-exact,
there exists an $(n-2,n-1)$-form $\theta$ on $X$ such that $ \p \db
\theta = \db \psi. $ It is well known from \cite{ab} that a small deformation
of the Iwasawa manifold, which satisfies the $(2,3)$-th weak
$\p\db$-lemma but does not satisfy the mild one from Example
\ref{ex-dms-notms}, may not be balanced. Thus, the condition
\lq\lq $(n-1,n)$-th mild $\p\db$-lemma" in Theorem \ref{0blc-inv} can't be replaced
by the weak one. In \cite[Example 3.7]{UV} or Example \ref{not-fy}, Ugarte--Villacampa construct an explicit family of nilmanifolds $I_{\lambda}$ of complex dimension $3$ with invariant balanced metric on each fiber, for $ \lambda \in [0,1)$. However, the general fiber $X_t$ doesn't satisfy the $(n-1,n)$-th weak $\p\db$-lemma for $t \neq 0$. Fortunately, the mild one holds on the reference fiber and thus, Fu--Yau's theorem is not applicable to this example, while ours is applicable.

Based on Fu--Yau's theorem, Angella--Ugarte \cite[Theorem
4.9]{au} prove that if $X_0$ admits a locally conformal balanced
metric and satisfies the $(n-1,n)$-th strong $\p\db$-lemma, then
$X_t$ is balanced for $t$ small. They define the \emph{$(n-1,n)$-th strong
$\p\db$-lemma} of a complex manifold $X$ as: for any $\p$-closed $(n-1,n)$-form $\Gamma$
on $X$ of the type $\Gamma=\p\xi+\db \psi$, there exists a suitable $\theta$
on $X$ with $ \p \db \theta = \Gamma. $ Interestingly, a complex manifold satisfies the $(n-1,n)$-th strong
$\p\db$-lemma if and only if both of the mild one and the dual mild
one hold on it, and the $(n-1,n)$-th dual mild $\p\db$-lemma
guarantees that a locally conformal balanced metric is also a global
one. The \emph{$(n-1,n)$-th dual mild $\p\db$-lemma} refers to that the induced mapping
$\iota^{n-1,n}_{BC,\db}: H^{n-1,n}_{BC}(X) \rightarrow
H^{n-1,n}_{\db}(X)$ from the $({n-1,n})$-th Bott--Chern cohomology group by the identity map is injective.
From this point of view, one can understand
Angella--Ugarte's theorem more intrinsically.
Besides, there indeed exist examples that satisfy the $(n-1,n)$-th mild $\p\db$-lemma but not the strong one,
such as a nilmanifold endowed with an invariant abelian complex structure
from Corollary \ref{abelian} and \cite[Proposition 2.9]{AU}.

Similarly to the K\"ahler case, we will prove Theorem \ref{0blc-inv} in Subsection \ref{proof-bal}
by reducing the proof to Kuranishi family and constructing a power series $\Omega(t)\in A^{n-1,n-1}(X_0)$ such
that
\begin{equation}\label{0wan.2}
    \begin{cases}
      d(e^{\iota_{\varphi}|\iota_{\b{\varphi}}}(\Omega(t)))=0,\\
      \Omega(t)=\o{\Omega(t)},\\
      \Omega(0)=\omega^{n-1},
    \end{cases}
\end{equation}
where $\om$ is the original balanced metric on $X_0$.  By
Proposition \ref{main1} again and setting
\[
\tilde{\Omega}(t)=e^{-\iota_{(\1-\b{\varphi}\varphi)^{-1}\b{\varphi}}}\circ
e^{-\iota_{\varphi}}\circ
e^{\iota_{\varphi}|\iota_{\b{\varphi}}}(\Omega(t)),
\]
one reduces the obstruction system \eqref{0wan.2} of
equations to:
\begin{equation}\label{wan.15}
    \begin{cases}
      \big(\b{\p}+\p\circ \iota_{\varphi}+\b{\p}\circ \iota_{\varphi}\circ \iota_{(\1-\b{\varphi}\varphi)^{-1}\b{\varphi}}+\frac{1}{2}\p\circ\ \iota_{\varphi}\circ \iota_{\varphi}\circ \iota_{(\1-\b{\varphi}\varphi)^{-1}\b{\varphi}}\big)\tilde{\Omega}(t)=0,\\
      \big(\p+\b{\p}\circ \iota_{(\1-\b{\varphi}\varphi)^{-1}\b{\varphi}}+\p\circ \iota_{\varphi}\circ \iota_{(\1-\b{\varphi}\varphi)^{-1}\b{\varphi}}\big)\tilde{\Omega}(t)=0,\\
      \tilde{\Omega}(0)=\omega^{n-1},
    \end{cases}
 \end{equation}
and solves this system formally also by the power series method,
when the \lq\lq$(n-1,n)$-th mild $\p\db$-lemma" just comes
from the strategy of using Observation \ref{expslt} to resolve \eqref{wan.15}.
 Inspired by the  H\"older  convergence
and regularity argument for the integrable Beltrami differential
$\varphi(t)$ in the deformation theory of complex structures, we
complete that of $\tilde{\Omega}(t)$. The key point is to deal
with the Green's operator
in the explicit canonical solution of $\tilde{\Omega}(t)$ there. It is worthy noticing that this analogous proof is different in the resolution of the obstruction equation \eqref{wan.15} by use of the Bott--Chern Green's operator and thus undergoes more difficult regularity argument. It is also applicable to the $(1,1)$-case without the K\"ahler or deformation invariance of $(1,1)$-Bott--Chern numbers assumption on the reference fiber $X_0$ essentially in Kodaira--Spencer's original proof:

\begin{proposition}[]\label{}
Assume that the reference
fiber $X_0$ satisfies the $(1,2)$-th mild $\p\db$-lemma, that is, any $d$-closed $\p$-exact $(1,2)$-form on $X_0$ is also $\p\db$-exact. Then any $d$-closed
$(1,1)$-form $\Omega_0$ on $X_0$ can be extended as a $d$-closed $(1,1)$-form
$\Omega_t$ varying smoothly at $t$ on its
small differentiable deformation $X_t$.
\end{proposition}

Notice that the $(1,2)$-mild $\p\db$-lemma is different from the $\p\db$-lemma on a complex manifold.
It is easy to see that the $(1,2)$-mild $\p\db$-lemma amounts to the injectivity of the mapping
\[ \iota_{BC,\p}^{1,2}:H^{1,2}_{BC}(X) \rightarrow H^{1,2}_{\p}(X),\]
or equivalently, that of
\[ \iota_{BC,\db}^{2,1}:H^{2,1}_{BC}(X) \rightarrow H^{2,1}_{\db}(X).\]
See Example \ref{NdbMdb} for an example of a non-$\p \db$-manifold which satisfies the (1,2)-mild $\p\db$-lemma.

Section \ref{other-str} is devoted to the local stabilities of $p$-K\"ahler structures.
In Subsection \ref{d-closed}, by means of Wu's result \cite[Theorem 5.13]{w}, we
will use the cohomological method, originally from \cite{KS}, to
get:
\begin{proposition}[=Proposition \ref{d-ext}]\label{0d-ext}
Let $r$ and $s$ be non-negative integers. Assume that the reference
fiber $X_0$ satisfies the $\p\db$-lemma. Then any $d$-closed
$(r,s)$-form $\Omega_0$ and $\p_0\db_0$-closed $(r,s)$-form $\Psi_0$
on $X_0$ can be extended unobstructed to a $d$-closed $(r,s)$-form
$\Omega_t$ and a $\p_t\db_t$-closed $(r,s)$-form $\Psi_t$ on its
small differentiable deformation $X_t$, respectively.
\end{proposition}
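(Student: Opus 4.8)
The plan is to adapt the cohomological (harmonic-theoretic) method of Kodaira--Spencer \cite{KS} used in Theorem~\ref{stab-Kahler}, feeding into it the extension map of Definition~\ref{map} and, as the decisive new ingredient, the deformation openness of the $\p\db$-lemma. As in the K\"ahler case I would begin by invoking Kuranishi's completeness theorem \cite{ku} to reduce to the Kuranishi family, so that ``$t$ small'' is uniform. The whole argument then rests on one preparatory fact --- that the Hodge theory of the fibres is stable near $t_0$ --- after which each of the two extensions is produced by applying the appropriate Hodge-theoretic projection to a pure-type transplant of $\Omega_0$ (resp.\ $\Psi_0$) obtained from the extension map.

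\emph{Hodge stability.} By Wu's result \cite[Theorem~5.13]{w}, since the reference fibre $X_0$ satisfies the $\p\db$-lemma, so does $X_t$ for $t$ in a neighbourhood of $t_0$. On each such fibre the Fr\"olicher spectral sequence then degenerates at $E_1$, so $\sum_{p+q=k}h^{p,q}_{\db}(X_t)=b_k(X_t)=b_k(X_0)$; combining this with the same identity on $X_0$ and with the upper semicontinuity of $h^{p,q}_{\db}$ forces each $h^{p,q}_{\db}(X_t)$ to be independent of $t$, and the $\p\db$-lemma identifies $h^{p,q}_{BC}$ and $h^{p,q}_A$ with $h^{p,q}_{\db}$, so the Bott--Chern and Aeppli numbers are deformation invariant as well. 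By Kodaira--Spencer's theory of families of elliptic operators \cite{MK}, the Bott--Chern Laplacian $\square_{BC,t}$ and the Aeppli Laplacian $\square_{A,t}$ depend smoothly on $t$ (through $\varphi(t)$) and have kernels of constant dimension, hence their Green operators and all the orthogonal projections in the Bott--Chern and Aeppli Hodge decompositions of $X_t$ vary smoothly with $t$ and preserve smoothness of forms.

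\emph{The two extensions.} Put $\hat\Omega_t:=e^{\iota_{\varphi}|\iota_{\o{\varphi}}}(\Omega_0)$ and $\hat\Psi_t:=e^{\iota_{\varphi}|\iota_{\o{\varphi}}}(\Psi_0)$; by Definition~\ref{map} and \cite[Lemma~2.8]{RZ15} these lie in $A^{r,s}(X_t)$ (pure type $(r,s)$ with respect to $X_t$), depend smoothly on $t$, and reduce to $\Omega_0$, $\Psi_0$ at $t=t_0$ (where $\varphi(t_0)=0$), but in general are not $d_t$- resp.\ $\p_t\db_t$-closed. Recall the Bott--Chern and Aeppli Hodge decompositions on $X_t$,
\[
A^{r,s}(X_t)=\mathbb{H}^{r,s}_{BC}(X_t)\ \oplus\ \p_t\db_t\big(A^{r-1,s-1}(X_t)\big)\ \oplus\ \big(\im\p_t^{*}+\im\db_t^{*}\big),
\]
\[
A^{r,s}(X_t)=\mathbb{H}^{r,s}_{A}(X_t)\ \oplus\ \big(\im\p_t+\im\db_t\big)\ \oplus\ \im(\p_t\db_t)^{*}.
\]
For bidegree reasons a $d_t$-closed element $\mu=\p_t^{*}\gamma+\db_t^{*}\delta$ of the last summand of the first decomposition satisfies $\p_t\mu=\db_t\mu=0$, hence $\langle\mu,\mu\rangle=\langle\gamma,\p_t\mu\rangle+\langle\delta,\db_t\mu\rangle=0$; so the space of $d_t$-closed $(r,s)$-forms is exactly $\mathbb{H}^{r,s}_{BC}(X_t)\oplus\p_t\db_t(A^{r-1,s-1}(X_t))$, and similarly the space of $\p_t\db_t$-closed $(r,s)$-forms is $\mathbb{H}^{r,s}_{A}(X_t)\oplus(\im\p_t+\im\db_t)$. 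Let $P_t$, $Q_t$ denote the projections onto these two spaces along the respective last summands; they vary smoothly with $t$ by the previous step. Then $\Omega_t:=P_t(\hat\Omega_t)$ is $d_t$-closed, of type $(r,s)$ on $X_t$, smooth in $t$, with $\Omega_{t_0}=P_{t_0}(\Omega_0)=\Omega_0$ since $\Omega_0$ is $d_0$-closed; and $\Psi_t:=Q_t(\hat\Psi_t)$ is $\p_t\db_t$-closed, of type $(r,s)$ on $X_t$, smooth in $t$, with $\Psi_{t_0}=\Psi_0$ since $\Psi_0$ is $\p_0\db_0$-closed. As $\Omega_0,\Psi_0$ were arbitrary, both extensions are unobstructed.

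\emph{Main obstacle.} The one substantial point is the Hodge-stability step: both the propagation of the $\p\db$-lemma to $X_t$ and the consequent constancy of \emph{all} of $h^{p,q}_{\db},h^{p,q}_{BC},h^{p,q}_A$ are needed, because it is exactly this constancy that makes the Green operators --- hence $P_t$ and $Q_t$ --- depend smoothly on $t$; should one of these dimensions jump, the corresponding projection would be discontinuous there and the construction would fail or meet an obstruction. The rest --- the smooth dependence of $\square_{BC,t}$, $\square_{A,t}$ on $\varphi(t)$, the elementary structure of the two Hodge decompositions, and the fact that $e^{\iota_{\varphi}|\iota_{\o{\varphi}}}$ is a smooth family of pure-type forms equal to the identity at $t_0$ --- is routine.
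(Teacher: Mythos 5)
Your proposal is correct and follows essentially the same route as the paper: transplant $\Omega_0$ and $\Psi_0$ to $X_t$ by $e^{\iota_{\varphi}|\iota_{\o{\varphi}}}$, then apply the projections onto the $d$-closed part $\mathbb{H}^{r,s}_{BC}\oplus\im(\p_t\db_t)$ and the $\p_t\db_t$-closed part $\mathbb{H}^{r,s}_{A}\oplus(\im\p_t+\im\db_t)$, whose smooth dependence on $t$ comes from the deformation invariance of the Bott--Chern and Aeppli harmonic space dimensions via the openness of the $\p\db$-lemma (the paper cites \cite[Theorem 5.12]{w} for this, and writes your $P_t$, $Q_t$ explicitly as $\p_t\db_t\db_t^*\p_t^*\G_t+F_t$ and its Aeppli analogue). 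Your expanded justification of the constancy of $h^{p,q}_{BC}$ and $h^{p,q}_{A}$ via Fr\"olicher degeneration is a correct filling-in of a step the paper leaves to the citation.
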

We can also prove this proposition in the $(n-1,n-1)$-case by another way inspired by the
results of \cite{au,FY,w}. It is impossible to prove Theorem \ref{0blc-inv}
by this method since the proof would rely on the deformation
openness of $(n-1,n)$-th mild $\p\db$-lemma, which contradicts with
Ugarte--Villacampa's Example \ref{not-fy}.

Finally, in Subsection \ref{p-kahler}, we study some basic
properties of $p$-K\"ahler structures, a possibly more intrinsic
notion for the local stabilities of complex structures. Based on
un-obstruction of extension for transverse forms and (the proof of) Proposition \ref{0d-ext}, we use two different approaches to obtain:
\begin{theorem}[= Theorem \ref{c-p-kahler}+ Remark \ref{p-k-rem}]
For any positive integer $p\leq n-1$, any
small differentiable deformation $X_t$ of a compact
$p$-K\"ahler manifold $X_0$ satisfying the deformation invariance of $(p,p)$-Bott--Chern numbers is still
$p$-K\"ahlerian.
\end{theorem}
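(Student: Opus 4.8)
The plan is to exploit that being $p$-K\"ahlerian is an \emph{open} condition at the level of forms: a $p$-K\"ahler structure on a complex manifold is a real, $d$-closed, transverse (strictly positive) $(p,p)$-form, and the transversality survives $C^{0}$-small perturbations of both the form and the ambient complex structure. Hence, writing $\Omega$ for the given $p$-K\"ahler form on $X_{0}$ and keeping the family $\{X_{t}\}$ along which the $(p,p)$-Bott-Chern numbers stay constant, it suffices to produce a family $\{\Omega_{t}\}$ of real, $d$-closed $(p,p)$-forms on $X_{t}$, smooth in $t$, with $\Omega_{t}|_{t=0}=\Omega$; for $t$ in a small neighbourhood of $0$ such an $\Omega_{t}$ is automatically transverse on $X_{t}$, so $X_{t}$ is $p$-K\"ahlerian. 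Note that, unlike the K\"ahler and balanced cases, no reduction to the Kuranishi family is strictly needed here, since the hypothesis is already attached to the given deformation.

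For the first approach I would obtain $\{\Omega_{t}\}$ cohomologically, in the spirit of the proof of Proposition \ref{0d-ext}. Since $\dim_{\mathbb{C}}\mathbb{H}^{p,p}_{BC}(X_{t})=h^{p,p}_{BC}(X_{t})$ on each compact fiber, the hypothesis says that the fourth-order Bott-Chern Laplacians $\Delta_{BC,t}$ have kernels of constant dimension; by the theory of elliptic operators in families (as already used for the deformation of complex structures in Subsections \ref{convergence} and \ref{regularity}) the corresponding Green and harmonic-projection operators vary smoothly in $t$, so the spaces $\mathbb{H}^{p,p}_{BC}(X_{t})$ of Bott-Chern harmonic $(p,p)$-forms assemble into a smooth, conjugation-invariant vector bundle over the base. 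A local section of this bundle through the harmonic representative of $[\Omega]_{BC}$ gives a smooth family of $d$-closed $(p,p)$-forms $\eta_{t}$ on $X_{t}$, which may be taken real, with $[\eta_{0}]_{BC}=[\Omega]_{BC}$. Writing $\eta_{0}-\Omega=\p_{0}\db_{0}\gamma_{0}$ for some (purely imaginary) $(p-1,p-1)$-form $\gamma_{0}$ on $X_{0}$, extending $\gamma_{0}$ arbitrarily to a smooth family $\gamma_{t}$ of purely imaginary $(p-1,p-1)$-forms on $X_{t}$, and setting
\[
\Omega_{t}:=\eta_{t}-\p_{t}\db_{t}\gamma_{t},
\]
one obtains a smooth family of real, $d$-closed $(p,p)$-forms with $\Omega_{0}=\Omega$, and the proof finishes by the openness of transversality.

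A second, more computational proof runs parallel to the treatments of the K\"ahler form (\eqref{0w.18}--\eqref{0reduced-equsys}) and of $\Omega(t)$ in the balanced case \eqref{0wan.2}: one seeks $\Omega_{t}$ in the form $e^{\iota_{\varphi}|\iota_{\b{\varphi}}}(\Omega(t))$ for a power series $\Omega(t)=\sum_{i,j}\Omega_{i,j}t^{i}\b{t}^{j}$ of $(p,p)$-forms on $X_{0}$ with $\Omega_{0,0}=\Omega$, uses Proposition \ref{main1} to turn $d\big(e^{\iota_{\varphi}|\iota_{\b{\varphi}}}(\Omega(t))\big)=0$ into an obstruction system on $X_{0}$, and solves it order by order; here it is precisely the deformation invariance of the $(p,p)$-Bott-Chern numbers that forces the obstruction class to vanish at each step, playing the role that the $\p\db$-lemma plays in Proposition \ref{0d-ext} and the mild $\p\db$-lemma plays in Theorem \ref{0blc-inv}. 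H\"older convergence and elliptic regularity of the resulting canonical solution are then handled as in Subsections \ref{convergence}, \ref{regularity} and \ref{proof-bal}.

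The main obstacle, in either approach, is the passage from the purely numerical hypothesis (constancy of $h^{p,p}_{BC}$) to a \emph{geometrically faithful} extension: $\Omega_{t}$ must be chosen so that its central value is \emph{exactly} the prescribed transverse form $\Omega$, not merely some other representative of $[\Omega]_{BC}$, because adding a $\p\db$-exact term can destroy positivity; the device of extending the class first and then subtracting the extended $\p\db$-correction is what resolves this, while keeping everything real and compatible with the varying complex structure $J_{t}$ is the delicate bookkeeping that this requires.
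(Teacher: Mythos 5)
Your first approach is correct and is essentially the paper's own proof: the constancy of $h^{p,p}_{BC}$ makes the Bott--Chern harmonic projector and Green's operator vary smoothly in $t$ (Remark \ref{d-ext-rem}), yielding a smooth family of real $d$-closed $(p,p)$-forms with central value exactly $\Omega$ (the paper packages your ``harmonic family plus $\p_t\db_t$-correction'' into the single formula $(\p_t\db_t\db_t^*\p_t^*\G_t+F_t)\big(e^{\iota_{\varphi}|\iota_{\overline{\varphi}}}(\Omega)\big)$), after which openness of transversality --- proved in the paper via compactness of the normalized strongly positive currents, resp.\ of the Grassmannian bundle, together with Lemma \ref{interval} --- finishes the argument. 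Two small caveats: ``transverse'' is not the same as ``strictly positive'' in the paper's terminology (transverse forms may have negative eigenvalues), and your second, power-series route is only a sketch that the paper itself does not carry out (its concluding remark leaves that method as a possible refinement), so it should not be presented as an established alternative proof.
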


\noindent \textbf{Notation} Without specially mentioned, the
hermitian metrics will be identified with their fundamental forms.
We only consider the
small differentiable deformations in this paper, i.e.,
the parameter $t$ is always assumed to be small. All sub-indices in
the power series, such as $i,j,\cdots$, are set no less than zero,
while Einstein sum convention is adopted in the local settings and
calculations. In many places, we fix a K\"ahler metric or a balanced
one on the reference fiber of the differentiable family to induce the
dual operators and the associated Hodge decomposition with respect
to $\db$ and $\p$ on it. A complex differential form, linear
operator or current is called \emph{real} if it is invariant under conjugation.

\textbf{Acknowledgement}: We would like to express our
gratitude to Professors Kefeng Liu, Huitao Feng, Fangyang
Zheng for their constant encouragement and help, Lucia Alessandrini, Xiaojun Huang, Dan Popovici, Luis Ugarte
for their interest and useful comments. The first
author would like to thank Dr. Xiaoshan Li, Jie Tu, Yi Wang for several
useful discussions and Wanke Yin for his lectures on positive currents at the seminar.
This work also benefits from NCTS of
Taiwan University, The Institute of Mathematical Sciences at the
Chinese University of Hong Kong, the Mathematics Department of UCLA
and the Institut de Mathematiques de Toulouse.

\section{Stability of K\"{a}hler structures}\label{kahler-section}
We introduce some basics on deformation theory of complex structures to be used throughout
this paper. For holomorphic family of compact complex manifolds, we adopt the definition \cite[Definition 2.8]{k}; while for differentiable one, we follow:
\begin{definition}[{\cite[Definition 4.1]{k}}]\label{} Let $\mc{X}$ be a differentiable manifold, $B$ a domain of $\mathbb{R}^k$ and $\pi$ a smooth map of $\mc{X}$ onto $B$.
By a \emph{differentiable family of $n$-dimensional compact complex manifolds} we mean the triple $\pi:\mc{X}\to B$ satisfying the following conditions:
\begin{enumerate}[$(i)$]
    \item \label{}
The rank of the Jacobian matrix of $\pi$ is equal to $k$ at every point of $\mc{X}$;
    \item \label{}
For each point $t\in B$, $\pi^{-1}(t)$ is a compact connected subset of $\mc{X}$;
    \item \label{}
$\pi^{-1}(t)$ is the underlying differentiable manifold of the $n$-dimensional compact complex manifold $X_t$ associated to each $t\in B$;
     \item \label{}
There is a locally finite open covering $\{\mathcal{U}_j\ |\ j=1,2,\cdots\}$ of $\mc{X}$ and complex-valued smooth functions $\zeta_j^1(p),\cdots,\zeta_j^n(p)$, defined on $\mathcal{U}_j$
such that for each $t$, $$\{p\rightarrow (\zeta_j^1(p),\cdots,\zeta_j^n(p))\ |\ \mathcal{U}_j\cap \pi^{-1}(t)\neq \emptyset\}$$ form a system of local holomorphic coordinates of $X_t$.
\end{enumerate}
\end{definition}

Let us sketch Kodaira--Spencer's proof of local stability theorem
\cite{KS}. Let $F_t$ be the orthogonal projection to the kernel
$\mathds{F}_t$ of the \emph{first $4$-th order Kodaira--Spencer
operator} (also often called \emph{Bott--Chern Laplacian})
\begin{equation}\label{bc-Lap}
\square_{{BC},t}=\p_t\db_t\db_t^*\p_t^*+\db_t^*\p_t^*\p_t\db_t+\db_t^*\p_t\p_t^*\db_t+\p_t^*\db_t\db_t^*\p_t+\db_t^*\db_t+\p_t^*\p_t
\end{equation}
and $\G_t$ the corresponding Green's operator with respect to $\alpha_t$ on $X_t$.
Here
$$\alpha_t=\sqrt{-1}g_{i\bar{j}}(\zeta,t)d \zeta^i \w d \overline{\zeta}^j$$
is a hermitian metric on $X_t$ depending differentiably on $t$ and $\alpha_0$ is a K\"ahler metric on $X_0$. By a cohomological
argument with the upper semi-continuity theorem, they prove that
$F_t$ and $\G_t$ depend differentiably on $t$. Then they can
construct the desired K\"ahler metric on $X_t$ as
$$\widetilde{\alpha_t}=\frac{1}{2}(F_t\alpha_t+\overline{F_t\alpha_t}).$$
See also \cite[Subsection 9.3]{V}.

Now let us describe our basic philosophy to reprove the
Kodaira--Spencer's local stability of K\"{a}hler structures.
By (the proof of) Kuranishi's completeness theorem \cite{ku}, for any compact complex manifold $X_0$, there exists a complete holomorphic family
$\varpi:\mc{K}\to T$ of complex manifolds at the reference point $0\in T$ in the sense that for any differentiable family $\pi:\mc{X}\to B$ with $\pi^{-1}(s_0)=\varpi^{-1}(0)=X_0$, there is a sufficiently small neighborhood $E\subseteq B$ of $s_0$, and smooth maps $\Phi: \mathcal {X}_E\rightarrow \mathcal {K}$,  $\tau: E\rightarrow T$ with $\tau(s_0)=0$ such that the diagram commutes
$$\xymatrix{\mathcal {X}_E \ar[r]^{\Phi}\ar[d]_\pi& \mathcal {K}\ar[d]^\varpi\\
(E,s_0)\ar[r]^{\tau}  & (T,0),}$$
$\Phi$ maps $\pi^{-1}(s)$ biholomorphically onto $\varpi^{-1}(\tau(s))$ for each $s\in E$, and $$\Phi: \pi^{-1}(s_0)=X_0\rightarrow \varpi^{-1}(0)=X_0$$ is the identity map.
This family is called \emph{Kuranishi family} and constructed as follows. Let $\{\eta_\nu\}_{\nu=1}^m$ be a base for $\mathbb{H}^{0,1}(X_0,T^{1,0}_{X_0})$, where some suitable hermitian metric is fixed on $X_0$ and $m\geq 1$; Otherwise the complex manifold $X_0$ would be \emph{rigid}, i.e., for any differentiable family $\kappa:\mc{M}\to P$ with $s_0\in P$ and $\kappa^{-1}(s_0)=X_0$, there is a neighborhood $V \subseteq P$ of $s_0$ such that $\kappa:\kappa^{-1}(V)\to V$ is trivial. Then one can construct a holomorphic family
\begin{equation}\label{phi-ps-pp}\varphi(t) = \sum_{|I|=1}^{\infty}\varphi_{I}t^I:=\sum_{j=1}^{\infty}\varphi_j(t),\ I=(i_1,\cdots,i_m),\ t=(t_1,\cdots,t_m)\in \mathbb{C}^m,\end{equation} {for $|t|< \rho$ a small positive constant,} of Beltrami differentials as follows:
\begin{equation}\label{phi-ps-0}
 \varphi_1(t)=\sum_{\nu=1}^{m}t_\nu\eta_\nu
\end{equation}
and for $|I|\geq 2$,
\begin{equation}\label{phi-ps}
  \varphi_I=\frac{1}{2}\db^*\G\sum_{J+L=I}[\varphi_J,\varphi_{L}].
\end{equation}
It is obvious that $\varphi(t)$ satisfies the equation
$$\varphi(t)=\varphi_1+\frac{1}{2}\db^*\G[\varphi(t),\varphi(t)].$$
Let
$$T=\{t\ |\ \mathbb{H}[\varphi(t),\varphi(t)]=0 \}.$$
Thus, for each $t\in T$, $\varphi(t)$ satisfies
\begin{equation}\label{int}
\b{\p}\varphi(t)=\frac{1}{2}[\varphi(t),\varphi(t)],
\end{equation}
and determines a complex structure $X_t$ on the underlying differentiable manifold of $X_0$. More importantly, $\varphi(t)$ represents the complete holomorphic family $\varpi:\mc{K}\to T$ of complex manifolds. Roughly speaking, Kuranishi family $\varpi:\mc{K}\to T$ contains all sufficiently small differentiable deformations of $X_0$.

By means of these, one can reduce the local stability Theorem \ref{stab-Kahler} to the Kuranishi family by shrinking $E$ if necessary, that is, it suffices to construct a K\"{a}hler metric on each $X_t$. From now on, one uses $\varphi(t)$ and  $\varphi$ interchangeably to denote this holomorphic family of integrable Beltrami differentials, and assumes $m=1$ for simplicity.

Using this reduction, we should
construct a natural K\"{a}hler extension $\tilde{\omega}_t$ of a given K\"{a}hler metric
$\omega_0$ on $X_0$, such that $\tilde{\omega}_t$ is a K\"{a}hler  metric on
the general fiber $\varpi^{-1}(t)=X_t$. More precisely, the extension
is given by
$$e^{\iota_{\varphi}|\iota_{\o{\varphi}}}:A^{1,1}(X_0)\to A^{1,1}(X_t),\quad \omega_0\to \tilde{\omega}_t:=e^{\iota_{\varphi}|\iota_{\o{\varphi}}}(\omega(t)),$$
where $\omega(t)$ is a family of smooth $(1,1)$-forms on $X_0$,
depending smoothly on $t$, and $\omega(0)=\omega_0$. As we need to
construct a K\"{a}hler extension, the following conditions appear:
\begin{enumerate}
  \item $d\left(e^{\iota_{\varphi}|\iota_{\o{\varphi}}}(\omega(t))\right)=0,$
  \item $\omega(t)=\o{\omega(t)}$.
\end{enumerate}
As $t$ is sufficiently small, $\omega(t)$ is positive by the
convergence argument and thus
$e^{\iota_{\varphi}|\iota_{\o{\varphi}}}(\omega(t))$ is a K\"{a}hler
form on $X_t$. Here we will use an elementary power series method to
complete the construction.

\subsection{Obstruction equations}
We will discuss the obstruction equation to extend the
$d$-closed pure-type complex differential forms on a complex manifold to the
ones on its
small differentiable deformation in this subsection. The argument in this and next subsections is applicable to a general differentiable family of complex manifolds.

For a general $\alpha\in A^{p,q}(X_0)$, by Proposition \ref{main1} and the
integrability condition \eqref{int}, one has
\begin{align}\label{2.5.1}
  \begin{split}
    d(e^{\iota_{\varphi}|\iota_{\b{\varphi}}}(\alpha))&=d\circ e^{\iota_{\varphi}}\circ e^{-\iota_{\varphi}}\circ e^{\iota_{\varphi}|\iota_{\b{\varphi}}}(\alpha)\\
    &=e^{\iota_{\varphi}}\circ\left([\p,\iota_{\varphi}]+\b{\p}+\p\right)\circ e^{-\iota_{\varphi}}\circ e^{\iota_{\varphi}|\iota_{\b{\varphi}}}(\alpha)\\
    &=e^{\iota_{\varphi}|\iota_{\b{\varphi}}}\circ\left(e^{-\iota_{\varphi}|-\iota_{\b{\varphi}}}\circ e^{\iota_{\varphi}}\circ\left([\p,\iota_{\varphi}]+\b{\p}+\p\right)
    \circ e^{-\iota_{\varphi}}\circ
    e^{\iota_{\varphi}|\iota_{\b{\varphi}}}(\alpha)\right).
  \end{split}
\end{align}
Here
$$e^{-\iota_{\varphi(t)}|-\iota_{\overline{\varphi(t)}}}: A^{p,q}(X_t)\>
A^{p,q}(X_0)$$
 is the
 inverse map of
$e^{\iota_{\varphi(t)}|\iota_{\overline{\varphi(t)}}}$, defined by
\begin{equation}\label{ephi-inv}
 \begin{aligned}
   & e^{-\iota_{\varphi(t)}|-\iota_{\overline{\varphi(t)}}}(s)\\
 =&s_{i_1\cdots i_p \bar{j}_1 \cdots \bar{j}_q}(\zeta)
 \bigg(e^{-\iota_{\varphi(t)}}\Big( \big(dz^{i_1}+\varphi(t)\lc
dz^{i_1}\big) \wedge \cdots \wedge
\big(dz^{i_p}+\varphi(t) \lc dz^{i_p}\big) \Big) \wedge\\
&\qquad\qquad\qquad e^{-\iota_{\overline{\varphi(t)}}} \Big(
\big(d\overline{z}^{j_1}+\overline{\varphi(t)}\lc
d\overline{z}^{j_1}\big) \wedge \cdots \wedge \big(
d\overline{z}^{j_q}+\overline{\varphi(t)}\lc d\overline{z}^{j_q}
\big)\Big)\bigg),
\end{aligned}
\end{equation}
where $s\in A^{p,q}(X_t)$ is locally written as
\begin{align*}
s=s_{i_1\cdots i_p \bar{j}_1 \cdots \bar{j}_q}(\zeta)
&(dz^{i_1}+\varphi(t)\lc dz^{i_1}) \wedge \cdots\wedge
(dz^{i_p}+\varphi(t)\lc dz^{i_p})\\
\wedge &(d\overline{z}^{j_1}+\overline{\varphi(t)}\lc
d\overline{z}^{j_1})\wedge\cdots\wedge
(d\overline{z}^{j_q}+\overline{\varphi(t)}\lc d\overline{z}^{j_q}),
\end{align*}
 and the operators $e^{-\iota_{\varphi(t)}}$,
$e^{-\iota_{\overline{\varphi(t)}}}$ also follow the convention
(\ref{0e-convention}) as in the proof of \cite[Lemma $2.8$]{RZ15}.
We introduce one more new notation $\Finv$ to denote the
\emph{simultaneous contraction} on each component of a complex
differential form. For example,
$(\1-\b{\varphi}\varphi+\b{\varphi})\Finv\alpha$ means that the
operator $(\1-\b{\varphi}\varphi+\b{\varphi})$ acts on $\alpha$
simultaneously as:
\begin{align}\label{2.8}
\begin{split}
&(\1-\b{\varphi}\varphi+\b{\varphi})\Finv(f_{i_1\cdots
i_p\o{j_1}\cdots\o{j_q}}dz^{i_1}\wedge\cdots \wedge dz^{i_p}\wedge
d\b{z}^{j_1}\wedge\cdots \wedge d\b{z}^{j_q})
\\
=&f_{i_1\cdots
i_p\o{j_1}\cdots\o{j_q}}(\1-\b{\varphi}\varphi+\b{\varphi})\l
dz^{i_1} \wedge\cdots\wedge(\1-\b{\varphi}\varphi +\b{\varphi})\l
dz^{i_p}
\\&\qquad\quad\quad\wedge(\1-\b{\varphi}\varphi+\b{\varphi})\l
d\b{z}^{j_1}\wedge\cdots\wedge(\1-\b{\varphi}\varphi+\b{\varphi})\l
d\b{z}^{j_q},
\end{split}
\end{align}
if $\alpha$ is locally expressed by:
$$\alpha=f_{i_1\cdots i_p\o{j_1}\cdots\o{j_q}}dz^{i_1}\wedge \cdots\wedge dz^{i_p}\wedge d\b{z}^{j_1}\wedge \cdots\wedge d\b{z}^{j_q}.$$
This new simultaneous contraction is well-defined since $\varphi(t)$
is a  global $(1,0)$-vector valued $(0, 1)$-form on $X_0$ (See
\cite[Pages $150-151$]{MK}) as reasoned in \cite[Proof of Lemma
2.8]{RZ15}. Notice that
$(\1-\b{\varphi}\varphi+\b{\varphi})\Finv\alpha\neq
\1\Finv\alpha-\b{\varphi}\varphi\Finv\alpha+\b{\varphi}\Finv\alpha$
in general. Using this notation, one can rewrite the extension map
$e^{\iota_{\varphi}|\iota_{\b{\varphi}}}$ in Definition \ref{map}:
$$e^{\iota_{\varphi}|\iota_{\b{\varphi}}}=(\1+\varphi+\b{\varphi})\Finv.$$
Then one has:
\begin{lemma}For any $\alpha\in A^{p,q}(X_0)$,
\begin{align}\label{2.3}
  e^{-\iota_{\varphi}}\circ e^{\iota_{\varphi}|\iota_{\b{\varphi}}}(\alpha)=(\1-\b{\varphi}\varphi+\b{\varphi})\Finv\alpha.
\end{align}
\end{lemma}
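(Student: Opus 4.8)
The plan is to verify this as a purely algebraic identity of operators on complex differential forms, not using the integrability of $\varphi$ at all. The key structural fact is that $\iota_{\varphi}$ is an \emph{even} derivation of the exterior algebra $A^{*,*}(X_0)$: locally $\varphi=\varphi^{i}\ts\p/\p z^{i}$ with $\varphi^{i}\in A^{0,1}(X_0)$, so $\iota_{\varphi}=\varphi^{i}\w\iota_{\p/\p z^{i}}$ has total degree $0$; consequently $e^{-\iota_{\varphi}}=\sum_{k\ge 0}\tfrac{(-1)^{k}}{k!}\iota_{\varphi}^{k}$ is an algebra automorphism of $A^{*,*}(X_0)$ fixing functions and with $e^{-\iota_{\varphi}}\circ e^{\iota_{\varphi}}=\1$. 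Writing $\alpha=s_{i_1\cdots i_p\o{j_1}\cdots\o{j_q}}\,dz^{i_1}\w\cdots\w dz^{i_p}\w d\b{z}^{j_1}\w\cdots\w d\b{z}^{j_q}$ and using Definition \ref{map} together with the fact that $e^{\iota_{\varphi}}$ and $e^{\iota_{\b{\varphi}}}$ are algebra homomorphisms, I would first rewrite
\[
e^{\iota_{\varphi}|\iota_{\b{\varphi}}}(\alpha)=s_{i_1\cdots i_p\o{j_1}\cdots\o{j_q}}(z(\zeta))\;e^{\iota_{\varphi}}\lk dz^{i_1}\w\cdots\w dz^{i_p}\rk\w e^{\iota_{\b{\varphi}}}\lk d\b{z}^{j_1}\w\cdots\w d\b{z}^{j_q}\rk,
\]
and then apply $e^{-\iota_{\varphi}}$, pushing it through the wedge product and through the coefficient, which is the same function on the underlying manifold whether written in the $z$- or the $\zeta$-coordinates.

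Since $e^{-\iota_{\varphi}}\circ e^{\iota_{\varphi}}=\1$, the holomorphic factor returns $dz^{i_1}\w\cdots\w dz^{i_p}$. For the antiholomorphic factor, using once more that $e^{-\iota_{\varphi}}$ is an algebra homomorphism, it suffices to compute on each $d\b{z}^{j}$ separately: because $\iota_{\b{\varphi}}^{2}d\b{z}^{j}=0$ one has $e^{\iota_{\b{\varphi}}}(d\b{z}^{j})=d\b{z}^{j}+\b{\varphi}\l d\b{z}^{j}$, and because $\iota_{\varphi}$ annihilates $(0,1)$-forms while $\iota_{\varphi}^{2}$ annihilates $(1,0)$-forms,
\[
e^{-\iota_{\varphi}}\lk d\b{z}^{j}+\b{\varphi}\l d\b{z}^{j}\rk=d\b{z}^{j}+\b{\varphi}\l d\b{z}^{j}-\varphi\l\lk\b{\varphi}\l d\b{z}^{j}\rk .
\]

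It then remains to identify the last term. Expanding $\b{\varphi}\l d\b{z}^{j}$ in local coordinates and using that $\iota_{\varphi}$ is a derivation that kills purely antiholomorphic forms, one gets $\varphi\l(\b{\varphi}\l d\b{z}^{j})=(\varphi\l\b{\varphi})\l d\b{z}^{j}=\b{\varphi}\varphi\l d\b{z}^{j}$ by the convention $\b{\varphi}\varphi=\varphi\l\b{\varphi}$; hence $e^{-\iota_{\varphi}}\lk e^{\iota_{\b{\varphi}}}(d\b{z}^{j})\rk=(\1-\b{\varphi}\varphi+\b{\varphi})\l d\b{z}^{j}$. On the other hand $(\1-\b{\varphi}\varphi+\b{\varphi})\l dz^{i}=dz^{i}$, since $\b{\varphi}\in A^{1,0}(X_0,T^{0,1}_{X_0})$ and $\b{\varphi}\varphi\in A^{0,1}(X_0,T^{0,1}_{X_0})$ both contract a $(1,0)$-form to $0$ while $\1\l dz^{i}=dz^{i}$. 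Combining the two, $e^{-\iota_{\varphi}}\circ e^{\iota_{\varphi}|\iota_{\b{\varphi}}}(\alpha)$ is precisely the form obtained from $\alpha$ by letting $\1-\b{\varphi}\varphi+\b{\varphi}$ act simultaneously on each of its $1$-form factors, which is $(\1-\b{\varphi}\varphi+\b{\varphi})\Finv\alpha$ in the sense of \eqref{2.8}. The argument is essentially mechanical; the only parts that require genuine care are the nilpotency bookkeeping that makes $e^{-\iota_{\varphi}}$ truncate at the right power on each bidegree, and the contraction identity $\varphi\l(\b{\varphi}\l\,\cdot\,)=(\varphi\l\b{\varphi})\l\,\cdot\,$ checked with the precise sign conventions of the paper, after which the claimed equality is immediate.
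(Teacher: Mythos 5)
Your proof is correct and essentially the same as the paper's: both reduce the identity to a factor-by-factor coordinate computation using the multiplicativity of $e^{\pm\iota_{\varphi}}$ and the one-form identity $\varphi\lc(\b{\varphi}\lc d\b{z}^{j})=(\b{\varphi}\varphi)\lc d\b{z}^{j}$. The only difference is direction — the paper applies $e^{\iota_{\varphi}}$ to the right-hand side and matches it with $e^{\iota_{\varphi}|\iota_{\b{\varphi}}}(\alpha)$ via $(\1+\varphi)\lc(\1-\b{\varphi}\varphi+\b{\varphi})\lc d\b{z}^{j}=(\1+\b{\varphi})\lc d\b{z}^{j}$, whereas you apply $e^{-\iota_{\varphi}}$ directly to the left-hand side — which is a trivially equivalent reorganization of the same computation.
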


\begin{proof}
Following the above notations and the definition of
$e^{\iota_{\varphi}|\iota_{\b{\varphi}}}$, we have
\begin{align}\label{2.1}
\begin{split}
  e^{\iota_{\varphi}|\iota_{\b{\varphi}}}(\alpha)&=f_{i_1\cdots i_p\o{j_1}\cdots\o{j_q}}(\1+\varphi)\l dz^{i_1}\wedge \cdots\wedge (\1+\varphi)\l dz^{i_p}\\
  &\qquad\qquad\quad\wedge (\1+\b{\varphi})\l d\b{z}^{j_1}\wedge \cdots\wedge (\1+\b{\varphi})\l d\b{z}^{j_q}.
  \end{split}
\end{align}

On the other hand,
\begin{align}\label{2.2}
  \begin{split}
    &e^{\iota_{\varphi}}\circ(\1-\b{\varphi}\varphi+\b{\varphi})\Finv\alpha\\
    =&e^{\iota_{\varphi}}\Big(f_{i_1\cdots i_p\o{j_1}\cdots\o{j_q}}(\1-\b{\varphi}\varphi+\b{\varphi})\l dz^{i_1}\wedge \cdots\wedge (\1-\b{\varphi}\varphi+\b{\varphi})\l dz^{i_p}\\
  &\qquad\qquad\qquad\wedge (\1-\b{\varphi}\varphi+\b{\varphi})\l d\b{z}^{j_1}\wedge \cdots\wedge (\1-\b{\varphi}\varphi+\b{\varphi})\l d\b{z}^{j_q}\Big)\\
  =&f_{i_1\cdots i_p\o{j_1}\cdots\o{j_q}}(\1+\varphi)\l(\1-\b{\varphi}\varphi+\b{\varphi})\l dz^{i_1}\wedge \cdots\wedge (\1+\varphi)\l(\1-\b{\varphi}\varphi+\b{\varphi})\l dz^{i_p}\\
  &\qquad\quad\quad\wedge (\1+\varphi)\l(\1-\b{\varphi}\varphi+\b{\varphi})\l d\b{z}^{j_1}\wedge \cdots\wedge (\1+\varphi)\l(\1-\b{\varphi}\varphi+\b{\varphi})\l d\b{z}^{j_q}\\
  =&f_{i_1\cdots i_p\o{j_1}\cdots\o{j_q}}(\1+\varphi)\l dz^{i_1}\wedge \cdots\wedge (\1+\varphi)\l dz^{i_p}\wedge (\1+\b{\varphi})\l d\b{z}^{j_1}\wedge \cdots\wedge (\1+\b{\varphi})\l d\b{z}^{j_q},
  \end{split}
\end{align}
where the last equality holds by
$$(\1+\varphi)\l(\1-\b{\varphi}\varphi+\b{\varphi})\l dz^{i_k}=(\1+\varphi)\l dz^{i_k}$$
and
$$(\1+\varphi)\l(\1-\b{\varphi}\varphi+\b{\varphi})\l d\b{z}^{j_k}=(\1-\b{\varphi}\varphi+\b{\varphi})\l d\b{z}^{j_k}+(\b{\varphi}\varphi)\l d\b{z}^{j_k}=(\1+\b{\varphi})\l d\b{z}^{j_k}.$$
Therefore, (\ref{2.3}) is proved by (\ref{2.1}) and (\ref{2.2}).
\end{proof}

Similarly:
\begin{lemma} For any $\alpha\in A^{p,q}(X_0)$,
 \begin{align}\label{2.4.1}
   e^{-\iota_{\varphi}|-\iota_{\b{\varphi}}}\circ e^{\iota_{\varphi}}(\alpha)=\left((\1-\b{\varphi}\varphi)^{-1}-(\1-\b{\varphi}\varphi)^{-1}\b{\varphi}\right)\Finv\alpha,
 \end{align}
 where $\left((\1-\b{\varphi}\varphi)^{-1}-(\1-\b{\varphi}\varphi)^{-1}\b{\varphi}\right)$ acts on $\alpha$ just as
 (\ref{2.8}).
\end{lemma}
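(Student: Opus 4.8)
The plan is to establish \eqref{2.4.1} in exactly the manner \eqref{2.3} was obtained, by computing the factorwise action of these contraction operators. Instead of unwinding the inverse map $e^{-\iota_{\varphi}|-\iota_{\b{\varphi}}}$ through its definition \eqref{ephi-inv}, I would use that it is the inverse of the extension map $e^{\iota_{\varphi}|\iota_{\b{\varphi}}}$ and apply the latter to both sides of \eqref{2.4.1}; writing $O:=(\1-\b{\varphi}\varphi)^{-1}-(\1-\b{\varphi}\varphi)^{-1}\b{\varphi}$, the claim becomes the equivalent assertion
\[
e^{\iota_{\varphi}}(\alpha)=e^{\iota_{\varphi}|\iota_{\b{\varphi}}}\big(O\Finv\alpha\big),
\]
in complete analogy with the reduction of \eqref{2.3} to the comparison of \eqref{2.1} and \eqref{2.2}. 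Expanding $\alpha=f_{i_1\cdots\o{j_q}}\,dz^{i_1}\wedge\cdots\wedge dz^{i_p}\wedge d\b{z}^{j_1}\wedge\cdots\wedge d\b{z}^{j_q}$ and recalling that $e^{\iota_{\varphi}}$ acts factorwise as $(\1+\varphi)\lc$, that $e^{\iota_{\varphi}|\iota_{\b{\varphi}}}=(\1+\varphi+\b{\varphi})\Finv$, and that $O$ acts simultaneously on the one-form factors as in \eqref{2.8}, the identity reduces to two one-form computations, one on each $dz^{i}$ and one on each $d\b{z}^{j}$. As an independent check I would also note that the left-hand composition of \eqref{2.4.1} is literally the inverse of that of \eqref{2.3}, so that, granting multiplicativity of simultaneous contraction $(P\Finv)\circ(Q\Finv)=(PQ)\Finv$ on one-forms, it suffices to invert the one-form operator $\1-\b{\varphi}\varphi+\b{\varphi}$ furnished by \eqref{2.3} and to identify its inverse with $O$.

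For the factorwise verification, on a holomorphic factor one has $O\lc dz^{i}=dz^{i}$, because both $\b{\varphi}\varphi$ and $\b{\varphi}$ kill $(1,0)$-forms, whence $e^{\iota_{\varphi}|\iota_{\b{\varphi}}}$ returns $(\1+\varphi)\lc dz^{i}=e^{\iota_{\varphi}}(dz^{i})$. On an antiholomorphic factor I would set $w^{j}:=(\1-\b{\varphi}\varphi)^{-1}\lc d\b{z}^{j}=\sum_{k\geq0}(\b{\varphi}\varphi)^{k}\lc d\b{z}^{j}$, a $(0,1)$-form defined for $t$ small (or as a formal series in $t$), so that $O\lc d\b{z}^{j}=w^{j}-\b{\varphi}\lc w^{j}$. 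Applying $(\1+\varphi+\b{\varphi})\Finv$ to this factor, its $(0,1)$-part $w^{j}$ yields $w^{j}+\b{\varphi}\lc w^{j}$ and its $(1,0)$-part $-\b{\varphi}\lc w^{j}$ yields $-\b{\varphi}\lc w^{j}-\varphi\lc\b{\varphi}\lc w^{j}$; the two $\b{\varphi}\lc w^{j}$ terms cancel and there remains
\[
w^{j}-\varphi\lc\b{\varphi}\lc w^{j}=w^{j}-(\b{\varphi}\varphi)\lc w^{j}=(\1-\b{\varphi}\varphi)\lc w^{j}=d\b{z}^{j},
\]
the last equality being the defining property of $w^{j}$. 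Since $e^{\iota_{\varphi}}(d\b{z}^{j})=d\b{z}^{j}$, both sides agree factor by factor, and reassembling the wedge product proves \eqref{2.4.1}.

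The step I expect to be the crux is fixing the meaning of the composite $(\1-\b{\varphi}\varphi)^{-1}\b{\varphi}$: it must be read so that, on $d\b{z}^{j}$, the Neumann operator $(\1-\b{\varphi}\varphi)^{-1}$ acts first and $\b{\varphi}$ afterwards, i.e. its value is $\b{\varphi}\lc w^{j}$ rather than $\b{\varphi}\lc d\b{z}^{j}$. This reading is the one consistent with the push-through identity $\b{\varphi}\circ(\1-\b{\varphi}\varphi)^{-1}=(\1-\varphi\b{\varphi})^{-1}\circ\b{\varphi}$, and it is exactly what makes $O$ the two-sided inverse of $\1-\b{\varphi}\varphi+\b{\varphi}$; with the opposite order the two $\b{\varphi}\lc w^{j}$ terms no longer match and the cancellation above breaks down, so this bookkeeping is where the argument can go wrong. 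The remaining ingredients are routine and parallel the proof of \eqref{2.3}: the invertibility of $\1-\b{\varphi}\varphi$ on $(0,1)$-forms for $t$ small, which makes $w^{j}$ well-defined, and the globality of all operators involved, already guaranteed since $\varphi(t)$ is a global $T^{1,0}_{X_0}$-valued $(0,1)$-form, as in \cite[Proof of Lemma 2.8]{RZ15}. No integrability of $\varphi$ is used, so \eqref{2.4.1} is a pointwise algebraic identity.
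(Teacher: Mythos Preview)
Your proposal is correct and follows essentially the same approach the paper indicates by writing ``Similarly'': you reduce \eqref{2.4.1} to the identity $e^{\iota_{\varphi}|\iota_{\b{\varphi}}}\big(O\Finv\alpha\big)=e^{\iota_{\varphi}}(\alpha)$ and verify it factorwise on $dz^{i}$ and $d\b{z}^{j}$, exactly parallel to the comparison of \eqref{2.1} and \eqref{2.2} in the proof of \eqref{2.3}. Your careful discussion of the order convention in $(\1-\b{\varphi}\varphi)^{-1}\b{\varphi}$ (apply $(\1-\b{\varphi}\varphi)^{-1}$ first, then $\b{\varphi}$, consistently with $\b{\varphi}\varphi=\varphi\lc\b{\varphi}$) is a useful clarification the paper leaves implicit.
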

Notice that the more intrinsic proofs of \eqref{2.3} and
\eqref{2.4.1} can be found in the proof of \cite[Proposition
2.12]{RZ15}.
 Substituting (\ref{2.3}) and (\ref{2.4.1}) into (\ref{2.5.1}), one has
\begin{proposition} For any $\alpha\in A^{p,q}(X_0)$,
\begin{equation}
 \begin{aligned}\label{2.7}
    &d(e^{\iota_{\varphi}|\iota_{\b{\varphi}}}(\alpha))\\=\ &
    e^{\iota_{\varphi}|\iota_{\b{\varphi}}}\left(\left((\1-\b{\varphi}\varphi)^{-1}-(\1-\b{\varphi}\varphi)^{-1}\b{\varphi}\right)\Finv\left([\p,\iota_{\varphi}]
    +\b{\p}+\p\right)(\1-\b{\varphi}\varphi+\b{\varphi})\Finv\alpha\right).
 \end{aligned}
\end{equation}
\end{proposition}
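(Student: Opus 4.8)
The plan is to feed the two preceding Lemmas directly into the chain of identities \eqref{2.5.1}, which already carries all the structural content. Recall how \eqref{2.5.1} arises: one writes $d=d\circ e^{\iota_{\varphi}}\circ e^{-\iota_{\varphi}}$, applies Proposition \ref{main1} with $\phi=\varphi$ to the factor $d\circ e^{\iota_{\varphi}}$, and then uses the integrability equation \eqref{int} to annihilate the term $\iota_{\db\varphi-\frac{1}{2}[\varphi,\varphi]}$. What remains is the operator $[\p,\iota_{\varphi}]+\db+\p$ sandwiched between $e^{\iota_{\varphi}}$ and $e^{-\iota_{\varphi}}$, and then pre- and post-composed with $e^{\iota_{\varphi}|\iota_{\b{\varphi}}}$ and its inverse $e^{-\iota_{\varphi}|-\iota_{\b{\varphi}}}$. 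So the genuinely new computation needed for this Proposition has effectively been done already, and what is left is a substitution.

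Concretely, I would proceed in three steps. First, replace the innermost composite $e^{-\iota_{\varphi}}\circ e^{\iota_{\varphi}|\iota_{\b{\varphi}}}(\alpha)$ by the single simultaneous contraction $(\1-\b{\varphi}\varphi+\b{\varphi})\Finv\alpha$, which is exactly \eqref{2.3}. Next, apply the operator $[\p,\iota_{\varphi}]+\db+\p$ to the resulting form; call the output $\beta$. Finally, rewrite the outer composite $e^{-\iota_{\varphi}|-\iota_{\b{\varphi}}}\circ e^{\iota_{\varphi}}(\beta)$ as $\big((\1-\b{\varphi}\varphi)^{-1}-(\1-\b{\varphi}\varphi)^{-1}\b{\varphi}\big)\Finv\beta$, which is \eqref{2.4.1}. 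Re-assembling these three pieces inside $e^{\iota_{\varphi}|\iota_{\b{\varphi}}}(\,\cdot\,)$ reproduces the right-hand side of \eqref{2.7} verbatim.

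The one point that requires care — and the closest thing to an obstacle — is the bookkeeping with the simultaneous contraction $\Finv$: since $\Finv$ does not distribute over sums of operators (as emphasized after \eqref{2.8}), the combinations $\1-\b{\varphi}\varphi+\b{\varphi}$ and $(\1-\b{\varphi}\varphi)^{-1}-(\1-\b{\varphi}\varphi)^{-1}\b{\varphi}$ must be carried along as single operators acting coframe-vector by coframe-vector, and must not be expanded prematurely. All of this is, however, already packaged inside Lemmas \eqref{2.3} and \eqref{2.4.1}, whose verification — matching identities such as $(\1+\varphi)\l(\1-\b{\varphi}\varphi+\b{\varphi})\l dz^{i}=(\1+\varphi)\l dz^{i}$ on the local coframe — is the real computational step and has been carried out above. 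Consequently I expect no serious difficulty here: the Proposition follows immediately from \eqref{2.5.1} upon substituting the two Lemmas, with no convergence or regularity input required at this stage.
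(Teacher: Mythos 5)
Your proposal is correct and coincides with the paper's own proof: the paper's argument is precisely the one-line substitution of the identities \eqref{2.3} and \eqref{2.4.1} into the chain \eqref{2.5.1}, which itself comes from Proposition \ref{main1} together with the integrability condition \eqref{int} killing the $\iota_{\db\varphi-\frac{1}{2}[\varphi,\varphi]}$ term. Your cautionary remark about not distributing $\Finv$ over sums of operators is consistent with the paper's own warning after \eqref{2.8} and introduces no discrepancy.
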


From (\ref{2.4.1}), we know that
$$
  e^{-\iota_{\varphi}|-\iota_{\b{\varphi}}}\circ e^{\iota_{\varphi}}: A^{p,q}(X_0)\to \bigoplus_{i=0}^{\min\{q,n-p\}}A^{p+i,q-i}(X_0).
$$
Thus, by carefully comparing the form types in both sides  of
(\ref{2.7}), we have
 \begin{align}\label{2.6}
   \b{\p}_t(e^{\iota_{\varphi}|\iota_{\b{\varphi}}}(\alpha))=e^{\iota_{\varphi}|\iota_{\b{\varphi}}}\left((\1-\b{\varphi}\varphi)^{-1}\Finv([\p,\iota_{\varphi}]+\b{\p})(\1-\b{\varphi}\varphi)\Finv\alpha\right).
 \end{align}
The $d$-closed condition
$d(e^{\iota_{\varphi}|\iota_{\b{\varphi}}}(\alpha))=0$ amounts to
$$\begin{cases}
\b{\p}_t(e^{\iota_{\varphi}|\iota_{\b{\varphi}}}(\alpha))=0,\\
 \p_t(e^{\iota_{\varphi}|\iota_{\b{\varphi}}}(\alpha))=\o{\b{\p}_t\circ e^{\iota_{\varphi}|\iota_{\b{\varphi}}}}({\alpha})=0,
    \end{cases}
$$
which, together with (\ref{2.6}), implies that
 \begin{equation}\label{w.18}
    \begin{cases}
      ([\p,\iota_{\varphi}]+\b{\p})(\1-\b{\varphi}\varphi)\Finv\alpha=0,\\
      ([\b{\p},\iota_{\b{\varphi}}]+\p)(\1-\varphi\b{\varphi})\Finv\alpha=0
    \end{cases}
 \end{equation}
by the invertibility of the operators
$e^{\iota_{\varphi}|\iota_{\b{\varphi}}}$,
$(\1-\b{\varphi}\varphi)^{-1}\Finv$ and their conjugations. Remark
that the operator $\Finv$ in \eqref{w.18} is just the ordinary
contraction operator $\lc$ when acting on $A^{1,1}(X)$ of a complex
manifold $X$. Actually, one can obtain an equivalent expression of
\eqref{2.7}
 \begin{align*}
    &d(e^{\iota_{\varphi}|\iota_{\b{\varphi}}}(\alpha))\\=&
    e^{\iota_{\varphi}|\iota_{\b{\varphi}}}\left((\1-\b{\varphi}\varphi)^{-1}\Finv([\p,\iota_{\varphi}]+\b{\p})(\1-\b{\varphi}\varphi)\Finv\alpha
    +(\1-\varphi\b{\varphi})^{-1}\Finv([\db,\iota_{\b\varphi}]+{\p})(\1-\varphi\b{\varphi})\Finv\alpha\right).
 \end{align*}

From the original proof of the stability theorem sketched at the
beginning of this section, one knows that the system \eqref{w.18} of
obstruction equations indeed has a real solution $\alpha(t)\in
A^{1,1}(X_t)$ with $\alpha(0)=\omega(0)$. To get this solution by a
power series method, we will formulate an effective obstruction
system \eqref{reduced-equsys} of equations in Proposition \ref{2.9}
for \eqref{w.18}.

We will use the commutator formula repeatedly, which is originated
from \cite{T,To89} and whose various versions appeared in
\cite{F,BK,Li,LSY,C} and also \cite{LR,lry} for vector bundle valued
forms.
\begin{lemma}\label{aaaa} For $\phi, \psi\in
A^{0,1}(X,T^{1,0}_X)$ and $\alpha\in A^{*,*}(X)$ on a complex
manifold $X$,
\begin{equation}\label{f1}
[\phi,\psi]\lrcorner\alpha=-\p(\psi\lrcorner(\phi
\lrcorner\alpha))-\psi\lrcorner(\phi \lrcorner\p\alpha)
+\phi\lrcorner\p(\psi\lrcorner\alpha)+\psi
\lrcorner\p(\phi\lrcorner\alpha).
\end{equation}
\end{lemma}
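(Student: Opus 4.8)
The identity \eqref{f1} is a Tian--Todorov-type commutator lemma, and the plan is to recast it as an identity between derivations of $A^{*,*}(X)$ and then verify it on algebra generators. First note that for $\phi,\psi\in A^{0,1}(X,T^{1,0}_X)$ the two contractions commute, $\iota_\phi\iota_\psi=\iota_\psi\iota_\phi$, because the anticommutation of the two $(0,1)$-form factors cancels the anticommutation of the two vector-field contractions. Granting this, the right-hand side of \eqref{f1}, read as the operator $-\p\iota_\psi\iota_\phi-\iota_\psi\iota_\phi\p+\iota_\phi\p\iota_\psi+\iota_\psi\p\iota_\phi$ acting on $\alpha$, is exactly $-\big[[\p,\iota_\phi],\iota_\psi\big]$, the iterated ordinary operator commutator. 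Thus \eqref{f1} is equivalent to the operator identity
\[
\iota_{[\phi,\psi]}=-\big[[\p,\iota_\phi],\iota_\psi\big]\qquad\text{on }A^{*,*}(X).
\]

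Both sides of this operator identity are derivations of the bigraded algebra $A^{*,*}(X)$: $\iota_\phi$ and $\p$ are derivations, hence so is $[\p,\iota_\phi]$ and then so is $[[\p,\iota_\phi],\iota_\psi]$; and $\iota_{[\phi,\psi]}$, being contraction with the $T^{1,0}_X$-valued $(0,2)$-form $[\phi,\psi]$, is likewise a derivation, of the same parity and bidegree (it shifts $(p,q)\mapsto(p-1,q+2)$ and is odd). Since, locally, $A^{*,*}(X)$ is generated over the smooth functions by the $dz^k$ and $d\bar z^k$, it suffices to check the identity on a smooth function $f$, on $d\bar z^k$, and on $dz^k$. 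The cases of $f$ and of $d\bar z^k$ are immediate: contractions annihilate functions and $(0,1)$-forms and $\p d\bar z^k=0$, so both sides vanish. In the remaining case one computes $-\big[[\p,\iota_\phi],\iota_\psi\big]dz^k=\iota_\phi(\p\psi^k)+\iota_\psi(\p\phi^k)$, where $\phi=\phi^k\otimes\frac{\partial}{\partial z^k}$ and $\psi=\psi^k\otimes\frac{\partial}{\partial z^k}$ are the local expressions with $(0,1)$-form coefficients $\phi^k,\psi^k$; expanding $\p\phi^k$ and $\p\psi^k$ in coordinates, the right-hand side is precisely the local expression of the $k$-th component of the bracket $[\phi,\psi]$ of $T^{1,0}_X$-valued $(0,1)$-forms used in deformation theory (cf. \cite[pp.~150--151]{MK}), i.e. it equals $\iota_{[\phi,\psi]}dz^k$.

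The only genuinely delicate point is the bookkeeping of signs and conventions — pinning down the extension of $\iota_\phi$ to a derivation of $A^{*,*}(X)$, the bidegree and parity conventions for $\p$, and the normalization of $[\cdot,\cdot]$ — so that the single computation on $dz^k$ closes with the correct overall sign; with the conventions of \cite{lry,RZ15} in force this is a short finite verification. An equivalent, fully elementary alternative is to reduce \eqref{f1} to decomposable Beltrami differentials $\phi=f\,d\bar z^a\otimes\frac{\partial}{\partial z^i}$ and $\psi=g\,d\bar z^b\otimes\frac{\partial}{\partial z^j}$ (both sides are additive in each of $\phi$ and $\psi$), expand the four terms against a local monomial $\alpha$ using the Leibniz rule for $\p$ and the derivation property of $\iota_\phi$ and $\iota_\psi$, and observe that after collecting, the contributions in which $\p$ differentiates the coefficient of $\alpha$ cancel, leaving exactly $[\phi,\psi]\lrcorner\alpha$. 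I would nonetheless favor the derivation argument, since it isolates the single step that carries any content.
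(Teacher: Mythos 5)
Your argument is correct, but note that the paper does not actually prove Lemma \ref{aaaa}: it states the commutator formula and cites it from the literature (Tian, Todorov, Clemens, et al.), so there is no in-paper proof to compare against, and a self-contained verification such as yours is a genuine addition. The structure of your proof is sound. The commutativity $\iota_\phi\iota_\psi=\iota_\psi\iota_\phi$ holds for the reason you give (the sign from transposing the two $(0,1)$-form factors cancels the sign from transposing the two interior products), and with it the right-hand side of \eqref{f1} is indeed the operator $-\bigl[[\p,\iota_\phi],\iota_\psi\bigr]$, where both brackets are ordinary commutators because $\iota_\phi,\iota_\psi$ are even (bidegree $(-1,1)$) derivations; the graded commutator of derivations being a derivation, both sides of the resulting operator identity are odd derivations of bidegree $(-1,2)$, and checking on local generators suffices by locality. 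On a function $f$ and on $d\bar z^k$ everything dies for the reasons you state (for $f$ one needs that $\iota_\psi\iota_\phi\p f=0$ because $\iota_\phi\p f$ is a $(0,1)$-form, which is covered by your remark), and on $dz^k$ one gets $-\bigl[[\p,\iota_\phi],\iota_\psi\bigr]dz^k=\iota_\phi(\p\psi^k)+\iota_\psi(\p\phi^k)=\phi^i\wedge\p_i\psi^k+\psi^i\wedge\p_i\phi^k$, which is precisely $[\phi,\psi]^k$ in the convention of \cite[pp.~150--151]{MK} used throughout this paper; so the single sign-sensitive computation you defer does close correctly. The only stylistic gap is that you state rather than carry out this last verification, but it is a two-line coordinate computation and the claimed intermediate expression is right. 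Your alternative bilinear/decomposable reduction is the brute-force route one finds in the cited references; the derivation argument is cleaner and, as you say, isolates the one step with content.
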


There are several formulae to be established, whose proofs are given
in Appendix \ref{7proof}.
\begin{proposition}\label{7for}
Let $\phi \in A^{0,1}(T^{1,0}_X)$ and $\alpha \in A^{p,q}(X)$ on a
complex manifold $X$. Then we have:
\begin{enumerate}[$(1)$]
\item\quad \label{7.1}
$\phi \lc \overline{\phi} \lc \alpha - (\phi \lc \overline{\phi})
\lc \alpha = \overline{\phi} \lc \phi \lc \alpha - (\overline{\phi}
\lc \phi) \lc \alpha$.

\item\quad \label{7.3}
$[\phi,\phi] \lc \overline{\phi}\lc \alpha = 2\phi \lc \p (\phi \lc
\overline{\phi} \lc \alpha) - \phi \lc \phi \lc \p(
\overline{\phi}\lc \alpha)$.

\item\quad \label{7.5}
For $\psi \in A^{r,s}(T^{1,0}_{X})$, $\db (\psi \lc \alpha) = (\db
\psi) \lc \alpha + (-1)^{r+s+1} \psi \lc \db \alpha. $

In particular, if $\psi \in A^{1,0}(T^{1,0}_{X})$ or
$A^{0,1}(T^{1,0}_{X})$, then
$$\db (\psi \lc \alpha) = (\db \psi) \lc \alpha + \psi \lc \db
\alpha. $$

\item\quad \label{7.7}
$\overline{\phi} \lc \overline{\phi} \lc \phi \lc \alpha - \phi \lc
\overline{\phi} \lc \overline{\phi} \lc \alpha = 2 ( \overline{\phi}
\lc \phi \overline{\phi} \lc \alpha - \overline{\phi} \phi \lc
\overline{\phi} \lc \alpha )$.

In particular, if $\alpha \in A^{1,1}(X)$, then one has
\begin{equation*}
\overline{\phi} \lc \overline{\phi} \lc \phi \lc \alpha =
2(\overline{\phi} \lc \phi \overline{\phi} \lc \alpha),
\end{equation*}
since $\phi \lc \overline{\phi} \lc \overline{\phi} \lc \alpha = 0$
and $\overline{\phi} \phi \lc \overline{\phi} \lc \alpha = 0$.
\end{enumerate}
\end{proposition}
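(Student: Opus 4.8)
The plan is to view $(1)$ and $(4)$ as pointwise algebraic identities, $(3)$ as a local Leibniz-type computation, and $(2)$ as an immediate specialization of Lemma~\ref{aaaa}. For the first three it suffices to fix a holomorphic coordinate chart, write $\phi=\phi^i_{\bar j}\,d\bar z^j\otimes\p_{z^i}$, $\overline\phi=\overline{\phi^i_{\bar j}}\,dz^j\otimes\p_{\bar z^i}$ and $\alpha=f_{I\bar J}\,dz^I\wedge d\bar z^J$, and then to exploit three elementary facts: interior products by coordinate vector fields pairwise anticommute; $\iota_{\p_{z^i}}$ annihilates every $d\bar z^j$ while $\iota_{\p_{\bar z^i}}$ annihilates every $dz^j$; and $\db\,\p_{z^i}=0$, since the $\p_{z^i}$ are a local holomorphic frame of $T^{1,0}_X$.

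For $(1)$ I would compute that the ``associativity defect'' $\phi\lc\overline\phi\lc\alpha-(\phi\lc\overline\phi)\lc\alpha$ equals, up to an overall sign, $\phi^j_{\bar b}\,\overline{\phi^i_{\bar a}}\,d\bar z^b\wedge dz^a\wedge(\p_{z^j}\lc\p_{\bar z^i}\lc\alpha)$, i.e.\ a purely ``second-order'' expression in the coordinate contractions, and then note that the same computation with $\phi$ and $\overline\phi$ interchanged produces the identical expression, because the sign change $d\bar z^b\wedge dz^a=-\,dz^a\wedge d\bar z^b$ is cancelled by $\p_{z^j}\lc\p_{\bar z^i}\lc\alpha=-\,\p_{\bar z^i}\lc\p_{z^j}\lc\alpha$. (Equivalently, $(1)$ is the Nijenhuis--Richardson identity $\iota_\phi\iota_{\overline\phi}-\iota_{\overline\phi}\iota_\phi=\iota_{\iota_\phi\overline\phi-\iota_{\overline\phi}\phi}$ for the vector-valued one-forms $\phi,\overline\phi$.) Formula $(4)$ I would then obtain by applying $(1)$ repeatedly --- once to $(\phi,\overline\phi)$ and once each to the pairs $(\iota_\phi\overline\phi,\overline\phi)$ and $(\iota_{\overline\phi}\phi,\overline\phi)$ --- so as to move $\phi$ from the rightmost contraction in $\phi\lc\overline\phi\lc\overline\phi\lc\alpha$ to the leftmost, the correction terms cancelling in pairs except for those which build up the right-hand side $\pm 2(\overline\phi\lc\phi\overline\phi\lc\alpha-\overline\phi\phi\lc\overline\phi\lc\alpha)$; alternatively one just expands both sides in coordinates. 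The special case $\alpha\in A^{1,1}(X)$ is pure type-counting: $\overline\phi\lc\alpha\in A^{2,0}(X)$, so $\phi\lc\overline\phi\lc\overline\phi\lc\alpha$ and $\overline\phi\phi\lc\overline\phi\lc\alpha$ both vanish.

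For $(3)$ I would write $\psi=\sum_i\beta^i\otimes\p_{z^i}$ with $\beta^i\in A^{r,s}(X)$, so that $\psi\lc\alpha=\sum_i\beta^i\wedge(\p_{z^i}\lc\alpha)$ and $\db\psi=\sum_i(\db\beta^i)\otimes\p_{z^i}$; the graded Leibniz rule for $\db$ on a wedge product, together with the anticommutation $\db(\p_{z^i}\lc\alpha)=-\,\p_{z^i}\lc\db\alpha$ (valid because contraction with a $(1,0)$-field anticommutes with $\db$), gives $\db(\psi\lc\alpha)=(\db\psi)\lc\alpha+(-1)^{r+s+1}\psi\lc\db\alpha$, the sign being the $(-1)^{r+s}$ from carrying $\db$ across $\beta^i$ times the extra $-1$ from the anticommutation; the two ``in particular'' statements are the case $r+s=1$. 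For $(2)$ I would put $\psi=\phi$ in the commutator formula of Lemma~\ref{aaaa}, obtaining $[\phi,\phi]\lc\gamma=-\p(\phi\lc\phi\lc\gamma)-\phi\lc\phi\lc\p\gamma+2\,\phi\lc\p(\phi\lc\gamma)$ for an arbitrary form $\gamma$, and then apply it with $\gamma=\overline\phi\lc\alpha$ and simplify, the two copies of $\phi\lc\p(\phi\lc\overline\phi\lc\alpha)$ merging into the stated factor $2$.

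I expect the only genuinely delicate point to be the sign-and-bidegree bookkeeping in $(4)$, a ``third-order'' identity in the contractions, together with the need to adopt one convention for the ordering of the form-part against the vector-part in $\iota_\phi$ and keep it fixed throughout, so that the signs in $(2)$--$(4)$ come out as stated; $(1)$ and $(3)$ are short, and $(2)$ is immediate once Lemma~\ref{aaaa} is available.
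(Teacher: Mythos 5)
Your parts (1), (3) and (4) are essentially fine. For (1) you perform the same local coordinate computation as the paper, exhibiting both sides as the common ``second-order'' expression in the coefficients $\phi^{i}_{\bar l}\overline{\phi}^{\bar j}_{t}\alpha_{I\bar J}$. For (3) the paper merely cites Clemens, so your direct Leibniz-rule argument using $\db\circ\iota_{\p/\p z^i}=-\iota_{\p/\p z^i}\circ\db$ is a legitimate, more self-contained substitute, and the sign $(-1)^{r+s+1}$ comes out correctly. For (4) the paper expands both sides in coordinates; your primary route of iterating (1) can be made to work, but the phrase ``the correction terms cancelling in pairs'' hides a genuine verification, namely that the second-order contractions satisfy $\iota_{\overline\phi}\circ\iota_{\phi\overline\phi}+\iota_{\overline\phi}\circ\iota_{\overline\phi\phi}=\iota_{\phi\overline\phi}\circ\iota_{\overline\phi}+\iota_{\overline\phi\phi}\circ\iota_{\overline\phi}$, which rests on the coordinate identity $\iota_{\iota_{\overline\phi}\phi}\overline\phi=\iota_{\overline\phi}(\iota_\phi\overline\phi)$; your coordinate fallback is exactly the paper's proof.

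The genuine gap is in (2). Specializing Lemma \ref{aaaa} to $\psi=\phi$ and applying it to the form $\overline\phi\lc\alpha$ yields
\[
[\phi,\phi]\lc\overline\phi\lc\alpha=-\p\big(\phi\lc\phi\lc\overline\phi\lc\alpha\big)-\phi\lc\phi\lc\p\big(\overline\phi\lc\alpha\big)+2\,\phi\lc\p\big(\phi\lc\overline\phi\lc\alpha\big),
\]
which differs from the asserted identity by the term $-\p\big(\phi\lc\phi\lc\overline\phi\lc\alpha\big)$. Your ``simplify'' accounts only for merging the two copies of $\phi\lc\p(\phi\lc\overline\phi\lc\alpha)$ into the factor $2$; the extra term is never addressed, and it does not vanish on type grounds: for $\alpha\in A^{p,q}(X)$ with $p\geq1$ the form $\phi\lc\phi\lc\overline\phi\lc\alpha$ lies in $A^{p-1,q+1}(X)$ and is generically nonzero (already for $n=2$ and $\overline\phi\lc\alpha=h\,dz^1\wedge dz^2$ one computes $\phi\lc\phi\lc\overline\phi\lc\alpha=2\det(\phi^i_{\bar j})\,h\,d\bar z^1\wedge d\bar z^2$). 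The paper's own proof of (2) explicitly invokes an additional input, the vanishing ``$\p(\phi\lc\phi\lc\overline\phi)=0$'', precisely to dispose of this term; and since $\phi\lc\phi\lc\overline\phi$ (the double contraction of the vector-valued form $\overline\phi$, which is zero because $\iota_\phi\overline\phi$ has no $(1,0)$ form part left to contract) is not literally the same object as $\phi\lc\phi\lc(\overline\phi\lc\alpha)$, any complete proof must explain how that vanishing is transferred to the scalar-form level. As written, your argument establishes only the three-term identity displayed above, not the two-term identity claimed in (2).
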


We first explain the homogenous notation for a power series to be
used here and henceforth. Assuming that $\alpha(t)$ is a power
series of (bundle-valued) $(p,q)$-forms, expanded as
  $$\alpha(t)=\sum_{k=0}^{\infty}\sum_{i+j=k}\alpha_{i,j}t^i\b{t}^j,$$
one uses the notation
\[ \begin{cases}
\alpha(t) = \sum^{\infty}_{k=0} \alpha_k, \\[4pt]
\alpha_k = \sum_{i+j=k} \alpha_{i,j}t^i \overline{t}^j, \\
\end{cases} \]
where $\alpha_k$ is the $k$-degree homogeneous part in the expansion
of $\alpha(t)$ and all $\alpha_{i,j}$ are smooth (bundle-valued)
$(p,q)$-forms on $X_0$ with $\alpha(0)=\alpha_{0,0}$. Similarly,
according to the expansion \eqref{phi-ps-pp}, one will also adopt this
notation to other terms related with $\varphi$, such as
\[ \lk \1 -\overline{\varphi}\varphi \rk^{\!-1} \lc
\varphi  = \sum_{k=1}^{\infty} \big( \lk \1 -\overline{\varphi}\varphi
\rk^{\!-1} \lc \varphi \big)_k, \] where $\big( \lk \1 -
\overline{\varphi}\varphi \rk^{\!-1} \lc \varphi \big)_k$ stands
for the $k$-degree homogeneous part of the power series  $\lk \1 -
\overline{\varphi}\varphi \rk^{\!-1} \lc \varphi$ in $t,\b t$.
Then we come to the crucial reduction:
\begin{proposition}\label{2.9} If the power series
  $$\omega(t)=\sum_{k=0}^{\infty}\sum_{i+j=k}\omega_{i,j}t^i\b{t}^j$$ of $(1,1)$-forms on $X_0$ is
a real formal solution of the system of equations
\begin{equation}\label{reduced-equsys}
    \begin{cases}
      \b{\p}\omega=\b{\p}(\b{\varphi}\varphi\l\omega-\varphi\l\b{\varphi}\l\omega)-\p(\varphi\l\omega),\\
      \p\omega=\p(\varphi\b{\varphi}\l\omega-\b{\varphi}\l\varphi\l\omega)-\b{\p}(\b{\varphi}\l\omega),
    \end{cases}
\end{equation}
up to the degree $N$, then $\db(\varphi(t)\lc \omega(t))_k=0$ for
each $0\leq k\leq N+1$ and thus $\omega(t)$ is also one real formal
solution of the system (\ref{w.18}) up to the degree $N$.
\end{proposition}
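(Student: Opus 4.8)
The plan is to prove the claim by induction on the degree $k$, reducing the full obstruction system \eqref{w.18} (equivalently its $(1,1)$-component) to the reduced system \eqref{reduced-equsys} via the commutator identities of Lemma \ref{aaaa} and Proposition \ref{7for}. The starting observation is that the two scalar equations in \eqref{w.18}, when $\alpha=\omega(t)\in A^{1,1}(X_0)$, are conjugates of each other (since $\omega(t)$ is real and $\1$ is real), so it suffices to treat the first one; and on $(1,1)$-forms the operator $\Finv$ is just the ordinary contraction $\lc$. So I must show that a real formal solution of \eqref{reduced-equsys} up to degree $N$ satisfies
\[
([\p,\iota_{\varphi}]+\b{\p})(\1-\b{\varphi}\varphi)\lc\omega=0
\]
up to degree $N$. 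Expanding $(\1-\b{\varphi}\varphi)\lc\omega = \omega - \b{\varphi}\varphi\lc\omega$ and using $[\p,\iota_\varphi] = \p\circ\iota_\varphi + \iota_\varphi\circ\p$ (note $\varphi$ has total degree $1$), the left-hand side becomes a sum of terms of the shape $\b\p\omega$, $\p(\varphi\lc\omega)$, $\varphi\lc\p\omega$, $\b\p(\b\varphi\varphi\lc\omega)$, $\p\iota_\varphi(\b\varphi\varphi\lc\omega)$, and $\iota_\varphi\p(\b\varphi\varphi\lc\omega)$.

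First I would substitute the two equations of \eqref{reduced-equsys} to eliminate $\b\p\omega$ and $\p\omega$; crucially, the first reduced equation supplies $\b\p\omega$ and the term $-\p(\varphi\lc\omega)$ it contains cancels against the $\p\iota_\varphi\omega$ coming from $[\p,\iota_\varphi]\omega$, while the remaining $\b\p(\b\varphi\varphi\lc\omega - \varphi\lc\b\varphi\lc\omega)$ must be matched against $-\b\p(\b\varphi\varphi\lc\omega)$ and the $\varphi$-dependent terms. The surviving expression should, after using Proposition \ref{7for}\eqref{7.1}, \eqref{7.3}, \eqref{7.7} to rewrite iterated contractions of $\varphi,\b\varphi$ against the $(1,1)$-form, and Lemma \ref{aaaa} together with the integrability equation \eqref{int} $\b\p\varphi=\tfrac12[\varphi,\varphi]$ to handle $\b\p\varphi$, collapse to something of the form $\iota_\varphi$ applied to (a combination equal by \eqref{reduced-equsys} to) $\p\omega - \p(\varphi\b\varphi\lc\omega-\b\varphi\lc\varphi\lc\omega)+\b\p(\b\varphi\lc\omega)$. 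This is exactly the content of the second reduced equation, hence vanishes up to degree $N$; but here the degree bookkeeping matters — $\iota_\varphi$ raises the $t$-degree by at least $1$, so $\iota_\varphi$ of something that vanishes up to degree $N$ produces a quantity vanishing up to degree $N+1$, which is why the conclusion $\db(\varphi(t)\lc\omega(t))_k = 0$ can be pushed to $k\le N+1$. I would extract that intermediate statement $\db(\varphi\lc\omega)_k=0$ for $k\le N+1$ first, and then feed it back to finish the degree-$N$ vanishing of \eqref{w.18}.

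The main obstacle I anticipate is purely organizational rather than conceptual: correctly tracking the homogeneous degrees through every term. Because $\varphi=\sum_{j\ge1}\varphi_j$ starts in degree $1$, a product like $\b\varphi\varphi\lc\omega$ mixes degrees, and when I invoke \eqref{reduced-equsys} "up to degree $N$" I am only allowed to use it on homogeneous pieces of degree $\le N$; the terms carrying an explicit $\iota_\varphi$ or $\iota_{\b\varphi}$ gain a degree shift that is exactly what lets the argument close at $N+1$ instead of $N$, and getting every shift right (especially in the cross terms $\varphi\lc\b\varphi\lc\omega$ where the two factors can each absorb part of the degree) is the delicate part. A secondary technical point is the careful use of Proposition \ref{7for}\eqref{7.7} in the special $(1,1)$-form case, where several triple contractions vanish identically — one must be sure to apply these simplifications only to genuine $(1,1)$-forms and not to the intermediate higher-type forms that appear after one contraction. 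Once the algebra is arranged so that each application of a commutator formula is matched by the appropriate line of \eqref{reduced-equsys}, the induction on $k$ is immediate, with the base case $k=0$ trivial since $\omega_{0,0}=\omega_0$ and $\varphi$ has no degree-$0$ part.
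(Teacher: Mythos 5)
Your overall strategy coincides with the paper's: reduce to the first equation of \eqref{w.18} by conjugation, isolate the intermediate claim $\db(\varphi\lc\omega)_k=0$ for $k\leq N+1$, and then show that the difference between the first equations of \eqref{w.18} and \eqref{reduced-equsys} vanishes at degree $N$ by means of Lemma \ref{aaaa}, the integrability condition \eqref{int} and Proposition \ref{7for}. However, there is a genuine gap at the intermediate claim, which is the technical heart of the proposition and which you assert rather than prove. Your justification --- that $\iota_\varphi$ raises the degree by one, so $\iota_\varphi$ of something vanishing up to degree $N$ vanishes up to degree $N+1$ --- does not apply, because $\db(\varphi\lc\omega)$ is not presented as $\iota_\varphi$ of an expression already known to vanish. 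Expanding $\db(\varphi\lc\omega)=\tfrac12[\varphi,\varphi]\lc\omega+\varphi\lc\db\omega$ and substituting \eqref{reduced-equsys} produces new terms such as $\varphi\lc\db(\b\varphi\varphi\lc\omega-\varphi\lc\b\varphi\lc\omega)$ that again contain $\db$ acting on $\varphi$-contractions of $\omega$; the recursion does not terminate by degree counting alone. The paper closes it with a specific algebraic manoeuvre: it derives \emph{two} different expressions for $\db(\varphi\lc\omega)$ (one equal to $-\tfrac12\big(\varphi\lc\p(\varphi\lc\varphi\lc\b\varphi\lc\omega)-\varphi\lc\varphi\lc\p(\varphi\lc\b\varphi\lc\omega)\big)$, the other obtained from it via $\iota_\varphi\circ\iota_{[\varphi,\varphi]}=\iota_{[\varphi,\varphi]}\circ\iota_\varphi$ and Proposition \ref{7for}.\eqref{7.7}), equates them, and solves to obtain $\db(\varphi\lc\omega)=\tfrac16\,\varphi\lc\varphi\lc\varphi\lc\p(\b\varphi\lc\omega)$, whose right-hand side carries four factors of $\varphi$ or $\b\varphi$ and therefore reduces the whole claim to the directly checkable degrees $k=1,2,3$. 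Without this identity (or an equivalent), your induction for the observation does not close.

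A secondary imprecision: the surviving expression after substitution does not collapse simply to $\iota_\varphi$ applied to the second reduced equation. The difference of the two first equations at degree $N$ is $\big(\p(\varphi\lc\b\varphi\varphi\lc\omega)+\varphi\lc\p(\omega-\b\varphi\varphi\lc\omega)+\db(\varphi\lc\b\varphi\lc\omega)\big)_N$, and showing this vanishes requires, beyond the second line of \eqref{reduced-equsys}, the leftover cubic terms $\tfrac12\varphi\lc\varphi\lc\p(\b\varphi\lc\omega)$ and $\tfrac12\p(\varphi\lc\varphi\lc\b\varphi\lc\omega)$ to be cancelled using Proposition \ref{7for}.\eqref{7.7} together with the very observation $\db(\varphi\lc\omega)_k=0$ discussed above. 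So the two halves of your plan are more entangled than your sketch suggests, and both ultimately rest on the unproved intermediate vanishing.
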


We will realize the importance of the $\db$-closedness
$\db(\varphi(t)\lc \omega(t))_{N+1}=0$, which fulfills
\eqref{db-condition} in Observation \ref{expslt} under the K\"ahler
condition and guarantees the existence of a real solution of
$\eqref{reduced-equsys}_{N+1}$, the equation \eqref{reduced-equsys}
at the $(N+1)$-th degree.

\begin{proof}
Comparing the power series expansion of the equations (\ref{w.18})
and (\ref{reduced-equsys}), we use induction on the degrees to
complete the proof.

Denote by $\omega_k$ the homogenous $k$-part of the power series
$\omega(t)$, i.e., $\omega_k=\sum_{i+j=k}\omega_{i,j}t^i\b{t}^j$.
Without danger of confusion, we will use $\varphi$ and $\omega$ to
denote $\varphi(t)$ and $\omega(t)$, respectively.

The case $N=0$ is trivial.
By induction, assuming that the proposition holds for the degrees
$\leq N-1$, we need to show the proposition for the degree $N$. That
is, if $(\ref{reduced-equsys})_k$ has a real solution and
$$\db(\varphi\lc \omega)_{k}=0$$
for the degrees $k\leq N$, then we will show that this solution is
also one for the system $(\ref{w.18})_N$ and  satisfies
$\db(\varphi\lc \omega)_{N+1}=0$. Without loss of generality, we
always assume that $N\geq 4$ since the lower-degree cases are also
obtained by the same formulation as follows.

Here is an important observation to be proved in Appendix \ref{app-b}.
\begin{observation}\label{closed}
$$\db(\varphi\lc \omega)_{k}=0,\qquad k\leq N+1.$$
\end{observation}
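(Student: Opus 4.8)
The plan is to prove the claim $\db(\varphi\lc\omega)_k=0$ for all $k\le N+1$ by the same induction on the degree that drives Proposition \ref{2.9}, with the base cases $k\le N$ already in hand by the inductive hypothesis of the Proposition. So the real content is to show $\db(\varphi\lc\omega)_{N+1}=0$, assuming that $(\ref{reduced-equsys})_k$ has been solved and $\db(\varphi\lc\omega)_k=0$ for all $k\le N$. First I would expand the first equation of (\ref{reduced-equsys}) at degree $N+1$; since $\varphi$ starts in degree $1$, the degree-$(N+1)$ part of the right-hand side only involves $\omega_j$ with $j\le N$, which are already controlled. Applying $\p$ to that degree-$(N+1)$ identity kills the $\p(\varphi\lc\omega)$ term outright and, using $\p\db=-\db\p$, turns $\p\db(\cdots)$ into $-\db\p(\cdots)$; the aim is to massage the result into the form $\db$ of something, so that after applying the second equation of (\ref{reduced-equsys}) (at lower degrees, where it is available) the whole expression collapses to $\db$ applied to $\varphi\lc\omega$ at degree $N+1$, up to terms already known to be $\db$-closed.

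The key technical inputs are Lemma \ref{aaaa} (the Tian--Todorov commutator formula) and the bracket identities in Proposition \ref{7for}, together with the integrability equation $\db\varphi=\tfrac12[\varphi,\varphi]$ from \eqref{int}. Concretely, I expect to need: (i) Proposition \ref{7for}\eqref{7.5} to move $\db$ through contractions, e.g. $\db(\varphi\lc\omega)=(\db\varphi)\lc\omega+\varphi\lc\db\omega$, then replace $\db\varphi$ by $\tfrac12[\varphi,\varphi]$; (ii) Lemma \ref{aaaa} to rewrite $[\varphi,\varphi]\lc\omega$ in terms of $\p$ of contractions of $\omega$; and (iii) the degree-$(N+1)$ and lower-degree instances of both lines of (\ref{reduced-equsys}) to substitute for $\db\omega$ and $\p\omega$ wherever they appear. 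The bookkeeping is of the same flavor as the derivation of (\ref{0reduced-equsys}) from (\ref{0w.18}), just tracked degree by degree: each substitution is legitimate because $\varphi$ raises degree by at least one, so every factor of $\varphi$ or $\overline\varphi$ drops the degree at which $\omega$ is evaluated into the range $\le N$ where the inductive hypothesis applies.

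The main obstacle I anticipate is purely organizational rather than conceptual: keeping track of which homogeneous degree each term lives in while iterating the two substitutions, and verifying that after all cancellations the surviving terms genuinely assemble into $\db(\varphi\lc\omega)_{N+1}$ (and nothing extra). In particular one must be careful that the second equation of (\ref{reduced-equsys}) is only invoked at degrees $\le N$ when its degree-$(N+1)$ version has not yet been established, and conversely that applying $\p$ to the first equation at degree $N+1$ is legitimate since that equation is assumed to hold at degree $N+1$ by the hypothesis of Proposition \ref{2.9}. Once the expression is reduced to $\db$ of a single contraction term, the conjugate symmetry (realness of $\omega$) and the type-counting already used to split (\ref{w.18}) finish the argument. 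I would end by remarking that this $\db$-closedness at degree $N+1$ is exactly the hypothesis \eqref{db-condition} needed to run Observation \ref{expslt} and produce the real solution of $(\ref{reduced-equsys})_{N+1}$, closing the induction.
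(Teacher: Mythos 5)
There is a genuine gap in your plan: you rely on the degree-$(N+1)$ instance of the first equation of \eqref{reduced-equsys}, which is \emph{not} available. The hypothesis of Proposition \ref{2.9} is that \eqref{reduced-equsys} is solved only up to degree $N$, and the whole purpose of Observation \ref{closed} is to deduce $\db(\varphi\lc\omega)_{N+1}=0$ from the equations at degrees $\le N$ alone, because this closedness is precisely the condition \eqref{db-condition} that Observation \ref{expslt} requires in order to produce a solution of $(\ref{reduced-equsys})_{N+1}$ at the next stage. Assuming the equation at degree $N+1$ in order to prove the closedness that licenses solving it is circular. The paper sidesteps this because in its computation every substitution of $\db\omega$ or $\p\omega$ from \eqref{reduced-equsys} occurs \emph{inside} at least one contraction by $\varphi$ or $\bar\varphi$: one starts from $\db(\varphi\lc\omega)=\tfrac12[\varphi,\varphi]\lc\omega+\varphi\lc\db\omega$ via Proposition \ref{7for}.\eqref{7.5} and \eqref{int}, so the degree-$(N+1)$ homogeneous part only ever calls on the equations at degrees $\le N$. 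Your opening move of applying $\p$ to the first equation at degree $N+1$ is therefore both unjustified and unnecessary.

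You are also missing the mechanism that actually closes the computation. After substituting via Lemma \ref{aaaa} and Proposition \ref{7for}.\eqref{7.1}, \eqref{7.3}, \eqref{7.7}, the surviving terms do not simply ``assemble into $\db(\varphi\lc\omega)_{N+1}$ up to $\db$-closed terms''; the paper instead evaluates the same expression in two different ways (using $\iota_{\varphi}\circ\iota_{[\varphi,\varphi]}=\iota_{[\varphi,\varphi]}\circ\iota_{\varphi}$), equates the two outcomes, and solves the resulting linear relation for $\db(\varphi\lc\omega)$ to obtain the self-referential identity
\[
\db(\varphi\lc\omega)=\tfrac16\,\varphi\lc\varphi\lc\varphi\lc\p(\bar\varphi\lc\omega).
\]
The right-hand side has degree at least $4$ in $(\varphi,\bar\varphi)$ and, by the realness of $\omega$, equals $\tfrac16\,\varphi\lc\varphi\lc\varphi\lc\overline{\db(\varphi\lc\omega)}$; hence its homogeneous parts vanish trivially for $k=1,2,3$ and at degree $N+1$ involve only $\db(\varphi\lc\omega)_j$ with $j\le N-2$, which vanish by the induction hypothesis. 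Without such a quantitative gain (the drop of at least three degrees on the right), no amount of rearranging into ``$\db$ of something'' yields the vanishing at degree $N+1$.
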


So by Lemma \ref{aaaa}, Proposition \ref{7for}.\eqref{7.1} and the
induction assumption, we have
\begin{align*}
  &\quad\varphi\l\p((\1-\b{\varphi}\varphi)\l\omega)\\
&=\varphi\l\p(\omega-\varphi\b{\varphi}\l\omega+\varphi\b{\varphi}\l\omega-\b{\varphi}\varphi\l\omega)\\
&=\varphi\l(-\p(\b{\varphi}\l\varphi\l\omega)-\b{\p}(\b{\varphi}\l\omega))+\varphi\l\p(\varphi\b{\varphi}\l\omega-\b{\varphi}\varphi\l\omega)\\
&=-\varphi\l\p(\varphi\l\b{\varphi}\l\omega)-\varphi\l\b{\p}(\b{\varphi}\l\omega)\\
&=-\varphi\l\p(\varphi\l\b{\varphi}\l\omega)-\b{\p}(\varphi\l\b{\varphi}\l\omega)+\b{\p}\varphi\l\b{\varphi}\l\omega\\
&=-\varphi\l\p(\varphi\l\b{\varphi}\l\omega)-\b{\p}(\varphi\l\b{\varphi}\l\omega)+\frac{1}{2}[\varphi,\varphi]\l\b{\varphi}\l\omega\\
&=-\b{\p}(\varphi\l\b{\varphi}\l\omega)-\frac{1}{2}\varphi\l\varphi\l\p(\b{\varphi}\l\omega)-\frac{1}{2}\p(\varphi\l\varphi\l\b{\varphi}\l\omega).
\end{align*}
Here the subscript $N$ is omitted. Using Proposition
\ref{7for}.\eqref{7.7} and Observation \ref{closed}, one has
$$
\left(\p(\varphi\l\b{\varphi}\varphi\l\omega)+\varphi\l\p(\omega-\b{\varphi}\varphi\l\omega)+\b{\p}(\varphi\l\b{\varphi}\l\omega)\right)_N=0,
$$
whose left-hand side is exactly the difference of the first
equations in $(\ref{reduced-equsys})_N$ and $(\ref{w.18})_N$.
Therefore, this real solution of the first equation in
$(\ref{reduced-equsys})_N$ is also that in $(\ref{w.18})_N$, and
similarly for the second equations in $(\ref{w.18})_N$ and
$(\ref{reduced-equsys})_N$.
\end{proof}

\subsection{Construction of power series}

For the resolution of the system \eqref{reduced-equsys}, we need
several more lemmas. As usual, the \emph{$\p\b{\p}$-lemma} refers
to: for every pure-type $d$-closed form on $X_0$, the properties of
$d$-exactness, $\p$-exactness, $\b{\p}$-exactness and
$\p\b{\p}$-exactness are equivalent.

\blemma\label{slvdb-1} Let $X$ be a complex manifold satisfying the
$\p\db$-lemma. Consider the system of equations:
\beq\label{conjugatequ-1}\left\{
\begin{aligned}
\p x &= \beta, \\
\db x &= \overline{\gamma}, \\
\end{aligned} \right. \eeq
where $\beta,\gamma$ are $(p+1,p)$-forms on $X$. The system of
equations \eqref{conjugatequ-1} has a solution if and only if the
following three statements hold: \bd
\item\label{c-1} $\db \beta + \p\overline{\gamma} = 0$,
\item\label{c-2} The $\p$-equation $\p x = \beta $ has a solution,
\item\label{c-3} The $\db$-equation $\db x = \overline{\gamma}$ has a solution.
\ed \elemma

\bproof If the system \eqref{conjugatequ-1} has a solution $\eta$,
 then both the $\p$- and $\db$-equations have solutions.
And it is easy to see that
\[ \db \beta + \p \overline{\gamma} = \db \p \eta + \p \db \eta = 0.\]
Conversely, let $\eta_1$ be a solution of the $\p$-equation,
$\eta_2$ a solution of the $\db$-equation with $\beta,\gamma$
satisfying $\db \beta + \p \overline{\gamma} = 0$, which yields that
$\p \eta_1 = \beta$.

We claim that there exists some $\tau \in A^{p,p-1}(X)$ such that
$\eta_2 + \db \tau$ satisfies the system \eqref{conjugatequ-1}. In
fact, it is obvious that $\eta_2 + \db \tau$ satisfies the second
equation of \eqref{conjugatequ-1}. As to the first one, we only need
to show that $\beta - \p \eta_2$ is $\p \db$-exact. It is easy to
check that
\[ \db( \beta - \p \eta_2) =\db \beta + \p \overline{\gamma} = 0.\]
And note that $\beta - \p \eta_2$ is $\p$-exact by the equality
\[ \beta - \p \eta_2= \p ( \eta_1 - \eta_2).\]
From the $\p\db$-Lemma, there exists some $\tau \in A^{p,p-1}(X)$
such that $\beta -\p \eta_2 = \p \db \tau$, equivalently saying that
$\eta_2 + \db \tau$ satisfies the first equation of
\eqref{conjugatequ-1}. Therefore, the claim is proved and $\eta_2 +
\db \tau$ is the solution of the system \eqref{conjugatequ-1}.
\eproof

\bcorollary\label{slvdb-2} Let $X$ be a complex manifold satisfying
the $\p\db$-lemma. The system of equations:
\beq\label{conjugatequ-2}\left\{
\begin{aligned}
\p x &= \beta,\\
\db x &= \overline{\beta}, \\
\end{aligned} \right. \eeq
where $\beta$ is a $(p+1,p)$-form on $X$, has a real solution if and
only if the following two statements hold: \bd \item $\db \beta + \p
\overline{\beta} = 0$,
\item The $\db$-equation
$\db x = \overline{\beta}$ has a solution. \ed \ecorollary

\bproof If $\eta_2$ is a solution of the $\db$-equation and $\db
\beta + \p \overline{\beta} = 0$, it is clear that
$\overline{\eta_2}$ satisfies the $\p$-equation $\p
\overline{\eta_2} = \beta$. Then Lemma \ref{slvdb-1} assures that
the system \eqref{conjugatequ-2} of equations admits a solution,
denoted by $\eta$. And $\frac{\eta + \overline{\eta}}{2}$ will be a
real solution of the system \eqref{conjugatequ-2}. \eproof

Since
$\beta$ and $\overline{\gamma}$ involved in this paper are mostly
$\db$-exact and $\p$-exact, respectively, such as \eqref{reduced-equsys}, we have:
\begin{observation}\label{expslt}
\emph{Let $\beta=\db \zeta$ and
$\overline{\gamma} = \p \xi$ for some suitable-type complex
differential forms $\zeta$ and $\xi$, respectively, which automatically fulfill the
condition \eqref{c-1} in Lemma \ref{slvdb-1}. The conditions
\eqref{c-2} and \eqref{c-3} rely on the equalities
\begin{equation}\label{db-condition}
\left\{
\begin{aligned}
\p \beta = \p (\db \zeta) =0, \\
\db \overline{\gamma} = \db (\p \xi)=0. \\
\end{aligned} \right.
\end{equation}
Then the $\p\db$-lemma will produce $\mu$ and $\nu$, satisfying the
equations \beq\label{mslmm} \left\{ \begin{aligned}
\p \db \mu = \db \zeta =\beta, \\
\db \p \nu = \p \xi = \overline{\gamma}. \\
\end{aligned} \right.  \eeq
The combined expression \beq\label{cbslt} \db \mu + \p \nu \eeq is
our choice for the solution of the system \eqref{conjugatequ-1}, which will be slightly modified to
\[\db \mu + \p \bar \mu\]
as the real solution of the system \eqref{conjugatequ-2}, when
$\beta$ happens to equal to $\gamma$.}
\end{observation}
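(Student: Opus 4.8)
The plan is to read Observation \ref{expslt} as a concrete unwinding of Lemma \ref{slvdb-1}, and hence of the $\p\db$-lemma, verifying each of its assertions in turn with nothing more than $d^2=0$ bookkeeping and a single appeal to the $\p\db$-lemma. First I would dispose of the compatibility requirement: since $\beta=\db\zeta$ and $\overline{\gamma}=\p\xi$ one has $\db\beta=\db\db\zeta=0$ and $\p\overline{\gamma}=\p\p\xi=0$, so $\db\beta+\p\overline{\gamma}=0$, which is exactly condition \eqref{c-1} of Lemma \ref{slvdb-1} and needs no hypothesis on $\zeta$ or $\xi$.

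Next I would produce the solutions of the two separate equations under the extra hypothesis \eqref{db-condition}. Granting $\p\beta=\p\db\zeta=0$, the form $\beta=\db\zeta$ is $\db$-exact and $\p$-closed, hence $d$-closed; the $\p\db$-lemma then upgrades $\db$-exactness to $\p\db$-exactness and furnishes $\mu$ with $\p\db\mu=\beta$, so that $x=\db\mu$ solves $\p x=\beta$, establishing \eqref{c-2}. Dually, granting $\db\overline{\gamma}=\db\p\xi=0$, the $\p$-exact and $\db$-closed form $\overline{\gamma}=\p\xi$ is $d$-closed, and the $\p\db$-lemma furnishes $\nu$ with $\db\p\nu=\overline{\gamma}$, the sign in $\p\db=-\db\p$ being absorbed into $\nu$, so that $x=\p\nu$ solves $\db x=\overline{\gamma}$, establishing \eqref{c-3}. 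Thus all three hypotheses of Lemma \ref{slvdb-1} hold and \eqref{conjugatequ-1} is solvable.

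It then remains to check that the combined expression \eqref{cbslt} is an explicit solution and that, in the symmetric case, its variant is real. Applying $\p$ and $\db$ to $\db\mu+\p\nu$ and invoking $\p^2=\db^2=\p\db+\db\p=0$ gives $\p(\db\mu+\p\nu)=\p\db\mu=\beta$ and $\db(\db\mu+\p\nu)=\db\p\nu=\overline{\gamma}$, so \eqref{cbslt} solves \eqref{conjugatequ-1}. When $\gamma=\beta$, conjugating $\p\db\mu=\beta$ yields $\db\p\overline{\mu}=\overline{\beta}=\overline{\gamma}$, so one may take $\nu=\overline{\mu}$; then $\db\mu+\p\overline{\mu}$ is manifestly conjugation-invariant and, by the computation just made, solves \eqref{conjugatequ-2}, recovering the real solution asserted by Corollary \ref{slvdb-2}.

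I do not expect a genuine obstacle inside the Observation itself; the real difficulty, which it is designed to package, lies in its intended use. When the power series resolution of \eqref{reduced-equsys}, or of \eqref{wan.15} in the balanced case, is carried out degree by degree, one must verify at each step that the accumulated right-hand sides actually present themselves in the form ``$\beta=\db\zeta$, $\overline{\gamma}=\p\xi$'' and satisfy \eqref{db-condition}, so that the Observation applies and the solution \eqref{cbslt} can be selected compatibly with the conjugation symmetry. Supplying exactly that verification is the role of Proposition \ref{2.9} together with the K\"ahlerianity of $X_0$, and, in the balanced setting, of the mild $\p\db$-lemma applied to \eqref{wan.15}.
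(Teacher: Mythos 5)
Your proposal is correct and matches the paper's (implicitly indicated) argument: the paper states the Observation without a separate proof precisely because it is the routine unwinding of Lemma \ref{slvdb-1} and Corollary \ref{slvdb-2} via $\db^2=\p^2=\p\db+\db\p=0$ and one application of the $\p\db$-lemma, which is exactly what you carry out. Your closing remark correctly locates where the real work lies — verifying \eqref{db-condition} degree by degree in Proposition \ref{2.9} and in the balanced analogue — so nothing is missing.
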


Recall a useful fact that $\db^*\G_{\db}y$ is the unique
solution, minimizing the $L^2$-norms of all the solutions, of the
equation
\[ \db x=y \]
on a compact complex manifold if the equation admits one, where
$x,y$ are complex differential forms of pure types and the operator
$\G_{\db}$ denotes the corresponding Green's operator of the
$\db$-Laplacian $\square$. In the K\"ahler case, we choose \[ \db
\mu = \p^* \G_{\p} (\db \zeta)=\p^* \G_{\p} \beta
\quad\text{and}\quad \p \nu = \db^* \G_{\db} (\p \xi) =  \db^*
\G_{\db} \overline{\gamma},\] where $\G_{\p}$ and $\G_{\db}$
coincide, with a uniform symbol $\G$ used afterwards. Then an
explicit solution of the system \eqref{conjugatequ-1} can be taken
as
\[ x = \p^* \G \db \zeta + \db^* \G \p \xi
=\p^* \G \beta + \db^* \G \overline{\gamma}. \] When $\beta$ happens
to equal to $\gamma$, one takes the real solution of the system
\eqref{conjugatequ-2} as
\beq\label{ees} x = \p^* \G
\db \zeta + \db^* \G \p \overline{\zeta} =\p^* \G \beta + \db^* \G
\overline{\beta} \eeq
and accordingly, notices that the operator $\G$
is real in this case.

By these, one is able to obtain the main result of this section:
\begin{theorem}\label{main}
  The system of equations
  \begin{equation}\label{w.22}
    \begin{cases}
      d(e^{\iota_{\varphi}|\iota_{\b{\varphi}}}(\omega(t)))=0,\\
      \omega(t)=\o{\omega(t)},\\
      \omega(0)=\omega_0,
    \end{cases}
  \end{equation}
admits a smooth solution $\omega(t)\in A^{1,1}(X_0)$, where
$\omega_0$ is a K\"{a}hler metric on the complex manifold $X_0$.
Therefore, we can construct a smooth K\"{a}hler metric
$e^{\iota_{\varphi}|\iota_{\b{\varphi}}}(\omega(t))$ on ${X}_t$.
\end{theorem}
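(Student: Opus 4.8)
The plan is to construct the power series $\omega(t) = \sum_{k=0}^\infty \omega_k$ inductively, degree by degree, using Observation \ref{expslt} at each step, and then to establish convergence and regularity. By Proposition \ref{2.9}, it suffices to solve the reduced system \eqref{reduced-equsys} formally together with the reality and initial conditions $\omega(t) = \overline{\omega(t)}$, $\omega(0) = \omega_0$; any real formal solution of \eqref{reduced-equsys} automatically solves \eqref{w.18}, hence $d(e^{\iota_{\varphi}|\iota_{\b{\varphi}}}(\omega(t))) = 0$ by the discussion preceding \eqref{w.18}.

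For the inductive construction: set $\omega_0 = \omega_0$, the given K\"ahler form, which solves \eqref{reduced-equsys} in degree $0$ since $d\omega_0 = 0$ and the $\varphi$-terms vanish in degree $0$. Assuming $\omega_0, \ldots, \omega_N$ have been chosen so that \eqref{reduced-equsys} holds up to degree $N$ and each is real, Proposition \ref{2.9} gives $\db(\varphi(t) \lc \omega(t))_{N+1} = 0$. Writing the degree-$(N+1)$ part of the first equation of \eqref{reduced-equsys} as $\db \omega_{N+1} = (\text{known terms of degree } N+1 \text{ built from } \varphi \text{ and } \omega_{\leq N})$, one checks that the right-hand side is $\db$-exact: indeed $\db(\b\varphi\varphi\lc\omega - \varphi\lc\b\varphi\lc\omega)$ is manifestly $\db$-exact, while $\p(\varphi\lc\omega)$ equals $\db$ of something modulo the integrability relation — more precisely the combination on the right is $\db\zeta_{N+1}$ for a suitable $\zeta_{N+1}$, and similarly the second equation has right-hand side $\p\xi_{N+1}$, with $\xi_{N+1} = \overline{\zeta_{N+1}}$ by reality. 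The compatibility condition \eqref{c-1} holds automatically; the conditions \eqref{db-condition} (namely $\p\db\zeta_{N+1} = 0$ and its conjugate) follow from $\db(\varphi\lc\omega)_{N+1}=0$ together with the commutator formula \eqref{f1} and Proposition \ref{7for}, exactly as in the computation inside the proof of Proposition \ref{2.9}. Then the K\"ahler $\p\db$-lemma (Corollary \ref{slvdb-2}, in the form of Observation \ref{expslt}) produces the real solution
\[
\omega_{N+1} = \p^* \G\, \db \zeta_{N+1} + \db^* \G\, \p\, \overline{\zeta_{N+1}},
\]
with $\G$ real, completing the induction.

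The remaining, and genuinely analytic, part of the argument is convergence: one must show the formal power series $\omega(t)$ converges in a suitable H\"older norm $C^{k,\alpha}$ for $|t|$ small, so that $\omega(t)$ is smooth and, for $|t|$ small, positive (since $\omega(0) = \omega_0$ is positive definite), whence $e^{\iota_{\varphi}|\iota_{\b{\varphi}}}(\omega(t))$ is a genuine K\"ahler form on $X_t$. I expect this to be the main obstacle. The standard device is a majorant series estimate à la Kodaira--Nirenberg--Spencer / Kuranishi: one uses the elliptic estimate $\|\p^*\G u\|_{k+1,\alpha} + \|\db^*\G u\|_{k+1,\alpha} \le C\|u\|_{k,\alpha}$ for the Green's operator, controls the nonlinear right-hand side $\zeta_{N+1}$ (a finite sum of contractions of lower-degree $\omega_j$ with the $\varphi$-terms, whose norms are themselves majorized by a geometric series $\rho^{-1}/(1 - |t|/\rho)$-type bound coming from the convergence of the Kuranishi family $\varphi(t)$), and feeds this into a classical majorant $A(t) = b \sum \frac{(ct)^k}{k^2}$ to close the induction on norms. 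The regularity/smoothness then follows by the usual bootstrapping (Subsections \ref{convergence} and \ref{regularity} of the paper), since each $\omega_{i,j}$ is already smooth and the limit inherits $C^\infty$ regularity once $C^{k,\alpha}$ convergence is established for all $k$. This completes the construction of the smooth K\"ahler metric $e^{\iota_{\varphi}|\iota_{\b{\varphi}}}(\omega(t))$ on $X_t$.
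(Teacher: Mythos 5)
Your proposal follows essentially the same route as the paper: reduce via Proposition \ref{2.9} to the system \eqref{reduced-equsys}, solve it degree by degree through Observation \ref{expslt} using the $\db$-closedness of $(\varphi\lc\omega)_{N+1}$, and then run the H\"older majorant and elliptic regularity arguments. Two execution points deserve correction, though. First, your closed formula $\omega_{N+1}=\p^*\G\,\db\zeta_{N+1}+\db^*\G\,\p\overline{\zeta_{N+1}}$, with $\db\zeta_{N+1}$ standing for the \emph{entire} right-hand side, does not directly verify the equations: the cross terms such as $\p\db^*\G\p\overline{\zeta_{N+1}}$ only vanish when the datum fed into the Green's operator is genuinely of the pure form $-\p(\varphi\lc\omega)$ resp.\ $-\db(\b\varphi\lc\omega)$. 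The paper first peels off the algebraic contraction term, setting
\[
\omega_N=(\b{\varphi}\varphi\lc\omega-\varphi\lc\b{\varphi}\lc\omega)_N+\left(\p\db^*\G(\varphi\lc\omega)+\db\p^*\G(\b{\varphi}\lc\omega)\right)_{N},
\]
using Proposition \ref{7for}.\eqref{7.1} to see that the same peeled term serves both equations; only then do the K\"ahler identities kill the cross terms, and only this explicit canonical form supports the norm estimates of Subsection \ref{convergence}. Second, regularity is not obtained by "establishing $C^{k,\alpha}$ convergence for all $k$": the convergence radius may shrink to $0$ as $k\to\infty$, which is precisely why the paper instead exhibits $\omega(t)$ as a solution of the elliptic equation \eqref{ellip-eq} and invokes interior estimates. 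Neither point changes the strategy, but both are needed to make the induction and the analytic closure actually go through.
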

\begin{proof}
We are going to present such an explicit expression for the solution
of the obstruction equation \eqref{reduced-equsys}, whose existence
is assured by Proposition \ref{2.9} and the remarks after it, with
the initial metric $\omega(0)=\omega_0$. The first-order system of equations
$$\label{a-reduced-equsys}
    \begin{cases}
      \b{\p}\omega_1=-\p(\varphi_1\l\omega_0),\\
      \p\omega_1=-\b{\p}(\b{\varphi}_1\l\omega_0),\\
    \end{cases}
$$
admits an explicit real solution, as given by \eqref{ees},
\[ \omega_1 = \p\db^*\G(\varphi_1\l\omega_0)+\db\p^*\G(\b{\varphi}_1\l\omega_0). \]
By induction, we may assume that (\ref{reduced-equsys}) has an
explicit real solution $\omega_k$ for $k\leq N-1$. Based the
construction \eqref{ees} above, one gets a real solution of
the $N$-th order equation $\eqref{reduced-equsys}_N$
$$\label{sol-reduced-equsys}
\omega_N=(\b{\varphi}\varphi\l\omega-\varphi\l\b{\varphi}\l\omega)_N+\left(\p\db^*\G(\varphi\l\omega)+\db\p^*\G(\b{\varphi}\l\omega)\right)_{N},
$$
using Proposition \ref{7for}.\eqref{7.1}.

Hence, we complete the induction and get a formal solution
$\omega(t)$ of (\ref{w.22}) with explicit expressions. By the  H\"older
$C^{k,\alpha}$-convergence and regularity argument in Subsections
\ref{convergence} and \ref{regularity}, the formal power series
$\omega(t)$ constructed above is smooth and solves the system
\eqref{w.22} of equations.
\end{proof}

\subsection{H\"older convergence}\label{convergence}
Consider an important power series in deformation theory of complex
structures
\begin{equation}\label{cru-ps}
A(t)=\frac{\beta}{16\gamma}\sum_{m=1}^\infty\frac{(\gamma
t)^m}{m^2}:=\sum_{m=1}^\infty A_m t^m,
\end{equation}
where $\beta, \gamma$ are positive constants to be determined. The
power series \eqref{cru-ps} converges for $|t|<\frac{1}{\gamma}$ and
has a nice property:
\begin{equation}\label{p-prin}
A^i(t)\ll {\left(\frac{\beta}{\gamma}\right)}^{i-1}A(t).
\end{equation}
 See \cite[Lemma 3.6 and its Corollary in Chapter 2]{MK} for these
basic facts. Here we use the following notation: For the series with
real positive coefficients
$$a(t)=\sum_{m=1}^\infty a_m t^m, \quad b(t)=\sum_{m=1}^\infty b_m t^m,$$
say that \emph{$a(t)$ dominates $b(t)$}, written as $b(t)\ll a(t)$,
if $ b_m\leq a_m$. But for a power series of (bundle-valued) complex differential
forms
$$\eta(t)=\sum_{i+j\geq 0}^\infty \eta_{i,j} t^i\bar{t}^j,$$ the
notation
$$\|\eta(t)\|_{k, \alpha}\ll A(t)$$ means
$$\sum_{i+j=m}\|\eta_{i,j}\|_{k, \alpha}
\leq A_m$$ with the $C^{k, \alpha}$-norm $\|\cdot\|_{k, \alpha}$ as
defined on \cite[Page 159]{MK}. In this manner, for a power
series of Beltrami differential $\psi(t)=\sum_{i+j=1}^\infty \psi_{i,j}
t^i\b t^j$, the notation
$$\|\psi(t)\|_{k, \alpha}\ll A(t)$$ indicates
$\sum_{i+j=m}\|\psi_{i,j}\|_{k, \alpha}
\leq A_m$, and similarly for $\bar{\psi}(t)$. This notation is also used to compare two or more power
series of (bundle-valued) complex differential forms degree by degree, such as $\|\psi(t)\|_{k, \alpha}\ll \|\eta(t)\|_{k, \alpha}\cdot\|\rho(t)\|_{k, \alpha}$ for three such power series.

For any complex differential form $\phi$, we have two a priori
elliptic estimates
\begin{equation}\label{ee1}
\|\overline{\partial}^*\phi\|_{k-1, \alpha}\leq C_1\|\phi\|_{k,
\alpha}
\end{equation}
and
\begin{equation}\label{db-ee2}
\|\G\phi\|_{k, \alpha}\leq C_{k,\alpha}\|\phi\|_{k-2, \alpha},
\end{equation}
where $\G$ is the associated Green's operator to the operator $\db$, $k>1$, $C_1$ and $C_{k,\alpha}$
depend on only on $k$ and $\alpha$, not on $\phi$. (See \cite[Proposition
$2.3$ in Chapter $4$]{MK}.)

According to the proof of Theorem \ref{main}, for $r\geq 1$, one
real solution of the obstruction equation \eqref{reduced-equsys} is
\begin{equation}\label{sol-reduced-equsys}
\omega_r=(\b{\varphi}\varphi\l\omega-\varphi\l\b{\varphi}\l\omega)_r+\left(\p\db^*\G(\varphi\l\omega)+\db\p^*\G(\b{\varphi}\l\omega)\right)_{r},
\end{equation}
and obviously
\begin{equation}
\label{sol-reduced-equsys-r}
\omega^{(r)}=(\b{\varphi}\varphi\l\omega-\varphi\l\b{\varphi}\l\omega)^{(r)}+\left(\p\db^*\G(\varphi\l\omega)+\db\p^*\G(\b{\varphi}\l\omega)\right)^{(r)}
\end{equation}
are real complex differential forms. Here for a power series of (bundle-valued)
complex differential forms
$$\eta(t)=\sum_{i+j= 0}^\infty \eta_{i,j} t^i\bar{t}^j,$$ one
denotes by $\eta^{(r)}$ the summation
$$\sum_{i+j=1}^{r}
\eta_{i,j} t^i\bar{t}^j:=\eta_1+\cdots+\eta_r.$$

Recall the holomorphic family
\begin{equation}\label{phi-ps-p}
\varphi(t) = \sum_{i=1}^{\infty}\varphi_{i}t^i:=\sum_{j=1}^{\infty}\varphi_j(t),
\end{equation}
{for $|t|< \rho$ a small positive constant,} of Beltrami differentials representing Kuranishi family develop as in \eqref{phi-ps-0},\eqref{phi-ps}:
$$
 \varphi_1(t)=t\eta,
$$
where $\eta$ is a base of for $\mathbb{H}^{0,1}(X_0,T^{1,0}_{X_0})$, and for $i\geq 2$,
$$
  \varphi_i=\frac{1}{2}\db^*\G\sum_{j+l=i}[\varphi_j,\varphi_{l}].
$$
Then it satisfies a nice convergence property:
$$\|\varphi(t)\|_{k, \alpha}\ll A(t)$$
as given in the proof of \cite[Proposition 2.4 in Chapter $4$]{MK}.
In this proof, $\beta$ and $\frac{\beta}{\gamma}$ should satisfy
that
\begin{equation}\label{initial-bg}
\beta\geq b,\ \frac{\beta}{\gamma}\leq b_k,
\end{equation}
where the constants $b,b_k>0$ and $b_k$ depends on $k$.
Here we follow the idea of proving the
convergence of $\varphi(t)$ there to obtain that
$$\|\omega^{(r)}\|_{k, \alpha}\ll A(t),\ \text{for any $r\geq 1$},$$
which implies the desired convergence $\|\omega^{(+\infty)}\|_{k,
\alpha}\ll A(t)$ immediately.
 Assume that they are chosen so that $$\|\omega^{(r-2)}\|_{k, \alpha},\|\omega^{(r-1)}\|_{k, \alpha}\ll A(t).$$
By the expression \eqref{sol-reduced-equsys-r} and the two a priori
elliptic estimates \eqref{ee1} \eqref{db-ee2}, one has
\begin{align*}
\|\omega^{(r)}\|_{k, \alpha}
 &=\|(\b{\varphi}\varphi\l\omega^{(r-2)}-\varphi\l\b{\varphi}\l\omega^{(r-2)})^{(r)}+\left(\p\db^*\G(\varphi\l\omega^{(r-1)})+\db\p^*\G(\b{\varphi}\l\omega^{(r-1)})\right)^{(r)}\\
 &\quad +(\b{\varphi}\varphi\l\omega_0-\varphi\l\b{\varphi}\l\omega_0)^{(r)}+\left(\p\db^*\G(\varphi\l\omega_0)+\db\p^*\G(\b{\varphi}\l\omega_0)\right)^{(r)}\|_{k, \alpha}\\
 &\ll 2\|\b{\varphi}^{(r-2)}\|_{k, \alpha}\|\varphi^{(r-2)}\|_{k, \alpha}\|\omega^{(r-2)}\|_{k, \alpha}
 +2 C_1C_{k,\alpha}\|\varphi^{(r-1)}\|_{k, \alpha}\|\omega^{(r-1)}\|_{k, \alpha}\\
 &\quad + 2\|\b{\varphi}^{(r-1)}\|_{k, \alpha}\|\varphi^{(r-1)}\|_{k, \alpha}\|\omega_0\|_{k, \alpha}
 +2 C_1C_{k,\alpha}\|\varphi^{(r)}\|_{k, \alpha}\|\omega_0\|_{k,
 \alpha}.
\end{align*}
Then we use induction and \eqref{p-prin} to get:
\begin{align*}
\|\omega^{(r)}\|_{k, \alpha}
 &\ll 2(A(t)+C_1C_{k,\alpha})A^2(t)+2\|\omega_0\|_{k,\alpha}(A(t)+C_1C_{k,\alpha})A(t)\\
 &\ll
 2(A(t)+C_1C_{k,\alpha})\left(\frac{\beta}{\gamma}+\|\omega_0\|_{k,\alpha}\right)A(t).
\end{align*}
Hence, we may choose $\beta,\gamma,\|\omega_0\|_{k, \alpha}$ so that the
following inequalities hold:
$$2(A(t)+C_1C_{k,\alpha})\left(\frac{\beta}{\gamma}+\|\omega_0\|_{k,\alpha}\right)<1,$$
\eqref{initial-bg} and $\omega^{(1)},\omega^{(2)}\ll A(t)$ according
to the above formulation, which are obviously possible as long as
$t$ is small and we notice that the H\"{o}lder norm depends on the
choice of the local coordinate charts when defined a differential
manifold as pointed out on \cite[Page $275$]{k}. Therefore, for small $t$, $\omega(t)$ converges in the $C^{k,
\alpha}$-norm and thus its positivity follows.

\subsection{Regularity argument}\label{regularity}
In this subsection we proceed to the regularity argument for the
power series constructed as above since there is possibly no uniform
lower bound for the convergence radius obtained in the last
subsection in the $C^{k,\alpha}$-norm as $k$ converges to $+\infty$.
We resort to the elliptic operator, the $\db$-Laplacian
\begin{equation}\label{dbar-Lap}
\square=\db^*\db+\db\db^*.
\end{equation}
Here the dual operators are defined with respect to the fixed
original K\"ahler metric $\omega_0$. By the classical K\"ahler
identity, Hodge decomposition and the induced commutativity of the
associated operators, one has
\begin{align*}
\square\omega
 &=\square(\b{\varphi}\varphi\l\omega-\varphi\l\b{\varphi}\l\omega)
    +\square\p\db^*\G(\varphi\l\omega)+\square\overline{\p\db^*\G(\varphi\l\omega)}\\
 &=\square(\b{\varphi}\varphi\l\omega-\varphi\l\b{\varphi}\l\omega)
    +\p\db^*(\varphi\l\omega)+\overline{\p\db^*(\varphi\l\omega)}
\end{align*}
according to the solution \eqref{sol-reduced-equsys}.

Consequently, $\omega$ is a solution of the two-order partial
differential equation
\begin{equation}\label{ellip-eq}
\square\omega-\square(\b{\varphi}\varphi\l\omega-\varphi\l\b{\varphi}\l\omega)
    -\p\db^*(\varphi\l\omega)-\db\p^*(\b\varphi\l\omega)=0.
\end{equation}

Similarly to the argument on \cite[Page 281]{k}, writing out the
last three terms in the left-hand side of \eqref{ellip-eq} locally,
we can find that the expressions of their principal parts (i.e., the
highest-order terms) contain factors from $\b{\varphi}\varphi$,
$\varphi$, or $\b{\varphi}$. Since $\varphi(t)\rightarrow 0$ as
$t\rightarrow 0$, taking sufficiently small $\epsilon$-disk
$\Delta_{\epsilon}\subseteq \mathbb{C}$, we can assume that the equation
\eqref{ellip-eq} is a linear elliptic partial differential equation
of $\om$ on $X_0$ when noticing the ellipticity of an operator only
concerns about its principal part. Thus, the interior estimates \cite{dn} for
elliptic systems of partial differential equations give
rise to the desired regularity of $\omega(t)$, i.e., $\omega(t)$ is
smooth on $X_0$ for each $t$ smaller than a uniform upper bound to
be obtained similarly to \cite[Appendix.\S 8]{k}. Then $\omega(t)$ can
be regarded as a real analytic family of $(1,1)$-forms in $t$ and it is smooth on $t$
by \cite[Proposition 2.2.3]{kp}.

\section{Stability of balanced structures with $(n-1,n)$-th mild
$\p\b{\p}$-lemma}\label{balanced}

In this section we will discuss the local stability problem of
balanced structures, satisfying various $\p\db$-lemmata. Recall that
a \emph{balanced metric} $\omega$ on an $n$-dimensional complex
manifold is a real positive $(1,1)$-form, satisfying that
$$d(\omega^{n-1})=0,$$
and a complex manifold is called \emph{balanced} if there
exists a balanced metric $\omega$ on it. Note that the existence of
a balanced metric $\omega$ is equivalent to that of a $d$-closed real
positive $(n-1,n-1)$-form $\Omega$ with the relation
$\Omega=\omega^{n-1}$ (see \cite[(4.8)]{Mich}).

\subsection{The $(n-1,n)$-th mild $\p\b{\p}$-lemma and
examples}\label{subs-mild} We are going to study a new kind of
\lq\lq$\p\db$-lemma", its relations with various analogous conditions in
the literature and its examples involved.

Let $X$ be a compact complex manifold of (complex) dimension $n$
with the following commutative diagram
$$\xymatrix{      & H^{n-1,n}_{\p}(X) \ar[dr]^{\iota^{n-1,n}_{\p,A}} &            \\
 H^{n-1,n}_{BC}(X) \ar[ur]^{\iota^{n-1,n}_{BC,\p}} \ar[dr]_{\iota^{n-1,n}_{BC,\db}} \ar[rr]^{\iota^{n-1,n}_{BC,A}} & &  H^{n-1,n}_{A}(X)  \\
                & H^{n-1,n}_{\db}(X) \ar[ur]_{\iota^{n-1,n}_{\db,A}}. &          }$$
Bott--Chern and Aeppli cohomology groups of $X$ are defined as
$$H^{\bullet,\bullet}_{\mathrm{BC}}(X)=:\frac{\ker \p\cap \ker\db}{\im \p\db}\quad
\text{and}\quad H^{\bullet,\bullet}_{\mathrm{A}}(X)=:\frac{\ker
\p\db}{\im \p+\im\db},$$ respectively, while
$H^{\bullet,\bullet}_{\p}(X)$ is defined similarly. Note that the
inequalities
\[\dim_{\mathbb{C}}H^{n-1,n}_{A}(X) \leq \dim_{\mathbb{C}}H^{n-1,n}_{\p}(X) \leq \dim_{\mathbb{C}}H^{n-1,n}_{BC}(X)\]
hold on any compact complex manifold \cite[Corollary 3.3]{au}.

\begin{definition}\label{mild}
The compact complex manifold $X$ satisfies \emph{the $(n-1,n)$-th
mild $\p\b{\p}$-lemma}, if the mapping
$\iota_{BC,\p}^{n-1,n}:H^{n-1,n}_{BC}(X) \rightarrow
H^{n-1,n}_{\p}(X)$, induced by the identity map, is injective.
Equivalently, for any $(n-2,n)$-complex differential form $\xi$,
there exists an $(n-2,n-1)$-form $\theta$ such that
\[ \p \db \theta = \p \xi. \]
Since $\iota_{BC,\p}^{n-1,n}:H^{n-1,n}_{BC}(X) \rightarrow
H^{n-1,n}_{\p}(X)$ is always surjective, the following conditions
are equivalent:
\[ \begin{aligned}
\text{the $(n-1,n)$-th mild $\p\b{\p}$-lemma}
&\Longleftrightarrow \iota_{BC,\p}^{n-1,n}:H^{n-1,n}_{BC}(X) \rightarrow H^{n-1,n}_{\p}(X)\ \text{an isomorphism} \\
&\Longleftrightarrow \dim_{\mathbb{C}} H^{n-1,n}_{BC}(X) =
\dim_{\mathbb{C}} H^{n-1,n}_{\p}(X).
\end{aligned}\]
\end{definition}
Let us give an example of a non-$\p \db$-manifold which satisfies the (1,2)-mild $\p\db$-lemma.
\begin{example}\emph{(\cite[Page 90]{N} and \cite[Example 1]{K})}\label{NdbMdb}
Let $X$ be the manifold in the case $(ii)$ of the completely-solvable
Nakamura manifold as given in \cite[Example 3.1]{AK}.
Then the manifold $X$ satisfies the $(1,2)$-mild $\p\db$-lemma, but not the $\p\db$-lemma.
\end{example}
\begin{proof}
It is shown in \cite[Table 5 in Appendix A]{AK} and \cite[the case B in Example 1]{K} that
the bases of $H^{2,1}_{BC}(X)$ and $H^{2,1}_{\db}(X)$ can be illustrated as follows:
\[ \begin{aligned}
H^{2,1}_{BC}(X) &= \langle [dz_{12\bar{3}}]_{BC}, [e^{-2z_1}dz_{12\bar{2}}]_{BC},
[e^{2z_1}dz_{13\bar{3}}]_{BC}, [dz_{12\bar{3}}]_{BC},[dz_{13\bar{2}}]_{BC} \rangle,\\
H^{2,1}_{\db}(X) &= \langle [dz_{12\bar{3}}]_{\db}, [e^{-2z_1}dz_{12\bar{2}}]_{\db},
[e^{2z_1}dz_{13\bar{3}}]_{\db}, [dz_{12\bar{3}}]_{\db},[dz_{13\bar{2}}]_{\db} \rangle,
\end{aligned}\]
which indicates that $\iota_{BC,\db}^{2,1}$ is actually an isomorphism.
It is obvious from \cite[Table 6 in Appendix A]{AK} that
$\dim_{\mathbb{C}}H^{1,1}_{BC}(X)=3$ and $\dim_{\mathbb{C}}H^{1,1}_{\db}(X)=5$,
which implies that the $\p\db$-lemma doesn't hold on $X$.
\end{proof}

There are three more similar conditions in relevance with the local
stability of balanced structures. \emph{The $(n-1,n)$-th weak
$\p\db$-lemma} on the compact complex manifold $X$, introduced by
Fu--Yau \cite{FY}, says that if for any real $(n-1,n-1)$-form $\psi$
such that $\db \psi$ is $\p$-exact, there exists an $(n-2,n-1)$-form
$\theta$, satisfying
\[ \p \db \theta = \db \psi. \]
And \emph{the $(n-1,n)$-th strong $\p\db$-lemma}, proposed by
Angella--Ugarte \cite{au}, states that the mapping
$\iota^{n-1,n}_{BC,A}: H^{n-1,n}_{BC}(X) \rightarrow
H^{n-1,n}_{A}(X)$, induced by the identity map, is injective, which
is equivalent to that for any $\p$-closed $(n-1,n)$-form $\Gamma$ of
the type $\Gamma=\p\xi+\db \psi$, there exists an $(n-2,n-1)$-form
$\theta$ such that
\[ \p \db \theta = \Gamma. \]
Angella--Ugarte \cite[Theorem 3.1]{au} show that the $(n-1,n)$-th
strong $\p\db$-lemma amounts to the sGG condition, carefully studied in \cite{PU},
and the vanishing of the first $\p\db$-degree $\Delta^1(X)$, introduced in \cite{at}, with the deformation
openness of the $(n-1,n)$-th strong $\p\db$-lemma proved in
\cite[Proposition 4.8]{au}. They also show in \cite[Corollary
3.3]{au} the equivalence:
\[ \begin{aligned}
\text{the $(n-1,n)$-th strong $\p\db$-lemma}
&\Longleftrightarrow \dim_{\mathbb{C}}H^{n-1,n}_{BC}(X) = \dim_{\mathbb{C}}H^{n-1,n}_{A}(X) \\
&\Longleftrightarrow \dim_{\mathbb{C}}H^{0,1}_{BC}(X) = \dim_{\mathbb{C}}H^{0,1}_{A}(X) \\
&\Longleftrightarrow b_1=2\dim_{\mathbb{C}}H^{0,1}_{A}(X).
\end{aligned}\]
Besides, the condition that the induced mapping
$\iota^{n-1,n}_{BC,\db}: H^{n-1,n}_{BC}(X) \rightarrow
H^{n-1,n}_{\db}(X)$ by the identity map is injective, is presented
by Angella--Ugarte \cite{AU} to study local conformal balanced
structures and global ones, which we may call \emph{the $(n-1,n)$-th
dual mild $\p\db$-lemma}.

After a simple check, we have the following observation:
\begin{observation}\label{s-eq}
\emph{The compact complex manifold $X$ satisfies the $(n-1,n)$-th
strong $\p\db$-lemma if and only if both of the mild one and the
dual mild one hold on $X$.}
\end{observation}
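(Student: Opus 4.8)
The plan is to reduce everything to the form-level characterizations in bidegree $(n-1,n)$ already recorded in Subsection~\ref{subs-mild}, exploiting that on an $n$-dimensional complex manifold every $(n-1,n)$-form is automatically $\db$-closed and $\p\db$-closed. Under this simplification a class of $H^{n-1,n}_{BC}(X)$ is a $\p$-closed $(n-1,n)$-form modulo $\im\p\db$, a class of $H^{n-1,n}_A(X)$ is an $(n-1,n)$-form modulo $\im\p+\im\db$, and the three lemmata become: the strong one says every $\p$-closed $(n-1,n)$-form of the shape $\p\xi+\db\psi$ lies in $\im\p\db$; the mild one says every $\p\xi$ with $\xi\in A^{n-2,n}(X)$ lies in $\im\p\db$; the dual mild one says every $\p$-closed $\db$-exact $(n-1,n)$-form lies in $\im\p\db$.

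For the implication ``strong $\Rightarrow$ mild and dual mild'' I would simply observe that $\p\xi$ is $\p$-closed and trivially of the type $\p\xi+\db\psi$ permitted by the strong lemma (with vanishing $\db$-part), so the strong lemma yields the mild one; and a $\p$-closed $\db$-exact form $\db\psi$ is $\p$-closed and of the type $\p\xi+\db\psi$ (with vanishing $\p$-part), so the strong lemma yields the dual mild one.

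For the converse, assuming both the mild and the dual mild lemmata, take a $\p$-closed $(n-1,n)$-form $\Gamma=\p\xi+\db\psi$. First apply the mild lemma to $\xi$ to produce $\theta_1\in A^{n-2,n-1}(X)$ with $\p\db\theta_1=\p\xi$, so that $\Gamma-\p\db\theta_1=\db\psi$ is $\db$-exact and, since $\p(\Gamma-\p\db\theta_1)=\p\Gamma=0$, still $\p$-closed. Then apply the dual mild lemma to $\db\psi$ to obtain $\theta_2\in A^{n-2,n-1}(X)$ with $\p\db\theta_2=\db\psi$, whence $\Gamma=\p\db(\theta_1+\theta_2)$, which is the strong lemma. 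The only delicate point — the closest thing to an obstacle in this otherwise routine ``simple check'' — is verifying that after the correction by $\p\db\theta_1$ the leftover $\db$-exact piece remains $\p$-closed, so that the dual mild lemma is genuinely applicable; this is precisely where the hypothesis that $\Gamma$ is $\p$-closed is used.
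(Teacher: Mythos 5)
Your proof is correct, and it is precisely the ``simple check'' that the paper alludes to but does not write out: the paper states Observation \ref{s-eq} with no proof, and your argument (strong $\Rightarrow$ each of mild and dual mild by taking $\psi=0$ resp.\ $\xi=0$; conversely, correct $\Gamma=\p\xi+\db\psi$ by $\p\db\theta_1=\p\xi$ from the mild lemma and then apply the dual mild lemma to the remaining $\p$-closed $\db$-exact piece) is the intended one. The one point needing care --- that $\Gamma-\p\db\theta_1=\db\psi$ is still $\p$-closed so the dual mild lemma applies --- is exactly the point you verify.
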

And the mild one and the dual mild one both imply the weak one. All
the four \lq\lq$\p\db$-lemmata" hold if the compact complex manifold $X$
satisfies the standard $\p\db$-lemma.

We refer the readers to \cite{R,UV,LUV,AK} as the background
materials on the theory of nilmanifolds and solvmanifolds to be
focused on in the rest of this subsection.

Recall that a \emph{nilmanifold $M$ with left-invariant complex
structure} is a compact quotient of a simply-connected nilpotent Lie
group $G$ of real even dimension by a lattice $\Gamma$ of maximal
rank, whose Lie algebra $\mathfrak{g}$ admits an integrable complex
structure $J$. It is clear that the \emph{invariant complex
structure} $J$ on $G$ descends to $M$ in a natural way and it is
given by an endomorphism $J:\mathfrak{g}\rightarrow \mathfrak{g}$ of
the Lie algebra $\mathfrak{g}$ such that $J^2=-\1$, satisfying the
\lq\lq Nijenhuis condition"
 $$[JX,JY]=J[JX,Y]+J[X,JY]+[X,Y],$$
for any $X,Y\in \mathfrak{g}$.

Let $\mathfrak{g}_\mathbb{C}$ be the complexification of
$\mathfrak{g}$ and $\mathfrak{g}_\mathbb{C}^*$ its dual.
We denote by $\mathfrak{g}^{1,0}$ and $\mathfrak{g}^{0,1}$ the
eigenspaces corresponding to the eigenvalues $\pm\sqrt{-1}$ of $J$
as an endomorphism of $\mathfrak{g}_\mathbb{C}^*$, respectively. The
decomposition
$$\mathfrak{g}_\mathbb{C}^*=\mathfrak{g}^{1,0}\bigoplus\mathfrak{g}^{0,1}$$
gives rise to a natural bigraduation on the complexified exterior
algebra
$$\bigwedge^*\mathfrak{g}_\mathbb{C}^*=\bigoplus_{p,q}\bigwedge^{p,q}
\mathfrak{g}^* =\bigoplus_{p,q}\bigwedge^p
\mathfrak{g}^{1,0}\bigoplus\bigwedge^q\mathfrak{g}^{0,1}.$$ We will
still use the Chevalley--Eilenberg differential $d$ of the Lie
algebra to denote its extension to the complexified exterior
algebra, i.e.,
$d:\bigwedge^*\mathfrak{g}_\mathbb{C}^*\rightarrow\bigwedge^{*+1}\mathfrak{g}_\mathbb{C}^*.$
It is well known that the endomorphism $J$ is a complex structure if
and only if \[d\mathfrak{g}^{1,0}\subset\bigwedge^{2,0}
\mathfrak{g}^*\bigoplus\bigwedge^{1,1} \mathfrak{g}^*.\] As for
nilpotent Lie algebras $\mathfrak{g}$, Salamon \cite{Sa} proves the
equivalence: $J$ is a \emph{complex structure} on $\mathfrak{g}$ if
and only if $\mathfrak{g}^{1,0}$ has a basis $\{\varpi^i\}_{i=1}^n$
such that $d\varpi^1=0$ and
$$d\varpi^i\in \mathcal{I}(\varpi^1,\cdots,\varpi^{i-1}),\ \text{for $i=2,\cdots,n$},$$
where $\mathcal{I}(\varpi^1,\cdots,\varpi^{i-1})$ is the ideal in
$\wedge^*\mathfrak{g}_\mathbb{C}^*$ generated by
$\{\varpi^1,\cdots,\varpi^{i-1}\}$. The work \cite{CFGU} shows that
a complex structure $J$ is\emph{ nilpotent} if and only if
$\mathfrak{g}^{1,0}$ admits a basis $\{\varpi^i\}_{i=1}^n$ with
$d\varpi^1=0$ and
$$d\varpi^i\in \wedge^2\langle\varpi^1,\cdots,\varpi^{i-1},\bar{\varpi}^1,\cdots,\bar{\varpi}^{i-1})\rangle,\ \text{for $i=2,\cdots,n$};$$
otherwise $J$ is \emph{non-nilpotent}. \emph{Abelian complex
structures} satisfy additionally that $d\mathfrak{g}^{1,0}\subset
\bigwedge^{1,1} \mathfrak{g}^*$. A nilpotent complex structure is
\emph{complex parallelizable} if and only if
$d\mathfrak{g}^{1,0}\subset \bigwedge^{2,0} \mathfrak{g}^*$.

Inspired by Ugarte--Villacampa \cite[Proposition 3.4 and Corollary
3.5]{UV}, one has:
\begin{proposition}\label{ms}
Let $M = \Gamma \backslash G$ be a $2n$-dimensional nilmanifold
endowed with an invariant complex structure $J$ (i.e., complex
$n$-dimensional $M$) and $\mathfrak{g}$ the Lie algebra of $G$. If
$(\mathfrak{g},J)$ satisfies the $(n-1,n)$-th mild $\p\db$-lemma and
$H^{n-2,n}_{\db}(M,J) \cong H^{n-2,n}_{\db}(\mathfrak{g},J)$, then
$(M,J)$ satisfies the $(n-1,n)$-th mild $\p\db$-lemma.
\end{proposition}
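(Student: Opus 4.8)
The plan is to verify the defining property of the $(n-1,n)$-th mild $\p\db$-lemma directly on $(M,J)$: given an arbitrary $(n-2,n)$-form $\xi$ on $M$, I must produce an $(n-2,n-1)$-form $\theta$ on $M$ with $\p\db\theta=\p\xi$. The strategy is to peel off a $\db$-exact correction so that what remains is an \emph{invariant} form, to which the Lie-algebra hypothesis applies verbatim; the one delicate ingredient, flagged now, will be that the assumed cohomology isomorphism is realized by the inclusion of invariant forms.

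First I would note the elementary bidegree fact that, since $n$ is the top anti-holomorphic degree on the complex $n$-fold $M$, every $(n-2,n)$-form is automatically $\db$-closed; hence $H^{n-2,n}_{\db}(M,J)=A^{n-2,n}(M)/\db A^{n-2,n-1}(M)$, and similarly $H^{n-2,n}_{\db}(\mathfrak{g},J)={\bigwedge}^{n-2,n}\mathfrak{g}^*\big/\,\db\big({\bigwedge}^{n-2,n-1}\mathfrak{g}^*\big)$. The assumed isomorphism $H^{n-2,n}_{\db}(M,J)\cong H^{n-2,n}_{\db}(\mathfrak{g},J)$ is the one induced by the canonical inclusion $\iota\colon\bigwedge^{\bullet,\bullet}\mathfrak{g}^*\hookrightarrow A^{\bullet,\bullet}(M)$ of invariant forms; in particular $\iota$ is surjective on $(n-2,n)$-Dolbeault cohomology. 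Applying this to the class of $\xi$ yields a decomposition
\[ \xi=\xi_0+\db\psi,\qquad \xi_0\in{\bigwedge}^{n-2,n}\mathfrak{g}^*,\quad \psi\in A^{n-2,n-1}(M). \]

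Next I would apply $\p$ and use the standard fact that, for an invariant complex structure on a nilmanifold, the Chevalley--Eilenberg differential $d$ (hence $\p$ and $\db$) preserves invariant forms. Thus $\p\xi=\p\xi_0+\p\db\psi$, where $\p\xi_0\in{\bigwedge}^{n-1,n}\mathfrak{g}^*$ is invariant. Because $(\mathfrak{g},J)$ satisfies the $(n-1,n)$-th mild $\p\db$-lemma and $\xi_0$ is an invariant $(n-2,n)$-form, there is an invariant $\theta_0\in{\bigwedge}^{n-2,n-1}\mathfrak{g}^*$ with $\p\db\theta_0=\p\xi_0$ inside $\bigwedge^{\bullet,\bullet}\mathfrak{g}^*$; since $\iota$ intertwines $\p$ and $\db$, the same identity holds in $A^{\bullet,\bullet}(M)$. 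Setting $\theta:=\theta_0+\psi\in A^{n-2,n-1}(M)$ we get $\p\db\theta=\p\db\theta_0+\p\db\psi=\p\xi_0+\p\db\psi=\p\xi$, which is precisely the $(n-1,n)$-th mild $\p\db$-lemma for $(M,J)$.

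No analytic input (Green's operators, elliptic estimates) enters: the argument is a pure transfer of the Lie-algebra statement to $M$ along $\iota$. The only point requiring genuine care is the fact that the comparison isomorphism in the hypothesis is realized by the inclusion of invariant forms, so that every class on $M$ admits an invariant representative modulo $\db$-exact terms; this is exactly where the structure theory of the Dolbeault cohomology of nilmanifolds with invariant complex structure is used, and it is precisely why the auxiliary hypothesis $H^{n-2,n}_{\db}(M,J)\cong H^{n-2,n}_{\db}(\mathfrak{g},J)$ is imposed rather than derived. Everything beyond that is bookkeeping with the bidegrees $(n-2,n)$, $(n-2,n-1)$ and $(n-1,n)$.
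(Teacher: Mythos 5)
Your proof is correct and follows essentially the same route as the paper: decompose $\xi$ as an invariant form plus a $\db$-exact term using the assumed cohomology isomorphism (the paper realizes the invariant representative via the symmetrization $\xi_\nu$ and \cite[Remark 3.3]{UV}, you via surjectivity of the inclusion-induced map), apply the Lie-algebra mild $\p\db$-lemma to the invariant part, and add the two potentials. Your explicit observation that every $(n-2,n)$-form is automatically $\db$-closed, so the decomposition applies to an arbitrary $\xi$, is a point the paper leaves implicit.
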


\begin{proof}
Let $\xi$ be an $(n-2,n)$-complex differential form on $M$. From the
isomorphism $H^{n-2,n}_{\db}(M,J) \cong
H^{n-2,n}_{\db}(\mathfrak{g},J)$ and \cite[Remark 3.3]{UV}, there
exists an $(n-2,n-1)$-form $\theta$ on $M$ such that
\[ \xi = \xi_{\nu} + \db \theta, \]
where $\xi_{\nu}$ denotes the image of $\xi$ under the
symmetrization process $A^{p,q}(M) \rightarrow
\bigwedge^{p,q}(\mathfrak{g^*})$. The $(n-1,n)$-th mild
$\p\db$-lemma on $(\mathfrak{g},J)$ implies that
\[ \p \db \tilde{\theta} = \p \xi_{\nu} \]
for some $\tilde{\theta} \in \bigwedge^{n-2,n-1}(\mathfrak{g^*})$.
It follows that
\[ \p \xi = \p \db (\tilde{\theta} + \theta). \]
\end{proof}

\begin{corollary}\label{abelian}
Let $M = \Gamma \backslash G$ be a $2n$-dimensional nilmanifold
endowed with an invariant abelian complex structure $J$. Then $M$
satisfies the $(n-1,n)$-th mild $\p\db$-lemma.
\end{corollary}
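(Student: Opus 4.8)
The plan is to obtain Corollary~\ref{abelian} from Proposition~\ref{ms}, applied to a nilmanifold $M=\Gamma\backslash G$ carrying an invariant abelian complex structure $J$, by verifying the two hypotheses of that proposition: that $(\mathfrak{g},J)$ itself satisfies the $(n-1,n)$-th mild $\p\db$-lemma, and that the symmetrization map induces an isomorphism $H^{n-2,n}_{\db}(M,J)\cong H^{n-2,n}_{\db}(\mathfrak{g},J)$.

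First I would dispose of the cohomological hypothesis. For any nilmanifold the inclusion of left-invariant forms into all smooth forms is a $\db$-chain map that is split by averaging, so $H^{p,q}_{\db}(\mathfrak{g},J)\to H^{p,q}_{\db}(M,J)$ is always injective and only surjectivity is in question. Since an abelian complex structure on a nilpotent Lie algebra is nilpotent in the sense of \cite{CFGU} (cf.\ the criterion recalled in the Introduction), the Dolbeault cohomology of $(M,J)$ is carried by invariant forms by \cite{CFGU}; in particular the required isomorphism holds in bidegree $(n-2,n)$, and Proposition~\ref{ms} reduces the whole statement to the Lie algebra.

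It then remains to prove the $(n-1,n)$-th mild $\p\db$-lemma on $(\mathfrak{g},J)$, and here I would exploit the defining property $d\mathfrak{g}^{1,0}\subset\bigwedge^{1,1}\mathfrak{g}^*$. On $\bigwedge^{\bullet,\bullet}\mathfrak{g}^*$ this says that $\p$ is the antiderivation with $\p\varpi^i=0$ and $\p\bar\varpi^l\in\bigwedge^{1,1}\mathfrak{g}^*$, and symmetrically $\db\bar\varpi^l=0$, $\db\varpi^i\in\bigwedge^{1,1}\mathfrak{g}^*$. Because $\bigwedge^{0,n}\mathfrak{g}^*$ is one-dimensional, an arbitrary $(n-2,n)$-form is $\xi=\eta\wedge\bar\varpi^1\wedge\cdots\wedge\bar\varpi^n$ with $\eta\in\bigwedge^{n-2,0}\mathfrak{g}^*$, and since $\p\eta=0$ one finds $\p\xi=\pm\,\eta\wedge\p(\bar\varpi^1\wedge\cdots\wedge\bar\varpi^n)=\pm\,\eta\wedge\rho\wedge\bar\varpi^1\wedge\cdots\wedge\bar\varpi^n$, where $\rho\in\bigwedge^{1,0}\mathfrak{g}^*$ is a fixed $(1,0)$-form arising from applying $\p$ to the top antiholomorphic form and built from the structure constants of $\mathfrak{g}$. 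Thus $\p\xi$ vanishes already when $\eta$ is divisible by $\rho$, and in every case the equation $\p\db\theta=\p\xi$ for an unknown $\theta\in\bigwedge^{n-2,n-1}\mathfrak{g}^*$ becomes a finite-dimensional linear system inside $\bigwedge^{\bullet,\bullet}\mathfrak{g}^*$ whose solvability one checks directly, the precise $(1,1)$-shape of $d\varpi^i$ (equivalently of $d\bar\varpi^i$) being exactly what makes it solvable. This Lie-algebra statement is also in the spirit of the discussion around \cite[Proposition 2.9]{AU}. Combining it with the previous paragraph, Proposition~\ref{ms} yields the corollary.

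The step I expect to be the main obstacle is the cohomological hypothesis rather than the Lie-algebra computation: the latter is an elementary finite check once abelianness is invoked, whereas the assertion that Dolbeault cohomology descends from $M$ to $\mathfrak{g}$ is a genuine theorem which relies on the abelian (nilpotent) hypothesis and need not hold for arbitrary invariant complex structures on nilmanifolds, so one must take care to cite it in precisely the bidegree $(n-2,n)$ that Proposition~\ref{ms} requires and to confirm that abelian complex structures are within its scope.
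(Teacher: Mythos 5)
Your overall route is the paper's: reduce to the Lie algebra via Proposition \ref{ms} by invoking the computability of Dolbeault cohomology by invariant forms for abelian complex structures, and then verify the $(n-1,n)$-th mild $\p\db$-lemma on $(\mathfrak{g},J)$. The cohomological half is fine: the paper cites \cite[Remark 4]{CF} and \cite[Theorem 1.10]{R} for the abelian case rather than \cite{CFGU}, but abelian structures are covered either way, and your remark that injectivity of $H^{\bullet,\bullet}_{\db}(\mathfrak{g},J)\to H^{\bullet,\bullet}_{\db}(M,J)$ always holds by averaging is correct.

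The gap is in the Lie-algebra half. You correctly compute that for $\xi=\eta\wedge\bar\varpi^1\wedge\cdots\wedge\bar\varpi^n$ with $\eta\in\bigwedge^{n-2,0}\mathfrak{g}^*$ one has $\p\eta=0$, hence $\p\xi=\pm\,\eta\wedge\rho\wedge\bar\varpi^1\wedge\cdots\wedge\bar\varpi^n$ for a fixed $\rho\in\bigwedge^{1,0}\mathfrak{g}^*$; but you then assert that the equation $\p\db\theta=\p\xi$ is ``a finite-dimensional linear system whose solvability one checks directly.'' That assertion is precisely what must be proved, and it is not automatic: in the complex-parallelizable case of Example \ref{ex-dms-notms} the analogous $\p\xi$ is nonzero and \emph{not} $\p\db$-exact, so the mild lemma fails there. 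What actually closes the argument --- and is the entire content of the paper's one-line proof --- is the stronger statement $\p\big(\bigwedge^{n-2,n}\mathfrak{g}^*\big)=0$, i.e.\ $\rho=0$, so that $\theta=0$ solves the equation. To see $\rho=0$: in a Salamon/CFGU-adapted basis an abelian structure satisfies $d\varpi^i=\sum_{j,k<i}B^i_{jk}\,\varpi^j\wedge\bar\varpi^k$, so $\p\bar\varpi^i=d\bar\varpi^i$ involves only the $\bar\varpi^j$ with $j<i$, and therefore every term of $\p(\bar\varpi^1\wedge\cdots\wedge\bar\varpi^n)$ contains a repeated antiholomorphic factor and vanishes. (Equivalently, $\rho$ is up to sign the trace of $Z\mapsto \mathrm{ad}_Z$ projected to $\mathfrak{g}^{0,1}$, which vanishes because nilpotency forces $\mathrm{tr}\,\mathrm{ad}_Z=0$ while abelianness kills the $\mathfrak{g}^{1,0}$-part of the trace.) Without establishing this vanishing, your proof does not go through.
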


\begin{proof}
It is known that the isomorphism $H^{p,q}_{\db}(M,J) \cong
H^{p,q}_{\db}(\mathfrak{g},J)$ holds for any $(p,q)$ with the
abelian complex structure $J$ (cf. \cite[Remark 4]{CF},
\cite[Theorem 1.10]{R} and the discussion ahead of \cite[Remark
3.3]{UV}). And the abelian complex structure $J$ implies that $\p
\big( \bigwedge^{n-2,n}(\mathfrak{g}^*)\big)=0$.
\end{proof}

Similarly with \cite[Proposition 3.2]{UV}, we have
\begin{proposition}\label{notms}
Let $M = \Gamma \backslash G$ be a $2n$-dimensional nilmanifold
endowed with an invariant complex structure $J$ with $\mathfrak{g}$
the Lie algebra of $G$. If $(\mathfrak{g},J)$ does not satisfy the
$(n-1,n)$-th mild $\p\db$-lemma, then $(M,J)$ does not satisfy the
$(n-1,n)$-th mild $\p\db$-lemma either.
\end{proposition}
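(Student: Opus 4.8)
The plan is to prove the contrapositive: assuming that $(M,J)$ satisfies the $(n-1,n)$-th mild $\p\db$-lemma, I will deduce that $(\mathfrak{g},J)$ does too. The main tool is the \emph{symmetrization map}
$$\mu\colon A^{p,q}(M)\longrightarrow \bigwedge\nolimits^{p,q}(\mathfrak{g}^*),$$
obtained by averaging a differential form over $M$ against the $G$-invariant volume form (available because a nilpotent $G$ is unimodular and $\Gamma$ is a lattice), exactly the device used in the proof of \cite[Proposition 3.2]{UV} and of Proposition \ref{ms} above. I will use only three standard properties of $\mu$: (i) its image consists of invariant forms, and since $J$ is an invariant complex structure $\mu$ preserves the bidegree, so it indeed sends $A^{p,q}(M)$ into $\bigwedge^{p,q}(\mathfrak{g}^*)$; (ii) $\mu$ commutes with $d$, and being bidegree-preserving it therefore commutes with $\p$ and with $\db$ separately; (iii) $\mu$ restricts to the identity on invariant forms.

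Granting this, here is the argument I would give. Let $\xi\in\bigwedge^{n-2,n}(\mathfrak{g}^*)$ be an arbitrary invariant $(n-2,n)$-form, viewed as an element of $A^{n-2,n}(M)$. By the $(n-1,n)$-th mild $\p\db$-lemma on $(M,J)$ there exists $\theta\in A^{n-2,n-1}(M)$ with $\p\db\theta=\p\xi$. Applying $\mu$ and using (ii) together with $\mu\xi=\xi$ from (iii) gives
$$\p\db(\mu\theta)=\mu(\p\db\theta)=\mu(\p\xi)=\p(\mu\xi)=\p\xi,$$
while $\mu\theta\in\bigwedge^{n-2,n-1}(\mathfrak{g}^*)$ by (i). Hence $(\mathfrak{g},J)$ satisfies the $(n-1,n)$-th mild $\p\db$-lemma, which is precisely the contrapositive of the claim.

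The only delicate point is the construction of $\mu$ and the verification of (i)--(iii): in particular, that the averaging is compatible with the bigraded splitting $\bigwedge^{\bullet}\mathfrak{g}^*_{\mathbb{C}}=\bigoplus_{p,q}\bigwedge^{p,q}\mathfrak{g}^*$, which rests on $J$ being invariant, and that averaging over the $G$-action by (right) translations---which are diffeomorphisms of $M$---commutes with $d$. Both are classical for nilmanifolds with invariant complex structure, and are the same facts already invoked in \cite[Remark 3.3]{UV} and in the proof of Proposition \ref{ms}. Note that, unlike the passage from $\mathfrak{g}$ up to $M$ in Proposition \ref{ms}, here no auxiliary cohomology isomorphism such as $H^{n-2,n}_{\db}(M,J)\cong H^{n-2,n}_{\db}(\mathfrak{g},J)$ is required, since we only move from $M$ down to $\mathfrak{g}$.
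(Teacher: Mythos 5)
Your proposal is correct and is essentially the paper's own argument: both rest on the symmetrization map $A^{p,q}(M)\to\bigwedge^{p,q}(\mathfrak{g}^*)$ commuting with $\p$ and $\db$ and restricting to the identity on invariant forms, exactly as in the proof of \cite[Proposition 3.2]{UV}. The only difference is that you phrase it as the contrapositive while the paper phrases it as a proof by contradiction, which is a purely cosmetic distinction.
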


\begin{proof}
Suppose that the $(n-1,n)$-th mild $\p\db$-lemma holds on $(M,J)$,
that is, for any $(n-2,n)$-form $\xi$ on $M$, there exists an
$(n-2,n-1)$-form $\theta$ such that
\[ \p \db \theta = \p \xi. \]
Using the symmetrization process (cf. the proof of \cite[Proposition
3.2]{UV}) on the both sides, we have
\[ \p \db \theta_{\nu} = \p \xi_{\nu}, \]
which contradicts the assumption on the Lie algebra level
$(\mathfrak{g},J)$. Here the equalities
\[ (\p \alpha)_{\nu} = \p \alpha_{\nu}\quad \text{and} \quad (\db \alpha)_{\nu} = \db \alpha_{\nu}\]
for any $\alpha \in A^{p,q}(M)$ are used as in the proof of
\cite[Proposition 3.2]{UV}.
\end{proof}

The following result of the $(n-1,n)$-th dual mild $\p\db$-lemma is
almost the same as the mild one in Propositions \ref{ms} and
\ref{notms}, for which we omit the proofs.

\begin{proposition}\label{dms}
Let $M = \Gamma \backslash G$ be a $2n$-dimensional nilmanifold
endowed with an invariant complex structure $J$ and $\mathfrak{g}$
the Lie algebra of $G$. If $(\mathfrak{g},J)$ satisfies the
$(n-1,n)$-th dual mild $\p\db$-lemma and $H^{n-1,n}_{BC}(M,J) \cong
H^{n-1,n}_{BC}(\mathfrak{g},J)$, then $(M,J)$ satisfies the
$(n-1,n)$-th dual mild $\p\db$-lemma; Similarly if
$(\mathfrak{g},J)$ does not satisfy the $(n-1,n)$-th dual mild
$\p\db$-lemma, then $(M,J)$ does not satisfy the $(n-1,n)$-th dual
mild $\p\db$-lemma.
\end{proposition}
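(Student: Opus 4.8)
The plan is to carry over the symmetrization arguments of Propositions~\ref{ms} and \ref{notms} almost word for word, with the "$\p\xi=\p\db\theta$" formulation of the mild $\p\db$-lemma replaced by the unwound form of the \emph{dual} one: a compact complex $n$-fold $(X,J)$ satisfies the $(n-1,n)$-th dual mild $\p\db$-lemma (i.e.\ $\iota^{n-1,n}_{BC,\db}$ is injective) exactly when every $\p$-closed $(n-1,n)$-form on $X$ that is $\db$-exact is already $\p\db$-exact (recall that an $(n-1,n)$-form is automatically $\db$-closed). As in those proofs I would use the symmetrization map $\mu\colon A^{p,q}(M)\to\bigwedge^{p,q}(\mathfrak{g}^*)$, $\alpha\mapsto\alpha_\nu$, recalling that $\mu$ and the inclusion $\bigwedge^{p,q}(\mathfrak{g}^*)\hookrightarrow A^{p,q}(M)$ of invariant forms both commute with $\p$ and $\db$, that $\mu$ restricts to the identity on invariant forms, and hence that the natural map $H^{n-1,n}_{BC}(\mathfrak{g},J)\to H^{n-1,n}_{BC}(M,J)$ is split by the $\mu$-induced map; thus the hypothesis $H^{n-1,n}_{BC}(M,J)\cong H^{n-1,n}_{BC}(\mathfrak{g},J)$ is precisely the assertion that these two natural maps are mutually inverse isomorphisms.

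For the first statement I would start from a $\p$-closed $(n-1,n)$-form $\alpha$ on $M$ that is $\db$-exact, say $\alpha=\db\eta$ with $\eta\in A^{n-1,n-1}(M)$, aiming to produce $\zeta\in A^{n-2,n-1}(M)$ with $\alpha=\p\db\zeta$. Applying $\mu$ gives $\alpha_\nu\in\bigwedge^{n-1,n}(\mathfrak{g}^*)$ with $\p\alpha_\nu=(\p\alpha)_\nu=0$ and $\alpha_\nu=(\db\eta)_\nu=\db\eta_\nu$, so $\alpha_\nu$ is $\p$-closed and $\db$-exact on $\mathfrak{g}$, and the $(n-1,n)$-th dual mild $\p\db$-lemma on $(\mathfrak{g},J)$ supplies $\tilde\zeta\in\bigwedge^{n-2,n-1}(\mathfrak{g}^*)$ with $\alpha_\nu=\p\db\tilde\zeta$. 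Hence the Bott--Chern class $[\alpha]\in H^{n-1,n}_{BC}(M,J)$ is mapped to $0$ by the $\mu$-induced isomorphism onto $H^{n-1,n}_{BC}(\mathfrak{g},J)$, so $[\alpha]=0$, i.e.\ $\alpha=\p\db\zeta$ for some $\zeta\in A^{n-2,n-1}(M)$; this is the injectivity of $\iota^{n-1,n}_{BC,\db}$ on $(M,J)$.

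For the second statement I would argue by contradiction: assuming $(M,J)$ satisfies the $(n-1,n)$-th dual mild $\p\db$-lemma while $(\mathfrak{g},J)$ does not, there is $\alpha_0\in\bigwedge^{n-1,n}(\mathfrak{g}^*)$ with $\p\alpha_0=0$ and $\alpha_0=\db\eta_0$ for some $\eta_0\in\bigwedge^{n-1,n-1}(\mathfrak{g}^*)$, but with $\alpha_0\notin\p\db\big(\bigwedge^{n-2,n-1}(\mathfrak{g}^*)\big)$. Viewing $\alpha_0$ as a form on $M$, the hypothesis on $(M,J)$ gives $\zeta\in A^{n-2,n-1}(M)$ with $\alpha_0=\p\db\zeta$; then $\mu$ together with $(\alpha_0)_\nu=\alpha_0$ and $(\p\db\zeta)_\nu=\p\db\zeta_\nu$ yields $\alpha_0=\p\db\zeta_\nu$ with $\zeta_\nu\in\bigwedge^{n-2,n-1}(\mathfrak{g}^*)$, a contradiction.

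I do not expect a genuine obstacle: everything reduces to the commutation of $\mu$ with $\p$ and $\db$ (already established and used in Propositions~\ref{ms}--\ref{notms}) and to identifying the hypothesized cohomology isomorphism with the natural one. The only point that deserves a line of care is that last identification, and it is immediate, since $\mu$ always splits the inclusion of invariant representatives in Bott--Chern cohomology, so the "$\cong$" forces the natural map to be injective as well as surjective.
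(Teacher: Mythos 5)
Your proof is correct and is exactly the route the paper intends: the authors omit the proof of Proposition \ref{dms} precisely because it is ``almost the same'' as the symmetrization arguments of Propositions \ref{ms} and \ref{notms}, and your adaptation to the dual mild case---including the observation that the abstract isomorphism $H^{n-1,n}_{BC}(M,J)\cong H^{n-1,n}_{BC}(\mathfrak{g},J)$ forces the natural inclusion-induced map to be an isomorphism because symmetrization splits it---is the faithful translation. No gaps.
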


\begin{example}\label{ex-dms-notms}
\emph{The complex structure in the category $(i)$ of
\cite[Proposition 2.3]{UV}, i.e., the complex parallelizable case of
complex dimension $3$, satisfies the $(2,3)$-th weak $\p\db$-lemma
and the dual mild one, but does not satisfy the mild one. The
Iwasawa manifold belongs to the category $(i)$.}
\end{example}

\begin{proof}
Let $J$ be the complex structure in the category $(i)$, which
satisfies the $(2,3)$-th weak $\p\db$-lemma by the proof of
\cite[Proposition 3.6]{UV}. It is easy to check that, on the Lie
algebra level,
\[\p \big( \bigwedge^{1,3}(\mathfrak{g}^*)\big) = \langle \omega^{12\b{1}\b{2}\b{3}} \rangle,\quad
\db \big( \bigwedge^{1,2}(\mathfrak{g}^*) \big)=0\quad \text{and}
\quad \db \big( \bigwedge^{2,2}(\mathfrak{g}^*) \big)=0.\] However,
\[ \p \omega^{3\b{1}\b{2}\b{3}} = \omega^{12\b{1}\b{2}\b{3}} \notin
\p \db \big( \bigwedge^{1,2}(\mathfrak{g}^*) \big)=0,\] and thus
$(2,3)$-th mild $\p\db$-lemma does not hold on the nilmanifold, from
Proposition \ref{notms}. Here $\omega^{3\b{1}\b{2}\b{3}}$ denotes
$\omega^3 \wedge \overline{\omega^1} \wedge \overline{\omega^2}
\wedge \overline{\omega^3}$ as the notation used in \cite{UV}. Also,
it is clear that $(\mathfrak{g},J)$ satisfies the $(2,3)$-th dual
mild $\p\db$-lemma, with $H^{p,q}_{BC}(M,J) \cong
H^{p,q}_{BC}(\mathfrak{g},J)$ for any $p,q$ assured by \cite[Theorem
3.8]{A}. Therefore, the $(2,3)$-th dual mild $\p\db$-lemma holds on
the nilmanifold by Proposition \ref{dms}.
\end{proof}

An invariant balanced Hermitian structure $F$ on a nilmanifold $M$
is the one, coming from a balanced Hermitian structure on the Lie
algebra $\mathfrak{g}^*$.

\begin{proposition}\label{ex-3dim}
Let $M$ be a $6$-dimensional nilmanifold endowed with an invariant
balanced Hermitian structure $(J,F)$. Then $(M,J)$ satisfies the
$(2,3)$-th mild $\p\db$-lemma if and only if $J$ is abelian or
non-nilpotent, and $(M,J)$ satisfies the $(2,3)$-th dual mild
$\p\db$-lemma if and only if $J$ is complex parallelizable.
\end{proposition}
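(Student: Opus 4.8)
The plan is to reduce both equivalences to a finite computation on the Lie algebra $\mathfrak{g}$ and then run through the classification of invariant balanced Hermitian structures on $6$-dimensional nilmanifolds. By the balanced part of Ugarte--Villacampa's classification (\cite[Proposition 2.3]{UV} and the surrounding results), a pair $(\mathfrak{g},J)$ admitting an invariant balanced metric falls into an explicit finite list of structure equations for a coframe $\{\omega^1,\omega^2,\omega^3\}$ of $\mathfrak{g}^{1,0}$, organized according to whether $J$ is abelian, nilpotent non-abelian, complex parallelizable, or non-nilpotent. So it suffices to decide, model by model, whether the $(2,3)$-th mild and dual mild $\p\db$-lemmas hold at the Lie-algebra level $(\mathfrak{g},J)$, and then to transfer the answer to $(M,J)$.

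The transfer is handled by Propositions \ref{ms}, \ref{notms} and \ref{dms}. One direction needs nothing extra: if $(\mathfrak{g},J)$ fails the mild (resp.\ dual mild) lemma, then so does $(M,J)$ by the symmetrization argument in Proposition \ref{notms} (resp.\ the second half of Proposition \ref{dms}). For the other direction, when $(\mathfrak{g},J)$ satisfies the lemma, Proposition \ref{ms} requires the Dolbeault isomorphism $H^{1,3}_{\db}(M,J)\cong H^{1,3}_{\db}(\mathfrak{g},J)$ and Proposition \ref{dms} the Bott--Chern isomorphism $H^{2,3}_{BC}(M,J)\cong H^{2,3}_{BC}(\mathfrak{g},J)$; these hold for abelian $J$ (\cite{CF,R}; cf.\ Corollary \ref{abelian}), for nilpotent $J$ (\cite{R}), for the complex parallelizable case (Example \ref{ex-dms-notms}, using \cite[Theorem 3.8]{A}), and in the remaining non-nilpotent balanced families by the case-by-case verifications available in \cite{UV,A}.

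The substance is then linear algebra on $\bigwedge^{\bullet,\bullet}\mathfrak{g}^*$. For the mild lemma I would compute, for each model, $\p\big(\bigwedge^{1,3}(\mathfrak{g}^*)\big)$ and $\p\db\big(\bigwedge^{1,2}(\mathfrak{g}^*)\big)$ and compare them. The abelian case needs nothing new, since $\p\big(\bigwedge^{1,3}(\mathfrak{g}^*)\big)=0$ there (Corollary \ref{abelian}); the complex parallelizable model fails the mild lemma exactly as in Example \ref{ex-dms-notms}; the remaining nilpotent non-abelian models fail it for the same reason (the $\p$-image lands outside $\p\db\big(\bigwedge^{1,2}(\mathfrak{g}^*)\big)$); and the non-nilpotent families are shown to satisfy $\p\big(\bigwedge^{1,3}(\mathfrak{g}^*)\big)\subset\p\db\big(\bigwedge^{1,2}(\mathfrak{g}^*)\big)$ by writing out their structure equations, uniformly in the continuous parameters. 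For the dual mild lemma I would compute the induced map $H^{2,3}_{BC}(\mathfrak{g},J)\to H^{2,3}_{\db}(\mathfrak{g},J)$ --- equivalently, decide whether every $\p$-closed, $\db$-exact $(2,3)$-form is $\p\db$-exact --- finding it injective precisely in the complex parallelizable case (Example \ref{ex-dms-notms}) and non-injective in every other balanced family; for non-torus abelian $J$ the failure also follows from Corollary \ref{abelian}, \cite[Proposition 2.9]{AU} and Observation \ref{s-eq}. One last bookkeeping remark closes the argument: an abelian balanced $J$ is complex parallelizable only when $d\mathfrak{g}^{1,0}=0$, i.e.\ $M$ is a complex torus, on which every $\p\db$-lemma trivially holds; hence ``$J$ abelian or non-nilpotent'' and ``$J$ complex parallelizable'' are exactly the two subsets of the classification list picked out.

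The main obstacle is not any individual computation but the organization: one must have the balanced $(\mathfrak{g},J)$ in a manageable normal form, must run the $\p$, $\db$ and $\p\db$ computations uniformly across the parameters appearing in the abelian and non-nilpotent families (so the mild lemma is established for whole families rather than generic members), and must be careful that the Lie-algebra-to-manifold cohomology comparisons of the second paragraph are legitimate in every case, the non-nilpotent families being the delicate point.
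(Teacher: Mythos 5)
Your proposal is essentially the paper's proof: both reduce to the Lie-algebra level via the symmetrization results (Propositions \ref{ms}, \ref{notms}, \ref{dms}), then run casework over the Ugarte--Villacampa classification of invariant balanced structures, reusing Corollary \ref{abelian} and \cite[Proposition 2.9]{AU} for the abelian case, Example \ref{ex-dms-notms} for the complex parallelizable case, and an explicit computation of $\p\big(\bigwedge^{1,3}(\mathfrak{g}^*)\big)$ versus $\p\db\big(\bigwedge^{1,2}(\mathfrak{g}^*)\big)$ (and of the non-$\p\db$-exact, $\p$-closed form $\db\,\omega^{13\b2\b3}$) for the non-nilpotent case. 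The one place where you diverge, and where your plan is heavier than it needs to be, is the nilpotent non-abelian, non-parallelizable models: you propose to verify directly that the mild and dual mild lemmas fail there, whereas the paper dispatches this case with no computation at all by invoking \cite[Proposition 3.6]{UV} --- the $(2,3)$-th \emph{weak} $\p\db$-lemma holds exactly when $J$ is abelian, complex parallelizable or non-nilpotent --- together with the observation (recorded after Observation \ref{s-eq}) that both the mild and the dual mild lemmas imply the weak one. That shortcut eliminates what you identify as the main organizational burden, namely running the $\p$, $\db$, $\p\db$ computations uniformly over the parameters of the remaining nilpotent families; your direct route should still work, but it is where all the delicate bookkeeping you worry about would actually live.
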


\begin{proof}
It is known, from \cite[Proposition 3.6]{UV}, $(M,J)$ satisfies the
$(2,3)$-th weak $\p\db$-lemma if and only if $J$ is abelian, complex
parallelizable or non-nilpotent. Example \ref{ex-dms-notms} shows
that the complex parallelizable case satisfies the $(2,3)$-th dual
mild $\p\db$-lemma but does not satisfy the mild one. Corollary
\ref{abelian} and \cite[Proposition 2.9]{AU} say that a
$2n$-dimensional nilmanifold endowed with an invariant abelian
complex structure satisfies the $(n-1,n)$-th mild $\p\db$-lemma but
never satisfies the $(n-1,n)$-th dual mild $\p\db$-lemma, especially
for the situation of the $6$-dimension (i.e., complex dimension $3$)
here. Hence, the only left to check is the non-nilpotent case.

As shown in the proof of \cite[Proposition 3.6]{UV}, the equality
holds
\[ \p \db \big( \bigwedge^{1,2}(\mathfrak{g}^{*}) \big) = \langle \omega^{13\b{1}\b{2}\b{3}}\rangle =
\p \big( \bigwedge^{1,3}(\mathfrak{g}^*) \big),\] according to the
category $(iii)$ of \cite[Proposition 2.3]{UV}, and the natural
inclusion
\[\big(\bigwedge^{*,*}(\mathfrak{g}^*),\db\big)\hookrightarrow \big(A^{*,*}(M),\db\big)\] induces an isomorphism
in the $\db$-cohomology in this case. Therefore, $(M,J)$ with the
non-nilpotent complex structure $J$ satisfies the $(2,3)$-th mild
$\p\db$-lemma by Proposition \ref{ms}. Meanwhile, it should be noted
that
\[ \db \omega^{13\b{2}\b{3}} = \pm \sqrt{-1} \omega^{12\b{1}\b{2}\b{3}}\quad \text{and} \quad
\p \omega^{12\b{1}\b{2}\b{3}} =0,\] according to the category
$(iii)$ of \cite[Proposition 2.3]{UV}. Hence, we have
\[ \text{the $\p$-closed $(2,3)$-form}\  \db \omega^{13\b{2}\b{3}} = \pm \sqrt{-1} \omega^{12\b{1}\b{2}\b{3}}
\notin \p \db \big( \bigwedge^{1,2}(\mathfrak{g}^{*}) \big) =
\langle \omega^{13\b{1}\b{2}\b{3}}\rangle.\] Then Proposition
\ref{dms} tells us that $(M,J)$ with $J$ non-nilpotent does not
satisfy the $(2,3)$-th dual mild $\p\db$-lemma.
\end{proof}

Explicit examples of different complex structures in Proposition
\ref{ex-3dim} have been provided in \cite{UV}.
Directly from Observation \ref{s-eq} and Proposition
\ref{ex-3dim}, one has:
\begin{corollary}
Let $M$ be a $6$-dimensional nilmanifold endowed with an invariant
balanced Hermitian structure $(J,F)$. Then the $(2,3)$-th strong
$\p\db$-lemma does not hold on $M$ except for a torus, i.e., the mild one and dual mild
one never hold simultaneously on $M$ except the torus case. Especially, the $(n-1,n)$-th
mild $\p\db$-lemma and the dual mild one are unrelated.
\end{corollary}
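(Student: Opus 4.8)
The plan is to derive this entirely formally from the two facts just established, Observation \ref{s-eq} and Proposition \ref{ex-3dim}, without any new computation. Recall that Observation \ref{s-eq} identifies the $(n-1,n)$-th strong $\p\db$-lemma with the \emph{simultaneous} validity of the mild one and the dual mild one. Hence it suffices to show that on a $6$-dimensional nilmanifold $M$ carrying an invariant balanced Hermitian structure $(J,F)$ these two conditions can never hold together.

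First I would invoke Proposition \ref{ex-3dim}: the $(2,3)$-th mild $\p\db$-lemma holds on $(M,J)$ precisely when $J$ is abelian or non-nilpotent, whereas the $(2,3)$-th dual mild $\p\db$-lemma holds precisely when $J$ is complex parallelizable. So the proof reduces to the observation that, among the invariant complex structures occurring for balanced $6$-nilmanifolds (categories $(i)$--$(iii)$ of \cite[Proposition 2.3]{UV}), the classes ``abelian or non-nilpotent'' and ``complex parallelizable'' are disjoint: an abelian structure satisfies $d\mathfrak{g}^{1,0}\subset\bigwedge^{1,1}\mathfrak{g}^*$ and a complex parallelizable one satisfies $d\mathfrak{g}^{1,0}\subset\bigwedge^{2,0}\mathfrak{g}^*$, so they can coincide only when $d\mathfrak{g}^{1,0}=0$ (the torus), while a non-nilpotent $J$ is by definition not complex parallelizable. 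Therefore no single $(M,J)$ in the list satisfies both conditions, and by Observation \ref{s-eq} the $(2,3)$-th strong $\p\db$-lemma fails on every such $M$; equivalently, the mild and dual mild ones never hold simultaneously.

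For the final assertion that the $(n-1,n)$-th mild and dual mild $\p\db$-lemmata are unrelated, I would simply cite the two explicit phenomena already recorded above: a nilmanifold with invariant abelian complex structure satisfies the mild $\p\db$-lemma (Corollary \ref{abelian}) but not the dual mild one (\cite[Proposition 2.9]{AU}), while the complex parallelizable case of complex dimension $3$, in particular the Iwasawa manifold, satisfies the dual mild $\p\db$-lemma but not the mild one (Example \ref{ex-dms-notms}). Hence neither implies the other. The only subtlety in the whole argument is bookkeeping rather than mathematics, namely that the complex torus, where $J$ is at once abelian and complex parallelizable, would satisfy both conditions (indeed the full $\p\db$-lemma); but a compact nilmanifold is complex parallelizable and abelian only if it is a torus, and this case is excluded from \cite[Proposition 2.3]{UV} on which Proposition \ref{ex-3dim} rests, so it does not intervene here.
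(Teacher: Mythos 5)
Your proposal is correct and follows exactly the route the paper takes: the paper derives this corollary ``directly from Observation \ref{s-eq} and Proposition \ref{ex-3dim}'' with no further argument, and your write-up simply makes explicit the disjointness of the classes ``abelian or non-nilpotent'' and ``complex parallelizable'' (plus the torus caveat), together with the already-recorded examples showing the mild and dual mild lemmata are independent. Nothing is missing.
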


\begin{remark}
The $(2,3)$-th strong $\p\db$-lemma can hold on the
completely-solvable balanced Nakamura manifolds, with one concrete
example given in \cite[Example 4.10]{au}, which is of complex
dimension $3$ and satisfies $\dim_{\mathbb{C}} H^{0,1}_{BC}(X) =
\dim_{\mathbb{C}}H^{0,1}_{A}(X) =1$.
\end{remark}

Let $\pi:\mc{X}\to B$ be a differentiable family of compact
 $n$-dimensional complex manifolds  with the reference fiber
$\pi^{-1}(t_0)=X_0$ and the general fibers $X_t:=\pi^{-1}(t)$. Here
$B$ denotes a sufficiently small domain in $\mathbb{R}^k$. Fu--Yau \cite[Theorem 6]{FY} show that the balanced
structure is deformation open, assuming that the $(n-1,n)$-th weak
$\p\db$-lemma holds on the general fibers $X_t$ for $t \neq 0$.
Angella--Ugarte \cite[Theorem 4.9]{au} prove that if $X_0$ admits a
locally conformal balanced metric and satisfies the $(n-1,n)$-th
strong $\p\db$-lemma, then $X_t$ is balanced for $t$ small. Our main
result in this section, whose proof is postponed to the next
subsection, is
\begin{theorem}\label{blc-inv}
Let $X_0$ be a compact balanced manifold of complex dimension $n$,
satisfying the $(n-1,n)$-th mild $\p\db$-lemma. Then $X_t$ also
admits a balanced metric for $t$ small.
\end{theorem}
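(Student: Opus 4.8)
The plan is to run, for $(n-1,n-1)$-forms, the power-series machinery that Section~\ref{kahler-section} develops for $(1,1)$-forms. By Kuranishi's completeness theorem we may assume that $\pi:\mc{X}\to B$ is the Kuranishi family $\varpi:\mc{K}\to T$ of $X_0$, with integrable Beltrami differential $\varphi=\varphi(t)$ as in \eqref{phi-ps-pp}--\eqref{int}; it then suffices to equip every $X_t$ with a balanced metric. Since a balanced metric on $X_t$ is the same datum as a $d$-closed, strictly positive, real $(n-1,n-1)$-form (one recovers the metric as its $(n-1)$-st root, cf.\ \cite[(4.8)]{Mich}), and strict positivity is an open condition, it is enough to produce a smooth family $\Om(t)\in A^{n-1,n-1}(X_0)$, depending smoothly on $t$, with $\Om(0)=\om^{n-1}$ and solving the system \eqref{0wan.2}: for $|t|$ small, $e^{\iota_{\varphi}|\iota_{\b{\varphi}}}(\Om(t))$ is then $d$-closed, real and, being close to $\om^{n-1}$, strictly positive on $X_t$, hence equal to $\om_t^{n-1}$ for a balanced metric $\om_t$.

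First I would carry out the purely algebraic reduction of \eqref{0wan.2} to the transformed system \eqref{wan.15}, exactly as sketched before the statement of the theorem: write $d=\p+\db$, apply Proposition~\ref{main1} together with the integrability \eqref{int}, and conjugate the resulting identity by the invertible operators $e^{-\iota_{\varphi}}$ and $e^{-\iota_{(\1-\b{\varphi}\varphi)^{-1}\b{\varphi}}}$, using the bookkeeping identities \eqref{2.3}, \eqref{2.4.1}. One records that, since $\iota_{\varphi}$ lowers holomorphic degree by $1$ while $\iota_{(\1-\b{\varphi}\varphi)^{-1}\b{\varphi}}$ raises it by $1$, the transformed form $\tilde{\Om}(t)$ lies in $A^{n,n-2}(X_0)\oplus A^{n-1,n-1}(X_0)\oplus A^{n-2,n}(X_0)$, and that, as every exponential operator reduces to the identity at $t=0$, the initial condition $\tilde{\Om}(0)=\om^{n-1}$ passes over, so that \eqref{0wan.2} becomes equivalent to \eqref{wan.15}. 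Note that this entire construction takes place on the \emph{fixed} manifold $X_0$; this is why only the $(n-1,n)$-th mild $\p\db$-lemma on $X_0$ --- and not a hypothesis on the generic fibre, as in Fu--Yau's theorem --- will be needed.

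The heart of the matter is to solve \eqref{wan.15} formally by a power series $\tilde{\Om}(t)=\sum_{k\ge0}\tilde{\Om}_k$ with $\tilde{\Om}_0=\om^{n-1}$, which satisfies \eqref{wan.15} in degree $0$ because $\om$ is balanced. Assuming a solution has been built through degree $N-1$, one splits the two equations of \eqref{wan.15} by bidegree and extracts the degree-$N$ part in $(t,\b t)$: the $A^{n,n-2}$-component of $\tilde{\Om}_N$ is then governed by a single $\db$-equation, the $A^{n-2,n}$-component by a single $\p$-equation, and --- just as in Corollary~\ref{slvdb-2} --- the middle $A^{n-1,n-1}$-component by a coupled pair consisting of a $\db$-equation and a $\p$-equation, with right-hand sides built from $\varphi$, $\b\varphi$, $(\1-\b{\varphi}\varphi)^{-1}\b{\varphi}$ and $\tilde{\Om}_{<N}$. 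The decisive point --- and the one place where the hypothesis enters --- is that, after massaging these right-hand sides with the integrability \eqref{int}, the commutator formula \eqref{f1} and the identities of Proposition~\ref{7for} (in the spirit of Observation~\ref{closed} and Proposition~\ref{2.9}), the obstruction to solving the middle pair reduces precisely to producing $\theta$ with $\p\db\theta=\p\xi$ for an $(n-2,n)$-form $\xi$, i.e.\ to the $(n-1,n)$-th mild $\p\db$-lemma; one then takes at each step the canonical solution assembled from $\p^{*}$, $\db^{*}$ and the Green operator $\G$ of the fixed balanced Hermitian metric, which propagates the induction and yields explicit formulas for $\tilde{\Om}_N$. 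The main obstacle I foresee is exactly this bookkeeping: checking that the degree-$N$ right-hand sides are genuinely $\p$- (resp.\ $\db$-) exact of the correct pure types so that the mild $\p\db$-lemma applies, and that the two equations for $\tilde{\Om}_N^{n-1,n-1}$ are mutually compatible --- subtler than the K\"ahler case because $X_0$ is merely balanced, so the K\"ahler identities are unavailable and the several Laplacians disagree.

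Finally I would reproduce the H\"older convergence and regularity arguments of Subsections~\ref{convergence} and \ref{regularity}: using the dominating series $A(t)$ of \eqref{cru-ps}, the a priori elliptic estimates \eqref{ee1}, \eqref{db-ee2}, the bound $\|\varphi(t)\|_{k,\alpha}\ll A(t)$ and the explicit solution formulas, one shows $\|\tilde{\Om}^{(r)}\|_{k,\alpha}\ll A(t)$ for every partial sum --- here the only new wrinkle, already flagged before the theorem, is to control the Green operator $\G$ occurring in the canonical solution of $\tilde{\Om}(t)$ --- whence $C^{k,\alpha}$-convergence of $\tilde{\Om}(t)$ for $|t|$ small; regularity then follows since $\tilde{\Om}(t)$ satisfies an elliptic system whose principal part is a fixed Laplacian, every perturbing term carrying a factor $\varphi$, $\b\varphi$ or $\b{\varphi}\varphi$ that vanishes with $t$, so interior estimates for elliptic systems give smoothness and smooth dependence on $t$. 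As \eqref{wan.15} is equivalent to \eqref{0wan.2}, the resulting $\Om(t)\in A^{n-1,n-1}(X_0)$ already satisfies the $d$-closedness and $\Om(0)=\om^{n-1}$; replacing $\Om(t)$ by $\tfrac12\big(\Om(t)+\o{\Om(t)}\big)$, which still works because $e^{\iota_{\varphi}|\iota_{\b{\varphi}}}$ and $d$ commute with conjugation and $\om^{n-1}$ is real, gives the required real solution, and hence a balanced metric on every $X_t$ with $|t|$ small.
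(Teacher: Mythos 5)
Your overall architecture coincides with the paper's proof in Subsection \ref{proof-bal}: reduce to the Kuranishi family, pass from $\Omega(t)$ to $\tilde{\Omega}(t)=e^{-\iota_{(\1-\b{\varphi}\varphi)^{-1}\b{\varphi}}}\circ e^{-\iota_{\varphi}}\circ e^{\iota_{\varphi}|\iota_{\b{\varphi}}}(\Omega(t))$, solve \eqref{wan.15} degree by degree using the mild $\p\db$-lemma, and then run the H\"older convergence and regularity arguments. But there is a concrete structural error at exactly the point you single out as the heart of the matter. You assert that $\tilde{\Omega}(t)$ lies in $A^{n,n-2}\oplus A^{n-1,n-1}\oplus A^{n-2,n}$ and organize the induction around three bidegree components of $\tilde{\Omega}_N$, each with its own equation. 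In fact the composite operator $e^{-\iota_{(\1-\b{\varphi}\varphi)^{-1}\b{\varphi}}}\circ e^{-\iota_{\varphi}}\circ e^{\iota_{\varphi}|\iota_{\b{\varphi}}}$ \emph{preserves pure types}: it sends $\alpha_{i_1\cdots i_p\o{j_1}\cdots\o{j_q}}\,dz^{i_1}\wedge\cdots\wedge dz^{i_p}\wedge d\b{z}^{j_1}\wedge\cdots\wedge d\b{z}^{j_q}$ to $\alpha_{i_1\cdots i_p\o{j_1}\cdots\o{j_q}}\,dz^{i_1}\wedge\cdots\wedge dz^{i_p}\wedge(\1-\b{\varphi}\varphi)\lc d\b{z}^{j_1}\wedge\cdots\wedge(\1-\b{\varphi}\varphi)\lc d\b{z}^{j_q}$, so $\tilde{\Omega}(t)$ is a genuine $(n-1,n-1)$-form. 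This is not cosmetic: the reduction to \eqref{wan.10} and the cancellation $\iota_{\varphi}\circ\iota_{\varphi}\circ\p\circ\iota_{(\1-\b{\varphi}\varphi)^{-1}\b{\varphi}}(\tilde{\Omega})=0$ in \eqref{wan.11} both use that $\iota_{(\1-\b{\varphi}\varphi)^{-1}\b{\varphi}}(\tilde{\Omega})$ has holomorphic degree $n$, which requires $\tilde{\Omega}$ to be of pure type. The correct degree-$N$ structure is two equations, one in $A^{n-1,n}$ and one in $A^{n,n-1}$, for the single pure-type unknown $u_N:=\tilde{\Omega}_N+\big(\iota_{\varphi}\circ\iota_{(\1-\b{\varphi}\varphi)^{-1}\b{\varphi}}(\tilde{\Omega})\big)_N$; the compatibility you worry about is resolved as in Observation \ref{expslt} by taking $u_N=\db\mu+\p\nu$, since then $\db u_N=\db\p\nu$ and $\p u_N=\p\db\mu$ decouple.

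The second gap concerns the solution operator. The equations furnished by the mild lemma (applied once to an $(n-2,n)$-form and once, after conjugation, to an $(n,n-2)$-form) are $\p\db$-equations for $\mu$ and $\nu$. Since $X_0$ is only balanced, you cannot assemble their canonical solutions from $\p^{*}$, $\db^{*}$ and the $\db$-Green operator as in the K\"ahler recipe \eqref{ees}; the paper needs Lemma \ref{ddbar-eq}, i.e., the minimal solution $(\p\db)^{*}\G_{BC}$ of the $\p\db$-equation together with $\G_{BC}\p\db=\p\db\G_{A}$, and the convergence argument then rests on the fourth-order a priori estimate \eqref{ee2} for $\G_{BC}$ rather than on \eqref{ee1}--\eqref{db-ee2}. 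Relatedly, the regularity step is heavier than your one-line appeal to interior estimates suggests: because of $\G_{BC}$ the equation \eqref{reg-eqn} is pseudo-differential (nonlocal), and one must first commute $\G_{BC}$ past cutoff functions via \eqref{leibniz} before the interior estimates apply with a radius independent of the differentiation order.
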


It is well known from \cite{ab} that small deformation of the
Iwasawa manifold, which satisfies the $(2,3)$-th weak $\p\db$-lemma
but does not satisfy the mild one from Example \ref{ex-dms-notms},
may not be balanced. Thus, the condition \lq\lq$(n-1,n)$-th mild
$\p\db$-lemma" in Theorem \ref{blc-inv} can't be replaced by the
weak one. It is an obvious generalization of Wu's result \cite[Theorem 5.13]{w}
that the balanced condition is preserved under small deformation if
the reference  fiber satisfies the $\p\db$-lemma. Based on Corollary
\ref{abelian} and Proposition \ref{ex-3dim}, one obtains:
\begin{corollary}\label{abelian-inv}
Let $M$ be a $2n$-dimensional nilmanifold endowed with an invariant
abelian balanced Hermitian structure. Then small deformation of $M$
is also balanced. Moreover, in the case of $6$-dimension,
this result still holds when $M$ is endowed with the non-nilpotent
balanced Hermitian structure.
\end{corollary}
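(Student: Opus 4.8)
The plan is to deduce this directly from Theorem \ref{blc-inv} by verifying its two hypotheses in each of the two cases: that $M$ (as the reference fiber $X_0$) is a balanced manifold of complex dimension $n$, and that it satisfies the $(n-1,n)$-th mild $\p\db$-lemma. Once both are in place, Theorem \ref{blc-inv} immediately gives a balanced metric on every sufficiently small deformation $X_t$ of $M$, which is exactly the assertion.

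First I would record that an invariant balanced Hermitian structure $(J,F)$ on $M$ furnishes a genuine balanced metric on $(M,J)$: the fundamental form $F$ is a real positive $(1,1)$-form with $d(F^{n-1})=0$, so $(M,J)$ is a balanced manifold of complex dimension $n$. This is the first hypothesis of Theorem \ref{blc-inv}, and it holds in both the abelian case and, for $n=3$, the non-nilpotent case. Next I would supply the mild $\p\db$-lemma. In the abelian case this is precisely Corollary \ref{abelian}, which says that a $2n$-dimensional nilmanifold with invariant abelian complex structure satisfies the $(n-1,n)$-th mild $\p\db$-lemma. In the $6$-dimensional non-nilpotent case one instead invokes Proposition \ref{ex-3dim}, which states that a $6$-dimensional nilmanifold carrying an invariant balanced Hermitian structure $(J,F)$ with $J$ non-nilpotent satisfies the $(2,3)$-th mild $\p\db$-lemma. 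With both hypotheses of Theorem \ref{blc-inv} secured in each case, the conclusion follows.

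There is no substantive obstruction left at this stage: the real work has already been done, on the one hand in Theorem \ref{blc-inv} (the power series construction of $\Omega(t)$ together with its H\"older convergence and regularity), and on the other hand in the Lie-algebra computations behind Corollary \ref{abelian} and Proposition \ref{ex-3dim}. The only point meriting a word of care is that "small deformation of $M$" is to be read through an arbitrary differentiable family $\pi:\mc{X}\to B$ with $\pi^{-1}(t_0)=M$; but Theorem \ref{blc-inv} is stated for exactly such families (the reduction to the Kuranishi family having already been carried out in its proof), so no further reduction is needed here.
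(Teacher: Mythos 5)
Your proposal is correct and matches the paper's own derivation: the corollary is obtained exactly by feeding Corollary \ref{abelian} (abelian case) and Proposition \ref{ex-3dim} (six-dimensional non-nilpotent case) into Theorem \ref{blc-inv}, with the invariant balanced Hermitian structure supplying the balanced metric on the reference fiber. No further comment is needed.
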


Examples and results such as \cite[Proposition 4.4, Remark 4.6,
Remark 4.7 and Example 4.10]{au} and \cite[Corollary 8 and Corollary
9]{FY} become consequences of Theorem \ref{blc-inv} and Corollary \ref{abelian-inv}. And it is
interesting to note that $6$-dimensional nilmanifolds endowed with
an invariant abelian or non-nilpotent balanced Hermitian structure
provide solutions of the Strominger system with respect to the
Bismut connection or the Chern connection in the anomaly
cancellation condition, respectively. See \cite[Section
5]{UV} and \cite[Section 4]{UV2} for more details.

\begin{example}[{\cite[Example 3.7]{UV}}]\label{not-fy}
\emph{Ugarte--Villacampa constructed an explicit family of
nilmanifolds with invariant complex structures $I_{\lambda}$ for
$\lambda\in[0,1)$ (of complex dimension $3$), with the fixed
underlying manifold the Iwasawa manifold. The complex structure of
the reference  fiber $I_{0}$ is abelian and admits an invariant
balanced metric, satisfying the $(2,3)$-th mild $\p\db$-lemma by
Proposition \ref{ex-3dim}. The complex structures of $I_{\lambda}$
for $\lambda \neq 0$ are nilpotent from \cite[Cororllary 2]{CFP},
but neither complex-parallelizable nor abelian. And thus they do not
satisfy the $(2,3)$-th weak $\p\db$-lemma by \cite[Proposition
3.6]{UV}. However, the nilmanifolds $I_{\lambda}$ for $\lambda \neq
0$ admit balanced metrics.}
\end{example}

The example above proves that neither the $(n-1,n)$-th weak
$\p\db$-lemma nor the mild one is deformation open. And it shows
that the condition in \cite[Theorem 6]{FY} is not a necessary one
for the deformation openness of balanced structures as mentioned in
\cite[the discussion ahead of Example 3.7]{UV}. Fortunately, Corollary
\ref{abelian-inv} can be applied to this example. See also
\cite[Remark 4.7]{au}, where Corollary \ref{abelian-inv} can also be
applied.

Meanwhile, from Corollary \ref{abelian} and \cite[Proposition
2.9]{AU}, a $2n$-dimensional nilmanifold endowed with an invariant
abelian complex structure satisfies the $(n-1,n)$-th mild
$\p\db$-lemma but never satisfies the $(n-1,n)$-th dual mild
$\p\db$-lemma. It implies that the deformation openness of balanced
structures with the reference  fiber a $2n$-dimensional nilmanifold
endowed with an invariant abelian balanced Hermitian structure can
be obtained by Corollary \ref{abelian-inv}, but it can't be derived
from \cite[Theorem 4.9]{au}.

Besides, it is known that the deformation invariance of the
dimensions of the $(n-1,n-1)$-th Bott--Chern group
$H^{n-1,n-1}_{BC}(X_t)$ can assure the deformation openness of
balanced structures as shown in \cite[Proposition 4.1]{au}. See also
Proposition \ref{d-ext}, which is a kind of generalization of this
result. However, \cite[Example 4.10]{au} shows that small
deformation of a completely-solvable Nakamura threefold, which is
balanced and satisfies the $(2,3)$-th strong $\p\db$-lemma, is also
balanced. The $(2,2)$-th Bott--Chern number varies along this
deformation. Fortunately, Theorem \ref{blc-inv} is applicable to
this case and also possibly to some cases with deformation variance
of $(n-1,n-1)$-th Bott--Chern number.

Finally, from the perspective of Theorem \ref{blc-inv}, we may have
a clear picture of Angella--Ugarte's result \cite[Theorem 4.9]{au},
which states that if $X_0$ admits a locally conformal balanced
metric and satisfies the $(n-1,n)$-th strong $\p\db$-lemma, then
$X_t$ is balanced for $t$ small. Actually, the $(n-1,n)$-th strong
$\p\db$-lemma decomposes into the mild one and the dual mild one,
according to Observation \ref{s-eq}. A locally conformal balanced
metric can be transformed into a balanced one by the $(n-1,n)$-th
dual mild $\p\db$-lemma, from \cite[Theorem 2.5]{AU}. Then the
$(n-1,n)$-th mild $\p\db$-lemma assures that the deformation
openness of balanced structures starts from the transformed balanced
metric on the reference fiber, thanks to Theorem \ref{blc-inv}.

\subsection{Proof for stability of balanced
structures with mild $\p\db$-lemma}\label{proof-bal} In this subsection, we will prove the
local stability Theorem \ref{blc-inv} of balanced structures with
the balanced reference fiber $X_0$ with the $(n-1,n)$-th mild
$\p\b{\p}$-lemma. Similarly to the K\"ahler case, we will reduce the proof to Kuranishi family as described in the beginning of Section \ref{kahler-section}.

Our goal is to construct a power series $\Omega(t)\in
A^{n-1,n-1}(X_0)$ such that
\begin{equation}\label{wan.2}
    \begin{cases}
      d(e^{\iota_{\varphi}|\iota_{\b{\varphi}}}(\Omega(t)))=0,\\
      \Omega(t)=\o{\Omega(t)},\\
      \Omega(0)=\omega^{n-1},
    \end{cases}
\end{equation}
and show the H\"older $C^{k,\alpha}$-convergence and regularity of
the power series $\Omega(t)$. Then it is clear that
$e^{\iota_{\varphi}|\iota_{\b{\varphi}}}(\Omega(t)))$ will be a
positive $(n-1,n-1)$-form on $X_t$ for $t$ small, due to the
positivity of its initial complex differential form $\omega^{n-1}$
and the convergence argument. By the real property of
$e^{\iota_{\varphi}|\iota_{\b{\varphi}}}$ as in \cite[Lemma
$2.8$]{RZ15}, it suffices to solve the following system of equations
\begin{equation}\label{wan.2.1}
    \begin{cases}
      d(e^{\iota_{\varphi}|\iota_{\b{\varphi}}}(\Omega(t)))=0,\\
      \Omega(0)=\omega^{n-1},\\
    \end{cases}
\end{equation}
since $\frac{\Omega(t)+\o{\Omega(t)}}{2}$ from one solution
$\Omega(t)$ of \eqref{wan.2.1} becomes one of the system
\eqref{wan.2}. The resolution of the system \eqref{wan.2.1} below is
a bit different from the one for the K\"ahler case, which relies
more on the form type $(n-1,n-1)$.

As both $e^{\iota_{(\1-\b{\varphi}\varphi)^{-1}\b{\varphi}}}$ and
$e^{\iota_{\varphi}}$ are invertible operators when $t$ is
sufficiently small, it follows that for any $\Omega \in
A^{n-1,n-1}(X_0)$,
\begin{align}\label{2.1.1}
  e^{\iota_{\varphi}|\iota_{\b{\varphi}}}(\Omega)=e^{\iota_{\varphi}}\circ e^{\iota_{(\1-\b{\varphi}\varphi)^{-1}\b{\varphi}}}\circ e^{-\iota_{(\1-\b{\varphi}\varphi)^{-1}\b{\varphi}}}\circ e^{-\iota_{\varphi}}\circ e^{\iota_{\varphi}|\iota_{\b{\varphi}}}(\Omega).
\end{align}
Set
\begin{align}\label{2.1.2}
  \tilde{\Omega}=e^{-\iota_{(\1-\b{\varphi}\varphi)^{-1}\b{\varphi}}}\circ e^{-\iota_{\varphi}}\circ e^{\iota_{\varphi}|\iota_{\b{\varphi}}}(\Omega),
\end{align}
where $\Omega$ and $\tilde{\Omega}$ are apparently one-to-one
correspondence. And it is easy to check that the operator
$$e^{-\iota_{(\1-\b{\varphi}\varphi)^{-1}\b{\varphi}}}\circ
e^{-\iota_{\varphi}}\circ e^{\iota_{\varphi}|\iota_{\b{\varphi}}}$$
preserves the form types and thus $\tilde{\Omega}$ is still an
$(n-1,n-1)$-form. In fact, for any $(p,q)$-form $\alpha$ on $X_0$,
we will find
\begin{align*}
\begin{split}
& e^{-\iota_{(\1-\b{\varphi}\varphi)^{-1}\b{\varphi}}}\circ
e^{-\iota_{\varphi}}\circ e^{\iota_{\varphi}|\iota_{\b{\varphi}}}(\alpha) \\
=&\alpha_{i_1\cdots i_p\o{j_1}\cdots\o{j_q}}
dz^{i_1}\wedge\cdots\wedge dz^{i_p} \wedge(\1-\b{\varphi}\varphi)\l
d\b{z}^{j_1}\wedge \cdots\wedge (\1-\b{\varphi}\varphi)\l
d\b{z}^{j_q}\in A^{p,q}(X_0),
\end{split}
\end{align*}
where $\alpha=\alpha_{i_1\cdots
i_p\o{j_1}\cdots\o{j_q}}dz^{i_1}\wedge\cdots\wedge dz^{i_p}\wedge
d\b{z}^{j_1}\wedge \cdots\wedge d\b{z}^{j_q}$. Together with
\eqref{2.1.1} and \eqref{2.1.2}, we obtain that
\begin{align}\label{2.2.1}
\begin{split}
  d(e^{\iota_{\varphi}|\iota_{\b{\varphi}}}(\Omega))&=d\circ e^{\iota_{\varphi}}\circ e^{\iota_{(\1-\b{\varphi}\varphi)^{-1}\b{\varphi}}}(\tilde{\Omega}) \\
                                                    &=e^{\iota_{\varphi}}\circ(\b{\p}+[\p, \iota_{\varphi}]+\p)\circ e^{\iota_{(\1-\b{\varphi}\varphi)^{-1}\b{\varphi}}}(\tilde{\Omega}) \\
                                                    &=e^{\iota_{\varphi}}\circ(\b{\p}+[\p,\iota_{\varphi}]+\p)
                                                    \big(\tilde{\Omega}+\iota_{(\1-\b{\varphi}\varphi)^{-1}\b{\varphi}}(\tilde{\Omega})\big), \\
\end{split}
\end{align}
where Proposition \ref{main1} is used in the second equality of \eqref{2.2.1}
and the third equality results from the form type of
$\tilde{\Omega}$.

By the invertibility of the operator $e^{\iota_{\varphi}}$ and the
form-type comparison, the equation
$d(e^{\iota_{\varphi}|\iota_{\b{\varphi}}}(\Omega))=0$ amounts to
 \begin{equation}\label{wan.10}
    \begin{cases}
      (\b{\p}+[\p, \iota_{\varphi}])\tilde{\Omega}=0,\\
      \p\tilde{\Omega}+(\b{\p}+\p\circ \iota_{\varphi})\circ \iota_{(\1-\b{\varphi}\varphi)^{-1}\b{\varphi}}(\tilde{\Omega})=0.
    \end{cases}
 \end{equation}
Then the second equation in (\ref{wan.10}) and Lemma \ref{aaaa}
imply
 \begin{align}\label{wan.11}
 \begin{split}
   \iota_{\varphi}\circ \p\tilde{\Omega}&=-\iota_{\varphi}\circ(\b{\p}+\p\circ \iota_{\varphi})\circ\iota_{(\1-\b{\varphi}\varphi)^{-1}\b{\varphi}}(\tilde{\Omega})\\
   &=-(\b{\p}\circ \iota_{\varphi}-\iota_{\frac{1}{2}[\varphi,\varphi]}+\iota_{\varphi}\circ\p\circ \iota_{\varphi})\circ \iota_{(\1-\b{\varphi}\varphi)^{-1}\b{\varphi}}(\tilde{\Omega})\\
   &=-(\b{\p}\circ \iota_{\varphi}+\frac{1}{2}\p\circ\ \iota_{\varphi}\circ \iota_{\varphi}+\frac{1}{2}\iota_{\varphi}\circ \iota_{\varphi}\circ\p)\circ \iota_{(\1-\b{\varphi}\varphi)^{-1}\b{\varphi}}(\tilde{\Omega})\\
   &=-(\b{\p}\circ \iota_{\varphi}+\frac{1}{2}\p\circ\ \iota_{\varphi}\circ \iota_{\varphi})\circ
   \iota_{(\1-\b{\varphi}\varphi)^{-1}\b{\varphi}}(\tilde{\Omega}),
   \end{split}
 \end{align}
where the form type of $\tilde{\Omega}$ is also used in the fourth
equality of \eqref{wan.11}. Substituting (\ref{wan.11}) into
(\ref{wan.10}), one obtains that (\ref{wan.10}) is equivalent to
\begin{equation}\label{wan.12}
    \begin{cases}
      \big(\b{\p}+\p\circ \iota_{\varphi}+\b{\p}\circ \iota_{\varphi}\circ \iota_{(\1-\b{\varphi}\varphi)^{-1}\b{\varphi}}+\frac{1}{2}\p\circ\ \iota_{\varphi}\circ \iota_{\varphi}\circ \iota_{(\1-\b{\varphi}\varphi)^{-1}\b{\varphi}}\big)\tilde{\Omega}=0,\\
      \big(\p+\b{\p}\circ \iota_{(\1-\b{\varphi}\varphi)^{-1}\b{\varphi}}+\p\circ \iota_{\varphi}\circ \iota_{(\1-\b{\varphi}\varphi)^{-1}\b{\varphi}}\big)\tilde{\Omega}=0.
    \end{cases}
\end{equation}

For the resolution of $\p\db$-equations, we need a lemma due to \cite[Theorem $4.1$]{P1}:
\begin{lemma}[]\label{ddbar-eq}
Let $(X,\omega)$ be a compact Hermitian complex manifold with the
pure-type complex differential forms $x$ and $y$. Assume that the
$\p \db$-equation
\begin{equation}\label{ddb-eq}
\p \db x =y
\end{equation}
admits a solution. Then an explicit solution of the $\p
\db$-equation \eqref{ddb-eq} can be chosen as
$$(\p\db)^*\G_{BC}y,$$
which uniquely minimizes the $L^2$-norms of all the solutions with
respect to $\omega$. Besides, the equalities hold
\[\G_{BC}(\p\db) = (\p\db) \G_{A}\quad \text{and} \quad (\p\db)^*\G_{BC} = \G_{A}(\p\db)^*,\]
where $\G_{BC}$ and $\G_{A}$ are the associated Green's operators of
$\square_{BC}$ and $\square_{A}$, respectively. Here $\square_{BC}$
is defined in \eqref{bc-Lap} and $\square_{{A}}$ is the \emph{second
Kodaira--Spencer operator} (often also called \emph{Aeppli
Laplacian})
$$\square_{{A}}=\p^*\db^*\db\p+\db\p\p^*\db^*+\db\p^*\p\db^*+\p\db^*\db\p^*+\db\db^*+\p\p^*.$$
\end{lemma}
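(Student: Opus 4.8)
The plan is to derive all three assertions from the standard elliptic Hodge theory of the fourth-order operators $\square_{BC}$ and $\square_A$ together with a single short operator identity. Both $\square_{BC}$ and $\square_A$ are elliptic, non-negative and self-adjoint with respect to $\omega$, so one has the $L^2$-orthogonal decompositions
\[
A^{p,q}(X)=\mathcal{H}^{p,q}_{BC}(X)\oplus \p\db A^{p-1,q-1}(X)\oplus\big(\p^{*}A^{p+1,q}(X)+\db^{*}A^{p,q+1}(X)\big),
\]
\[
A^{p,q}(X)=\mathcal{H}^{p,q}_{A}(X)\oplus\big(\p A^{p-1,q}(X)+\db A^{p,q-1}(X)\big)\oplus(\p\db)^{*}A^{p+1,q+1}(X),
\]
where $\mathcal{H}^{p,q}_{BC}=\ker\square_{BC}=\ker\p\cap\ker\db\cap\ker(\p\db)^{*}$, $\mathcal{H}^{p,q}_{A}=\ker\square_{A}=\ker(\p\db)\cap\ker\p^{*}\cap\ker\db^{*}$, and $(\p\db)^{*}=\db^{*}\p^{*}$; moreover $\G_{BC},\G_{A}$ are self-adjoint and invert $\square_{BC},\square_{A}$ on the orthogonal complements of the respective harmonic spaces, and $\im(\p\db)\perp\mathcal{H}_{BC}$ since $\mathcal{H}_{BC}\subseteq\ker(\p\db)^{*}$.

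The key step is the operator identity $\square_{BC}\circ(\p\db)=(\p\db)\circ\square_{A}$. First, for every $\gamma$ the form $\p\db\gamma$ is both $\p$- and $\db$-closed, so every summand of $\square_{BC}$ in \eqref{bc-Lap} except $\p\db\db^{*}\p^{*}=\p\db(\p\db)^{*}$ annihilates it; hence $\square_{BC}(\p\db\gamma)=\p\db(\p\db)^{*}(\p\db\gamma)$. On the other hand, compose $\p\db$ on the left with each of the six summands of $\square_{A}$: using $\db\p=-\p\db$ (whence also $\p^{*}\db^{*}=-\db^{*}\p^{*}$) and $\p^{2}=\db^{2}=0$, all six vanish except $\p\db\,\p^{*}\db^{*}\db\p$, and $\p^{*}\db^{*}\db\p=-\p^{*}\db^{*}\p\db=\db^{*}\p^{*}\p\db=(\p\db)^{*}(\p\db)$, so $(\p\db)\square_{A}=\p\db(\p\db)^{*}(\p\db)=\square_{BC}(\p\db)$.

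Granting this identity, $\p\db$ carries $\ker\square_{A}$ into $\ker\square_{BC}$ and $\im\square_{A}$ into $\im\square_{BC}$, hence intertwines the harmonic projections; since $\square_{BC}$ is injective on $(\mathcal{H}_{BC})^{\perp}$ while $\G_{BC}(\p\db)\alpha$ and $(\p\db)\G_{A}\alpha$ both lie there, one concludes $\G_{BC}(\p\db)=(\p\db)\G_{A}$, and then, taking adjoints and using self-adjointness of $\G_{BC},\G_{A}$, also $(\p\db)^{*}\G_{BC}=\G_{A}(\p\db)^{*}$. For the solution formula, assume $\p\db x_{1}=y$. Then $y\perp\mathcal{H}^{p+1,q+1}_{BC}$, so $y=\square_{BC}\G_{BC}y$; by the commutation relation $\G_{BC}y=(\p\db)(\G_{A}x_{1})$ is $\p$- and $\db$-closed, hence $\square_{BC}(\G_{BC}y)=\p\db(\p\db)^{*}\G_{BC}y$, i.e. $y=\p\db\big((\p\db)^{*}\G_{BC}y\big)$, so $x:=(\p\db)^{*}\G_{BC}y$ solves \eqref{ddb-eq}. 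Finally $x\in\im(\p\db)^{*}$, and any other solution $x'$ has $x'-x\in\ker(\p\db)\perp\im(\p\db)^{*}$, so $\|x'\|^{2}=\|x\|^{2}+\|x'-x\|^{2}$, giving the unique $L^2$-minimality.

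I expect the only delicate point to be the bookkeeping in the operator identity—checking the five cancellations in $(\p\db)\circ\square_{A}$ and the sign $\p^{*}\db^{*}=-\db^{*}\p^{*}$—everything else being a formal consequence of the standard elliptic Hodge theory for $\square_{BC}$ and $\square_{A}$; alternatively one may simply invoke \cite[Theorem 4.1]{P1} directly.
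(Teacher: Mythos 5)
Your proof is correct and follows essentially the same route as the paper: both rest on the Hodge decompositions for $\square_{BC}$ and $\square_{A}$, the intertwining identity $\square_{BC}(\p\db)=\p\db(\p\db)^*\p\db=(\p\db)\square_{A}$, and the orthogonality of $\ker(\p\db)=\ker\square_A\oplus(\mathrm{Im}\,\p+\mathrm{Im}\,\db)$ to $\mathrm{Im}\,(\p\db)^*$ for the $L^2$-minimality. The only difference is organizational — the paper first commutes $\G_{BC}$ with the composite $\p\db(\p\db)^*$ to get the solution formula and only afterwards introduces the Aeppli decomposition, whereas you derive everything from the intertwining identity up front — but the underlying argument is the same.
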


\begin{proof}
We shall use the Hodge decomposition of $\square_{BC}$ on $X$:
\begin{equation}\label{bc-hod}
A^{p,q}(X)=\ker\square_{BC}\oplus \textrm{Im}~(\p\db)
\oplus(\textrm{Im}~\p^*+\textrm{Im}~\db^*),
\end{equation}
whose three parts are orthogonal to each other with respect to the
$L^2$-scalar product defined by $\omega$, combined with the equality
$$\1=\mathbb{H}_{BC}+\square_{BC}\G_{BC},$$
where $\mathbb{H}_{BC}$ is the harmonic projection operator. And it
should be noted that
\begin{equation}\label{bc-ker}
\ker\square_{BC}=\ker\p\cap\ker\db\cap\ker(\p\db)^*.
\end{equation}
Then two observations follow:
\begin{enumerate}[(1)]
    \item \label{case-1}
$\square_{BC}\p\db(\p\db)^*=\p\db(\p\db)^*\square_{BC};$
    \item \label{case-2}
$\G_{BC}\p\db(\p\db)^*=\p\db(\p\db)^*\G_{BC}.$
 \end{enumerate}
It is clear that \eqref{case-1} implies \eqref{case-2}. Actually,
\eqref{case-1} yields
\[ \G_{BC}\square_{BC}\p\db(\p\db)^*\G_{BC}=\G_{BC}\p\db(\p\db)^*\square_{BC}\G_{BC}. \]
A routine check shows that
$$\G_{BC}\square_{BC}\p\db(\p\db)^*\G_{BC}=(\1-\mathbb{H}_{BC})\p\db(\p\db)^*\G_{BC}=\p\db(\p\db)^*\G_{BC}$$
and
$$\G_{BC}\p\db(\p\db)^*\square_{BC}\G_{BC}=\G_{BC}\p\db(\p\db)^*(\1-\mathbb{H}_{BC})=\G_{BC}\p\db(\p\db)^*;$$
while, the statement \eqref{case-1} is proved by a direct
calculation:
$$\square_{BC}\p\db(\p\db)^*=(\p\db)(\p\db)^*(\p\db)(\p\db)^*=\p\db(\p\db)^*\square_{BC}.$$
Hence, one has
$$(\p\db)(\p\db)^*\G_{BC}y=\G_{BC}(\p\db)(\p\db)^*y=\G_{BC}\square_{BC}y=(\1-\mathbb{H}_{BC})y=y,$$
where $y \in \textrm{Im}~\p \db$ due to the solution-existence of
the $\p\db$-equation.

To see that the solution $(\p\db)^*\G_{BC}y$ is the unique
$L^2$-norm minimum, we resort to the Hodge decomposition of the
operator $\square_{A}$:
\begin{equation}\label{A-H}
A^{p,q}(X)=\ker\square_{A} \oplus (Im~\p +Im~\db ) \oplus
Im~(\p\db)^*,
\end{equation}
where $\ker \square_{A} = \ker (\p \db) \cap \ker \p^* \cap \ker
\db^*$. Let $z$ be an arbitrary solution of the $\p\db$-equation
\eqref{ddb-eq}, which decomposes into three components $z_1 + z_2
+z_3$ with respect to the Hodge decomposition \eqref{A-H} of
$\square_{A}$. By the Hodge theory of $\square_{A}$, the equality
holds
\[ \ker (\p\db) = \ker \square_{A} \oplus ( \textrm{Im}\p + \textrm{Im}\db ) ,\]
which implies that $\p \db (z_1 + z_2)=0$. Hence, it follows that
\[ \p \db z = \p \db z_3 =y. \]
After noticing that $\db^* z_3 = \p^* z_3 =0$, we get
\[ (\p\db)^* y = (\p\db)^* \p \db z_3 = \square_{A} z_3, \]
which implies that
\[ \mathbb{G}_{A} (\p\db)^* y = \mathbb{G}_{A} \square_{A} z_3= (\1-\mathbb{H}_{A})z_3 = z_3. \]
Then it is obvious that
\begin{equation}\label{bc-d-db}
\square_{BC} (\p\db)=\p \db (\p\db)^* \p \db=(\p\db)\square_{A},
\end{equation}
with $\G_{BC}(\p\db)=(\p\db)\G_{A}$ established as well. Taking
adjoint operators of both sides in \eqref{bc-d-db}, we will find
that
\[  (\p\db)^* \square_{BC} = \square_{A} (\p\db)^*,\]
implying the equality
\[(\p\db)^*\mathbb{G}_{BC}=\mathbb{G}_{A}(\p\db)^*.\] And thus, we
obtain that
\[ z_3 =  \mathbb{G}_{A} (\p\db)^* y = (\p\db)^*\mathbb{G}_{BC} y. \]
Therefore, \[ \|z\|^2 = \|z_1\|^2 +\|z_2\|^2+\|z_3\|^2\geq \|z_3\|^2
= \|(\p\db)^*\mathbb{G}_{BC}y \|^2, \] and the equality holds if and
only if $z_1 =z_2 =0$, i.e., $z=z_3=(\p\db)^*\mathbb{G}_{BC} y$.
\end{proof}

Now we arrive at:
\begin{proposition}\label{fml-blc}
Let $\omega$ be a balanced metric on $X_0$, which satisfies the
$(n-1,n)$-th mild $\p\b{\p}$-lemma. Then the system \eqref{wan.2} of
equations is formally solved.
\end{proposition}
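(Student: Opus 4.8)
The plan is to follow the K\"ahler scheme of Subsection~\ref{kahler-section} almost verbatim, replacing $(1,1)$-forms by $(n-1,n-1)$-forms and the full $\p\db$-lemma by the $(n-1,n)$-th mild one. First, since $e^{\iota_{\varphi}|\iota_{\b{\varphi}}}$ is real and $\varphi(0)=0$, I would reduce to building a \emph{formal} power series $\tilde{\Omega}(t)=\sum_{k\geq 0}\tilde{\Omega}_k$ (in $t,\b{t}$) of $(n-1,n-1)$-forms on $X_0$ with $\tilde{\Omega}_0=\omega^{n-1}$ solving the reduced system \eqref{wan.12}: by \eqref{2.1.2}--\eqref{2.2.1} such a $\tilde{\Omega}(t)$ yields $\Omega(t)$ solving \eqref{wan.2.1} with $\Omega(0)=\omega^{n-1}$, and then $\tfrac{1}{2}(\Omega(t)+\overline{\Omega(t)})$ solves \eqref{wan.2}. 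The series $\tilde{\Omega}(t)$ is constructed by induction on the total degree $N$.

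At degree $0$, since $\varphi=O(t)$, the system \eqref{wan.12} reduces to $\db\,\omega^{n-1}=0$ and $\p\,\omega^{n-1}=0$, which is exactly the balanced condition on $\omega$; this is the only place the balanced hypothesis is used. For the inductive step, assume $\tilde{\Omega}_0,\dots,\tilde{\Omega}_{N-1}$ are already chosen. Extracting the degree-$N$ parts of \eqref{wan.12} and isolating the leading terms $\db\tilde{\Omega}_N$, $\p\tilde{\Omega}_N$ gives $\db\tilde{\Omega}_N=E_N$ and $\p\tilde{\Omega}_N=F_N$, where $E_N,F_N$ depend only on $\varphi$ and $\tilde{\Omega}_0,\dots,\tilde{\Omega}_{N-1}$. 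Keeping track of form types --- $\iota_{\varphi}$ shifts $(p,q)\mapsto(p-1,q+1)$, while on an $(n-1,n-1)$-form $\iota_{(\1-\b{\varphi}\varphi)^{-1}\b{\varphi}}$ lands in $A^{n,n-2}(X_0)$ as in \eqref{wan.11} --- one finds $E_N\in A^{n-1,n}(X_0)$, $F_N\in A^{n,n-1}(X_0)$ and, writing out \eqref{wan.12} explicitly, the sharper form
\[ E_N=-\p\xi_N-\db\rho_N,\qquad F_N=-\db\psi_N-\p\rho_N, \]
where $\rho_N:=(\iota_{\varphi}\iota_{(\1-\b{\varphi}\varphi)^{-1}\b{\varphi}}\tilde{\Omega})_N\in A^{n-1,n-1}(X_0)$, $\xi_N:=\big(\iota_{\varphi}\tilde{\Omega}+\tfrac{1}{2}\iota_{\varphi}\iota_{\varphi}\iota_{(\1-\b{\varphi}\varphi)^{-1}\b{\varphi}}\tilde{\Omega}\big)_N\in A^{n-2,n}(X_0)$ and $\psi_N:=(\iota_{(\1-\b{\varphi}\varphi)^{-1}\b{\varphi}}\tilde{\Omega})_N\in A^{n,n-2}(X_0)$ are all already determined.

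Now the $(n-1,n)$-th mild $\p\db$-lemma enters, exactly in the spirit of Observation~\ref{expslt}, but only for obstructions of the special shape ``$\p$ of an $(n-2,n)$-form''. Applying Definition~\ref{mild} to $\xi_N$ would produce $\theta_N\in A^{n-2,n-1}(X_0)$ with $\p\db\theta_N=\p\xi_N$, whence $\db(\p\theta_N)=-\p\xi_N$; applying it to the $(n-2,n)$-form $\overline{\psi_N}$ would produce $\theta_N'\in A^{n-2,n-1}(X_0)$ with $\p\db\theta_N'=\p\overline{\psi_N}$, so that $\sigma_N:=\overline{\theta_N'}\in A^{n-1,n-2}(X_0)$ satisfies $\p\db\sigma_N=-\db\psi_N$ after conjugation. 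I would then set
\[ \tilde{\Omega}_N:=-\rho_N+\p\theta_N+\db\sigma_N\in A^{n-1,n-1}(X_0), \]
and check directly $\db\tilde{\Omega}_N=-\db\rho_N+\db\p\theta_N=E_N$ and $\p\tilde{\Omega}_N=-\p\rho_N+\p\db\sigma_N=F_N$, closing the induction. No separate compatibility condition is needed: the identity $F_N=-\db\psi_N-\p\rho_N$ already presents the residual $F_N+\p\rho_N$ as ``$\db$ of an $(n,n-2)$-form'', whose conjugate is ``$\p$ of an $(n-2,n)$-form'' handled by the mild $\p\db$-lemma, and likewise for $E_N+\db\rho_N$. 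For the subsequent H\"older convergence and regularity arguments one would instead take the canonical choices $\theta_N=(\p\db)^*\G_{BC}(\p\xi_N)$ and $\theta_N'=(\p\db)^*\G_{BC}(\p\overline{\psi_N})$ furnished by Lemma~\ref{ddbar-eq}, in order to control Green's operators degree by degree.

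The hard part will be the bookkeeping in the previous two steps: one must verify that, after the ``obvious'' subtraction of $\rho_N$, the residual obstructions $\p\xi_N$ in the $\db$-equation and $\db\psi_N$ in the $\p$-equation are genuinely of the form ``$\p$ of an $(n-2,n)$-form'' (respectively its conjugate), which is precisely --- and only --- what the mild $\p\db$-lemma supplies, not its weak or strong variants. This forces a careful analysis of the form-type shifts of all the nested contractions appearing in \eqref{wan.12}, together with the remark that forcing $\tilde{\Omega}_N$ into type $(n-1,n-1)$ makes $E_N$ automatically $\db$-closed and $F_N$ automatically $\p$-closed (being of top $\db$-, resp.\ $\p$-degree). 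Once this is in place, the remaining formal scheme is identical to the K\"ahler case.
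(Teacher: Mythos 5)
Your proposal is correct and follows essentially the same route as the paper: reduce to the system \eqref{wan.12} via \eqref{2.1.2}--\eqref{2.2.1}, use the balanced condition at degree $0$, and at each degree $N$ absorb the term $\rho_N=(\iota_{\varphi}\iota_{(\1-\b{\varphi}\varphi)^{-1}\b{\varphi}}\tilde{\Omega})_N$ and kill the two residual obstructions --- $\p$ of an $(n-2,n)$-form and (after conjugation) $\db$ of an $(n,n-2)$-form --- by the mild $\p\db$-lemma, with the canonical $(\p\db)^*\G_{BC}$ choices of Lemma~\ref{ddbar-eq} reserved for the convergence argument. Your write-up in fact makes explicit the conjugation step and the form-type bookkeeping that the paper's proof leaves implicit in the phrase ``by the form-type consideration.''
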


\begin{proof}
It suffices to resolve \eqref{wan.12} with  initial value
$\tilde{\Omega}(0)=\omega^{n-1}$. Actually, the solution
$\tilde{\Omega}(t)$ of the system \eqref{wan.12}, satisfying that
$\tilde{\Omega}(0)=\omega^{n-1}$, corresponds to the one of
\eqref{wan.2.1}, by the relation below as in \eqref{2.1.2}
\[
\tilde{\Omega}(t)=e^{-\iota_{(\1-\b{\varphi}\varphi)^{-1}\b{\varphi}}}\circ
e^{-\iota_{\varphi}}\circ
e^{\iota_{\varphi}|\iota_{\b{\varphi}}}(\Omega(t)).
\]
Therefore, we focus on
$$
    \begin{cases}
      \big(\b{\p}+\p\circ \iota_{\varphi}+\b{\p}\circ \iota_{\varphi}\circ \iota_{(\1-\b{\varphi}\varphi)^{-1}\b{\varphi}}+\frac{1}{2}\p\circ\ \iota_{\varphi}\circ \iota_{\varphi}\circ \iota_{(\1-\b{\varphi}\varphi)^{-1}\b{\varphi}}\big)\tilde{\Omega}(t)=0,\\
      \big(\p+\b{\p}\circ \iota_{(\1-\b{\varphi}\varphi)^{-1}\b{\varphi}}+\p\circ \iota_{\varphi}\circ \iota_{(\1-\b{\varphi}\varphi)^{-1}\b{\varphi}}\big)\tilde{\Omega}(t)=0,\\
      \tilde{\Omega}(0)=\omega^{n-1},
    \end{cases}
$$
and solve this system of equations also by the iteration method used
in the K\"ahler case.

Assume that $\tilde{\Omega}(t)$ can develop into a power series as
follows:
\begin{equation*}
    \begin{cases}
      \tilde{\Omega}(t)=\sum_{k=0}^{\infty}\tilde{\Omega}_k,\\
      \tilde{\Omega}_k=\sum_{i+j=k}\tilde{\Omega}_{i,j}t^i\b{t}^j,
    \end{cases}
\end{equation*}
where $\tilde{\Omega}_k$ is the $k$-degree homogeneous part in the
expansion of $\tilde{\Omega}(t)$  and $\tilde{\Omega}_{i,j}$ are all
smooth $(n-1,n-1)$-forms on $X_0$. Here we follow the notation
described before Proposition \ref{2.9}. And in \eqref{phi-ps-p} the
holomorphic family of integrable Beltrami differentials on $X_0$ develops a power series of Beltrami
differentials in $t$ as
$$
\varphi(t) = \sum_{i=1}^{\infty}\varphi_{i}t^i.
$$
Hence, we need to
solve
\begin{equation}\label{wan.16}
    \begin{cases}
      \left(\big(\b{\p}+\p\circ \iota_{\varphi}+\b{\p}\circ \iota_{\varphi}\circ \iota_{(\1-\b{\varphi}\varphi)^{-1}\b{\varphi}}+\frac{1}{2}\p\circ\ \iota_{\varphi}\circ \iota_{\varphi}\circ \iota_{(\1-\b{\varphi}\varphi)^{-1}\b{\varphi}}\big)\tilde{\Omega}(t)\right)_k=0,\\
      \left(\big(\p+\b{\p}\circ \iota_{(\1-\b{\varphi}\varphi)^{-1}\b{\varphi}}+\p\circ \iota_{\varphi}\circ \iota_{(\1-\b{\varphi}\varphi)^{-1}\b{\varphi}}\big)\tilde{\Omega}(t)\right)_k=0,\\
      \tilde{\Omega}(0)=\omega^{n-1},
    \end{cases}
\end{equation}
for any $k\geq 0$.

The case $k=0$ of the equations \eqref{wan.16} holds since $\omega$
is a balanced metric. By induction, we assume that the system
\eqref{wan.16} of equations is solved for each $k\leq l$ and the
solutions are denoted by $\{\tilde{\Omega}_k\}_{k\leq l}\in
A^{n-1,n-1}(X_0)$. By the form-type consideration, one observes
that the $(n-1,n)$-th mild $\p\db$-lemma produces $\mu$ and $\nu$,
such that the following equalities
\[ \left\{ \begin{aligned}
\p \db \mu &= -\db  \Big( \iota_{(\1-\b{\varphi}\varphi)^{-1}\b{\varphi}} \big( \tilde{\Omega}(t) \big) \Big)_{l+1},\\
\db \p \nu &= -\p \Big( \big( \iota_{\varphi}+ \frac{1}{2}
\iota_{\varphi}\circ \iota_{\varphi}\circ
\iota_{(\1-\b{\varphi}\varphi)^{-1}\b{\varphi}}\big)\tilde{\Omega}(t) \Big)_{l+1} \\
\end{aligned} \right. \]
hold as the ones \eqref{mslmm} in Observation \ref{expslt}. Then
Lemma \ref{ddbar-eq} enables us to determine the two explicit
solutions \beq\label{ddbarslt} \left\{ \begin{aligned}
\mu &= - (\p\db)^* \G_{BC} \db  \Big( \iota_{(\1-\b{\varphi}\varphi)^{-1}\b{\varphi}} \big( \tilde{\Omega}(t) \big) \Big)_{l+1}, \\
\nu &= (\p\db)^* \G_{BC} \p \Big( \big( \iota_{\varphi}+ \frac{1}{2}
\iota_{\varphi}\circ \iota_{\varphi}\circ
\iota_{(\1-\b{\varphi}\varphi)^{-1}\b{\varphi}}\big)\tilde{\Omega}(t)
\Big)_{l+1}.
\end{aligned} \right. \eeq
Hence, the $(l+1)$-th order of the system \eqref{wan.16} is solved,
yielding that
\[\tilde{\Omega}_{l+1} + \Big( \iota_{\varphi}\circ \iota_{(\1-\b{\varphi}\varphi)^{-1}\b{\varphi}} \big( \tilde{\Omega}(t)\big) \Big)_{l+1} =
\db \mu + \p \nu,\] in the same manner as \eqref{cbslt}, where $\mu$
and $\nu$ are given by \eqref{ddbarslt}. Therefore, we complete the
proof.
\end{proof}

Then we come to the  H\"older  convergence argument for $\tilde{\Omega}(t)$.
For $l=1,2,\cdots$, one $l$-degree canonical formal solution of
\eqref{wan.16} is given by induction:
\begin{equation}\label{can-sol-Om}
\begin{aligned}
\tilde{\Omega}_l
 =&-\Big( \iota_{\varphi}\circ
\iota_{(\1-\b{\varphi}\varphi)^{-1}\b{\varphi}} \big(
\tilde{\Omega}(t)\big) \Big)_l -\db (\p\db)^* \G_{BC} \db  \Big(
\iota_{(\1-\b{\varphi}\varphi)^{-1}\b{\varphi}} \big(
\tilde{\Omega}(t) \big) \Big)_l\\
& + \p (\p\db)^* \G_{BC} \p \Big( \big( \iota_{\varphi}+ \frac{1}{2}
\iota_{\varphi}\circ \iota_{\varphi}\circ
\iota_{(\1-\b{\varphi}\varphi)^{-1}\b{\varphi}}\big)\tilde{\Omega}(t)
\Big)_l
\end{aligned}
\end{equation}
and obviously
\begin{equation}\label{can-sol-Om-up}
\begin{aligned}
\tilde{\Omega}^{(l)}
 =&-\Big( \iota_{\varphi}\circ
\iota_{(\1-\b{\varphi}\varphi)^{-1}\b{\varphi}} \big(
\tilde{\Omega}(t)\big) \Big)^{(l)} -\db (\p\db)^* \G_{BC} \db  \Big(
\iota_{(\1-\b{\varphi}\varphi)^{-1}\b{\varphi}} \big(
\tilde{\Omega}(t) \big) \Big)^{(l)}\\
& + \p (\p\db)^* \G_{BC} \p \Big( \big( \iota_{\varphi}+ \frac{1}{2}
\iota_{\varphi}\circ \iota_{\varphi}\circ
\iota_{(\1-\b{\varphi}\varphi)^{-1}\b{\varphi}}\big)\tilde{\Omega}(t)
\Big)^{(l)}.
\end{aligned}
\end{equation}
In this proof we follow the notations in Subsection \ref{convergence}.

We use an important a priori estimate for the three terms in the
left-hand side of the above equality: for any complex differential
form $\phi$,
\begin{equation}\label{ee2}
\|\G_{BC}\phi\|_{k, \alpha}\leq C_{k,\alpha}\|\phi\|_{k-4, \alpha},
\end{equation}
where $k>3$ and $C_{k,\alpha}$ depends on only on $k$ and $\alpha$,
not on $\phi$. See \cite[Appendix.Theorem $7.4$]{k} for example. Assume
that
$$\|\tilde{\Omega}^{(r-1)}\|_{k, \alpha},\|\tilde{\Omega}^{(r-2)}\|_{k, \alpha},\|\tilde{\Omega}^{(r-3)}\|_{k, \alpha}\ll
A(t).$$ So, by the expression \eqref{can-sol-Om-up} and the a priori
estimate \eqref{ee2},
\begin{align*}
   &\|-\Big( \iota_{\varphi}\circ
\iota_{(\1-\b{\varphi}\varphi)^{-1}\b{\varphi}} \big(
\tilde{\Omega}(t)\big) \Big)^{(r)}\|_{k, \alpha}\\
=&\|-(\varphi\lc(\1-\b{\varphi}\varphi)^{-1}\b{\varphi}\lc\tilde{\Omega}^{(r-2)})^{(r)}
-(\varphi\lc(\1-\b{\varphi}\varphi)^{-1}\b{\varphi}\lc\tilde{\Omega}_0)^{(r)}\|_{k, \alpha}\\
 \ll & 2\|\varphi^{(r-2)}\|_{k, \alpha}^2\cdot
\|\tilde{\Omega}^{(r-2)}\|_{k, \alpha}+2\|\varphi^{(r-1)}\|_{k,
\alpha}^2\cdot
\|\tilde{\Omega}_0\|_{k, \alpha}\\
\ll &
2\left(\frac{\beta}{\gamma}\right)\left(\frac{\beta}{\gamma}+\|\tilde{\Omega}_0\|_{k,
\alpha}\right)A(t),
\end{align*}

\begin{align*}
   &\|-\db (\p\db)^* \G_{BC} \db  \Big(
\iota_{(\1-\b{\varphi}\varphi)^{-1}\b{\varphi}} \big(
\tilde{\Omega}(t) \big) \Big)^{(r)}\|_{k, \alpha}\\
=&\Big\|\db(\p\db)^*\G_{BC}\db\Big((\1-\b{\varphi}\varphi)^{-1}\b{\varphi}\lc\tilde{\Omega}^{(r-1)}
 +(\1-\b{\varphi}\varphi)^{-1}\b{\varphi}\lc\tilde{\Omega}_0\Big)^{(r)}\Big\|_{k, \alpha}\\
 \ll&
 C_1^2C_{k,\alpha}\Big(2\|\varphi^{(r-1)}\|_{k, \alpha}\|\tilde{\Omega}^{(r-1)}\|_{k,
\alpha}+2\|\varphi^{(r)}\|_{k, \alpha}\|\tilde{\Omega}_0\|_{k,
\alpha}\Big)\\
 \ll & 2C_1^2C_{k,\alpha}\left(\frac{\beta}{\gamma}+\|\tilde{\Omega}_0\|_{k,
\alpha}\right)A(t)
\end{align*}
and similarly,
\begin{align*}
   &\|\p (\p\db)^* \G_{BC} \p \Big( \big( \iota_{\varphi}+ \frac{1}{2}
\iota_{\varphi}\circ \iota_{\varphi}\circ
\iota_{(\1-\b{\varphi}\varphi)^{-1}\b{\varphi}}\big)\tilde{\Omega}(t)
\Big)^{(r)}\|_{k, \alpha}\\
 \ll & C_1^2C_{k,\alpha}\left(\frac{\beta}{\gamma}+\|\tilde{\Omega}_0\|_{k,
\alpha}\right)\left(1+\left(\frac{\beta}{\gamma}\right)^2\right)A(t).
\end{align*}
Hence, we choose $\beta,\gamma,\|\tilde{\Omega}_0\|_{k, \alpha}$ so
that the following inequalities hold:
$$2\left(\frac{\beta}{\gamma}\right)\left(\frac{\beta}{\gamma}+\|\tilde{\Omega}_0\|_{k,
\alpha}\right),
2C_1^2C_{k,\alpha}\left(\frac{\beta}{\gamma}+\|\tilde{\Omega}_0\|_{k,
\alpha}\right),
C_1^2C_{k,\alpha}\left(\frac{\beta}{\gamma}+\|\tilde{\Omega}_0\|_{k,
\alpha}\right)\left(1+\left(\frac{\beta}{\gamma}\right)^2\right)
<\frac{1}{3},$$ \eqref{initial-bg} for the integrable Beltrami
differential $\varphi(t)$ and
$$\|\tilde{\Omega}^{(1)}\|_{k, \alpha},\|\tilde{\Omega}^{(2)}\|_{k,
\alpha},\|\tilde{\Omega}^{(3)}\|_{k, \alpha}\ll A(t)$$ according the
above formulation, which are obviously possible as $t$ is small. Thus we obtain that
$$\|\tilde{\Omega}^{(r)}\|_{k, \alpha}\ll A(t),\ \text{for any $r\geq 1$},$$
which implies the desired convergence $\|\tilde{\Omega}^{(+\infty)}\|_{k,
\alpha}\ll A(t)$ immediately.

Finally, inspired by the elliptic argument \cite[Appendix.\S 8]{k},
we will apply the interior estimate to obtain the regularity of
$\tilde{\Omega}(t)$, which is a local problem. By the canonical
formal solution expression \eqref{can-sol-Om} of \eqref{wan.16}, one just needs to
use the following strongly elliptic second-order
pseudo-differential equation
\begin{equation}
\label{reg-eqn}
\begin{aligned}
\square\tilde{\Omega}(t)
 =&-\square\Big( \iota_{\varphi}\circ
\iota_{(\1-\b{\varphi}\varphi)^{-1}\b{\varphi}} \big(
\tilde{\Omega}(t)\big) \Big) -\db \db^*\db(\p\db)^* \G_{BC} \db
\Big( \iota_{(\1-\b{\varphi}\varphi)^{-1}\b{\varphi}} \big(
\tilde{\Omega}(t) \big) \Big)\\
& + \square\p (\p\db)^* \G_{BC} \p \Big( \big( \iota_{\varphi}+
\frac{1}{2} \iota_{\varphi}\circ \iota_{\varphi}\circ
\iota_{(\1-\b{\varphi}\varphi)^{-1}\b{\varphi}}\big)\tilde{\Omega}(t)
\Big),
\end{aligned}
\end{equation}
where $\square$ is the $\db$-Laplacian defined by \eqref{dbar-Lap}.
Recall that an elliptic partial differential operator of order $2m$ is
pseudo-differential and so is its inverse, whose order becomes $-2m$
as a pseudo-differential operator.

We cover $X:=X_0$ by coordinates neighborhoods $X_j$, $j=1,\cdots,
J$. Let $z_j=(z_j^1,\cdots,z_j^n)$ be the local holomorphic
coordinates on $X_j$ with $z_j^k=x_j^k+\sqrt{-1}x_j^{k+n}$ and put
$t=x^{2n+1}+\sqrt{-1}x^{2n+2}\in \Delta_{\epsilon}$ a $\epsilon$-disk in
$\mathbb{C}$. By these $2n+2$
real coordinates, $X_j\times \Delta_{\epsilon}$ is identified with
an open set $U_j$ of a $(2n+2)$-dimensional torus
$\mathds{T}^{2n+2}$.

Choose a partition of unity subordinate to $X_j$, that is, a set
$\{\rho_j\}$ of $C^\infty$ functions $\rho_j(x)$ on $X$ so that
$\sup\rho_j\subset X_j$ and for any $x\in X$,
$\sum_{j=1}^{J}\rho_j(x)\equiv 1$. For each $l=1,2,\cdots$, choose a
smooth function $\eta^l(t)$ with values in $[0,1]$:
\begin{equation}\label{eta-l}
\eta^l(t)\equiv
    \begin{cases}
      1,\ \text{for $|t|\leq (\frac{1}{2}+\frac{1}{2^{l+1}})r$};\\
      0,\ \text{for $|t|\geq (\frac{1}{2}+\frac{1}{2^{l}})r$},
    \end{cases}
 \end{equation}
where $r$ is a positive constant to be determined. Notice that $r$
is crucially used to give the uniform bound for the convergence
radius of $\tilde{\Omega}(t)$.

Set
\begin{equation}\label{ro-eta}
\rho_j^l(x,t)=\rho_j(x)\eta^l(t).
\end{equation}
Recall in the proof for $\varphi(t)$ in \cite[Appendix.\S 8]{k}, one
should also set
$$\chi_j^l(x,t)=\chi_j(x)\eta^l(t),$$
where the smooth function $\chi_j(x)$ with $\sup\chi_j\subset X_j$
is identically equal to $1$ on some neighborhood of the support of
$\rho_j$. But here we will replace its role directly by $\eta^l(t)$
to avoid the trouble caused by the presence of Green's operator
$\G_{BC}$.

First we will prove that $\eta^3\tilde{\Omega}$ is $C^{k+1,\alpha}$.
Consider the equation:
\begin{equation}\label{F21}
\square(\triangle^h_i\rho_j^3\tilde{\Omega})
 =F_1^1+F_2^1+F_3^1,
\end{equation}
where $F_1^1,F_2^1,F_3^1$ denote the three terms with respect to the
ones in the right-hand side of \eqref{reg-eqn} after the
corresponding operations, respectively, and $i=1,\cdots, 2n$. Here
$\triangle^h_i$ is the difference quotient as
\cite[Appendix.(8.14)]{k}. In particular,
$$F_2^1=-\triangle^h_i\left(\rho_j^3\db \db^*\db(\p\db)^* \G_{BC} \db
\Big( \iota_{(\1-\b{\varphi}\varphi)^{-1}\b{\varphi}} \big(
\tilde{\Omega}(t) \big) \Big)\right)+\text{lower-order terms of
$\tilde{\Omega}(t)$}.$$ In this proof the \lq\lq order" refers to the one of a
pseudo-differential operator. Since $\square$ is an elliptic linear
differential operator whose principal part is of diagonal type, by
\cite[Appendix.Theorem 2.3]{k} and \eqref{F21}, one obtains the a
priori estimate
\begin{equation}\label{4-ck}
\begin{aligned}
\|\triangle^h_i\rho_j^3\tilde{\Omega}\|_{k, \alpha}
 &\leq C_k(\|
F_1^1+F_2^1+F_3^1 \|_{k-2,
\alpha}+\|\triangle^h_i\rho_j^3\tilde{\Omega}\|_{0})\\
 &\leq C_k(\|
F_1^1\|_{k-2, \alpha}+\| F_2^1\|_{k-2, \alpha}+\| F_3^1 \|_{k-2,
\alpha}+\|\triangle^h_i\rho_j^3\tilde{\Omega}\|_{0}),
\end{aligned}
\end{equation}
where $C_k$ is a positive constant, possibly depending on $k$. Now
let us estimate the first three terms in the right-hand side of the
above inequality. Here we just estimate the second one, the most
troublesome one, since the other two terms are quite analogous.

We need an equality for the Green's operator: Let $E:=E(x, D)$ be an
elliptic linear partial differential operator on a smooth manifold
 and $G_E$, $\mathbb{H}_E$ its associated Green's operator and orthogonal projection to $\ker E$ defined such as in
\cite[Appendix.Definition 7.2]{k}. Then for any smooth function $f$
and differential form $\varpi$ on this manifold,
\begin{equation}\label{leibniz}
fG_E\varpi=G_E(f\varpi)-G_E(f\mathbb{H}_E\varpi)+\mathbb{H}_E(fG_E\varpi)+\text{lower-order
terms of $\varpi$}.
\end{equation}
Applying this equality to $\rho_j^3\db \db^*\db(\p\db)^* \G_{BC} \db
\Big( \iota_{(\1-\b{\varphi}\varphi)^{-1}\b{\varphi}} \big(
\tilde{\Omega}(t) \big) \Big)$ with $E=\square_{BC}$, $f=\rho_j^3$
and $\varpi=\db \Big(
\iota_{(\1-\b{\varphi}\varphi)^{-1}\b{\varphi}} \big(
\tilde{\Omega}(t) \big) \Big)$, one obtains
\begin{equation}\label{Lb-app}
\begin{aligned}
   &\rho_j^3\db \db^*\db(\p\db)^* \G_{BC} \db \Big(
\iota_{(\1-\b{\varphi}\varphi)^{-1}\b{\varphi}} \big(
\tilde{\Omega}(t) \big) \Big)\\
 =&\db \db^*\db(\p\db)^* \G_{BC} \db \Big(
\iota_{(\1-\b{\varphi}\varphi)^{-1}\b{\varphi}} \big(\rho_j^3
\tilde{\Omega}(t) \big) \Big)+\text{lower-order terms of
$\tilde{\Omega}(t)$},
\end{aligned}
\end{equation}
where we use the fact that $\db \Big(
\iota_{(\1-\b{\varphi}\varphi)^{-1}\b{\varphi}} \big(
\tilde{\Omega}(t) \big) \Big)$ is $\p\db$-exact and the equalities
\eqref{bc-hod}, \eqref{bc-ker}. Thus, by \eqref{Lb-app} and the
useful formula
\begin{equation}\label{eta-2l-1-2l+1}
\eta^{2l-1}\cdot\eta^{2l+1}=\eta^{2l+1},\ l=1,2,\cdots,
\end{equation} one gets the estimate on the first term of
$F_2^1$: $$\label{}
\begin{aligned}
   &\| -\triangle^h_i\left(\rho_j^3\db \db^*\db(\p\db)^* \G_{BC} \db \Big(
\iota_{(\1-\b{\varphi}\varphi)^{-1}\b{\varphi}} \big(
\tilde{\Omega}(t) \big) \Big)\right)\|_{k-2, \alpha}\\
 \leq& L\|\eta^1\varphi \|_{0}\|\triangle^h_i(\rho_j^3\tilde{\Omega}(t))\|_{k, \alpha}
 +L'\|\eta^3\tilde{\Omega}(t)\|_{k, \alpha}\|\eta^1\varphi\|_{k,
 \alpha},
\end{aligned}
$$
where $L,L'$ are positive numbers. Hence, one obtains:
\begin{equation}\label{M_2^1}
\| F_2^1\|_{k-2, \alpha}\leq
M_2^1A(r)\|\triangle^h_i(\rho_j^3\tilde{\Omega}(t))\|_{k, \alpha}
+M_k\|\eta^3\tilde{\Omega}(t)\|_{k, \alpha}\|\eta^1\varphi\|_{k,
 \alpha},
\end{equation}
where $M_2^1,M_k$ are positive numbers. Similarly, we are able to
get the analogous estimates for $F_1^1$ and $F_3^1$ with the positive
constants $M_1^1$ and $M_3^1$. Thus,
\begin{equation}\label{dq-3}
\begin{aligned}
   &\| \triangle^h_i(\rho_j^3\tilde{\Omega}(t))\|_{k, \alpha}\\
\leq&C_k(M_1^1+M_2^1+M_3^1)A(r)\|\triangle^h_i(\rho_j^3\tilde{\Omega}(t))\|_{k,
\alpha}+C_k\|\rho_j^3\tilde{\Omega}(t)\|_{1}+C_kM_k\|\eta^3\tilde{\Omega}(t)\|_{k,
\alpha}\|\eta^1\varphi\|_{k,
 \alpha}.
\end{aligned}
\end{equation}
Choose a sufficiently small $r$ such that
\begin{equation}\label{r-1/2}
C_k(M_1^1+M_2^1+M_3^1)A(r)\leq \frac{1}{2}
\end{equation}
and if $|t|<r$
\begin{equation}\label{r-om-phi}
\tilde{\Omega}(t), \varphi(t) \in C^{k, \alpha}.
\end{equation}
Then $$\label{} \eta^3\tilde{\Omega}(t), \eta^1\varphi(t) \in C^{k,
\alpha}.
$$
Therefore, by \eqref{dq-3}, one knows that
$$
   \| \triangle^h_i(\rho_j^3\tilde{\Omega}(t))\|_{k, \alpha}
\leq
2C_k\|\rho_j^3\tilde{\Omega}(t)\|_{1}+2C_kM_k\|\rho_j^3\tilde{\Omega}(t)\|_{k,
\alpha}\|\eta^1\varphi\|_{k,
 \alpha},
$$
where the right-hand side is bounded and independent of $h$. Hence,
we have proved $$\rho_j^3\tilde{\Omega}(t)\in C^{k+1, \alpha}$$ by
\cite[Appendix Lemma 8.2.(iii)]{k}. Summing with respect $j$, one
knows that $\eta^3\tilde{\Omega}(t)\in C^{k+1, \alpha}$ by
\eqref{ro-eta}.

Next we shall prove that $\eta^5\tilde{\Omega}$ is $C^{k+2,\alpha}$.
Consider the equation:
\begin{align*}
\square(\triangle^h_i D_\beta(\rho_j^5\tilde{\Omega}))
 =F_1^2+F_2^2+F_3^2,
\end{align*}
where $D_\beta=\frac{\p\ }{\p x^\beta}$ and $F_1^2,F_2^2,F_3^2$
denote the three terms with respect to the ones in the right-hand
side of \eqref{reg-eqn} after the corresponding operations,
respectively.  In particular,
$$F_2^2=D_\beta F_2^1+\text{lower-order terms of
$\tilde{\Omega}(t)$}.$$  By \cite[Appendix.Theorem 2.3]{k}, one
obtains the a priori estimate
\begin{equation}\label{dq-3f}
\begin{aligned}
\|\triangle^h_iD_\beta(\rho_j^5\tilde{\Omega})\|_{k, \alpha}
 &\leq C_k(\|
F_1^2+F_2^2+F_3^2 \|_{k-2,
\alpha}+\|\triangle^h_iD_\beta(\rho_j^5\tilde{\Omega})\|_{0})\\
 &\leq C_k(\|
F_1^2\|_{k-2, \alpha}+\| F_2^2\|_{k-2, \alpha}+\| F_3^2 \|_{k-2,
\alpha}+\|\triangle^h_iD_\beta(\rho_j^5\tilde{\Omega})\|_{0}),
\end{aligned}
\end{equation}
where $C_k$ is the same as in \eqref{4-ck}. Now let us estimate the
first three terms in the right-hand side of the above inequality.
Here we just estimate the second one since the other two terms are
quite analogous. We use the equality \eqref{leibniz} for the Green's
operator again. Then one gets the estimate on the first term of
$F_2^2$: $$\label{}
\begin{aligned}
   &\| -\triangle^h_iD_\beta\left(\rho_j^5\db \db^*\db(\p\db)^* \G_{BC} \db \Big(
\iota_{(\1-\b{\varphi}\varphi)^{-1}\b{\varphi}} \big(
\tilde{\Omega}(t) \big) \Big)\right)\|_{k-2, \alpha}\\
 \leq& L\|\eta^3\varphi \|_{0}\|\triangle^h_iD_\beta(\rho_j^5\tilde{\Omega}(t))\|_{k, \alpha}
 +L''\|\eta^5\tilde{\Omega}(t)\|_{k+1, \alpha}\|\eta^3\varphi\|_{k+1,
 \alpha},
\end{aligned}
$$
where $L,L''$ are positive numbers. Hence, one obtains:
\begin{equation}\label{F22}
\| F_2^2\|_{k-2, \alpha}\leq
M_2^1A(r)\|\triangle^h_iD_\beta(\rho_j^5\tilde{\Omega}(t))\|_{k,
\alpha} +M_{k+1}\|\eta^5\tilde{\Omega}(t)\|_{k+1,
\alpha}\|\eta^3\varphi\|_{k+1,
 \alpha},
\end{equation}
where $M_2^1,M_{k+1}$ are positive numbers. Crucially, $M_2^1$ is
exactly the same as in \eqref{M_2^1}, since the range of the
function $\eta^l(t)$ is $[0,1]$ by \eqref{eta-l}, although
$M_{k+1}$ is possibly different from $M_k$ in \eqref{M_2^1}.
 Similarly,
we are able to get the analogous estimates for $F_1^2$ and $F_3^2$
with the positive constants $M_1^1$ and $M_3^1$, which are the same
as in the above argument for $\eta^3\tilde{\Omega}(t)\in C^{k+1,
\alpha}$. Thus, by \eqref{dq-3f} and \eqref{F22},
\begin{equation}\label{dq-5}
\begin{aligned}
   &\| \triangle^h_iD_\beta(\rho_j^5\tilde{\Omega}(t))\|_{k, \alpha}\\
\leq&C_k(M_1^1+M_2^1+M_3^1)A(r)\|\triangle^h_iD_\beta(\rho_j^5\tilde{\Omega}(t))\|_{k,
\alpha}+C_k\|\rho_j^5\tilde{\Omega}(t)\|_{2}+C_kM_{k+1}\|\eta^5\tilde{\Omega}(t)\|_{k+1,
\alpha}\|\eta^3\varphi\|_{k+1,
 \alpha}.
\end{aligned}
\end{equation}
 Choose the same sufficiently small $r$ as in the above argument for $\eta^3\tilde{\Omega}(t)\in C^{k+1,
\alpha}$, given by \eqref{r-1/2} and \eqref{r-om-phi}. Therefore, by
\eqref{dq-5},
$$
   \| \triangle^h_iD_\beta(\rho_j^5\tilde{\Omega}(t))\|_{k, \alpha}
\leq
2C_k\|\rho_j^5\tilde{\Omega}(t)\|_{2}+2C_kM_{k+1}\|\eta^5\tilde{\Omega}(t)\|_{k+1,
\alpha}\|\eta^3\varphi\|_{k+1,
 \alpha},
$$
where the right-hand side is bounded when we use the formula
\eqref{eta-2l-1-2l+1}, $\eta^3\tilde{\Omega}(t)\in C^{k+1, \alpha}$
that we just proved in the above argument and also the fact that
 $\eta^3\varphi(t)\in C^{k+1, \alpha}$ proved in
\cite[Appendix.\S 8]{k}. Hence, we have proved
$$D_\beta(\rho_j^5\tilde{\Omega}(t))\in C^{k+1, \alpha}$$ by
\cite[Appendix Lemma 8.2.(iii)]{k} and thus
$$\rho_j^5\tilde{\Omega}(t)\in C^{k+2, \alpha}.$$ Summing with
respect $j$, one obtains that $\eta^5\tilde{\Omega}(t)\in C^{k+2,
\alpha}$ by \eqref{ro-eta}. Notice that in this procedure $r$ has
not been replaced.

We can also prove that, for any $l=1,2,\cdots$,
$\eta^{2l+1}\tilde{\Omega}$ is $C^{k+l, \alpha}$, where
 $r$ can be chosen independent of $l$. Since $\eta^{2l+1}(t)$ is
identically equal to $1$ on $|t|<\frac{r}{2}$ which is independent
of $l$, $\tilde{\Omega}(t)$ is $C^{\infty}$ on $X_0$ with
$|t|<\frac{r}{2}$.  Then $\tilde{\Omega}(t)$ can be considered as a real analytic family of $(n-1,n-1)$-forms in $t$
 and it is smooth on $t$ by \cite[Proposition 2.2.3]{kp} again.

\section{Stability of $p$-K\"ahler
structures}\label{other-str}
This section is to prove a local stability theorem of $p$-K\"ahler
structures with deformation invariance of Bott--Chern numbers. We will first study
obstruction of extension for $d$-closed forms and then the un-obstruction of real extension for transverse form
via its two equivalent definitions.

Consider the differentiable family $\pi: \mathcal{X} \rightarrow
B$ of compact complex $n$-dimensional manifolds
over a sufficiently small domain in $\mathbb{R}^k$ with the
reference fiber $X_0:= \pi^{-1}(0)$ and the general fibers $X_t:=
\pi^{-1}(t).$ Here we fix a family of
hermitian metrics on $X_t$.
\subsection{Obstruction of extension for $d$-closed and $\p\db$-closed forms}
\label{d-closed}
Inspired by Wu's result \cite[Theorem 5.13]{w}, one has:
\begin{proposition}\label{d-ext}
Let $r$ and $s$ be non-negative integers. Assume that the reference
fiber $X_0$ satisfies the $\p\db$-lemma. Then any $d$-closed
$(r,s)$-form $\Omega_0$ and $\p_0\db_0$-closed $(r,s)$-form $\Psi_0$
on $X_0$ can be extended unobstructed to a $d$-closed $(r,s)$-form
$\Omega_t$ and a $\p_t\db_t$-closed $(r,s)$-form $\Psi_t$ on its
small differentiable deformation $X_t$, respectively.
\end{proposition}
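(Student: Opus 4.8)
The plan is to run the cohomological argument of Kodaira--Spencer, using Wu's stability of the $\p\db$-lemma to supply the dimension constancy that makes the harmonic projections vary smoothly. First I would invoke Wu's result \cite[Theorem 5.13]{w}: since $X_0$ satisfies the $\p\db$-lemma, so does $X_t$ for every $t$ in a sufficiently small neighborhood of $0$. On a compact $\p\db$-manifold the Fr\"olicher spectral sequence degenerates at $E_1$, so $\sum_{p+q=k}h^{p,q}_{\db}(X_t)=b_k(X_t)$ holds for all $t$ near $0$; since the left-hand summands are individually upper semicontinuous in $t$ and the right-hand side is topologically fixed, each $h^{p,q}_{\db}(X_t)$ is constant near $t=0$. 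As all four standard cohomologies coincide on a $\p\db$-manifold, the Bott--Chern numbers $h^{p,q}_{BC}(X_t)$ and the Aeppli numbers $h^{p,q}_{A}(X_t)$ are likewise independent of $t$ near $0$.

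Next, exactly as recalled for $\square_{BC,t}$ at the beginning of Section \ref{kahler-section}, the Kodaira--Spencer upper-semicontinuity argument applied to the self-adjoint elliptic families $\{\square_{BC,t}\}$ and $\{\square_{A,t}\}$ shows that, under this dimension constancy, the harmonic projections $\mathbb H_{BC,t}$, $\mathbb H_{A,t}$ and the Green's operators $\G_{BC,t}$, $\G_{A,t}$ depend smoothly on $t$. Recall also that $\square_{BC,t}$-harmonic forms lie in $\ker\p_t\cap\ker\db_t$ by \eqref{bc-ker}, hence are $d$-closed, while $\square_{A,t}$-harmonic forms lie in $\ker(\p_t\db_t)$.

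Now I would build the extensions. Since $\Omega_0$ is $d$-closed and of pure type, its component along $\im\p_0^*+\im\db_0^*$ in the Hodge decomposition \eqref{bc-hod} vanishes, so $\Omega_0=\mathbb H_{BC,0}\Omega_0+\p_0\db_0\gamma_0$ for some $(r-1,s-1)$-form $\gamma_0$ on $X_0$. Put
\[
\Omega_t:=\mathbb H_{BC,t}\bigl(e^{\iota_{\varphi}|\iota_{\b{\varphi}}}(\mathbb H_{BC,0}\Omega_0)\bigr)+\p_t\db_t\bigl(e^{\iota_{\varphi}|\iota_{\b{\varphi}}}(\gamma_0)\bigr),
\]
using that $e^{\iota_{\varphi}|\iota_{\b{\varphi}}}$ preserves bidegree and is the identity at $t=0$: the first summand is $\square_{BC,t}$-harmonic, hence $d$-closed; the second is $\p_t\db_t$-exact, hence $d$-closed; the sum is smooth in $t$ and reduces at $t=0$ to $\mathbb H_{BC,0}\Omega_0+\p_0\db_0\gamma_0=\Omega_0$. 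The $\p_0\db_0$-closed form $\Psi_0$ is treated identically with Aeppli in place of Bott--Chern: from \eqref{A-H} one gets $\Psi_0=\mathbb H_{A,0}\Psi_0+\p_0 a_0+\db_0 b_0$, and
\[
\Psi_t:=\mathbb H_{A,t}\bigl(e^{\iota_{\varphi}|\iota_{\b{\varphi}}}(\mathbb H_{A,0}\Psi_0)\bigr)+\p_t\bigl(e^{\iota_{\varphi}|\iota_{\b{\varphi}}}(a_0)\bigr)+\db_t\bigl(e^{\iota_{\varphi}|\iota_{\b{\varphi}}}(b_0)\bigr)
\]
is a smooth family of $(r,s)$-forms on $X_t$, each $\p_t\db_t$-closed, with $\Psi_t|_{t=0}=\Psi_0$.

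A variant avoiding the harmonic decomposition of the given form: starting from $\tilde\Omega_t:=e^{\iota_{\varphi}|\iota_{\b{\varphi}}}(\Omega_0)$, first set $\Omega'_t:=\tilde\Omega_t-\db_t^*\G_{\db,t}(\db_t\tilde\Omega_t)$ to kill $\db_t\tilde\Omega_t$; then $\p_t\Omega'_t$ is $\p_t$-exact and $\db_t$-closed, so by Lemma \ref{ddbar-eq} on $X_t$ one may write $\p_t\Omega'_t=\p_t\db_t\lambda_t$ with $\lambda_t:=(\p_t\db_t)^*\G_{BC,t}(\p_t\Omega'_t)$, and finally $\Omega_t:=\Omega'_t-\db_t\lambda_t$ is $d$-closed, with all correction terms vanishing at $t=0$ because $\p_0\Omega_0=\db_0\Omega_0=0$, so $\Omega_t|_{t=0}=\Omega_0$; a single correction $\tilde\Psi_t-(\p_t\db_t)^*\G_{BC,t}(\p_t\db_t\tilde\Psi_t)$ handles $\Psi_0$. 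Either way, the only substantive step is the first one: passing from Wu's openness of the $\p\db$-lemma to the deformation invariance of the relevant Bott--Chern and Aeppli numbers and thence, via Kodaira--Spencer's upper semicontinuity, to the smooth dependence on $t$ of $\mathbb H_{BC,t},\mathbb H_{A,t},\G_{BC,t},\G_{A,t}$; everything after that is bookkeeping with \eqref{bc-hod}, \eqref{A-H}, and Lemma \ref{ddbar-eq}.
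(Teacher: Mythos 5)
Your proof is correct and follows essentially the same route as the paper's: both derive the deformation invariance of the Bott--Chern and Aeppli numbers (hence the smooth dependence on $t$ of the harmonic projections and Green's operators) from Wu's openness of the $\p\db$-lemma, push the given form to $X_t$ via $e^{\iota_{\varphi}|\iota_{\b{\varphi}}}$, and correct it using the Bott--Chern (resp.\ Aeppli) Hodge decomposition. The only cosmetic difference is that the paper applies the single operator $\p_t\db_t\db_t^*\p_t^*\G_t+F_t$ (resp.\ its Aeppli analogue) directly to the naive extension, rather than decomposing $\Omega_0$ into harmonic and $\p_0\db_0$-exact pieces before extending.
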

\begin{proof}
By use of the extension map
$e^{\iota_{\varphi}|\iota_{\o{\varphi}}}$, we can construct two
$(r,s)$-forms $e^{\iota_{\varphi}|\iota_{\o{\varphi}}}(\Omega_0)$
and $e^{\iota_{\varphi}|\iota_{\o{\varphi}}}(\Psi_0)$ on $X_t$,
starting with $\Omega_0$ and $\Psi_0$, respectively.

Let $F_t$ be the orthogonal projection to $\mathds{F}_t$, the kernel
of
$$\square_{{BC},t}=\p_t\db_t\db_t^*\p_t^*+\db_t^*\p_t^*\p_t\db_t+\db_t^*\p_t\p_t^*\db_t+\p_t^*\db_t\db_t^*\p_t+\db_t^*\db_t+\p_t^*\p_t$$
and $\G_t$ denote the associated Green's operator with respect to a smooth family of Hermitian metrics on $X_t$. Then $F_t$ and
$\G_t$ are $C^{\infty}$ differentiable in $t$ since the $\dim
\mathds{F}_t$ is deformation invariant, thanks to \cite[Theorem
5.12]{w}. Therefore, the desired $d$-closed $(r,s)$-form is
\begin{equation}\label{4-omt}
\Omega_t =\big(\p_t\db_t\db_t^*\p_t^*\G_t+F_t\big)\big(e^{\iota_{\varphi}|\iota_{\o{\varphi}}}(\Omega_0)\big)
\end{equation}
as long as one notices that
$$\left.\Omega_t\right|_{t=0} =\big(\square_{{BC},0}\G_0+F_0\big)\Omega_0=\Omega_0$$
by the Hodge decomposition of the operator $\square_{{BC},0}$ and
the $d$-closedness of $\Omega_0$.

The construction of $\Psi_t$ is quite similar to the one of
$\Omega_t$ in \eqref{4-omt}. Denote by $\widetilde{F}_t$ the orthogonal
projection to $\widetilde{\mathds{F}}_t$, the kernel of
\[ \square_{A,t}= \p_t\db_t\db_t^*\p_t^*+\db_t^*\p_t^*\p_t\db_t
+\p_t\db_t^*\db_t\p_t^*+\db_t\p_t^*\p_t\db_t^*+\db_t\db_t^*+\p_t\p_t^*
\] and by $\widetilde{\mathbb{G}}_t$ the associated Green's
operator with respect to a smooth family of Hermitian metrics on $X_t$. By the same token, $\widetilde{F}_t$ and
$\widetilde{\mathbb{G}}_t$ are also $C^{\infty}$ differentiable in
$t$ since the $\dim \widetilde{\mathds{F}}_t$ is deformation
invariant. Then the construction of $\Psi_t$ goes as follows:
\[ \Psi_t =\Big(\big(\p_t\db_t\db_t^*\p_t^*+\p_t\db_t^*\db_t\p_t^*+
\db_t\p_t^*\p_t\db_t^*+\db_t\db_t^*+\p_t\p_t^*\big)\widetilde{\G}_t+\widetilde{F}_t\Big)
\Big(e^{\iota_{\varphi}|\iota_{\o{\varphi}}}(\Psi_0)\Big), \] where
it is easy to see that
\[ \left.\Psi_t\right|_{t=0}= \big( \square_{A,0}\widetilde{\mathbb{G}}_0 + \widetilde{F}_0 \big) \Psi_0 = \Psi_0. \]
\end{proof}

\begin{remark}\label{d-ext-rem}
It follows easily from the proposition above that any small deformation
of a pluriclosed manifold, satisfying the $\p\db$-lemma, is still
pluriclosed. Recall that a compact complex manifold is called
\emph{pluriclosed} if it admits a $\p\db$-closed positive $(1,1)$-form.
Moreover, it follows from the proof that the theorem still holds
when the $\p\db$-lemma assumption is replaced by the
infinitesimal  deformation invariance of $(r,s)$-Bott--Chern and
Aeppli numbers, respectively. These results are possibly known to experts.
\end{remark}

We can also prove this proposition by another way inspired by the
results of \cite{au,FY,w} in the $(n-1,n-1)$-case, i.e., any
$d$-closed $(n-1,n-1)$-form $\Omega$ on a complex manifold $X$
satisfying the $\p\db$-lemma can be extended unobstructed to a
$d$-closed $(n-1,n-1)$-form on the
small differentiable deformation $X_t$
of $X$.

Let $f_t: X_t\rightarrow X_0$ be a diffeomorphism depending on $t$
with $f_0= \emph{identity}$. And then one obtains a $d$-closed
$(2n-2)$-form $\Om_t=f_t^*\Om$ on $X_t$, which is decomposed as
$$\Om_t=\Om_t^{n-2,n}+\Om_t^{n-1,n-1}+\Om_t^{n,n-2}$$
with respect to the complex structure on $X_t$. It is easy to check
the following properties:

\begin{enumerate}[$(1)$]
    \item \label{appom}
$\Om_t^{n-1,n-1}$ approaches to $\Omega$ as $t\rightarrow 0$;

    \item \label{appo}
$\Om_t^{n-2,n}$ and $\Om_t^{n,n-2}$ approach to 0 as $t\rightarrow
0$.
 \end{enumerate}

Recall that \cite[Theomem 5.12]{w} or \cite[Corollary 3.7]{at} says
that if $X_0$ satisfies the $\p\db$-lemma, so does the general fiber
$X_t$. So we can choose an $(n-2,n-1)$-form $\Psi_1$ and an
$(n-1,n-2)$-form $\Psi_2$ on $X_t$ such that
$$\p_t\db_t \Psi_1=\db_t \Om_t^{n-1,n-1}=-\p_t \Om_t^{n-2,n},$$
$$-\p_t\db_t \Psi_2=\p_t \Om_t^{n-1,n-1}=-\db_t \Om_t^{n,n-2},$$
where $\Psi_1$ and $\Psi_2$ can be set as
$$\Psi_1,\Psi_2\bot_{\om_t} \ker(\p_t\db_t).$$
 Put
$$\widetilde{\Om}_t=\Om_t^{n-1,n-1}+ \p_t\Psi_1+\db_t
\Psi_2.$$ Obviously, $\widetilde{\Om}_t$ is a $d$-closed
$(n-1,n-1)$-form on $X_t$. Then Fu--Li--Yau proved the
following highly nontrivial estimates in \cite[Sections $4$ and
$5$]{FLY}: for some $0<\alpha<1$
$$\|\p_t\Psi_1\|_{C^0(\om_t)}\leq C\|\p_t \Om_t^{n-2,n}\|_{C^{0,\alpha}(\om_t)}$$
and similarly
$$\|\db_t
\Psi_2\|_{C^0(\om_t)}\leq C\|\db_t
\Om_t^{n,n-2}\|_{C^{0,\alpha}(\om_t)},$$ where $C$ is a uniform
constant. By use of these two estimates, one knows that
$\widetilde{\Om}_t$ is indeed $d$-closed extension of $\Om$ since
$\p_t \Om_t^{n-2,n}$ and $\db_t\Om_t^{n,n-2}$ approaches to zero
uniformly as $t\rightarrow 0$.

It is easy to see that Theorem \ref{0blc-inv} is impossible to
obtain by Fu--Yau's result since the proof would rely on the
deformation openness of $(n-1,n)$-th mild $\p\db$-lemma, which
contradicts with Ugarte--Villacampa's Example \ref{not-fy}.

\subsection{Un-obstruction of extension for transverse forms}\label{p-kahler}
In this subsection we study some basic properties and local stabilities of $p$-K\"ahler
structures, which seem more pertinent to the
nature of the stability problem of complex structures.

Let $V$ be a complex vector space of complex dimension $n$ with its
dual space $V^{*}$, namely the space of complex linear functionals
over $V$. Denote the complexified space of the exterior $m$-vectors
of $V^{*}$ by $\bigwedge^{m}_{\mathbb{C}} V^{*}$, which admits a
natural direct sum decomposition
\[ \bigwedge^{m}_{\mathbb{C}} V^{*} = \sum_{r+s=m} \bigwedge^{r,s} V^*, \]
where $\bigwedge^{r,s} V^*$ is the complex vector space
of $(r,s)$-forms on $V^*$.
The case $m=1$ exactly reads \[ \bigwedge^{1}_{\mathbb{C}} V^{*} =
V^* \bigoplus \overline{V^{*}},\] where the natural isomorphism $V^*
\cong \bigwedge^{1,0}V^*$ is used. Let $q\in \{1, \cdots, n\}$ and $p=n-q$. Obviously, the complex dimension
$N$ of $\bigwedge^{q,0}V^*$ equals to the combination number $C^{q}_n$. After a basis $\{
\beta_i \}_{i=1}^N$ of the complex vector space $\bigwedge^{q,0}V^*$ is fixed,
the canonical Pl\"ucker embedding as in \cite[Page 209]{GH} is given
by
$$\begin{array}{cccc}
\rho: & G(q,n) & \hookrightarrow & \mathbb{P}(\bigwedge^{q,0}V^*) \\
      & \Lambda & \mapsto & [\cdots,\Lambda_{i},\cdots]. \\
\end{array} $$
Here $G(q,n)$ denotes the Grassmannian of $q$-planes in the vector
space $V^*$ and $\mathbb{P}(\bigwedge^{q,0}V^*)$ is the
projectivization of $\bigwedge^{q,0}V^*$. A $q$-plane in $V^*$ can
be represented by a decomposable $(q,0)$-form $\Lambda \in
\bigwedge^{q,0}V^*$ up to a nonzero complex number, and
$\{\Lambda_i\}_{i=1}^N$ are exactly the coordinates of $\Lambda$
under the fixed basis $\{ \beta_i \}_{i=1}^N$. \emph{Decomposable
$(q,0)$-forms} are those forms in $ \bigwedge^{q,0}V^*$ that can be
expressed as $\gamma_1 \bigwedge \cdots \bigwedge \gamma_q$ with
$\gamma_i \in V^* \cong \bigwedge^{1,0}V^*$ for $1 \leq i \leq q$.
Set
\begin{equation}\label{knpq}
k=(N-1)-pq
\end{equation} to be the codimension of $\rho(G(q,n))$ in
$\mathbb{P}(\bigwedge^{q,0}V^*)$, whose locus characterizes the
decomposable $(q,0)$-forms in $\mathbb{P}(\bigwedge^{q,0}V^*)$.

Now we list several positivity notations and refer the readers to \cite{HK,H,Demailly} for more details. A $(q,q)$-form $\Theta$ in
$\bigwedge^{q,q}V^*$ is defined to be \emph{strictly positive (resp. positive)} if
\[ \Theta =\sigma_{q}\sum_{i,j=1}^N \Theta_{i\b j} \beta_i \wedge \b\beta_j,\]
where $\Theta_{ij}$ is a positive (resp. semi-positive) hermitian matrix of the size $N \times N$ with $N=C_{n}^q$ under the
basis $\{\beta_i \}_{i=1}^N$ of the complex vector space $\bigwedge^{q,0}V^*$ and $\sigma_{q}$ is defined to be the constant
$2^{-q}(\sqrt{-1})^{q^2}$.
According to this definition, the fundamental form of a hermitian metric on a complex manifold is actually a strictly positive $(1,1)$-form everywhere.
A $(p,p)$-form $\Gamma\in \bigwedge^{p,p}V^*$ is called
\emph{weakly positive}
if
the volume form $$\Gamma\wedge\sigma_{q}\tau\wedge\bar{\tau}$$ is
positive for every nonzero decomposable $(q,0)$-form $\tau$
of $V^*$;
a $(q,q)$-form $\Upsilon\in \bigwedge^{q,q}V^*$ is said to be \emph{strongly positive} if
$\Upsilon$ is a convex combination
$$\Upsilon=\sum\gamma_s \sqrt{-1}\alpha_{s,1}\wedge\bar\alpha_{s,1}\wedge\cdots\wedge\sqrt{-1}\alpha_{s,q}\wedge\bar\alpha_{s,q},$$
where $\alpha_{s,i}\in V^*$ and $\gamma_s\geq 0$.
As shown in \cite[Chapter III.\S\ 1.A]{Demailly},  the sets of weakly positive and strongly positive forms are closed convex cones,
 and by definition, the weakly positive cone is dual to the strongly positive cone via the pairing
$$\bigwedge^{p,p}V^*\times \bigwedge^{q,q}V^*\longrightarrow \mathbb{C};$$ all weakly positive forms
are real.
An element $\Xi$ in
$\bigwedge^{p,p}V^*$ is called \emph{transverse}, if
the volume form $$\Xi\wedge\sigma_{q}\tau\wedge\bar{\tau}$$ is
strictly positive for every nonzero decomposable $(q,0)$-form $\tau$
of $V^*$. There exist many various names for this
terminology and we refer to \cite[Appendix]{abb} for a list.

These positivity notations on complex vector spaces can be extended pointwise to
complex differential forms on a complex manifold.
Let $M$ be a complex manifold of dimension $n$. Then:

\begin{definition}[{\cite[Definition $1.11$]{aa}}, for example]
Let $p$ be an integer,
$1\leq p\leq n$. Then $M$ is called a \emph{$p$-K\"ahler manifold}
if there exists a $d$-closed transverse $(p,p)$-form on $M$.
\end{definition}

The duality between the weakly positive and strongly positive cones of forms is used to define
corresponding positivities for currents.
\begin{definition}[]\label{po-cur}
A current $T$ of bidegree $(q,q)$ on $M$ is strongly positive (resp. positive)
if the pairing $\langle T, u\rangle \geq 0$ for all weakly positive (resp. strongly positive) test forms
$u\in A^{p,p}(M)$ at each point. Clearly, each positive current is real.
\end{definition}

We are going to discuss several basics of transverse forms, such as the equivalent characterizations of
$1$-K\"ahlerness and $(n-1)$-K\"ahlerness by a slightly different approach from Alessandrini-Andreatta \cite[Proposition 1.15]{aa}.
Let $V$ be furnished with a Hermitian inner product and $V^*$ with
the dual inner product, which will extend to
$\bigwedge^{m}_{\mathbb{C}}V^*$.
Denote by $\bigwedge^{p,p}_{\mathbb{R}}V$ the (real) vector space of
real $(p,p)$-forms of $V^*$, which consists of invariant complex
$(p,p)$-forms of $V^*$ under conjugation. Then it is well known that, for
every $\Omega\in \bigwedge^{p,p}_{\mathbb{R}}V^*$, there exist real
numbers $\{\lambda_1,\cdots,\lambda_N\}$ and an orthogonal basis
$\{\eta_1,\cdots,\eta_N\}$ of $\bigwedge^{p,0}V^*$, satisfying
$|\eta_j|^2=2^p$ for $1 \leq j \leq N$, such that
$$\Omega=\sigma_p\sum_{j=1}^N\lambda_j\eta_j\wedge\bar{\eta}_j,$$
which is called the \emph{canonical form} for real $(p,p)$-forms (See
\cite{HK} for more details). The \emph{positive (resp. negative) index} of
$\Omega$ is the number of positive (resp. negative) ones in
$\{\lambda_1,\cdots,\lambda_N\}$.

\begin{proposition}
Let $\Omega$ be a transverse $(p,p)$-form of $V^*$. Then the positive index
of $\Omega$ is no less than $N-k$, where $k$ is given by
\eqref{knpq}.
\end{proposition}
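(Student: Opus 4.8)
The plan is to reformulate the transversality of $\Omega$ as a positivity statement for the Hermitian form that $\Omega$ represents on $\bigwedge^{p,0}V^*$, and then to extract the lower bound on the positive index from the projective dimension theorem applied to the Pl\"ucker cone of decomposable $(p,0)$-forms.

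First I would recall that a real $(p,p)$-form $\Omega$ on $V^*$ canonically determines a Hermitian form $h_\Omega$ on $\bigwedge^{p,0}V^*$: writing $\Omega=\sigma_p\sum_{a,b}(h_\Omega)_{a\bar b}\,\theta_a\wedge\bar\theta_b$ for an orthonormal basis $\{\theta_a\}$ of $\bigwedge^{p,0}V^*$, the canonical form $\Omega=\sigma_p\sum_{j=1}^N\lambda_j\,\eta_j\wedge\bar\eta_j$ merely diagonalizes $h_\Omega$, so the positive index of $\Omega$ equals the number of positive eigenvalues of $h_\Omega$; moreover $h_\Omega(\mu,\bar\mu)$ is, for any $\mu\in\bigwedge^{p,0}V^*$, exactly the coefficient of $\mu\wedge\bar\mu$ in $\Omega/\sigma_p$ when $\mu$ is part of a basis. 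The crucial observation is then
$$\Omega\ \text{transverse}\quad\Longleftrightarrow\quad h_\Omega(\eta,\bar\eta)>0\ \text{for every nonzero decomposable}\ \eta\in\textstyle\bigwedge^{p,0}V^*.$$
Indeed, given a nonzero decomposable $(p,0)$-form $\eta$ spanning a $p$-plane $P\subseteq V^*$, choose any complementary $q$-plane $S$ and a decomposable $(q,0)$-form $\tau$ spanning $S$; in a basis $e_1,\dots,e_n$ of $V^*$ with $\tau=e_1\wedge\cdots\wedge e_q$ and $\eta$ a nonzero multiple of $e_{q+1}\wedge\cdots\wedge e_n$, only the component of $\Omega$ along $(e_{q+1}\wedge\cdots\wedge e_n)\wedge\overline{(e_{q+1}\wedge\cdots\wedge e_n)}$ survives in $\Omega\wedge\sigma_q\tau\wedge\bar\tau$, and the normalizing constants $\sigma_p,\sigma_q,\sigma_n$ are designed precisely so that this volume form is a positive multiple of $h_\Omega(\eta,\bar\eta)$ times the standard one (the same bookkeeping that makes $\sigma_r\,\mu\wedge\bar\mu$ a positive form for decomposable $\mu$). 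Since every decomposable $(p,0)$-form arises this way, the equivalence follows; alternatively one may quote it directly from the references cited for the positivity notions.

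Then I would write $\Omega$ in canonical form, let $m$ be its positive index, and set $L\subseteq\bigwedge^{p,0}V^*$ to be the span of those $\eta_j$ with $\lambda_j\le 0$, so that $\dim_{\mathbb C}L=N-m$ and $h_\Omega$ is negative semidefinite on $L$. Suppose toward a contradiction that $m\le pq$; then $\dim_{\mathbb C}L\ge N-pq=k+1$, so $\mathbb P(L)\subseteq\mathbb P(\bigwedge^{p,0}V^*)=\mathbb P^{N-1}$ is a linear subspace of dimension $\ge k$. On the other hand the locus of decomposable $(p,0)$-forms is $\rho(G(p,n))$, of dimension $p(n-p)=pq=N-1-k$. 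By the projective dimension theorem in $\mathbb P^{N-1}$, a linear subspace of dimension $\ge k$ meets every subvariety of dimension $\ge N-1-k$; hence $\mathbb P(L)\cap\rho(G(p,n))\ne\emptyset$, i.e.\ $L$ contains a nonzero decomposable $(p,0)$-form $\eta_0$. Then $h_\Omega(\eta_0,\bar\eta_0)\le 0$ since $\eta_0\in L$, contradicting the transversality of $\Omega$ via the equivalence above. Therefore $m\ge pq+1=N-k$, which is the assertion.

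The main obstacle is the first step: pinning down, with the correct signs, the equivalence between transversality (stated through decomposable $(q,0)$-forms and the induced volume form) and positivity of $h_\Omega$ on the Pl\"ucker cone of decomposable $(p,0)$-forms — that is, controlling the interplay of $\sigma_p,\sigma_q,\sigma_n$ and the reordering signs in $e_I\wedge\bar e_J\wedge\tau\wedge\bar\tau$. Once that identification is in hand, the remainder is a short application of the canonical form together with the projective dimension theorem.
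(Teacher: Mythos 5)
Your proof is correct, and its combinatorial core — canonical form, the codimension count $k=(N-1)-pq$ for the Pl\"ucker image of the Grassmannian, and the projective dimension theorem forcing a linear subspace of dimension $\geq k$ in $\mathbb{P}^{N-1}$ to meet a subvariety of codimension $k$ — is exactly the paper's. The difference is that you dualize: you recast transversality as positivity of the Hermitian form $h_\Omega$ on the cone of decomposable $(p,0)$-forms and then locate a bad decomposable form inside the span $L$ of the non-positive eigendirections, whereas the paper stays on the $(q,0)$ side and directly produces a decomposable $\tau$ lying in the intersection of the hyperplanes $\eta_1^\perp\cap\cdots\cap\eta_{N-(k+1)}^\perp$ dual to the \emph{positive} eigendirections, so that $\Omega\wedge\sigma_q\tau\wedge\bar\tau=\sigma_p\sum_{j\geq N-k}\lambda_j\,\eta_j\wedge\bar\eta_j\wedge\sigma_q\tau\wedge\bar\tau\leq 0$ term by term. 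The two constructions are literally dual to each other (your $\eta_0\in L\cap\rho(G(p,n))$ corresponds under the pairing $\bigwedge^{p,0}V^*\times\bigwedge^{q,0}V^*\to\bigwedge^{n,0}V^*$ to the paper's $\tau$), but the paper's version buys you something concrete: it never needs the equivalence you flag as your ``main obstacle,'' since the non-positivity of the pairing against $\sigma_q\tau\wedge\bar\tau$ is read off directly from the canonical form and the strong positivity of each $\sigma_p\eta_j\wedge\bar\eta_j$. Your reformulation of transversality via $h_\Omega$ is a true and standard fact (and your sign bookkeeping with $\sigma_p\sigma_q(-1)^{pq}=\sigma_n$ works out), but if you keep your route you should either prove it carefully or cite it; note also that the induced decomposable $(p,0)$-form attached to a given $\tau$ depends on a choice of complement, which is harmless only because you ultimately quantify over all decomposable forms on both sides.
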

\begin{proof}
It is clear that
$$\Omega=\sigma_p(\lambda_1\eta_1\wedge\bar\eta_1+\cdots+\lambda_N\eta_N\wedge\bar\eta_N)$$
for some real numbers $\{\lambda_1,\cdots,\lambda_N\}$ and some
orthogonal basis $\{\eta_1,\cdots,\eta_N\}$ of $\bigwedge^{p,0}V^*$,
satisfying that $|\eta_j|^2=2^p$ for $1 \leq j \leq N$.

Suppose that the positive index $\Omega\leq N-k-1$. Without loss of
generality, we may assume that $\{\lambda_{N-k},\cdots,\lambda_N\}$
are all non-positive real numbers.

Apparently, $\bigwedge^{p,0}V^*$ and $\bigwedge^{q,0}V^*$ are dual
vector spaces. And thus, the zero loci of $\eta_1,\cdots,\eta_{N-(k+1)}$
define $N-(k+1)$ hyperplanes in $\mathbb{P}(\wedge^{q,0}V^*)$, denoted by
$\eta_1^\perp,\cdots,\eta_{N-(k+1)}^\perp$, respectively.
Since the codimension of $\rho(G(q,n))$ is $k$ and the complex plane
$\eta_1^\perp\cap\cdots\cap\eta_{N-(k+1)}^\perp$ is of dimension
$k$, there exists some nonzero decomposable
$(q,0)$-form $\tau$, which lies in
$\eta_1^\perp\cap\cdots\cap\eta_{N-(k+1)}^\perp$. Then it follows
that
$$\Omega\wedge\sigma_{q}\tau\wedge\bar\tau\leq 0,$$
which contradicts with the definition \lq\lq transverse".
\end{proof}

\begin{corollary}[{\cite[Proposition 1.15]{aa}}]\label{}
A complex manifold $M$ is $1$-K\"ahler if and only if $M$ is
K\"ahler; $M$ is $(n-1)$-K\"ahler if and only if $M$ is balanced.
\end{corollary}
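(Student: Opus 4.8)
The plan is to deduce both equivalences from one pointwise linear-algebra fact: for $p=1$ and for $p=n-1$, a real $(p,p)$-form is \emph{transverse} if and only if it is \emph{strictly positive}. Granting this, the K\"ahler case is immediate from the definitions, and the balanced case follows from Michelsohn's identification (\cite[(4.8)]{Mich}, recalled in Section~\ref{balanced}) of $d$-closed strictly positive $(n-1,n-1)$-forms with $(n-1)$-st powers of balanced metrics.

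First take $p=1$, so $q=n-1$ and $N=C_n^{n-1}=n$; by \eqref{knpq} the codimension is $k=(N-1)-pq=(n-1)-(n-1)=0$, hence $\rho(G(n-1,n))$ fills $\mathbb{P}\big(\bigwedge^{n-1,0}V^*\big)$ and every $(n-1,0)$-form of $V^*$ is decomposable. If $\Xi$ is a transverse $(1,1)$-form on $M$, then the Proposition above, applied at each point, shows its positive index is at least $N-k=n$, hence equal to $n$, so every coefficient in the canonical form of $\Xi$ is positive and $\Xi$ is strictly positive. Conversely, given a strictly positive $(1,1)$-form $\Xi$ and a nonzero $(n-1,0)$-form $\tau$ at a point, I would choose coordinates $w$ in which $\tau$ is a constant multiple of $dw^1\wedge\cdots\wedge dw^{n-1}$ (possible since $\tau$ is decomposable); then $\Xi\wedge\sigma_{n-1}\tau\wedge\overline{\tau}$ equals a positive constant times the coefficient $\Xi_{n\bar n}$ of $dw^n\wedge d\overline{w}^n$ in $\Xi$ times the euclidean volume form, and $\Xi_{n\bar n}>0$ because the coefficient matrix of a $(1,1)$-form changes by congruence under a linear change of coframe and thus stays positive definite. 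So $\Xi$ is transverse, and $M$ is $1$-K\"ahler if and only if it admits a $d$-closed strictly positive $(1,1)$-form, i.e. a K\"ahler metric.

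For $p=n-1$ the same arithmetic gives $q=1$, $N=C_n^1=n$, and $k=(N-1)-pq=0$, so every $(1,0)$-form is (trivially) decomposable; the Proposition again forces a transverse $(n-1,n-1)$-form to have positive index $n$ at each point, hence to be strictly positive, and the converse is the analogous coordinate computation (choosing $w$ so that the given nonzero $(1,0)$-form is a multiple of $dw^n$ and reading off the positive coefficient of $\prod_{j<n}\sqrt{-1}\,dw^j\wedge d\overline{w}^j$). Thus $M$ is $(n-1)$-K\"ahler if and only if it admits a $d$-closed strictly positive $(n-1,n-1)$-form. By Michelsohn's pointwise bijection $\omega\mapsto\omega^{n-1}$ between strictly positive $(1,1)$-forms and strictly positive $(n-1,n-1)$-forms (\cite[(4.8)]{Mich}; cf. the discussion in Section~\ref{balanced}), such a form is exactly $\omega^{n-1}$ for a unique positive $(1,1)$-form $\omega$, whose $d$-closedness is precisely $d(\omega^{n-1})=0$; conversely, if $\omega$ is a balanced metric then $\omega^{n-1}$ is a $d$-closed strictly positive, hence transverse, $(n-1,n-1)$-form. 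Therefore $M$ is $(n-1)$-K\"ahler if and only if $M$ is balanced.

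Beyond the Proposition above, the only ingredients are the elementary fact that in dimension $n$ all $(n-1,0)$- and $(1,0)$-forms are decomposable and Michelsohn's identification of strictly positive $(n-1,n-1)$-forms with powers $\omega^{n-1}$. The main point requiring care is the bookkeeping of the $\sigma$-constants together with the congruence behaviour of coefficient matrices under linear coordinate changes, which is exactly what ensures the single coefficient singled out by a decomposable $\tau$ is positive; I expect this to be routine.
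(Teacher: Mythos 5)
Your proof is correct and follows essentially the same route as the paper, whose entire argument is the one-line computation $N-k=N-(N-1)+(n-1)\cdot 1=n$ combined with the preceding proposition on the positive index; you simply make explicit the steps the paper leaves implicit (full positive index $\Rightarrow$ strict positivity, the converse coordinate check that strictly positive forms are transverse, and the appeal to Michelsohn's identification $\Omega=\omega^{n-1}$).
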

\begin{proof}
Both cases follow from the expression formula \eqref{knpq},
$$N-k=N-(N-1)+(n-1)\cdot 1=n. $$
\end{proof}

\begin{example}
There exists a transverse $(p,p)$-form $\Omega$ of $V^*$, whose positive
index is exactly $N-k$.
\end{example}
Actually, let $\{\eta_1,\cdots,\eta_N\}$ still denote the orthogonal
basis of $\bigwedge^{p,0}V^*$ as above. We shall construct an
element $\Omega$ in $\bigwedge^{p,p}_{\mathbb{R}}V^*$ below
\[ \Omega= \sigma_p \sum_{i=1}^{N-k} \lambda_i\eta_i\wedge\bar\eta_i, \]
with $\lambda_1,\cdots,\lambda_{N-k}>0$. It is clear that the
dimension of the complex plane
$\eta_1^\perp\cap\cdots\cap\eta_{N-k}^\perp$ is $k-1$. Hence, we can
slightly change the orthogonal basis $\{\eta_1,\cdots,\eta_N\}$ to
another one with the same feature, such that
$\eta_1^\perp\cap\cdots\cap\eta_{N-k}^\perp$ has no intersection
with $\rho(G(q,n))$. Let us check that $\Omega$ is transverse. It is
easy to see that
$$\Omega\wedge\sigma_{q}\tau\wedge\bar\tau\geq 0$$
for each nonzero decomposable $(q,0)$-form $\tau$. Now suppose that
$\Omega\wedge\sigma_{q}\tau\wedge\bar\tau=0$ for some decomposable
$(q,0)$-form $\tau\neq0$. Then it follows that
$$
    \begin{cases}
     \eta_1\wedge\tau=0,\\
     \qquad\vdots\\
     \eta_{N-k}\wedge\tau=0,\\
    \end{cases}
$$
which implies that
$\tau\in\eta_1^\perp\cap\cdots\cap\eta_{N-k}^\perp$. This
contradicts with the choice of the orthogonal basis
$\{\eta_1,\cdots,\eta_N\}$. Therefore, $\Omega$ is a transverse
$(p,p)$-form.

\begin{example}[{See also the example on \cite[Page 50]{HK}}]
There exists a transverse $(p,p)$-form $\Omega$ of $V^*$, whose negative
index is larger than $0$.
\end{example}
In fact, let us slightly modify the example above. Consider the
element in $\bigwedge^{p,p}_{\mathbb{R}}V^*$,
\begin{equation}\label{om-pp}
\Omega=\sigma_p(\lambda_1\eta_1\wedge\bar\eta_1+\cdots+\lambda_{N-k}\eta_{N-k}\wedge\bar\eta_{N-k}+\lambda_{N-k+1}\eta_{N-k+1}\wedge\bar\eta_{N-k+1}),
\end{equation}
where $\lambda_1,\cdots,\lambda_{N-k}>0$, $\lambda_{N-k+1}$ is some
negative number to be fixed later, and the complex plane
$\eta_1^\perp\cap\cdots\cap\eta_{N-k}^\perp$ has no intersection
with $\rho(G(q,n))$ in $\mathbb{P}(\wedge^{q,0}V^*)$. Construct a
function over $\rho(G(q,n))$, defined by
$$f([\tau])=\frac{\sigma_p(\lambda_1\eta_1\wedge\bar\eta_1+\cdots+\lambda_{N-k}\eta_{N-k}\wedge\bar\eta_{N-k})\wedge\sigma_{q}\tau\wedge\bar\tau}
{\sigma_p\eta_{N-k+1}\wedge\bar\eta_{N-k+1}\wedge\sigma_{q}\tau\wedge\bar\tau}.$$
It is easy to check that $f$ is well-defined on $\rho(G(q,n))$. The
function $f$ has positive values when $[\tau]\in
\rho(G(q,n))\setminus \eta^\perp_{N-k+1}$ and attains to $+\infty$
when $[\tau]\in \rho(G(q,n))\cap\eta^\perp_{N-k+1}$. Then $f$ can
obtain its minimum value over $\rho(G(q,n))$ by an elementary
analysis, which is denoted by $a>0$. Let $-a<\lambda_{N-k+1}<0$.
Then $\Omega$ constructed in \eqref{om-pp} is transverse.

In \cite{ab}, Alessandrini--Bassanelli proved that
$(n-1)$-K\"ahlerian property is not preserved under the small
deformations for balanced manifolds nor, more generally, for
$p$-K\"ahler manifolds  $(p > 1)$, while, based on the above argument
on the $p$-K\"ahler structures and Proposition \ref{d-ext},
we have the following local stability theorem of $p$-K\"ahlerian structures:
\begin{theorem}\label{c-p-kahler}
For any positive integer $p\leq n-1$, any
small differentiable deformation $X_t$ of a
$p$-K\"ahler manifold $X_0$ satisfying the $\p\db$-lemma is still
$p$-K\"ahlerian.
\end{theorem}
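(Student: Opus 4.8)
The plan is to deduce the theorem from Proposition \ref{d-ext} together with an openness argument for transversality. Since $X_0$ is $p$-K\"ahlerian, there is a $d$-closed transverse $(p,p)$-form $\Omega_0$ on $X_0$. The $\p\db$-lemma hypothesis lets me apply Proposition \ref{d-ext} with $(r,s)=(p,p)$: the explicit $(p,p)$-form
\[ \Omega_t=\big(\p_t\db_t\db_t^*\p_t^*\G_t+F_t\big)\big(e^{\iota_{\varphi}|\iota_{\o{\varphi}}}(\Omega_0)\big) \]
on $X_t$ as in the proof of Proposition \ref{d-ext} is $d$-closed, of type $(p,p)$ with respect to the complex structure $J_t$ of $X_t$ (since $e^{\iota_{\varphi}|\iota_{\o{\varphi}}}$ sends $A^{p,p}(X_0)$ to $A^{p,p}(X_t)$ and $\square_{BC,t}$ together with its relevant summands preserves bidegree), and satisfies $\Omega_t|_{t=0}=\Omega_0$. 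It then suffices to check that, after possibly shrinking the parameter, $\Omega_t$ is transverse with respect to $J_t$; by definition this makes $X_t$ $p$-K\"ahlerian.

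First I would record the $t$-continuity. As in the proof of Proposition \ref{d-ext}, the deformation invariance of $\dim\mathds{F}_t$ forced by the $\p\db$-lemma through \cite[Theorem 5.12]{w} makes $F_t$ and $\G_t$ depend $C^{\infty}$-differentiably on $t$, while the extension operator $e^{\iota_{\varphi}|\iota_{\o{\varphi}}}$ depends smoothly on $t$ and reduces to the identity at $t=0$ because $\varphi(0)=0$. Hence $\{\Omega_t\}$ is a smooth family of $2p$-forms on the fixed underlying manifold with $\Omega_t\to\Omega_0$ in every $C^{k}$-norm as $t\to 0$, and the complex structure $J_t$ (equivalently the Beltrami differential $\varphi(t)$) likewise tends smoothly to $J_0$.

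Next I would use compactness to upgrade pointwise transversality of $\Omega_0$ to transversality of $\Omega_t$. At a point, transversality of a $(p,p)$-form $\Omega$ amounts to strict positivity of the continuous function $[\tau]\mapsto \Omega\wedge\sigma_{q}\tau\wedge\bar\tau$, normalized with respect to a fixed auxiliary Hermitian metric on the underlying manifold, on the compact Grassmannian $\rho(G(q,n))\subset\mathbb{P}(\bigwedge^{q,0}V^*)$. Since $X_0$ is compact and $\Omega_0$ is transverse everywhere, there is a uniform $\delta>0$ such that $\Omega_0\wedge\sigma_{q}\tau\wedge\bar\tau$ is at least $\delta$ times the fixed background volume form, for all $J_0$-decomposable unit $(q,0)$-forms $\tau$ at all points of $X_0$. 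Because $J_t\to J_0$ and $\Omega_t\to\Omega_0$ uniformly on $X_0$, and because the orientation induced by $J_t$ agrees with that of $J_0$ for $t$ small (orientations being discrete), the same inequality with $\delta$ replaced by $\delta/2$ persists for $J_t$-decomposable $(q,0)$-forms once $|t|$ is small enough. Thus $\Omega_t$ is a $d$-closed transverse $(p,p)$-form on $X_t$, and $X_t$ is $p$-K\"ahlerian.

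The step I expect to be the main obstacle is this last one, because transversality is tested against the \emph{moving} complex structure $J_t$: the decomposable $(q,0)$-forms, the action of $\sigma_{q}$ on them, and the reference volume all vary with $t$, so the openness must be phrased uniformly over the compact manifold $X_0$ and over the compact fibre $\rho(G(q,n))$ simultaneously, rather than pointwise in either variable. Finally, for the accompanying Remark \ref{p-k-rem} one notes that the only consequence of the $\p\db$-lemma used above is the deformation invariance of the $(p,p)$-Bott-Chern numbers, which is exactly what makes $F_t$ and $\G_t$ smooth in $t$; hence the identical argument yields $p$-K\"ahlerian stability under the weaker hypothesis that the $(p,p)$-Bott-Chern numbers are deformation invariant, in parallel with Remark \ref{d-ext-rem}.
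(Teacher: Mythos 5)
Your argument is correct and coincides in substance with the paper's second proof of Theorem \ref{c-p-kahler}: extend $\Omega_0$ via Proposition \ref{d-ext}, then establish openness of transversality by uniform positivity over the compact manifold and the compact Grassmannian fibre simultaneously, with the $J_t$-decomposable $(q,0)$-forms parametrized through $e^{\iota_{\varphi}}$ by the fixed fibre $\rho(G(q,n))$. (The paper also records an alternative first proof via the duality with strongly positive $(n-p,n-p)$-currents, but that is not needed for your route, and your closing observation matches Remark \ref{p-k-rem}.)
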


Alessandrini--Bassanelli \cite[Section 4]{aabb} constructed a
smooth proper modification $\tilde{X}$ of $\mathbb{CP}^5$, which
will be $p$-K\"ahler for $2 \leq p \leq 5$, but non-K\"ahler. It is
clear that the non-K\"ahler Moishezon $5$-fold $\tilde{X}$ is
$p$-K\"ahler for $p=2,3$ (the most interesting parts in this
theorem), satisfying the $\p \db$-lemma due to \cite{DGMS}, which
indicates that Theorem \ref{c-p-kahler} does not just concern
K\"ahler or balanced $\p \db$-structures. It is worth noticing  that
Alssandrini--Bassanelli conjectured on \cite[Page $299$]{aabb}
that a $p$-K\"ahlerian complex manifold is also $q$-K\"ahler for
$p\leq q\leq n$. Besides, Theorem \ref{c-p-kahler} might help to
produce examples of $p$-K\"ahler $\p \db$-manifolds, which are not
in the Fujiki class, since being in the Fujiki class is not an open
property under deformations thanks to \cite{Cam} and \cite{LP}.

We will present two proofs for this theorem and use the following trivial lemma of Calculus in both proofs.
\begin{lemma}\label{interval}
Let $f(z,t)$ be a real continuous function on $K \times \Delta_{\epsilon}$, where $K$ is a compact set and
$\Delta_{\epsilon}=\{t \in \mathbb{R}^k\ \big |\ |t|<\epsilon \}$. Assume that
\[ f(z,0)>0, \quad \text{for}\ z \in K.\]
Then there exists some positive number $\delta>0$, such that
\[ f(z,t)>0,\]
for $z \in K$ and $t\in \Delta_\delta$.
\end{lemma}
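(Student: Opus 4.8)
The plan is to argue by contradiction, using only the compactness of $K$ and the sequential characterization of continuity. First I would suppose that no such $\delta$ exists. Then for every positive integer $n$ with $\tfrac1n<\epsilon$ the desired conclusion fails at the scale $\delta=\tfrac1n$, so there is a point $z_n\in K$ and a parameter $t_n\in\Delta_{1/n}$ with $f(z_n,t_n)\le 0$.

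Next I would extract a convergent subsequence. Since $K$ is compact, the sequence $\{z_n\}$ admits a subsequence $\{z_{n_j}\}$ converging to some $z_0\in K$. Along this subsequence $|t_{n_j}|<\tfrac1{n_j}\to 0$, hence $t_{n_j}\to 0$ and therefore $(z_{n_j},t_{n_j})\to(z_0,0)$ in $K\times\Delta_\epsilon$; note that once $\tfrac1n<\epsilon$ all these points genuinely lie in the domain $K\times\Delta_\epsilon$ on which $f$ is assumed continuous, so this step is legitimate.

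Finally I would invoke continuity of $f$ at $(z_0,0)$: we get $f(z_{n_j},t_{n_j})\to f(z_0,0)$. On one hand the hypothesis gives $f(z_0,0)>0$; on the other hand $f(z_{n_j},t_{n_j})\le 0$ for all $j$, so passing to the limit forces $f(z_0,0)\le 0$, a contradiction. This establishes the existence of the required $\delta>0$. There is essentially no serious obstacle here; an equally short alternative would be to observe that $K\times\{0\}$ is compact, so $m:=\min_{z\in K}f(z,0)>0$ is attained, cover $K\times\{0\}$ by finitely many product neighborhoods on which $f>0$, and take $\delta$ to be the minimum of the finitely many radii — but the contradiction argument above is marginally cleaner to write down.
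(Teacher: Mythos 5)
Your argument is correct, but it takes a different route from the paper's. The paper proves the lemma directly by an open-cover argument: for each $z \in K$ continuity gives a neighborhood $U_z$ and a radius $\delta_z>0$ with $f>0$ on $U_z \times \Delta_{\delta_z}$, a finite subcover $U_{z_1},\dots,U_{z_m}$ is extracted, and $\delta=\min\{\delta_{z_1},\dots,\delta_{z_m}\}$ — essentially the ``alternative'' you sketch in your last sentence. You instead argue by contradiction via sequential compactness, extracting points $(z_n,t_n)$ with $f(z_n,t_n)\le 0$ and $t_n\to 0$, passing to a convergent subsequence of $\{z_n\}$, and contradicting $f(z_0,0)>0$. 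Both are standard and both work here; the one substantive difference is that your route uses that $K$ is \emph{sequentially} compact, which is not automatic for a compact set in an arbitrary topological space, whereas the finite-subcover argument needs only compactness. In the paper's first application of the lemma, $K$ is a compact set of strongly positive currents in the weak topology; that space happens to be metrizable (it is a bounded subset of the dual of a separable Fréchet space), so your argument still applies, but the covering proof is the one that is valid verbatim for any compact $K$ and is the safer formulation given how the lemma is stated and used.
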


\begin{proof}
It is clear that for each $z \in K$, there exists an open neighborhood $U_z$ in $K$ and some $\delta_{z}>0$, such that
\[ f(z,t)>0, \]
for $z \in U_z$ and $t \in \Delta_{\delta_z}$. Compactness of $K$ enables us to find
a finite open covering of $K$, say $U_{z_1}, \cdots, U_{z_m}$. Then we may set $\delta$ to be
\[ \min\{ \delta_{z_1},\cdots,\delta_{z_m} \}. \]
Therefore, it follows that
\[ f(z,t) >0 ,\]
for $z \in K$ and $t\in \Delta_\delta$.
\end{proof}

Next, we proceed to the first proof of Theorem \ref{c-p-kahler}, which is based on an equivalent definition of transverse $(p,p)$-forms via strongly positive currents and
their extension property.

Let $\pi: \mathcal{X} \rightarrow B$ be a  differentiable family of compact complex manifolds
with the reference fiber $X_0:= \pi^{-1}(0)$ and the general fibers $X_t:=\pi^{-1}(t)$.
It is known from \cite[Lemma $2.8$]{RZ15} that the extension map in \eqref{map}
\[ e^{\iota_{\varphi(t)}|\iota_{\overline{\varphi(t)}}}:
 A^{n-p,n-q}(X_0)\> A^{n-p,n-q}(X_t) \]
is a linear isomorphism, depending smoothly on $t$, and its inverse map $e^{-\iota_{\varphi(t)}|-\iota_{\overline{\varphi(t)}}}$ is defined by \eqref{ephi-inv}.
Since the dual spaces of $A^{n-p,n-q}(X_0)$ and $A^{n-p,n-q}(X_t)$ are exactly the spaces of $(p,q)$-currents on $X_0$ and $X_t$,
set as $\mathfrak{D}'^{p,q}(X_0)$ and $\mathfrak{D}'^{p,q}(X_t)$, with the weak topologies respectively,
the adjoint map $\big( e^{-\iota_{\varphi(t)}|-\iota_{\overline{\varphi(t)}}} \big)^*$, given by
\begin{equation}\label{adj-map}
 \big( e^{-\iota_{\varphi(t)}|-\iota_{\overline{\varphi(t)}}} \big)^*:
\mathfrak{D}'^{p,q}(X_0) \> \mathfrak{D}'^{p,q}(X_t),
\end{equation}
is defined by the following formula:
\beq\label{ext-current-1}
\left\langle  \big( e^{-\iota_{\varphi(t)}|-\iota_{\overline{\varphi(t)}}} \big)^*T, \Omega_t  \right\rangle
=\left\langle  T, e^{-\iota_{\varphi(t)}|-\iota_{\overline{\varphi(t)}}} \big(\Omega_t\big)  \right\rangle,
\eeq
where $T$ is a $(p,q)$-current on $X_0$, $\Omega_t$ is an $(n-p,n-q)$-form on $X_t$ and the pairing $\langle \bullet,\bullet\rangle$
is the natural pairing between currents and smooth complex differential forms of pure type on $X_0$ or $X_t$.
It is clear that every $(n-p,n-q)$-form $\Omega_t$ on $X_t$ can be expressed as $e^{\iota_{\varphi(t)}|\iota_{\overline{\varphi(t)}}}\big(\Omega\big)$,
for some $(n-p,n-q)$-form $\Omega$ on $X_0$, due to the linear isomorphism $e^{\iota_{\varphi(t)}|\iota_{\overline{\varphi(t)}}}$.
Then the formula \eqref{ext-current-1} now reads:
\beq\label{ext-current-2}
\left\langle \big(e^{-\iota_{\varphi(t)}|-\iota_{\overline{\varphi(t)}}} \big)^*T, e^{\iota_{\varphi(t)}|\iota_{\overline{\varphi(t)}}} \big( \Omega \big) \right\rangle
=\left\langle  T, \Omega  \right\rangle,
\eeq
which can be regarded as the defining formula of the adjoint map \eqref{adj-map}.
It is easy to see that this adjoint map is a linear homeomorphism, depending smoothly on $t$. Using this map,
one obtains the following extension proposition, to be also of independent interest.
\begin{proposition}\label{p-eq}
The natural map $e^{\iota_{\varphi(t)}|\iota_{\overline{\varphi(t)}}}$ sends each kind of positive
$(p,p)$-forms (defined in this subsection) on $X_0$ bijectively onto the corresponding one on $X_t$;
the map $\big( e^{-\iota_{\varphi(t)}|-\iota_{\overline{\varphi(t)}}} \big)^*$ sends strongly positive and positive
$(p,p)$-currents homeomorphically onto the corresponding ones on $X_t$, respectively.
\end{proposition}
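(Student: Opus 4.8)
The plan is to reduce both assertions to one pointwise linear-algebra fact: the extension map is, at each point of the underlying differentiable manifold, the exterior-power of a $\mathbb{C}$-linear isomorphism of the $(1,0)$-cotangent spaces (together with its conjugate), and every positivity notion introduced in this subsection is preserved by such an isomorphism. Once this is in place, the statement about forms is immediate and the statement about currents follows by dualizing with \eqref{ext-current-2}.

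First I would make the pointwise picture explicit. For $t$ small set $\omega^i:=dz^i+\varphi(t)\lc dz^i=e^{\iota_{\varphi(t)}}(dz^i)$ and $\b\omega^j:=d\b{z}^j+\overline{\varphi(t)}\lc d\b{z}^j$; since $e^{\iota_{\varphi(t)}|\iota_{\overline{\varphi(t)}}}\colon A^{1,0}(X_0)\to A^{1,0}(X_t)$ is a linear isomorphism by \cite[Lemma $2.8$]{RZ15}, the $\{\omega^i\}$ (resp. $\{\b\omega^j\}$) form a local frame of $(1,0)$- (resp. $(0,1)$-) forms on $X_t$. Unwinding Definition \ref{map}, at every point $p$ of the common underlying manifold $e^{\iota_{\varphi(t)}|\iota_{\overline{\varphi(t)}}}$ carries $dz^{i_1}\wedge\cdots\wedge dz^{i_r}\wedge d\b{z}^{j_1}\wedge\cdots\wedge d\b{z}^{j_s}$ to $\omega^{i_1}\wedge\cdots\wedge\omega^{i_r}\wedge\b\omega^{j_1}\wedge\cdots\wedge\b\omega^{j_s}$ and leaves the coefficient function unchanged, i.e. on $\bigwedge^{r,s}$ it equals $\bigwedge^{r,s}$ of the isomorphism $L_{p,t}\colon (T_p^{1,0}X_0)^*\to (T_p^{1,0}X_t)^*$, $dz^i\mapsto\omega^i$, together with the conjugate map; $L_{p,t}$ is real and depends smoothly on $t$ because $\varphi(t)$ does.

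Next I would state and prove the linear-algebra lemma: if $L\colon V_0^*\to V_1^*$ is a $\mathbb{C}$-linear isomorphism of dual spaces of $n$-dimensional complex vector spaces, then the induced real isomorphism $\bigwedge^{p,p}V_0^*\to\bigwedge^{p,p}V_1^*$ sends strictly positive, positive, weakly positive, strongly positive, and transverse $(p,p)$-forms to forms of the same type, bijectively. The strongly positive case is immediate, since $L$ maps a convex combination $\sum\gamma_s\sqrt{-1}\alpha_{s,1}\wedge\b\alpha_{s,1}\wedge\cdots\wedge\sqrt{-1}\alpha_{s,q}\wedge\b\alpha_{s,q}$ to one of exactly the same form. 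For the (strictly) positive case one observes that the Hermitian form on $\bigwedge^{q,0}$ attached to a real $(q,q)$-form is transformed by the congruence $\bigwedge^q L$, hence retains its signature, so positive definiteness (resp. semidefiniteness) is preserved. For the weakly positive and transverse cases one uses that $L$ (and $L^{-1}$) sends nonzero decomposable $(q,0)$-forms to nonzero decomposable $(q,0)$-forms, and that $L$ carries a positive (resp. strictly positive) $(n,n)$-form to a positive (resp. strictly positive) $(n,n)$-form; writing a test form as $\tau=\textstyle\bigwedge^q L\,(\tau_0)$ one gets $L_*\Gamma\wedge\sigma_q\tau\wedge\b\tau=L_*\big(\Gamma\wedge\sigma_q\tau_0\wedge\b\tau_0\big)\geq 0$ (resp. $>0$). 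Bijectivity onto the corresponding cone comes from applying the lemma to $L^{-1}$. Combined with the pointwise description this proves the first assertion of the proposition, with smoothness in $t$ inherited from $\varphi(t)$.

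Finally, for the currents, I would simply dualize. Let $T$ be a strongly positive (resp. positive) $(p,p)$-current on $X_0$ and $v$ a weakly positive (resp. strongly positive) test $(p,p)$-form on $X_t$; write $v=e^{\iota_{\varphi(t)}|\iota_{\overline{\varphi(t)}}}(\Omega)$ for a unique $(p,p)$-form $\Omega$ on $X_0$, which by the first part is weakly positive (resp. strongly positive). Then \eqref{ext-current-2} gives
$$\big\langle\,\big(e^{-\iota_{\varphi(t)}|-\iota_{\overline{\varphi(t)}}}\big)^{*}T,\ v\,\big\rangle=\langle T,\Omega\rangle\ \geq\ 0,$$
so $\big(e^{-\iota_{\varphi(t)}|-\iota_{\overline{\varphi(t)}}}\big)^{*}T$ is again strongly positive (resp. positive). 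Running the same argument for the inverse adjoint map shows the correspondence is onto; and since that adjoint map is already a linear homeomorphism of the full current spaces, it restricts to a homeomorphism of the respective positive cones. I do not expect a real obstacle here: the only delicate point is the weakly-positive/transverse case of the linear-algebra lemma, where one must carefully verify that $L_*$ preserves positivity of top-degree forms and that decomposability of $(q,0)$-forms is preserved under both $L$ and $L^{-1}$.
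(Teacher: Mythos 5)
Your proposal is correct and follows essentially the same route as the paper: the paper likewise observes that $e^{\iota_{\varphi(t)}|\iota_{\overline{\varphi(t)}}}$ acts pointwise by replacing the frame $dz^{I}\wedge d\bar z^{J}$ with the frame $e^{\iota_{\varphi(t)}}(dz^{I})\wedge\overline{e^{\iota_{\varphi(t)}}(dz^{J})}$ while keeping coefficients (hence the hermitian matrix of a positive form and the convex combination of a strongly positive form) unchanged, checks weak positivity by pulling a strongly positive test form at a point of $X_t$ back to $X_0$ exactly as you do with $\tau=\bigwedge^{q}L(\tau_0)$, and then obtains the current statement by dualizing through \eqref{ext-current-2}. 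Your packaging of these checks into a single pointwise linear-algebra lemma is only a cosmetic reorganization, not a different argument.
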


\begin{proof}
What we need to show is that the map $e^{\iota_{\varphi(t)}|\iota_{\overline{\varphi(t)}}}$ actually sends smooth forms or currents, satisfying
some positive condition, to the ones with the same positivity, since it can be similarly proved that
the inverse map of $e^{\iota_{\varphi(t)}|\iota_{\overline{\varphi(t)}}}$ shares the same property.

It is clear that the map $e^{\iota_{\varphi(t)}|\iota_{\overline{\varphi(t)}}}$ sends strongly positive $(p,p)$-forms $X_0$ to the strongly positive ones
on $X_t$, by the very definition of $e^{\iota_{\varphi(t)}|\iota_{\overline{\varphi(t)}}}$ and strong positivity. As to the positive case,
we may choose a positive $(p,p)$-form $\Omega$ on $X_0$, which can be locally written as
\[ \Omega = \sigma_p \sum_{|I|=|J|=p} \Omega_{I\b{J}} dz^{I} \wedge d\b{z}^{J},\]
where $\Omega_{I\b{J}}$ is a semi-positive hermitian matrix of the size $N \times N$ with $N=C_{n}^p$ everywhere and varies with respect to the local
coordinates $\{z^i\}_{i=1}^n$. The local expression of $e^{\iota_{\varphi(t)}|\iota_{\overline{\varphi(t)}}}\big(\Omega\big)$ amounts to
\[ \sigma_p \sum_{|I|=|J|=p} \Omega_{I\b{J}} e^{\iota_{\varphi(t)}}\big(dz^{I}\big)
\wedge \overline{e^{\iota_{\varphi(t)}} \big(dz^{J}\big)}, \]
where $$\left\{ e^{\iota_{\varphi(t)}}\big(dz^{I}\big)
\wedge \overline{e^{\iota_{\varphi(t)}} \big(dz^{J}\big)} \right\}_{|I|=|J|=p}$$ is a local basis
of $(p,p)$-forms on $X_t$ as shown in \cite[Lemma 2.4]{RZ15} for example. And thus, $e^{\iota_{\varphi(t)}|\iota_{\overline{\varphi(t)}}}\big(\Omega\big)$ is clearly a positive $(p,p)$-form on $X_t$.
To see the case of weak positivity, let $\Omega$ be a weakly positive $(p,p)$-form on $X_0$. We need to show that
$e^{\iota_{\varphi(t)}|\iota_{\overline{\varphi(t)}}} \big( \Omega \big)$ is weakly positive on $X_t$. Fix a point $w$ on $X_t$
and a strongly positive $(n-p,n-p)$-form \[\eta_t \in \bigwedge^{n-p,n-p}T^{*(1,0)}_{w, X_t}.\]
Let $\mathcal{X} \stackrel{(\rho,\pi)}{\simeq} X_0 \times B$ be the diffeomorphism for the  differentiable family $\pi:\mathcal{X}\> B$, which
induces the integrable Beltrami differential form $\varphi(t)$ (cf.\cite{C}). Then $\eta_t$ can be expressed as $e^{\iota_{\varphi(t)}|\iota_{\overline{\varphi(t)}}}\big(\eta\big)$
for some
\[ \eta \in \bigwedge^{n-p,n-p} T^{*(1,0)}_{\rho(w), X_0}. \]
Besides, the equality holds
\[ \left.e^{\iota_{\varphi(t)}|\iota_{\overline{\varphi(t)}}} \big( \Omega \big)\right|_{w} \wedge \eta_t
= \left.e^{\iota_{\varphi(t)}|\iota_{\overline{\varphi(t)}}} \big( \Omega \big)\right|_{w} \wedge e^{\iota_{\varphi(t)}|\iota_{\overline{\varphi(t)}}} \big( \eta \big)
= e^{\iota_{\varphi(t)}|\iota_{\overline{\varphi(t)}}} \big( \left.\Omega\right|_{\rho(w)} \wedge \eta \big).\]
From the very definition of weakly positive $(p,p)$-form $\Omega$ on $X_0$, it follows that \[\left.\Omega\right|_{\rho(w)} \wedge \eta\]
is a positive $(n,n)$-form at $\rho(w)$ on $X_0$ and thus
\[\left.e^{\iota_{\varphi(t)}|\iota_{\overline{\varphi(t)}}} \big( \Omega \big)\right|_{w} \wedge \eta_t\]
is also a positive $(n,n)$-form at $w$ on $X_t$, by the definition of $e^{\iota_{\varphi(t)}|\iota_{\overline{\varphi(t)}}}$. Therefore,
the $(p,p)$-form $e^{\iota_{\varphi(t)}|\iota_{\overline{\varphi(t)}}} \big( \Omega \big)$ is weakly positive on $X_t$, since $w$ and $\eta_t$
can be arbitrarily chosen.

The statements on the currents can be proved directly from the definition \ref{po-cur} for the positivities of currents, the formula \eqref{ext-current-2} and the results
of smooth forms shown above.
\end{proof}

To study local stabilities of transverse $(p,p)$-forms, we need an equivalent characterization of them, as in \cite[Claim on Page $5$]{a} or implicit in
\cite[The proofs of Lemma 1.22 and Theorem 1.17]{aa}: a $(p,p)$-form
$\Omega$ is transverse on an $n$-dimensional  compact complex manifold $M$
if and only if \[\int_M \Omega \wedge T>0,\]
for every nonzero strongly positive $(n-p,n-p)$-current $T$ on it.

\begin{proposition}\label{ext-trans}
Let $\pi: \mathcal{X} \> B$ be a  differentiable family of compact complex $n$-dimensional manifolds and
$\Omega_t$  a family of real $(p,p)$-forms on $X_t$, depending smoothly on $t$. Assume that
$\Omega_0$ is a transverse $(p,p)$-form on $X_0$. Then $\Omega_t$ is also transverse on $X_t$ for $t$ small.
\end{proposition}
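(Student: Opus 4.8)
The plan is to reduce transversality on the moving fibers $X_t$ to a single uniform positivity estimate on the fixed manifold $X_0$, and then to run a compactness argument in the spirit of Lemma \ref{interval}, but over a space of currents rather than over $X_0$ itself. First I would invoke the characterization recalled just above: $\Omega_t$ is transverse on $X_t$ if and only if $\int_{X_t}\Omega_t\wedge T>0$ for every nonzero strongly positive $(n-p,n-p)$-current $T$ on $X_t$. Since $e^{\iota_{\varphi(t)}|\iota_{\overline{\varphi(t)}}}$ is a linear isomorphism on forms, one can write $\Omega_t=e^{\iota_{\varphi(t)}|\iota_{\overline{\varphi(t)}}}(\tilde{\Omega}_t)$ for a unique real $(p,p)$-form $\tilde{\Omega}_t$ on $X_0$ depending smoothly on $t$, with $\tilde{\Omega}_0=\Omega_0$. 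By Proposition \ref{p-eq}, the adjoint map $\big(e^{-\iota_{\varphi(t)}|-\iota_{\overline{\varphi(t)}}}\big)^{*}$ carries the nonzero strongly positive $(n-p,n-p)$-currents on $X_0$ bijectively onto those on $X_t$, so every such $T$ on $X_t$ equals $\big(e^{-\iota_{\varphi(t)}|-\iota_{\overline{\varphi(t)}}}\big)^{*}S$ for a nonzero strongly positive $(n-p,n-p)$-current $S$ on $X_0$. The defining identity \eqref{ext-current-2} then gives
$$\int_{X_t}\Omega_t\wedge T=\Big\langle \big(e^{-\iota_{\varphi(t)}|-\iota_{\overline{\varphi(t)}}}\big)^{*}S,\ e^{\iota_{\varphi(t)}|\iota_{\overline{\varphi(t)}}}(\tilde{\Omega}_t)\Big\rangle=\langle S,\tilde{\Omega}_t\rangle=\int_{X_0}\tilde{\Omega}_t\wedge S,$$
so that $\Omega_t$ is transverse on $X_t$ if and only if $\int_{X_0}\tilde{\Omega}_t\wedge S>0$ for every nonzero strongly positive $(n-p,n-p)$-current $S$ on $X_0$.

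Next I would compactify the space of test currents. Fix a Hermitian metric $\omega_0$ on $X_0$; then $\omega_0^{p}$ is a transverse $(p,p)$-form, so by the characterization recalled above $\langle S,\omega_0^{p}\rangle>0$ for every nonzero strongly positive $(n-p,n-p)$-current $S$, and by homogeneity it is enough to test the inequality over
$$\mathcal{C}=\big\{\,S\ \text{a strongly positive}\ (n-p,n-p)\text{-current on}\ X_0\ \big|\ \langle S,\omega_0^{p}\rangle=1\,\big\}.$$
A standard bound against the strictly positive form $\omega_0^{p}$ shows that $\mathcal{C}$ lies in a mass-bounded, hence (Banach--Alaoglu) weak-$*$ compact, subset of the space of currents; and since both the strong positivity cone and the affine hyperplane $\langle\,\cdot\,,\omega_0^{p}\rangle=1$ are weak-$*$ closed, $\mathcal{C}$ is itself weak-$*$ compact.

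Finally I would apply Lemma \ref{interval} with $K=\mathcal{C}$ and $f(S,t)=\langle S,\tilde{\Omega}_t\rangle$. Continuity of $f$ in $S$ is immediate from the definition of the weak-$*$ topology, since $\tilde{\Omega}_t$ is, for each fixed $t$, a fixed smooth form; continuity in $t$, uniform in $S\in\mathcal{C}$, follows from $\|\tilde{\Omega}_t-\tilde{\Omega}_{t_0}\|_{C^{0}}\to 0$ together with the uniform mass bound on $\mathcal{C}$, and these two facts combine to joint continuity of $f$ on $\mathcal{C}\times\Delta_{\epsilon}$. At $t=0$ one has $f(S,0)=\langle S,\tilde{\Omega}_0\rangle=\langle S,\Omega_0\rangle>0$ for every $S\in\mathcal{C}$, because $\Omega_0$ is transverse on $X_0$. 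Lemma \ref{interval} then produces $\delta>0$ with $f(S,t)>0$ for all $S\in\mathcal{C}$ and all $|t|<\delta$; by homogeneity this means $\int_{X_0}\tilde{\Omega}_t\wedge S>0$ for every nonzero strongly positive $S$, i.e. $\Omega_t$ is transverse on $X_t$ for $|t|<\delta$. I expect the main obstacle to be this functional-analytic step: checking that $\mathcal{C}$ is weak-$*$ compact and that $f$ is jointly continuous on $\mathcal{C}\times\Delta_{\epsilon}$ (in particular that the smooth dependence of $\Omega_t$ on $t$ survives pairing with arbitrary strongly positive currents of order zero). The algebraic core of the argument -- rewriting $\int_{X_t}\Omega_t\wedge T$ as $\int_{X_0}\tilde{\Omega}_t\wedge S$ -- is an immediate consequence of Proposition \ref{p-eq} and the identity \eqref{ext-current-2}, and everything after it is the compactness trick already packaged in Lemma \ref{interval}.
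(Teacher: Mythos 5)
Your proposal is correct and follows essentially the same route as the paper: the paper also tests $\Omega_t$ against the weak-$*$ compact set of strongly positive $(n-p,n-p)$-currents on $X_0$ normalized by $\langle T,\omega_0^p\rangle=1$ (compactness being quoted from Alessandrini--Andreatta and Sullivan rather than re-derived via Banach--Alaoglu), transports them to $X_t$ via $\big(e^{-\iota_{\varphi(t)}|-\iota_{\overline{\varphi(t)}}}\big)^{*}$ using Proposition \ref{p-eq} and \eqref{ext-current-2}, and concludes with Lemma \ref{interval}. Your rewriting of $\int_{X_t}\Omega_t\wedge T$ as $\int_{X_0}\tilde{\Omega}_t\wedge S$ is just the paper's function $f(T,t)$ expressed on $X_0$ instead of $X_t$, so the two arguments coincide.
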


\begin{proof}
Fix a hermitian metric $\omega_0$ on $X_0$ and define a real function $f(T,t)$ as follows:
\[ f(T,t) = \int_{X_t} \Omega_t \wedge \big(e^{-\iota_{\varphi(t)}|-\iota_{\overline{\varphi(t)}}} \big)^*T, \]
where $T$ varies in the space of strongly positive $(n-p,n-p)$-currents on $X_0$, satisfying
\[ \int_{X_0} \omega_0^p \wedge T =1, \]
and $t \in \Delta_{\epsilon}$. As pointed out in \cite[Propositon $1.7$]{aa} or \cite[Propositon I.5]{S}, the space of strongly positive $(n-p,n-p)$-currents $T$ on $X_0$, satisfying
$\int_{X_0} \omega_0^p \wedge T =1$, is a compact set, denoted by $K$ here. Then it is obvious that
the function $f(T,t)$ is continuous on $K \times \Delta_{\epsilon}$. The assumption that the real $(p,p)$-form $\Omega_0$ is transverse on $X_0$
implies that
\[ f(T,0)>0, \]
for each $T \in K$. Hence, Lemma \ref{interval} provides a positive number $\delta>0$, such that
\[ f(T,t)>0, \]
for $T \in K$ and $t\in \Delta_\delta$. This indeed shows that $\Omega_t$
is transverse for $t\in \Delta_\delta$, by Proposition \ref{p-eq} and the following application of
the formula \eqref{ext-current-2}:
\[ \int_{X_t} \left(e^{\iota_{\varphi(t)}|\iota_{\overline{\varphi(t)}}}\big(\omega_0\big)\right)^{p} \wedge \big(e^{-\iota_{\varphi(t)}|-\iota_{\overline{\varphi(t)}}} \big)^*T
= \int_{X_t} e^{\iota_{\varphi(t)}|\iota_{\overline{\varphi(t)}}}\big(\omega^p_0\big) \wedge \big(e^{-\iota_{\varphi(t)}|-\iota_{\overline{\varphi(t)}}} \big)^*T
= \int_{X_0} \omega_0^p \wedge T =1,\]
where $e^{\iota_{\varphi(t)}|\iota_{\overline{\varphi(t)}}}\big(\omega_0\big)$ can be regarded as a fixed hermitian metric on $X_t$.
\end{proof}

\begin{proof}[The first proof of Theorem \ref{c-p-kahler}]
Let $\Omega_0$ be a $d$-closed transverse $(p,p)$-form on the $p$-K\"ahler manifold $X_0$. Proposition \ref{d-ext} assures that
there exists a $d$-closed real extension $\Omega_t$ of $\Omega_0$ on $X_t$, depending smoothly on $t$.
Therefore, by Proposition \ref{ext-trans}, $\Omega_t$ is actually a $d$-closed transverse $(p,p)$-form
on $X_t$ for small $t$, which implies that $X_t$ is a $p$-K\"ahler manifold.
\end{proof}

Then one also has another proof:
\begin{proof}[{The second proof of Theorem \ref{c-p-kahler}}]
Let $\Omega_0$ be a $d$-closed transverse $(p,p)$-form on $X_0$ and $\Omega_t$ its $d$-closed real extension on $X_t$ constructed in \eqref{4-omt}
by Proposition \ref{d-ext}. To prove the theorem,  we just need: there exists a uniform small constant $\varepsilon>0$,
such that for any $t\in \Delta_\varepsilon$ and any nonzero
decomposable $(q,0)$-form $\tau$ at any given point $x\in X_0$  with $p+q=n$,
\begin{align}\label{11}
  \Omega_t(x)\wedge \sigma_q e^{\iota_{\varphi}}(\tau)\wedge e^{\iota_{\b{\varphi}}}(\b{\tau})>0,
\end{align}
where $\varphi:=\varphi(x,t)$ is induced by the
small differentiable deformation. Since
$e^{\iota_{\varphi}}$ induces an isomorphism between decomposable $(q,0)$-forms of $X_0$
and those of $X_t$, $\Omega_t$ will be the desired $d$-closed transverse $(p,p)$-form on $X_t$.

In fact, let $\omega$ be a Hermitian metric on $X_0$.
For any fixed point $x\in X_0$, we define a continuous function $f_{x}(t,[\tau])$
on $\Delta_\epsilon\times Y_{x}$ by
\begin{equation}\label{fxttau}
f_{x}(t,[\tau]):=\frac{ \Omega_t(x)\wedge \sigma_q e^{\iota_{\varphi}}(\tau)\wedge e^{\iota_{\b{\varphi}}}(\b{\tau})}{|\tau|^2_{\omega(x)}\cdot \omega(x)^n},
 \end{equation}
where $Y_x=\rho(G(q,n))|_{x}\subset \mb{P}(\bigwedge^{q,0}T^*_{X_0}|_x)$ is compact. Notice that
$\Omega_t\wedge \sigma_q e^{\iota_{\varphi}}(\tau)\wedge e^{\iota_{\b{\varphi}}}(\b{\tau})$ can be considered as an $(n,n)$-form on $X_0$.
Then by the transversality of $\Omega_0$,
$$f_x(0,[\tau])=\frac{ \Omega_0(x)\wedge \sigma_q \tau\wedge \b{\tau}}{|\tau|^2_{\omega(x)}\cdot \omega(x)^n}>0.$$
Thus, by the continuity of $f_{x}(t,[\tau])$ on $t$ and $[\tau]$,  Lemma \ref{interval} gives rise to a constant $\epsilon_x>0$ depending only on $x$, such that
$$\label{12}
  f(x,\overline{\Delta}_{\epsilon_x/2},Y_x):=f_{x}(\overline{\Delta}_{\epsilon_x/2},Y_x)>0.
$$
Let $\{U_j\}_{j=1}^J$ be trivializing covering of $\bigwedge^{q,0}T^*_{X_0}$, and choose any $x\in U_j$ for some $j$. Then one can identify $Y_x$ and $Y_y$ for any point $y\subset U_j$, and $f_y$ is defined on $Y_x$.
So by Lemma \ref{interval}, there exists an open neighbourhood $V_x\subset U_j$ of $x$ such that
$$
  f(V_x, \overline{\Delta}_{\epsilon_x/2},Y_x)>0
$$
since $f$ is continuous on $\overline{\Delta}_{\epsilon_x/2}\times Y_x$.

Noticing that $X_0$ is compact, one obtains a finite open covering $V_{x_i}$, $i=1,\cdots, m$, $X_0=\cup_{i=1}^m V_{x_i}$. Set
$$\varepsilon:=\min_{1\leq i\leq m}\epsilon_{x_i}/2>0.$$
Then
$$
  f(x,\overline{\Delta}_{\varepsilon},Y_x)=f_x(\overline{\Delta}_{\varepsilon},Y_x)>0
$$
for any $x\in X_0$.

Therefore,  by the definition \eqref{fxttau},
for any $|t|\leq \varepsilon$,
$$
  \Omega_t(x)\wedge \sigma_q e^{\iota_{\varphi(x)}}(\tau)\wedge e^{\iota_{\b{\varphi}(x)}}(\b{\tau})=f_{x}(t,[\tau])|\tau|^2_{\omega(x)}\cdot \omega(x)^n>0
$$
for any nonzero decomposable $(q,0)$-form $\tau$ at $x$. This is exactly the desired inequality \eqref{11}.
\end{proof}

\begin{remark} \label{p-k-rem}
Proposition \ref{ext-trans} and the second proof of Theorem \ref{c-p-kahler} actually show that any smooth real extension of a transverse $(p,p)$-form is still transverse, which also plays an important role in \cite{RWZ19}.
So the obstruction to extend a $d$-closed transverse $(p,p)$-form on a compact complex manifold
lies in the $d$-closedness. Hence, by Remark \ref{d-ext-rem}, the condition of the $\p\db$-lemma in Theorem \ref{c-p-kahler} can be replaced by the deformation invariance
of $(p,p)$-Bott--Chern numbers. Moreover, this condition may be weakened as some kind of the $\p\db$-lemma if the power series method works, just similarly to
what we have done in the balanced case in Section \ref{balanced}.
\end{remark}

Finally, we have to mention an interesting conjecture proposed by
Demailly--P$\breve{\textrm{a}}$un for the \lq global' stability of K\"ahler structures,
i.e., the K\"ahler property should be open for the countable Zariski
topology on the base.
\begin{conjecture}[{\cite[Cojecture 5.1]{DP}}] Let $X\longrightarrow S$ be a
deformation of compact complex manifolds over an irreducible base
$S$. Assume that one of the fibres $X_{t_0}$ is K\"ahler. Then there
exists a countable union $S'\subsetneq S$ of analytic subsets in the
base such that $X_t$ is K\"ahler for $t\in S\setminus S'$. Moreover,
$S$ can be chosen so that the K\"ahler cone is invariant over
$S\setminus S'$, under parallel transport by the Gauss-Manin
connection.
\end{conjecture}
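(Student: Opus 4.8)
\medskip
\noindent\textbf{Towards the conjecture above.}
The plan is to combine the \emph{Euclidean openness} of the K\"ahler property---which is precisely the content of the local stability Theorem \ref{stab-Kahler} reproved above---with Demailly--Paun's numerical description of the K\"ahler cone and a Noetherian bookkeeping of analytic subfamilies. First I would shrink to a relatively compact $t_0\in U\subseteq S$ and record that, by Theorem \ref{stab-Kahler}, the K\"ahler locus $S_{\mathrm{K}}:=\{t\in S:\ X_t\ \text{is K\"ahler}\}$ is open and nonempty. Over any simply connected open subset of $S$ the Gauss--Manin connection flatly trivializes the local system $R^2\pi_*\mathbb{R}$; on $S_{\mathrm{K}}$ the Hodge filtration moves holomorphically, so the real $(1,1)$-classes $H^{1,1}_{\mathbb{R}}(X_t)$ form a real-analytic subbundle there and each K\"ahler cone $\mathcal{K}_t\subseteq H^{1,1}_{\mathbb{R}}(X_t)$ is a genuine full-dimensional open convex cone.

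The decisive external input is Demailly--Paun's theorem \cite{DP}: on a compact K\"ahler manifold $\mathcal{K}_t$ is a connected component of the open set of classes $\alpha$ with $\int_Y\alpha^{\dim Y}>0$ for every irreducible analytic subset $Y\subseteq X_t$. Analytic subsets of the fibres are organized by the relative Barlet/Douady space $\mathcal{B}(\mathcal{X}/S)\to S$, a \emph{countable} disjoint union of analytic spaces; on each component $\mathcal{B}_\nu\to S$ sits a universal cycle of some pure dimension $d_\nu$, and for a flat-transported class $\beta$ of $\mathcal{K}_{t_0}$ the number $\int_Y\beta^{d_\nu}$ is a real-analytic quantity along $\mathcal{B}_\nu$ whose degeneracy traces out an analytic subset $D_\nu\subseteq S$. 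The candidate bad set is then
\[
S'\ :=\ \bigcup_\nu D_\nu\ \cup\ \bigcup_{p,q}J_{p,q}\ \cup\ \Sigma,
\]
where $J_{p,q}$ is the jumping locus of $\dim H^{p,q}(X_t)$ and $\Sigma$ the discriminant/non-reduced locus of the family; each piece is a proper analytic subset because it is avoided by $t_0$, so $S'$ is a countable union of proper analytic subsets of $S$.

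Third, I would propagate K\"ahlerness off $S'$. For $t\in S\setminus S'$ pick a path from $t_0$ and parallel-transport $\mathcal{K}_{t_0}$ to an open cone $C_t\subseteq H^{1,1}_{\mathbb{R}}(X_t)$. Since $t\notin\bigcup_\nu D_\nu$, every family $\mathcal{B}_\nu$ meeting $X_t$ keeps $\int_Y\beta^{d_\nu}>0$ strictly for $\beta\in C_t$, so $C_t$ lies inside the \emph{numerical} K\"ahler cone of $X_t$; since $t\notin\bigcup J_{p,q}$ the dimensions of the relevant cohomologies are locally constant, so $C_t$ is full-dimensional and varies continuously, hence stays in one connected component of that numerical cone. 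It remains to upgrade ``numerically positive'' to ``K\"ahler'': here one exploits that $X_t$ is a deformation of K\"ahler fibres and invokes the structural results on such limits (existence of a strongly Gauduchon, or Fujiki class $\mathcal{C}$, structure off a further countable analytic set) together with the class $\mathcal{C}$ version of the Demailly--Paun characterization and regularization of K\"ahler currents; this produces an actual K\"ahler metric on $X_t$ and, simultaneously, identifies $\mathcal{K}_t=C_t$, i.e. the Gauss--Manin invariance of the K\"ahler cone over $S\setminus S'$.

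The main obstacle---and the reason the statement is still a conjecture---is exactly this last upgrade together with the global control of $S'$: one must guarantee that the degeneracy loci $D_\nu$ coming from \emph{all} subvarieties of \emph{all} fibres genuinely remain \emph{proper} (this can fail when $S_{\mathrm{K}}$ is not dense, or when a component of the Barlet space dominates $S$), and one must know that a numerically positive class on the possibly only bimeromorphically K\"ahler fibre $X_t$ is represented by a smooth K\"ahler form rather than merely a K\"ahler current. Both would follow from a suitable \emph{relative} Demailly--Paun characterization over the base, for which the extension map $e^{\iota_{\varphi}|\iota_{\overline{\varphi}}}$ used throughout this paper --- transporting positive forms and currents fibrewise as in Propositions \ref{p-eq} and \ref{ext-trans} --- looks like the natural bookkeeping device.
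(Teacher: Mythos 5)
This statement is not a theorem of the paper: it is Demailly--Paun's Conjecture 5.1, quoted verbatim in the final paragraph precisely as an \emph{open problem} about the ``global'' stability of K\"ahler structures. The paper offers no proof of it, and to my knowledge none exists in the literature, so there is no ``paper's own proof'' to compare against. Your text is therefore correctly read not as a proof but as a strategy sketch, and you are candid that it does not close.

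The two gaps you name at the end are exactly where the argument breaks, and it is worth seeing that they are not technicalities. First, the step ``upgrade numerically positive to K\"ahler'' is circular as written: the Demailly--Paun theorem describes the K\"ahler cone of a manifold \emph{already known} to be compact K\"ahler (and its class-$\mathcal{C}$ variants still presuppose a K\"ahler current), so you cannot invoke it on $X_t$ to conclude that $X_t$ is K\"ahler; producing even a K\"ahler current on a fibre that is merely a limit of K\"ahler fibres is the heart of the conjecture, and neither the regularization machinery nor the extension map $e^{\iota_{\varphi}|\iota_{\overline{\varphi}}}$ of Propositions \ref{p-eq} and \ref{ext-trans} supplies it, since the latter transports positivity but not closedness. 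Second, the properness of the degeneracy loci $D_\nu$ is not guaranteed: a component $\mathcal{B}_\nu$ of the relative Barlet space need not lie over a neighbourhood of $t_0$ at all, in which case ``$t_0$ avoids $D_\nu$'' gives no control and $D_\nu$ could a priori be all of $S$; moreover parallel transport of $\mathcal{K}_{t_0}$ over the (generally non--simply connected) set $S\setminus S'$ raises monodromy issues you do not address. So the proposal should be presented as a discussion of why the conjecture is hard, not as a proof attempt; as the latter it has essential gaps.
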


\appendix
\section{Proof of Proposition \ref{7for}}\label{7proof}
One chooses a holomorphic coordinate chart $(z^1,\cdots,z^n)$ on the
complex manifold $X$ throughout this proof.

\begin{enumerate}
\item \quad Set $\phi = \phi^s_{\bar{t}} \dz^t \frac{\p}{\p z^s}$ and
$\alpha=\frac{1}{p!q!} \alpha_{I \bar{J}} dz^{I} \w \dz^{J}$
locally. One calculates:

 $\begin{aligned} & \phi \lc
\overline{\phi} \lc \alpha -
(\phi \lc \overline{\phi}) \lc \alpha \\
=& \phi \lc \lk \sum_{1 \leq u \leq q} \frac{1}{p!q!}
\overline{\phi}^{\bar{j_u}}_t \alpha_{I\bar{J}} dz^{I} \w \cdots \w
\big(dz^t\big)_{j_u} \w \cdots \rk
- \lk \phi^t_{\bar{l}} \overline{\phi}^{\bar{s}}_t \dz^l \frac{\p}{\pz^s} \rk \lc \alpha \\
=& \phi \lc \lk \sum_{1 \leq u \leq q} \frac{1}{p!q!}
\overline{\phi}^{\bar{j_u}}_t \alpha_{I\bar{J}} dz^{I} \w \cdots
\w \big(dz^t\big)_{j_u} \w \cdots \rk \\
 & - \sum_{1 \leq u \leq q} \frac{1}{p!q!} \phi^t_{\bar{l}}
\overline{\phi}^{\bar{j_u}}_t \alpha_{I\bar{J}} dz^{I} \w \cdots
\w \big(\dz^l\big)_{j_u} \w \cdots  \\
=& \sum_{\begin{subarray}{c} 1 \leq v \leq p \\ 1 \leq u \leq
q\end{subarray}} \frac{1}{p!q!} \phi^{i_v}_{\bar{l}}
\overline{\phi}^{\bar{j_u}}_t \alpha_{I\bar{J}} \cdots \w
\big(\dz^l\big)_{i_{v}} \w \cdots dz^{i_p} \w \dz^{j_1} \cdots \w
\big(dz^t\big)_{j_u} \w \cdots,
\end{aligned}$

while

 $\begin{aligned} & \overline{\phi} \lc \phi \lc
\alpha - (\overline{\phi}
\lc \phi) \lc \alpha \\
=& \overline{\phi} \lc \lk \sum_{1 \leq v \leq p} \frac{1}{p!q!}
\phi^{i_v}_{\bar{l}} \alpha_{I\bar{J}} \cdots \w
\big(\dz^l\big)_{i_v} \w \cdots \w \dz^{J} \rk
- \lk \overline{\phi}^{\bar{l}}_{t} \phi^{s}_{\bar{l}} dz^t \frac{\p}{\p z^s} \rk \lc \alpha \\
=& \overline{\phi} \lc \lk \sum_{1 \leq v \leq p} \frac{1}{p!q!}
\phi^{i_v}_{\bar{l}} \alpha_{I\bar{J}} \cdots
\big(\dz^l\big)_{i_v} \w \cdots \w \dz^{J} \rk \\
 & - \sum_{1 \leq v \leq p} \frac{1}{p!q!}
\overline{\phi}^{\bar{l}}_{t} \phi^{i_v}_{\bar{l}}
\alpha_{I\bar{J}} \cdots \w \big(dz^t\big)_{i_v} \w \cdots  \dz^{J} \\
=& \sum_{\begin{subarray}{c} 1 \leq v \leq p \\ 1 \leq u \leq
q\end{subarray}} \frac{1}{p!q!} \phi^{i_v}_{\bar{l}}
\overline{\phi}^{\bar{j_u}}_t \alpha_{I\bar{J}} \cdots \w
\big(\dz^l\big)_{i_v} \w \cdots dz^{i_p} \w \dz^{j_1} \cdots \w
\big(dz^t\big)_{j_u} \w \cdots.
\end{aligned}$

\item \quad This follows directly from the generalized commutator formula \eqref{f1} and $$\p(\phi
\lc \phi \lc \overline{\phi}) = 0.$$

\item \quad This is proved in \cite[Appendix B]{C}.

\item \quad Set $\phi = \phi^s_{\bar{t}} \dz^t \frac{\p}{\p z^s}$ and
$\alpha=\frac{1}{p!q!} \alpha_{I \bar{J}} dz^{I} \w \dz^{J}$
locally.  Thus,

$\begin{aligned} & \overline{\phi} \lc \overline{\phi} \lc \phi \lc
\alpha -
\phi \lc \overline{\phi} \lc \overline{\phi} \lc \alpha \\
=& \overline{\phi} \lc \overline{\phi} \lc \lk \sum_{1 \leq v \leq
p} \frac{1}{p!q!}\phi^{i_v}_{\bar{t}} \alpha_{I\bar{J}}
\cdots \w \big(\dz^t\big)_{i_v} \w \cdots \w \dz^{J} \rk\\
&- 2\phi \lc \lk \sum_{1 \leq u < l \leq q} \frac{1}{p!q!}
\overline{\phi}^{\bar{j_u}}_i \overline{\phi}^{\bar{j_l}}_j
\alpha_{I\bar{J}} dz^{I} \w \cdots \w \big(dz^i\big)_{j_u} \w
\cdots \w \big(dz^j\big)_{j_l} \w \cdots \rk \\
=& 2 \sum_{\begin{subarray}{c} 1 \leq v \leq p \\ 1 \leq u \leq q
\end{subarray}} \frac{1}{p!q!} \overline{\phi}^{\bar{j_u}}_i
\overline{\phi}^{\bar{t}}_{j} \phi^{i_v}_{\bar{t}} \alpha_{I\bar{J}}
\cdots \w \big(dz^j\big)_{i_v} \w \cdots
dz^{i_p} \w \dz^{j_1} \cdots \w \big(dz^{i}\big)_{j_u} \w \cdots \\
& +2 \sum_{\begin{subarray}{c} 1 \leq v \leq p\\ 1 \leq u < l \leq q
\end{subarray}} \frac{1}{p!q!} \overline{\phi}^{\bar{j_u}}_i
\overline{\phi}^{\bar{j_l}}_j \phi^{i_v}_{\bar{t}} \alpha_{I\bar{J}}
\cdots \w \big(\dz^t\big)_{i_v} \w \cdots dz^{i_p} \w \dz^{j_1}
\cdots \w \big(dz^i\big)_{j_u} \w \cdots
\w \big(dz^j\big)_{j_i} \w \cdots \\
\end{aligned}$

$\begin{aligned}
& -2 \sum_{\begin{subarray}{c} 1 \leq v \leq p\\ 1 \leq u < l \leq q
\end{subarray}} \frac{1}{p!q!} \overline{\phi}^{\bar{j_u}}_i
\overline{\phi}^{\bar{j_l}}_j \phi^{i_v}_{\bar{t}} \alpha_{I\bar{J}}
\cdots \w \big(\dz^t\big)_{i_v} \w \cdots dz^{i_p} \w \dz^{j_1}
\cdots \w \big(dz^i\big)_{j_u} \w \cdots
\w \big(dz^j\big)_{j_l} \w \cdots \\
& -2 \left( \sum_{1 \leq u < l \leq q}
\frac{1}{p!q!} \phi^{i}_{\bar{t}} \overline{\phi}^{\bar{j_u}}_i
\overline{\phi}^{\bar{j_l}}_j \alpha_{I\bar{J}} dz^{I} \w \cdots
\w \big(\dz^t\big)_{j_u} \w \cdots \w \big(dz^j\big)_{j_l} \w \cdots \right.\\
& \quad \quad \quad + \left. \sum_{1 \leq u < l \leq q}
\frac{1}{p!q!} \overline{\phi}^{\bar{j_u}}_i \phi^{j}_{\bar{t}}
\overline{\phi}^{\bar{j_l}}_j \alpha_{I\bar{J}} dz^{I} \w \cdots \w
\big( dz^i \big)_{j_u} \w
\cdots \w \big( \dz^t \big)_{j_l} \w \cdots \right) \\
=& 2 ( \overline{\phi} \lc \phi \overline{\phi} \lc \alpha
- \overline{\phi} \phi \lc \overline{\phi} \lc \alpha ). \\
\end{aligned}$
\end{enumerate}

\section{Proof of Observation \ref{closed}} \label{app-b}
We will omit the sub-index in many places without
danger of confusion. By use of \eqref{7.5} and \eqref{7.3} in
Proposition \ref{7for}, the integrability condition \eqref{int} and
the commutator formula \eqref{f1} repeatedly, one has
\begin{align*}
&\quad \db(\varphi\lc\omega)\\ &= \frac{1}{2} [\varphi,\varphi] \lc
\omega + \varphi \lc \db \omega\\
&= - \frac{1}{2} \varphi \lc \varphi \lc \p \omega + \varphi \lc \p
( \varphi \lc \omega ) + \varphi \lc \big( \db(\bar\varphi \varphi
\lc \omega - \varphi \lc \bar \varphi \lc
\omega) - \p (\varphi \lc \omega) \big) \\
&= - \frac{1}{2} \varphi \lc \varphi \lc \big( \p (\varphi \bar
\varphi \lc \omega - \bar \varphi \lc \varphi \lc \omega ) -\db
(\bar \varphi \lc \omega)\big) + \varphi \lc \db (\bar\varphi
\varphi \lc \omega - \varphi \lc \bar \varphi \lc \omega ) \\
&= - \frac{1}{2} \varphi \lc \varphi \lc \big( \p (\varphi \bar
\varphi \lc \omega - \bar \varphi \lc \varphi \lc \omega ) -
\db (\bar \varphi \lc \omega) \big) \\
& \quad + \db (\varphi \lc \bar\varphi \varphi \lc \omega) -
\frac{1}{2} [\varphi,\varphi] \lc \bar \varphi \varphi \lc \omega -
\varphi \lc \frac{1}{2} [\varphi,\varphi] \lc \bar \varphi \lc
\omega - \varphi \lc \varphi \lc \db (\bar \varphi \lc \omega ).
\end{align*}
Then \eqref{7.1} and \eqref{7.7} in Proposition \ref{7for} yield
that
\begin{align*}
&\quad \db(\varphi\lc\omega)\\
&= - \frac{1}{2} \varphi \lc \varphi\lc \big( \p (\varphi \lc \bar
\varphi \lc \omega) + \db ( \bar \varphi \lc \omega ) \big) +
\frac{1}{2} \db (\varphi \lc \varphi \lc \bar \varphi \lc \omega) +
\frac{1}{2} \varphi \lc \varphi \lc \varphi \lc \p (\bar \varphi \lc
\omega ) \\
&= - \frac{1}{2} \varphi \lc \varphi\lc \big( \p (\varphi \lc \bar
\varphi \lc \omega) + \db ( \bar \varphi \lc \omega ) \big) +
\frac{1}{2} \varphi \lc [\varphi, \varphi] \lc \bar \varphi \lc
\omega + \frac{1}{2} \varphi \lc \varphi \lc \db (\bar \varphi \lc
\omega) + \frac{1}{2}\varphi \lc \varphi \lc \varphi \lc \p (\bar \varphi \lc \omega) \\
&= -
\frac{1}{2}\big(\varphi\lc\p(\varphi\lc\varphi\lc\bar\varphi\lc\omega)
-\varphi\lc\varphi\lc\p(\varphi\lc\bar\varphi\lc\omega)\big).
\end{align*}
Note that the equality $\iota_{\varphi} \circ
\iota_{[\varphi,\varphi]} = \iota_{[\varphi,\varphi]} \circ
\iota_{\varphi}$ holds (cf. \cite[Page 361]{C}). Then the last
equality but one above is also equal to \begin{align*} & -
\frac{1}{2} \varphi \lc \varphi \lc\big( \p (\varphi \lc \bar
\varphi \lc \omega) + \db ( \bar \varphi \lc \omega ) \big) +
\frac{1}{2}[\varphi, \varphi] \lc \varphi \lc \bar \varphi \lc
\omega + \frac{1}{2} \varphi \lc \varphi \lc \db (\bar \varphi \lc
\omega)
+ \frac{1}{2} \varphi \lc \varphi \lc \varphi \lc \p (\bar \varphi \lc \omega)\\
=& \frac{1}{2} \Big(
\varphi\lc\varphi\lc\varphi\lc\p(\bar\varphi\lc\omega)
+2\big(\varphi\lc\p(\varphi\lc\varphi\lc\bar\varphi\lc\omega)
-\varphi\lc\varphi\lc\p(\varphi\lc\bar\varphi\lc\omega)\big)\Big).
\end{align*}
Hence, we get the equality
\begin{align*}
&- \frac{1}{2}
\big(\varphi\lc\p(\varphi\lc\varphi\lc\bar\varphi\lc\omega)
-\varphi\lc\varphi\lc\p(\varphi\lc\bar\varphi\lc\omega) \big)\\
=&\frac{1}{2} \Big(
\varphi\lc\varphi\lc\varphi\lc\p(\bar\varphi\lc\omega)
+2\big(\varphi\lc\p(\varphi\lc\varphi\lc\bar\varphi\lc\omega)-\varphi\lc\varphi\lc\p(\varphi\lc\bar\varphi\lc\omega)\big)
\Big),
\end{align*}
which implies that \begin{align*} \db (\varphi \lc \omega) &=
-\frac{1}{2}
\big(\varphi\lc\p(\varphi\lc\varphi\lc\bar\varphi\lc\omega)
- \varphi\lc\varphi\lc\p(\varphi\lc\bar\varphi\lc\omega)\big) \\
& = \frac{1}{6}  \varphi\lc\varphi\lc\varphi\lc\p
(\bar\varphi\lc\omega).
\end{align*}

Therefore, $\db(\varphi\lc \omega)_{N} =0$ reduces to the equality
$$\label{closed-check} \db (\varphi \lc \omega)_k =0, \quad
\textrm{for}\ k=1,2,3,$$ which are easily to be checked by the above
formulation. This concludes the proof of Observation \ref{closed}.

\end{document}